%% file: arxiv_submission.tex
\documentclass[11pt]{amsart}

\input{newmacros}

\title[Generic Riemann Solutions for Radiative Euler Equations]{Long-Time Behavior towards Generic Riemann Solutions for Radiative Euler Equations}
\author[Cheng]{Jeffrey Cheng}
\address[Jeffrey Cheng]
{ Department of Mathematics, \newline
The University of Texas at Austin \\
Austin, TX 78712, USA}
\email{jeffrey.cheng@utexas.edu}

\author[Eun]{Namhyun Eun}
\address[Namhyun Eun]
{ School of Mathematics, \newline
Korea Institute for Advanced Study (KIAS) \\
Seoul 02455, Republic of Korea}
\email{namhyuneun@kias.re.kr}
\newtheorem{theorem}{Theorem}[section]
\newtheorem{lemma}{Lemma}[section]

\newtheorem{proposition}{Proposition}[section]
\theoremstyle{remark}
\newtheorem{remark}{Remark}[section]

\theoremstyle{plain}
\newtheorem{definition}{Definition}[section]

\numberwithin{equation}{section}

\begin{document}
\bibliographystyle{plain}

\date{\today}

\subjclass{35B40, 35B35, 35L65, 35Q31}

\keywords{Radiative Euler Equations, Stability, Radiative Shocks, Hyperbolic-Elliptic Coupling, Nonlocal Dissipation}

\thanks{\textbf{Acknowledgment.}
N. Eun was supported by a KIAS Individual Grant (MG109001) at Korea Institute for Advanced Study, by Samsung Science and Technology Foundation (Project No. SSTF-BA2102-01), and by a National Research Foundation of Korea (NRF) grant funded by the Korean government (MSIT) (No. RS-2026-25477288).}

\begin{abstract}
We study the time-asymptotic stability of generic Riemann solutions for the one-dimensional radiative Euler equations, where the Riemann pattern consists of a \(1\)-rarefaction wave, a \(2\)-contact discontinuity, and a \(3\)-shock wave.
We prove that, for sufficiently small wave strengths and initial perturbations, the Cauchy problem admits a unique global solution, which converges uniformly as \(t\to \infty\) to a composite wave of the \(1\)-rarefaction wave, \(2\)-viscous contact wave, and \(3\)-radiative shock profile with a time-dependent shift. As a key ingredient, we establish a comprehensive set of quantitative estimates for weak radiative shock profiles.

Our proof is based on the method of \(a\)-contraction with shifts.
In all previous applications of the \(a\)-contraction method to the study of long-time behavior, the proof has relied essentially on the parabolic structure provided by either viscosity or heat conduction.
In this work, we extend the method to a hyperbolic-elliptic system without parabolic structure.
\end{abstract}
\maketitle \centerline{\date}

\tableofcontents

\section{Introduction}
We consider the one-dimensional radiative Euler system in Lagrangian mass coordinates:
\begin{equation}\label{main}
\begin{cases}
v_t - u_x = 0, \\
u_t + p(v,\th)_x = 0, \\
\left(e+\frac{u^2}{2}\right)_t + (p(v,\th)u)_x + q_x =0, \\
-\left(\frac{q_x}{v}\right)_x + \a v q + \b (\th^4)_x =0,
\end{cases}
\end{equation}
where \(v\) is the specific volume, \(u\) is the velocity, \(p\) is the pressure, \(e\) is the internal energy, \(\th\) is the absolute temperature, and \(q\) is the radiative heat flux.
Here, \(\a\) and \(\b\) are positive constants depending only on the type of gas.
We consider an ideal polytropic gas, for which the pressure and internal energy are given by: 
\begin{equation}\label{pressure}
p(v,\th) = \frac{R\th}{v}, \qquad
e(\th) = \frac{R}{\g-1}\th,
\end{equation}
where \(R>0\) is the gas constant and \(\g>1\) is the adiabatic constant, both of which also depend only on the fluid.
The total-energy equation in \eqref{main} can be rewritten in non-divergence form as
\[
\frac{R}{\g-1}\th_t + p(v,\th) u_x + q_x =0.
\]

\vspace{2mm}
A fundamental question for \eqref{main} concerns the asymptotic behavior of solutions as \(t\to\infty\).
In this paper, we aim to show that, for solutions connecting distinct far-field states \((v_\pm,u_\pm,\theta_\pm)\), the long-time behavior is governed by the corresponding inviscid Riemann problem:
\begin{equation}\label{riemann}
\begin{cases}
v_t - u_x = 0, \\
u_t + p(v,\th)_x = 0, \\
\left(e+\frac{u^2}{2}\right)_t + (p(v,\th)u)_x = 0,
\end{cases}
\qquad
(v,u,\th)(0,x) =
\begin{cases}
(v_-,u_-,\th_-), & x<0, \\
(v_+,u_+,\th_+), & x>0.
\end{cases}
\end{equation}
Note that the characteristic speeds of the Euler system in \eqref{riemann} are 
\[
\l_1(v,\th) = -\frac{\sqrt{\g R \th}}{v}, \qquad
\l_2(v,\th) = 0, \qquad
\l_3(v,\th) = \frac{\sqrt{\g R \th}}{v}.
\]
The Riemann problem \eqref{riemann} admits self-similar solutions consisting of elementary waves: shock waves, rarefaction waves, and contact discontinuities.
We consider initial data \((v_0,u_0,\th_0)\) connecting two distinct far-field states \((v_\pm,u_\pm,\theta_\pm)\in\RR_+\times\RR\times\RR_+\), that is,
\[
(v_0(x),u_0(x),\th_0(x)) \to (v_\pm,u_\pm,\theta_\pm)
\qquad \text{as } x\to\pm\infty,
\]
such that the associated Riemann problem \eqref{riemann} admits a generic Riemann solution.
Specifically, we consider a Riemann solution which consists of a \(1\)-rarefaction wave connecting $(v_-, u_-, \theta_-)$ and $(v_*, u_*, \theta_*)$, a \(2\)-contact discontinuity connecting $(v_*, u_*, \theta_*)$ and $(v^*, u^*, \theta^*)$, and a \(3\)-shock wave connecting $(v^*,u^*,\theta^*)$ and $(v_+, u_+, \theta_+)$.
Note that across the contact discontinuity, \(u_*=u^*\) and \(p(v_*,\th_*)=p(v^*,\th^*)\eqqcolon p^*\).

\subsection{Viscous Approximation of Riemann Solutions}
We introduce the viscous counterpart of the elementary waves of the Riemann problem \eqref{riemann}.
More precisely, we define the rarefaction wave and its smooth approximation, viscous contact wave, and radiative shock wave.

The rarefaction wave \((v^r,u^r,\th^r)\) and its smooth approximation \((v^R,u^R,\th^R)\) are defined in the same way as \cite[Section 2.1]{KVW-ARMA25}. We first set \(w_- \coloneqq \l_1(v_-,\th_-)\) and \(w_* \coloneqq \l_1(v_*,\th_*)\).
Note that \(w_- < w_* <0\) holds.
Let \(w^r(t,x)\) and \(w(t,x)\) be, respectively, the entropy solution and the smooth solution to the Burgers equation: 
\[
\begin{cases}
(w^r)_t + w^r (w^r)_x = 0,\\
w^r(0,x) = \begin{cases} w_-, & x<0, \\ w_*, & x>0, \end{cases}
\end{cases}
\qquad
\begin{cases}
w_t + ww_x =0,\\
w(0,x)=\frac{w_-+w_*}{2}+\frac{w_*-w_-}{2}\tanh x.
\end{cases}
\]
The corresponding rarefaction \((v^r,u^r,\th^r)(x/t)\) and its smooth approximation \((v^R,u^R,\th^R)(t,x)\) are defined on the \(1\)-rarefaction curve from \((v_-,u_-,\th_-)\) by 
\[
\l_1(v^r(x/t),\th^r(x/t))=w^r(t,x), \qquad
\l_1(v^R(t,x),\th^R(t,x))=w(1+t,x).
\]
Both satisfy the Euler equations:
\begin{equation}\label{euler-ur}
\begin{cases}
(v^r)_t - (u^r)_x = 0, \\
(u^r)_t + p(v^r,\th^r)_x = 0, \\
\frac{R}{\g-1}(\th^r)_t + p(v^r,\th^r)(u^r)_x = 0,
\end{cases}
\qquad
\begin{cases}
(v^R)_t - (u^R)_x = 0, \\
(u^R)_t + p(v^R,\th^R)_x = 0, \\
\frac{R}{\g-1}(\th^R)_t + p(v^R,\th^R)(u^R)_x = 0,
\end{cases}
\end{equation}
where the equations are written in non-divergence form. We further define the approximate radiative heat flux associated with the smooth rarefaction wave by
\begin{equation}\label{q-rare}
q^R \coloneqq -\frac{\b}{\a v^R}((\th^R)^4)_x.
\end{equation}
This can be seen as the Lagrangian mass coordinate counterpart of the approximate radiative flux used in \cite{FRX-CMAA23}.
For the exact rarefaction wave, the analogous flux \(q^r \coloneqq -\frac{\b}{\a v^r}((\th^r)^4)_x\) satisfies \(\norm{q^r(t,\cdot)}_{L^\infty(\RR)}=\Ocal(t^{-1})\) as \(t\to\infty\).
We thus take the rarefaction component of the asymptotic radiative heat flux to be zero instead of \(q^r\).

We turn to the definition of the viscous contact wave.
Motivated by \cite{WX-JDE11,X-DCDS12}, we first consider the nonlinear diffusion equation
\begin{equation}\label{nde}
\Theta_t = \frac{4\b(\g-1)p^*}{\a R^2 \g} (\Theta^2 \Theta_x)_x
\end{equation}
and its monotone self-similar solution \(\Theta(t,x)=\Theta_C(\frac{x}{\sqrt{1+t}})\) connecting the two states \(\th_*\) and \(\th^*\):
\[
\Theta_C(-\infty) = \th_*, \qquad
\Theta_C(+\infty) = \th^*.
\]
The viscous contact wave \((v^C,u^C,\th^C)\) is defined as follows:
\begin{equation}\label{uc}
v^C \coloneqq \frac{R\Theta}{p^*}, \qquad
\th^C \coloneqq \Theta, \qquad
u^C \coloneqq u^* + \frac{4\b(\g-1)}{\a R\g}\Theta^2 \Theta_x.
\end{equation}
Moreover, the radiative heat flux component \(q^C\) is defined as follows:
\begin{equation}\label{q-cont}
q^C \coloneqq -\frac{4\b p^*}{\a R}\Theta^2 \Theta_x.
\end{equation}
It follows from the definition that \((v^C,u^C,\th^C,q^C)\) satisfies
\begin{equation}\label{euler-uc}
\begin{cases}
(v^C)_t - (u^C)_x = 0, \\
(u^C)_t + p(v^C,\th^C)_x = (u^C)_t, \\
\frac{R}{\g-1}(\th^C)_t + p^*(u^C)_x + (q^C)_x = 0, \\
\a v^C q^C + \b ((\th^C)^4)_x = 0.
\end{cases}
\end{equation}
The pressure \(p(v^C,\th^C)\) of the viscous contact wave is constant and equal to \(p^*\).

The viscous counterpart of the shock wave, referred to as the radiative shock, is a traveling wave solution of the system \eqref{main}.
That is, the radiative shock \((v^S,u^S,\th^S,q^S)(\x\coloneqq x-\s t)\) satisfies the following traveling wave ordinary differential equations:
\begin{equation}\label{VS}
\begin{cases}
-\s (v^S)' - (u^S)' = 0, \\
-\s (u^S)' + p(v^S,\th^S)' = 0, \\
-\s \frac{R}{\g-1}(\th^S)' + p(v^S,\th^S)(u^S)' + (q^S)' =0, \\
-\left(\frac{(q^S)'}{v^S}\right)' + \a v^S q^S + \b ((\th^S)^4)' =0,
\end{cases}\qquad
\left(' = \frac{d}{d\x}\right)
\end{equation}
where the traveling wave speed is given by the Rankine--Hugoniot condition:
\[
\s = \sqrt{-\frac{p(v_+,\th_+)-p(v^*,\th^*)}{v_+-v^*}} >0.
\]
Note that for the case of a \(1\)-shock, the shock speed is as follows: 
\[
\s = -\sqrt{-\frac{p(v_l,\th_l)-p(v_r,\th_r)}{v_l-v_r}} <0,
\]
where the shock connects the two states, from \((v_l,u_l,\th_l,0)\) to \((v_r,u_r,\th_r,0)\).
The existence of radiative shocks is known from earlier works.
We reprove existence and establish quantitative estimates for these profiles, with the results and relevant literature presented in Section \ref{sec:shock}.

Then, we consider the following composite wave as our viscous ansatz:
\begin{equation}\begin{aligned}\label{eq:compositewave}
(\tilde{v}, \tilde{u}, \tilde{\theta}, \tilde{q})
&\coloneqq
\Big(v^r\Big(\frac{x}{t}\Big)+v^C\Big(\frac{x}{\sqrt{1+t}}\Big)
+v^S(x-\sigma t-X(t))-v_*-v^*, \\
&\qquad
u^r\Big(\frac{x}{t}\Big)+u^C(t,x)+u^S(x-\sigma t-X(t))-u_*-u^*, \\
&\qquad
\theta^r\Big(\frac{x}{t}\Big)+\theta^C\Big(\frac{x}{\sqrt{1+t}}\Big)
+\theta^S(x-\sigma t-X(t))-\th_*-\th^*, \\
&\qquad
q^C(t,x)+q^S(x-\sigma t-X(t))\Big),
\end{aligned}\end{equation}
which is composed of the \(1\)-rarefaction wave, the viscous \(2\)-contact wave, and the \(3\)-radiative shock profile shifted by \(X(t)\) (defined in \eqref{eq:shiftdef}), connecting \((v_-,u_-,\th_-)\) to \((v_*,u_*,\th_*)\), \((v_*,u_*,\th_*)\) to \((v^*,u^*,\th^*)\), and \((v^*,u^*,\th^*)\) to \((v_+,u_+,\th_+)\), respectively.

\subsection{Main Result}
Our main theorem is as follows.

\begin{theorem}\label{thm:main}
Let \((v_-,u_-,\th_-)\in\RR_+\times\RR\times\RR_+\) be the fixed state.
There exist positive constants \(\d_0\) and \(\e_0\) such that the following holds.
Let \((v_+,u_+,\th_+)\) be any state such that the solution to the Riemann problem \eqref{riemann} consists of a \(1\)-rarefaction wave, a \(2\)-contact discontinuity, and a \(3\)-shock wave, with intermediate states \((v_*,u_*,\th_*)\) and \((v^*,u^*,\th^*)\).
Consider the corresponding composite wave defined by \eqref{eq:compositewave}.
If the wave strengths satisfy 
\[
|v_+-v^*|+|v^*-v_*|+|v_*-v_-| \le \d_0,
\]
and the initial perturbation verifies
\begin{equation}\label{eq:initpertsmall}
\sum_{\pm} \norm{\big(v_0-v_\pm, u_0-u_\pm, \theta_0-\theta_{\pm}\big)}_{L^2(\RR_\pm)}
+\norm{\big((v_0)_x,(u_0)_x,(\th_0)_x\big)}_{H^1(\RR)} < \e_0,
\end{equation}
then the Cauchy problem \eqref{main} admits a unique global-in-time solution \((v,u,\th,q)(t,x)\).\\
Moreover, there exists an absolutely continuous shift function \(X(t)\) such that
\begin{equation}\begin{aligned}\label{eq:maincontinuity}
&(v-\tilde{v}, u-\tilde{u}, \theta-\tilde{\theta}, q-\tilde{q})(t,x) \in C(0,\infty;H^1(\RR)),\\ 
&(v_{xx}, u_{xx}, \theta_{xx}, q_{xx}) \in C(0,\infty;L^2(\RR)),\\
&q_{xxx}(t,x)-(q^S)_{xxx}(x-\sigma t-X(t)) \in L^2(0,\infty;L^2(\RR)).
\end{aligned}\end{equation}
In addition, the asymptotic behavior as \(t\to \infty\) is as follows:
\begin{equation}\label{eq:pertgoesto0main}
\sup_{x \in \RR}|(v-\vtil, u-\util, \theta-\thtil, q-\qtil)(t,x)| \to 0,
\end{equation}
and 
\begin{equation}\label{eq:shiftgoesto0main}
\lim_{t \to \infty}|\dot{X}(t)|=0.
\end{equation}
\end{theorem}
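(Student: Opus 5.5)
The argument is the standard continuation scheme: local existence, a uniform a priori estimate, and a time-asymptotic argument. The a priori estimate comes from the method of \(a\)-contraction with shifts applied to the composite wave \eqref{eq:compositewave}.

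\textbf{Reduction.} First eliminate \(q\) through the elliptic equation. For given \((v,\th)\), the last equation of \eqref{main} is a uniformly elliptic ODE in \(q\), so \(q\) is a nonlocal function of \(\th^4\):
\[
q=-\b\,\mathcal{K}_v\big((\th^4)_x\big), \qquad \mathcal{K}_v=\Big(\a v-\partial_x\frac{1}{v}\partial_x\Big)^{-1}.
\]
This turns \eqref{main} into a hyperbolic system with a nonlocal dissipation in the energy equation. Local existence in the class \eqref{eq:maincontinuity} then follows by a standard iteration, and uniqueness follows from an \(L^2\)-stability estimate. Global existence reduces to a uniform-in-time a priori estimate for the perturbation \((\phi,\psi,\zeta,\eta)=(v-\vtil,u-\util,\th-\thtil,q-\qtil)\) in \(H^1\) (with \(H^2\) control of \(q\)), under the smallness assumption \(\sup_t\norm{(\phi,\psi,\zeta)}_{H^1}\le\e_1\).

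\textbf{Energy method.} Take the shift in the usual form:
\[
\dot X(t)=-\frac{M}{\d_S}\int_\RR a(\x)\,\big(\text{weighted combination of }\phi,\psi,\zeta\text{ against }(v^S)_x,(u^S)_x,(\th^S)_x\big)\,dx,
\]
where \(\d_S=|v_+-v^*|\) is the shock strength and \(a\) is a monotone weight. The weight satisfies \(a-1=\Ocal(\l)\) and \(a_x\sim\frac{\l}{\d_S}|(v^S)_x|\), with \(\d_S\ll\l\ll1\). Multiply the perturbed system by \(a\) times the derivative of the relative entropy \(\eta(U|\tilde U)\) and integrate.

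The key terms are as follows.
\begin{itemize}
\item The shock part gives the good term \(\dot X\) contribution together with \(-\int a_x\,(\text{flux} \mid \text{entropy})\). Using the quantitative profile estimates of Section~\ref{sec:shock} (monotonicity, \(|(v^S)'|\sim\d_S^2e^{-c\d_S|\x|}\), and \(q^S\approx-\frac{\b}{\a v^S}((\th^S)^4)'\) up to higher order), this reduces to a Poincaré-type inequality on the shock layer. That inequality yields a weighted bad-term bound that is absorbed.
\item The rarefaction part gives the good sign term \(\int (u^R)_x\,\cdot\) from the expansion structure.
\item The contact part produces error terms of size \(\d_C(1+t)^{-1}\) and \(\d_C(1+t)^{-1}e^{-cx^2/(1+t)}\). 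These are controlled by the heat-kernel weighted estimate of Huang--Li--Matsumura type.
\end{itemize}
Wave interactions are exponentially small in time because of the separation of wave speeds.

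\textbf{Main obstacle: no parabolic dissipation.} Previous \(a\)-contraction proofs used viscosity or heat conduction to control \(\norm{\zeta_x}^2\) on the shock layer. Here the only dissipation comes from the radiation. Testing the \(q\)-equation against \(\eta\) and against \(\zeta\) gives the identity
\[
\int\frac{\eta_x^2}{v}+\a\int v\eta^2=-\b\int(\th^4-\tilde\th^4)_x\eta+\dots,
\]
which turns into the dissipation \(\int\frac{\eta^2}{\th^3}+\frac{\eta_x^2}{\th^3}\) together with a relation \(\eta\approx-\frac{4\b}{\a}\th^3\zeta_x+\) (higher order in \(\partial_x^2\)). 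I expect the hardest step to be making this replacement precise. Concretely, one must bound \(\int a_x|\zeta|^2\) and the Poincaré-inequality defect using \(\norm{\eta}_{L^2}\), \(\norm{\eta_x}_{L^2}\), and a nonlocal \(H^{-1}\)-type quantity. The first thing I would try is to write \(\zeta_x=\mathcal{K}\)-expression\((\eta,\eta_{xx})\) and use the smallness of \(\d_S\) on the layer, where \(\partial_x\sim\d_S\), so that the elliptic operator is effectively local there.

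The \(\phi_x\) and \(\psi_x\) derivatives, which are hyperbolic, are controlled through the momentum equation: \(\psi_t+p_x=\dots\) gives dissipation of \(p-\tilde p\) once it is tested against \(\phi_x\). Higher-order estimates follow the same pattern, differentiating the system once and twice. Together these close the bound
\[
\sup_t\norm{(\phi,\psi,\zeta)}_{H^1}^2+\int_0^\infty\Big(\d_S|\dot X|^2+\mathcal{G}+\mathcal{D}\Big)\,dt\le C\big(\e_0^2+\d_0^{1/3}\big).
\]
Here \(\mathcal{G}\) collects the good wave terms and \(\mathcal{D}\) the dissipation terms.

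\textbf{Asymptotics.} From the integrated bound, \(g(t)=\norm{(\phi_x,\psi_x,\zeta_x,\eta)}_{L^2}^2\) lies in \(W^{1,1}(0,\infty)\), so \(g(t)\to0\). The Gagliardo--Nirenberg inequality together with the uniform \(L^2\) bound then gives \eqref{eq:pertgoesto0main}. For the shift, \(|\dot X|\le C\norm{(\phi,\psi,\zeta)}_{L^\infty}\) by its definition. This gives \eqref{eq:shiftgoesto0main}.
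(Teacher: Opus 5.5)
\textbf{Verdict: genuine gap.} Your overall architecture matches the paper's: local existence plus continuation, a weighted relative entropy with a monotone weight of variation $\lambda$, the shift \eqref{eq:shiftdef}, the Huang--Li--Matsumura weighted estimate for the contact, and exponentially small interactions. However, the step you flag as the main obstacle is exactly where the proof lives, and the way you propose to get through it does not work.

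At the $L^2$ level, the radiation yields only $D_1\sim\|r\|_{L^2}^2$ and $D_2\sim\|r_\xi\|_{L^2}^2$ (your $\eta$ is the paper's $r$, your $\zeta$ is its $\chi$). Suppose you apply \Cref{lem:poincare} to the temperature perturbation $\chi$ itself. Then the gradient term you must pay for is $\int|\chi_\xi|^2\,d\xi$ with an $O(1)$ coefficient. By \eqref{qeq-x}, $4\beta\theta^3\chi_\xi=r_{\xi\xi}/v-\alpha v r+\dots$, so this costs $\|r_{\xi\xi}\|_{L^2}^2$, which the relative-entropy identity does not produce. Your remedy (``$\partial_x\sim\delta_S$ on the layer, so the elliptic operator is effectively local'') applies to the profile, not to the perturbation. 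Neither $\chi$ nor $r$ is frequency-localized, so $r_{\xi\xi}$ is not small compared with $r$. The ``nonlocal $H^{-1}$-type quantity'' you allude to is also never identified.

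The missing idea is the paper's nonlocal variable. Let $L=\alpha v^*-\partial_\xi(\partial_\xi/v^*)$ and $K_{v^*}=L^{-1}$, which has an explicit Green's function. Set $Z:=-4\beta(\theta^*)^3K_{v^*}\chi$ (the paper's $\zeta$ in \eqref{eq:zetadef}). Then $Z_\xi=r+E_2$ and $\chi=-\frac{\alpha v^*}{4\beta(\theta^*)^3}Z+\frac{r_\xi}{4\beta(\theta^*)^3v^*}+E_3$, with $E_2,E_3$ controlled by \Cref{lem:errorsL2}. This is used as follows.
\begin{itemize}
\item The Poincar\'e inequality is applied to $w(y)=Z(\xi)$, not to $\chi$. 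Its gradient term is then $\int Z_\xi^2\approx\int r^2$, which $D_1$ supplies with room to spare via \eqref{tail-0} and the constant $\frac12$.
\item The shift, with $M$ as in \eqref{def:M}, absorbs the mean $\bar w$.
\item The $r_\xi$-part of $\chi$ only ever meets the weight $(v^S)_\xi=O(\delta_S^2)$, so it is absorbed by $D_2$.
\end{itemize}
This is what closes the $L^2$ estimate with only $(\varepsilon_1+\sqrt{\delta_0})\int_0^t\|(\phi_\xi,\psi_\xi,\chi_\xi)\|_{L^2}^2$ on the right-hand side.

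\textbf{The higher-order sketch also needs repair.}
\begin{itemize}
\item Without viscosity, the first-order identity \eqref{h1begin} contains $\int u_\xi(\phi_\xi)^2$ and $\int\frac{p_\xi}{\theta}\psi_\xi\chi_\xi$. These require $\|(v_\xi,u_\xi,\theta_\xi)\|_{L^\infty}$ to be small, which an $H^1$ a priori assumption does not give. The bootstrap must therefore be at the $H^2$ level, as in \eqref{suppose} and consistent with \eqref{eq:initpertsmall}.
\item Testing the momentum equation against $\phi_\xi$ gives $\phi_\xi$-dissipation only at the price of a wrong-sign term $+c\int\psi_\xi^2$. The paper gets $\psi_\xi$-dissipation from the energy equation instead, through $\frac{d}{dt}\int\frac{R}{\gamma-1}\chi\psi_\xi$ in \eqref{key-qty}, whose $pu_\xi$ term gives $-p^*\int\psi_\xi^2$.
\item The resulting $\|\chi_\xi\|^2$ is paid for via \eqref{qeq-x}, i.e.\ $\|\chi_\xi\|^2\lesssim\|r\|_{H^2}^2+\dots$. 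Here $\|r_\xi\|_{H^1}^2$ comes from testing the $r$-equation with $-r_{\xi\xi}/(4\beta\theta^4)$, chosen so that the cross term $\int\frac{\chi_\xi}{\theta}r_{\xi\xi}$ cancels.
\item The energy estimates must be run around the smooth ansatz $U^R$ with the flux $q^R=-\frac{\beta}{\alpha v^R}((\theta^R)^4)_\xi$, not around $(\tilde U,\tilde q)$. The inviscid rarefaction is only Lipschitz. Without $q^R$, the elliptic residual $\beta((\theta^R)^4)_\xi$ has squared $L^2$ norm of order $\delta_R(1+t)^{-1}$ for $t\gtrsim\delta_R^{-1}$, which is not integrable in time.
\end{itemize}
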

\begin{remark}
\Cref{thm:main} states that if the far-field states $(v_\pm, u_\pm, \theta_\pm)$ are connected by the superposition of a $1$-rarefaction, a $2$-contact, and a $3$-shock, then the solution of the radiative Euler equations \eqref{main} converges as $t \to \infty$ to the composite wave consisting of an inviscid self-similar rarefaction, a viscous contact wave, and a radiative shock shifted by $X(t)$.
\end{remark}
\begin{remark}
The shift $X$ satisfies \eqref{eq:shiftgoesto0main}, and therefore has sublinear growth:
\[
\lim_{t \to \infty}\frac{X(t)}{t}=0.
\]
Thus, the shifted shock has the same asymptotic propagation speed \(\s\) as the original shock wave.
\end{remark}

There is a long history of works on long-time behavior for one-dimensional dissipative compressible fluid equations, particularly the Navier--Stokes equations and related models.
We refer the reader to \cite{HLM-ARMA10,HM-CMP09,HXY-ADV08,MZ-CPAM04,MN-1985,MN-1986} and the references therein.
For the radiative Euler system, stability of basic wave patterns was shown in \cite{MR2836843} (rarefactions), \cite{MR3010836,WX-JDE11} (contacts) and \cite{LCG-CRM07,NPZ-PD10} (shocks).
Stability results are also available for composite waves consisting of a viscous contact wave and two rarefaction waves \cite{X-DCDS12}, as well as for composites of two viscous shock profiles \cite{FRX-AIHPC19}.
However, these results do not cover composite waves that include both a shock and either a rarefaction wave or a contact wave, such as the pattern considered in \Cref{thm:main}.

Let us also mention the approach of Zumbrun and co-authors, which relies on obtaining detailed pointwise Green's function bounds on the linearized operator.
This has been carried out in a large variety of different scenarios (see for example \cite{MR2787588,MR2228699,MR1947862,MR2048568,MZ-CPAM04,MR4062477,MR1665788}).
We note that such analyses rely on spectral stability assumptions (related to the zeroes of the Evans function) which are usually verifiable for small-amplitude profiles.
In principle, the methods generalize to large-amplitude profiles (as long as the spectral stability hypothesis holds), and the theory is developed with a great level of generality.
Of most relevance to this paper are the applications to scalar radiative shocks \cite{MR2579710} and hyperbolic-elliptic coupled systems \cite{NPZ-PD10}.
The radiative structure we use in the $L^2$-estimate in \Cref{sec:L2} is present already in these works.
However, the pointwise Green's function approach has only been carried out for linearization around single shock profiles.
In this paper, we are able to treat composite waves, where we must deal with interaction between the radiative shock and the other waves.

Our proof relies on the method of \(a\)-contraction with shifts, developed in the viscous setting by Kang and Vasseur \cite{KV-17,KV-JEMS21}.
This method identifies a pseudo distance that captures the contractive structure of viscous shocks by combining a suitable weight with a time-dependent shift.
In contrast to the anti-derivative method, the \(a\)-contraction method provides a compatible framework for analyzing the stability of both shocks and rarefaction waves.
Using this method, Kang, Vasseur and Wang showed the time-asymptotic stability of rarefaction-shock composite waves for the barotropic Navier--Stokes equations \cite{KVW-ADV23}, and subsequently studied the superposition of a rarefaction wave, a viscous contact wave, and a viscous shock for the Navier--Stokes--Fourier system \cite{KVW-ARMA25}. 
For further applications of the \(a\)-contraction method, see \cite{HWZ-MA25} for nonconvex fluxes, \cite{HKKKO-CMP26} for interactions with boundary layers, \cite{WY-25} for the Boltzmann equation, \cite{EEK-24,KV-Inven21} for inviscid limits, and \cite{CKV-ARMA22,CFK-ARMA26} for stability of inviscid equations.

The radiative Euler equations have a hyperbolic-elliptic structure.
An elliptic coupling is also present in the Navier--Stokes--Poisson system, for which the asymptotic stability of rarefaction-shock composite waves has been established in \cite{Shim-JDE26}.
However, viscosity provides the dissipation needed to close the estimates. Thus, despite the additional elliptic coupling, the principal dissipative mechanism remains that of the Navier--Stokes equations. Another related development is the work of Guo--Li--Wu--Xu \cite{GLWX-2026} on a heat-conductive ideal gas without viscosity.
Their system shares with the radiative Euler equations the absence of momentum viscosity, but its dissipation is provided by local Fourier heat conduction.
That is, as in viscous systems, the energy estimates are closed by exploiting local parabolic dissipation. Let us finally note that Kageura and Ueda \cite{KU-JHDE25} proved the stability of shocks for the Jin--Xin relaxation model to a scalar conservation law, which contains no parabolic terms.
However, its damped-wave reformulation yields a coercive term that is essentially the same as parabolic dissipation in the weighted Poincar\'e argument.

In contrast to these results, in the radiative Euler equations, the required dissipation must be extracted from the radiative coupling itself.
The radiative flux is governed by an elliptic equation, and eliminating this flux produces a non-local and only partially dissipative operator.
Consequently, the energy estimates must be closed through a dissipative mechanism largely different from that used in \cite{GLWX-2026}.
We explain in the following subsection how the weak damping and weak diffusion generated by this radiative coupling are combined to close the energy estimates.

\subsection{Main Difficulties and New Ideas}
Since the simultaneous propagation of rarefaction, contact, and shock creates an incompatibility between the anti-derivative method (for shocks) and the energy method (for the other waves), we employ the method of \(a\)-contraction with shifts which was used to solve this exact problem in \cite{KVW-ADV23,KVW-ARMA25} by establishing shock stability without using the anti-derivative method.

An essential difficulty arises at this stage.
The \(a\)-contraction method relies on the weighted Poincar\'e inequality in Lemma \ref{lem:poincare} to close the \(L^2\) estimate for shock stability.
All past works on systems of conservation laws have exploited the parabolic structure of the Navier--Stokes equations (or heat conduction, in the case of \cite{GLWX-2026}) to allow the use of the Poincar\'e inequality. Such a parabolic structure is completely absent in the radiative Euler equations. It is well-known that the elliptic equation is intimately linked to higher-order derivative estimates for the temperature (see, e.g., \cite{FRX-DCDS21,WX-JDE11,X-DCDS12}).
However, in those works, this only appears at the level of the higher-order estimates.

In this work, we solve this problem by identifying a structure (going back to the work \cite{NPZ-PD10}) which makes the link between the perturbation of the radiative heat flux (denoted by \(r\)) and the temperature perturbation (denoted by \(\chi\)) more apparent.
Rather than deriving energy estimates from the elliptic equation for the radiative heat flux in \eqref{main}, we solve the linearized equation directly using the Green's function in \Cref{sec:chir}.
Then, we define a new variable \(\z\) (see \eqref{eq:zetadef}), which, up to small errors, is essentially the non-local recovery of \(r\) via the elliptic equation.
The Green's function method allows us to transparently convert the temperature perturbation \(\chi\) into a linear combination of \(\z\) and \(r_x\).
We then apply the weighted Poincar\'e inequality \Cref{lem:poincare} to \(\z\) and \(\z_x\) to deplete the \(\z\) component of \(\chi\), and use the damping provided by the energy structure to deplete the \(r_x\) component. 
\par We believe that this approach is advantageous to doing energy estimates on the elliptic equation for a multitude of reasons.
Firstly, it makes the relationship between \(q\) and \(\th\) more apparent than energy estimates at the anti-derivative level.
Secondly, it clarifies the stabilizing effect of the radiative part; it contains both weak damping and weak diffusion, which combine to recover stability results which are the same as those given by usual parabolic diffusion.
Third, compared to previous analyses of shock stability based on the anti-derivative method \cite{FRX-AIHPC19,LCG-CRM07}, our approach also allows us to work directly with perturbations in Sobolev spaces in the absence of the zero-mass condition and to treat a wider range of values of the adiabatic constant \(\g\).
Finally, the adaptation of the non-local radiative structure to the $a$-contraction method lets us seamlessly handle compositions of rarefactions, contacts, and shocks.

Note that \Cref{thm:main} holds for any \(\g>1\).
For the radiative Euler equations, the restriction that the parameter \(\g\) lie between $1$ and $3$ has in the past played a fundamental role (here, we are speaking for purely mathematical reasons; the value of \(\g\) of course depends on the physics of the gas under consideration).
For example, it was shown by Mascia \cite{Mascia-PD13} that sufficiently large smooth shocks may have non-monotone temperatures when \(\g\in(1,3)\) (although in this article we treat arbitrarily small shocks for every value of \(\g>1\)).
Even in the small-shock scenario, the restriction \(\g\in(1,3)\) has played a fundamental role in the time-asymptotic stability analysis in \cite{FRX-AIHPC19}, where it was used to obtain coercivity of the dissipation provided by the radiation (the restriction \(\g\in(1,2]\) also appears in \cite{LCG-CRM07}, but this is more for simplicity rather than a critical restriction).
It is interesting that no such restriction appears in the analysis presented in this paper.
Note that the small-shock stability result for general hyperbolic-elliptic systems in \cite{NPZ-PD10}, using the pointwise semigroup approach, also has no restriction on \(\g\) other than \(\g>1\) when applied to the system \eqref{main}.

\vspace{2mm}
The rest of the paper is structured as follows.
In \Cref{sec:prelims}, we give some important properties and estimates for the smooth approximate rarefaction waves, the viscous contact waves, and the radiative shock waves.
In \Cref{sec:pfofmain}, we explain how \Cref{thm:main} follows from local existence and the \textit{a priori} estimate \Cref{prop:main}.
The rest of the paper is devoted to proving \Cref{prop:main}; \Cref{sec:L2} proves the $L^2$ estimate using the $a$-contraction method, while \Cref{sec:h2} closes the $H^2$ estimate by deriving higher-order energy estimates and space-time estimates of the density, velocity, and temperature.

\vspace{2mm}
\(\bullet\) \textbf{Notation:} Throughout the paper, \(C\) and \(c\) denote generic positive constants, independent of \(\e_0\) and \(\d_0\) (and later also the related parameters $\delta_1$ and $\e_1$, defined in \Cref{prop:main}), which may vary from line to line.
We use \(C\) for upper bounds and \(c\) for lower bounds.

\section{Preliminaries}\label{sec:prelims}
\setcounter{equation}{0}

\subsection{Approximate Rarefaction Wave}
The following lemma provides useful estimates for the smooth approximate \(1\)-rarefaction wave:

\begin{lemma}\cite[Lemma 2.1]{KVW-ARMA25}\label{lem:uR}
Let \((v^R,u^R,\th^R)(t,x)\) be the smooth approximate \(1\)-rarefaction wave constructed above, and denote its strength by 
\[
\d_R \coloneqq |v_*-v_-|
\sim |u_*-u_-|
\sim |\th_*-\th_-|.
\]
Set \(\l_{1-}\coloneqq \l_1(v_-,\th_-)\) and \(\l_{1*}\coloneqq \l_1(v_*,\th_*)\).
Then, there exist positive constants \(C\) and \(C_k\), independent of \(\d_R\), such that the following properties hold:
\begin{enumerate}
\item[(1)]
For every \(t\ge 0\) and \(x\in\RR\), 
\[
u_x^R = \frac{2v^R}{\g+1}w_x >0, \qquad
v_x^R = \frac{v^R}{\sqrt{R\g\th^R}}u_x^R >0, \qquad
\th_x^R = -\frac{(\g-1)\th^R}{v^R}v_x^R <0.
\]
\item[(2)]
For every \(t\ge 0\) and \(p\in[1,\infty]\), 
\[
\norm{\big(v_x^R,u_x^R,\th_x^R\big)(t,\cdot)}_{L^p(\RR)}
\le C\min\big\{\d_R,(\d_R)^{\frac{1}{p}}(1+t)^{-1+\frac{1}{p}}\big\}.
\]
Moreover, for \(k=2,3,4,\) 
\[
\norm{\rd_x^k \big(v^R,u^R,\th^R\big)(t,\cdot)}_{L^p(\RR)}
\le C_k \min \big\{\d_R, (1+t)^{-1}\big\},
\]
and, for every \(t\ge 0\) and \(x\in\RR\), 
\[
|\rd_x^k v^R| \le C_k v_x^R, \qquad
|\rd_x^k u^R| \le C_k u_x^R, \qquad
|\rd_x^k \th^R| \le C_k |\th_x^R|.
\]
\item[(3)]
For every \(t\ge 0\) and \(x\ge \l_{1*}(1+t)\), 
\[
\big|(v^R,u^R,\th^R)(t,x)
-(v_*,u_*,\th_*)\big|
+\sum_{k=1}^4 \big|\rd_x^k\big(v^R,u^R,\th^R\big)(t,x)\big|
\le C\d_R e^{-2|x-\l_{1*}(1+t)|}.
\]
\item[(4)]
For every \(t\ge 0\) and \(x\le \l_{1-}(1+t)\), 
\[
\big|(v^R,u^R,\th^R)(t,x)
-(v_-,u_-,\th_-)\big|
+\sum_{k=1}^4 \big|\rd_x^k\big(v^R,u^R,\th^R\big)(t,x)\big|
\le C\d_R e^{-2|x-\l_{1-}(1+t)|}.
\]
\item[(5)]
The approximate wave converges uniformly to the rarefaction wave as \(t\to\infty\): 
\[
\lim_{t\to\infty} \sup_{x\in\RR}
\left|\big(v^R,u^R,\th^R\big)(t,x)
-\big(v^r,u^r,\th^r\big)\left(\frac{x}{t}\right)\right| = 0.
\]
\end{enumerate}
\end{lemma}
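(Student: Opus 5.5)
The argument reduces every property to the explicit smooth solution of Burgers' equation, since the approximate wave is $(v^R,u^R,\th^R)=\Phi(w(1+t,x))$ with $\Phi$ the smooth inverse of $\l_1$ restricted to the $1$-rarefaction curve through $(v_-,u_-,\th_-)$. I would proceed in four steps.

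\textbf{Step 1 (structure and item (1)).} First write the Riemann invariants of the polytropic gas. Along the $1$-rarefaction curve the entropy is constant, so $p v^\g$ is constant and $\th v^{\g-1}$ is constant. This gives $\th_x^R=-\frac{(\g-1)\th^R}{v^R}v_x^R$. From the second equation of \eqref{euler-ur} and the fact that a simple wave satisfies $u_x^R=-\l_1 v_x^R$, we get $v_x^R=\frac{v^R}{\sqrt{R\g\th^R}}u_x^R$. Next, differentiating $\l_1(v^R,\th^R)=w$ along the curve gives
\[
\frac{d\l_1}{dv}=\frac{(\g+1)\sqrt{R\g\th}}{2v^2}
\]
on the curve, since $\th\propto v^{1-\g}$. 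Consequently $u_x^R=\frac{2v^R}{\g+1}w_x$. Positivity holds because $w(0,\cdot)$ is increasing, so $w_x>0$ for all time.

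\textbf{Step 2 (items (2)--(4)).} Use the characteristic representation $w(t,x)=w_0(x_0)$ with $x=x_0+w_0(x_0)t$. This yields
\[
w_x=\frac{w_0'(x_0)}{1+tw_0'(x_0)}, \qquad w_0'=\frac{w_*-w_-}{2}\operatorname{sech}^2 x_0.
\]
The bound $0<w_x\le\min\{C\d_R,\,1/t\}$ is immediate, and $\|w_x\|_{L^1}=w_*-w_-\sim\d_R$. Interpolating between $L^1$ and $L^\infty$ gives the $L^p$ estimate with the shifted time $1+t$.

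For higher derivatives, differentiate the representation repeatedly. Each $\rd_x^k w$ is a rational expression in derivatives of $w_0$ divided by powers of $(1+tw_0')$. Since $|w_0^{(j)}|\le C_jw_0'$ for $\tanh$, each term is bounded by $C_kw_x$; this gives the pointwise bounds $|\rd_x^k w|\le C_kw_x$. The pointwise inequalities for $\rd_x^k(v^R,u^R,\th^R)$ then follow by the chain rule through the smooth map $\Phi$, since products of lower derivatives are also bounded by $Cw_x$. The $\min\{\d_R,(1+t)^{-1}\}$ rates for $k\ge2$ follow likewise.

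For the tails (3)--(4), take $x\ge\l_{1*}(1+t)$. Then the foot $x_0$ of the characteristic satisfies $x_0\ge x-\l_{1*}(1+t)$ up to bounded shifts, because characteristics have speed $\le w_*$. Hence $|w-w_*|\le C\d_Re^{-2x_0}$. Similarly $w_x\le w_0'(x_0)\le C\d_Re^{-2x_0}$, and the same bound holds for higher derivatives. Transferring these through $\Phi$ gives (3), and (4) is symmetric.

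\textbf{Step 3 (item (5)).} Compare $w(1+t,x)$ with the entropy rarefaction $w^r(t,x)$. Outside the fan, use the tails from Step 2. Inside the fan, $x_0$ is bounded by $C\log(1+t)$ or similar, so $|x/t-w(1+t,x)|\le C(|x_0|+1)/t\to0$ uniformly. Applying the Lipschitz map $\Phi$ concludes.

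\textbf{Main obstacle.} The delicate step is the uniform pointwise comparison $|\rd_x^k w|\le C_kw_x$ with constants independent of $\d_R$ and $t$, since each term in the higher-derivative expansion must be controlled. I would handle this by induction on $k$, using $tw_0'/(1+tw_0')\le1$ to absorb each factor.
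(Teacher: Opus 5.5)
Your route is the one the paper intends (it defers to the characteristic representation of Matsumura--Nishihara and repeated differentiation). Item (1), the pointwise comparisons in (2), and the tails (3)--(4) work as you describe; in fact $w_0\le w_*$ gives $x_0\ge x-\lambda_{1*}(1+t)\ge 0$ with no shift needed.

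The gap is the claim that the rates $\min\{\delta_R,(1+t)^{-1}\}$ for $\partial_x^k$, $k\ge 2$, ``follow likewise''. From $|\partial_x^k w|\le C_k w_x$ and the bounds on $w_x$ you only get $\|\partial_x^k w\|_{L^p}\le C\min\{\delta_R,\delta_R^{1/p}(1+t)^{-1+1/p}\}$. This is strictly weaker once $(1+t)\delta_R\gg 1$, and for $p=1$ it gives no time decay at all. That $L^1$ decay is used later: it gives $\|Q_2^R\|_{L^1}\le C\min\{\delta_R,(1+t)^{-1}\}$, hence $\int_0^t\|Q_2^R\|_{L^1}^{4/3}d\tau\le C\delta_R^{1/3}$ in Lemma~\ref{lem:Q}. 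So the delicate point is not the pointwise comparison you flag. The comparison with $w_x$ actually discards the factors you need.

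Write $T=1+t$ and $\Lambda=1+Tw_0'(x_0)$. Then $w_x=w_0'/\Lambda$ but $w_{xx}=w_0''/\Lambda^3$, because the $\Lambda^{-2}$ contribution cancels. By induction, every term of $\partial_x^k w$, $k\ge 2$, has the form $T^m\prod_{i=1}^{m+1}w_0^{(j_i)}/\Lambda^{k+1+m}$. Integrate in the characteristic variable, $dx=\Lambda\,dx_0$, and use $|w_0^{(j)}|\le C_j w_0'$ and $Tw_0'\le\Lambda$; this gives $\|\partial_x^k w\|_{L^1}\le C_k\int_{\mathbb{R}} w_0'\Lambda^{-2}\,dx_0$. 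With $\tau=\tanh x_0$ and $A=T(w_*-w_-)/2$, this integral equals $\frac{w_*-w_-}{2}\int_{-1}^{1}\big(1+A(1-\tau^2)\big)^{-2}d\tau$. Since $1-\tau^2\ge 1-|\tau|$, it is at most $\min\{w_*-w_-,\,2/T\}$. The extra products from the chain rule through the rarefaction-curve parametrization are harmless, since they contain at least two factors bounded by $Cw_x$ and $\|w_x\|_{L^\infty}\|w_x\|_{L^1}\le C\delta_R\min\{\delta_R,(1+t)^{-1}\}$. Interpolating with the $L^\infty$ bound then gives the stated rate for every $p$.

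There is a smaller slip in (5): outside the fan the tail estimate does not tend to zero. At $x=\lambda_{1*}t$ one has $x-\lambda_{1*}(1+t)=|\lambda_{1*}|$, so (3) only gives the $t$-independent bound $C\delta_R e^{-2|\lambda_{1*}|}$. Use monotonicity instead: for $x\ge\lambda_{1*}t$, $0\le w_*-w(1+t,x)\le w_*-w(1+t,\lambda_{1*}t)$. The right-hand side is controlled by your in-fan estimate, which is $O\big((1+\log(1+t))/(1+t)\big)$ because $|x_0|\le C+\frac{1}{2}\log(1+t)$ on the closed fan. The left side of the fan is symmetric.
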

\begin{proof}
The above estimates for the approximate rarefaction wave follow from the characteristic representation of the solution to the Burgers equation, as in the proof of \cite[Lemma 2.1]{MN-1986} (see also \cite[Lemma 2.1]{KVW-ARMA25}).
In particular, the higher-order estimates can be derived by repeatedly differentiating the same representation.
We therefore omit the details.
\end{proof}

We now state the following lemma, which provides quantitative estimates for \(q^R\).
\begin{lemma} \label{lem:qR}
Let \(q^R\) be defined in \eqref{q-rare}.
Under the assumptions of Lemma \ref{lem:uR}, the following estimates hold, with constants independent of \(\d_R\).
\begin{enumerate}
\item[(1)]
For every \(t\ge0\) and \(x\in\RR\),
\[
q^R
=-\frac{4\b(\th^R)^3}{\a v^R} \th_x^R
=\frac{4\b(\g-1)(\th^R)^4}{\a(v^R)^2} v_x^R
=\frac{4\b(\g-1)(\th^R)^{\frac{7}{2}}}{\a\sqrt{R\g}v^R} u_x^R>0.
\]
In particular, 
\[
C^{-1} v_x^R \le q^R \le C v_x^R.
\]
\item[(2)]
For every \(t\ge 0\) and \(p\in[1,\infty]\), 
\begin{align*}
\norm{q^R(t,\cdot)}_{L^p(\RR)}
&\le C\min\big\{\d_R, (\d_R)^{\frac{1}{p}} (1+t)^{-1+\frac{1}{p}}\big\}, \\
\sum_{j=1}^3 \norm{\rd_x^j q^R(t,\cdot)}_{L^p(\RR)}
&\le C\min\big\{\d_R, (1+t)^{-1}\big\}.
\end{align*}
Moreover, for every \(t\ge 0\) and \(x\in\RR\), 
\[
\sum_{j=1}^3 \abs{\rd_x^j q^R(t,x)}
\le C q^R(t,x).
\]
Consequently, it follows that 
\[
\norm{q^R(t,\cdot)}_{H^3(\RR)}
\le C\min\big\{\d_R, (\d_R)^{\frac{1}{2}} (1+t)^{-\frac{1}{2}}\big\}.
\]
\item[(3)]
For every \(t\ge 0\) and \(x\ge \l_{1*}(1+t)\), 
\[
\sum_{j=0}^3 \abs{\rd_x^j q^R(t,x)}
\le C\d_R e^{-2|x-\l_{1*}(1+t)|}.
\]
\item[(4)]
For every \(t\ge 0\) and \(x\le \l_{1-}(1+t)\), 
\[
\sum_{j=0}^3 \abs{\rd_x^j q^R(t,x)}
\le C\d_R e^{-2|x-\l_{1-}(1+t)|}.
\]
\item[(5)] The approximate wave converges uniformly to zero as \(t\to\infty\): 
\[
\lim_{t\to\infty} \norm{q^R(t,\cdot)}_{L^\infty(\RR)} =0.
\]
\end{enumerate}
\end{lemma}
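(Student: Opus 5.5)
The plan is to derive everything from the explicit formula \eqref{q-rare} combined with Lemma \ref{lem:uR}. For item (1), I would expand \(((\th^R)^4)_x=4(\th^R)^3\th^R_x\), which gives the first expression. The second and third expressions then follow by substituting the pointwise identities of Lemma \ref{lem:uR}(1):
\[
\th^R_x=-\frac{(\g-1)\th^R}{v^R}v^R_x,
\qquad
v^R_x=\frac{v^R}{\sqrt{R\g\th^R}}u^R_x .
\]
Positivity comes from \(v^R_x>0\). The two-sided bound \(C^{-1}v^R_x\le q^R\le Cv^R_x\) holds because \(v^R\) and \(\th^R\) take values in a compact set lying strictly inside \(\RR_+\). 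That compact set can be fixed once \((v_-,\th_-)\) is fixed and \(\d_R\le\d_0\). So the coefficient \(4\b(\g-1)(\th^R)^4/(\a (v^R)^2)\) is bounded above and below by constants independent of \(\d_R\).

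For item (2), I would write \(q^R=F(v^R,\th^R)\,v^R_x\) with \(F\) smooth on that compact set. Differentiating by the Leibniz rule, \(\rd_x^j q^R\) for \(j=1,2,3\) becomes a finite sum of products of derivatives of \(F\) with terms of the form \(\rd_x^{k_1}(v^R,\th^R)\cdots \rd_x^{k_m}(v^R,\th^R)\). Each such product contains a factor \(\rd_x^{k}v^R\) with \(k\le 4\), or a factor involving \(\rd_x^{k}\th^R\). The pointwise bounds \(|\rd_x^k v^R|\le C_kv^R_x\) and \(|\rd_x^k\th^R|\le C_k|\th^R_x|\le Cv^R_x\) from Lemma \ref{lem:uR}(2) apply to one factor of each product. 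The remaining factors are bounded in \(L^\infty\) by a constant. This gives \(\sum_{j=1}^3|\rd_x^jq^R|\le Cv^R_x\le Cq^R\) pointwise.

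The \(L^p\) bound on \(q^R\) itself is then immediate from \(q^R\le Cv^R_x\) and the \(L^p\) bound for \(v^R_x\). For the derivatives I would not use the pointwise bound by \(q^R\), since it only yields \(\d_R^{1/p}(1+t)^{-1+1/p}\). Instead I would use the Leibniz expansion directly. Each term is a product in which one factor is controlled in \(L^p\) by \(C\min\{\d_R,(1+t)^{-1}\}\), via Lemma \ref{lem:uR}(2) for \(k\ge2\) or via the first-derivative bound for \(k=1\). The other factors are bounded in \(L^\infty\) by \(C\min\{\d_R,(1+t)^{-1}\}\le C\). This \(L^\infty\) control holds because \(\|v^R_x\|_{L^\infty}\le C\min\{\d_R,(1+t)^{-1}\}\).

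The one step needing care is a term like \(F\,\rd_x^{j+1}v^R\), whose \(L^p\) norm is controlled by the stated bound for \(k=j+1\le4\). Terms with at least two derivative factors are controlled even better. The \(H^3\) bound follows by taking \(p=2\) for \(q^R\) and using \(\min\{\d_R,(1+t)^{-1}\}\le \d_R^{1/2}(1+t)^{-1/2}\) for the derivatives.

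Items (3) and (4) follow from the pointwise bounds \(|\rd_x^j q^R|\le Cq^R\le Cv^R_x\) together with Lemma \ref{lem:uR}(3),(4), which give \(v^R_x\le C\d_Re^{-2|x-\l_{1*}(1+t)|}\) in the appropriate region, and similarly near \(\l_{1-}\). Item (5) is the case \(p=\infty\) of (2): \(\|q^R\|_{L^\infty}\le C(1+t)^{-1}\to0\).

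I expect the main obstacle to be purely bookkeeping, namely checking that every Leibniz term up to third order involves at most fourth derivatives of \((v^R,\th^R)\), so that Lemma \ref{lem:uR} suffices. No genuine difficulty is anticipated.
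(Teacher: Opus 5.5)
Your proposal is correct and follows the same route as the paper, whose proof is just direct computation from \eqref{q-rare} combined with the pointwise and \(L^p\) bounds of Lemma \ref{lem:uR}. One small bookkeeping correction concerns Leibniz terms built only from first derivatives, such as \((v^R_x)^2\) or \(v^R_x\th^R_x\): there you should place one factor in \(L^\infty\) (bounded by \(C\min\{\d_R,(1+t)^{-1}\}\)) and the other in \(L^p\) (bounded by \(C\d_R\le C\)), because for \(p<\infty\) the \(L^p\) norm of a single first derivative does not decay like \((1+t)^{-1}\).
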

\begin{proof}
The estimates above are obtained by straightforward calculations using the definition of \(q^R\), together with the estimates for \((v^R,u^R,\th^R)\) in Lemma \ref{lem:uR}.
We therefore omit the details.
\end{proof}

\subsection{Viscous Contact Wave}
Thanks to the self-similar structure of the nonlinear diffusion equation \eqref{nde}, the viscous contact wave defined by \eqref{uc} and \eqref{q-cont} satisfies the following quantitative estimates.
\begin{lemma}\label{lem:VC}
Let \((v^C,u^C,\th^C,q^C)(t,x)\) be the viscous contact wave defined by \eqref{uc}--\eqref{q-cont}, connecting \((v_*,u_*,\th_*,0)\) and \((v^*,u^*,\th^*,0)\), with \(v_*,\th_*,v^*,\th^*>0\).
Denote its strength by 
\[
\d_C \coloneqq |v^*-v_*|
=\frac{R}{p^*}|\th^*-\th_*|.
\]
Then there exist positive constants \(c_1\) and \(C_k\), independent of \(\d_C\), such that for all \(t\ge 0\),
\begin{align*}
\left|
\big(v^C-v_*,u^C-u_*,\th^C-\th_*,q^C\big)(t,x)
\right|
&\le C_0 \d_C \exp\left(-\frac{c_1 x^2}{1+t}\right),
&& x<0,\\
\left|
\big(v^C-v^*,u^C-u^*,\th^C-\th^*,q^C\big)(t,x)
\right|
&\le C_0 \d_C \exp\left(-\frac{c_1 x^2}{1+t}\right),
&& x>0.
\end{align*}
Moreover, for all \(t\ge 0\) and \(x\in\RR\),
\begin{align}
\left|\rd_x^k(v^C,\th^C)(t,x)\right|
&\le C_k \d_C (1+t)^{-\frac{k}{2}}
\exp\left(-\frac{c_1 x^2}{1+t}\right),
&& k=1,2,\cdots, \label{c-vth-der}\\
\left|\rd_x^k(u^C-u^*,q^C)(t,x)\right|
&\le C_k\d_C(1+t)^{-\frac{k+1}{2}}
\exp\left(-\frac{c_1 x^2}{1+t}\right),
&& k=0,1,2,\cdots. \label{c-uq-der}
\end{align}
\end{lemma}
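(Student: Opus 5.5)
The plan is to reduce everything to estimates on the self-similar profile $\Theta_C(\eta)$, $\eta = x/\sqrt{1+t}$, of the nonlinear diffusion equation \eqref{nde}. Write $\kappa \coloneqq \frac{4\b(\g-1)p^*}{\a R^2\g}$. Substituting $\Theta(t,x)=\Theta_C(\eta)$ into \eqref{nde} gives the ODE
\[
-\frac{\eta}{2}\Theta_C' = \kappa (\Theta_C^2\Theta_C')', \qquad \Theta_C(-\infty)=\th_*,\ \Theta_C(+\infty)=\th^*.
\]
Existence and uniqueness of a monotone solution are classical (the references \cite{WX-JDE11,X-DCDS12} can be invoked); I will take this for granted and focus on the quantitative bounds.

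Set $F \coloneqq \kappa\Theta_C^2\Theta_C'$. The ODE reads $F' = -\frac{\eta}{2\kappa\Theta_C^2}F$, so
\[
F(\eta)=F(0)\exp\Big(-\int_0^\eta \frac{s}{2\kappa\Theta_C(s)^2}\,ds\Big).
\]
Since $\Theta_C$ lies between $\th_*$ and $\th^*$, which are close to a fixed positive state when $\d_C$ is small, $\Theta_C^2$ is bounded above and below uniformly. Hence $|\Theta_C'(\eta)|\le C|F(0)|e^{-2c_1\eta^2}$ for some $c_1>0$ independent of $\d_C$. Normalizing by $\th^*-\th_*=\int_\RR \Theta_C'\,d\eta$, and using that the Gaussian-type factor has integral comparable to $1$, gives $|F(0)|\sim |\th^*-\th_*|\sim \d_C$. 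Integrating the Gaussian tail from $\pm\infty$ then yields $|\Theta_C(\eta)-\th_*|\le C\d_C e^{-c_1\eta^2}$ for $\eta<0$, and the analogous bound with $\th^*$ for $\eta>0$.

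For higher derivatives, I will differentiate the first-order relation $\Theta_C' = F/(\kappa\Theta_C^2)$ inductively. Each $\Theta_C^{(k)}$ is a sum of products of $F^{(j)}$ and lower derivatives of $\Theta_C$. Since $F^{(j)}$ equals $F$ times a polynomial in $\eta$ (with coefficients bounded by the inductive hypothesis), and polynomials are absorbed by halving the Gaussian exponent, we get $|\Theta_C^{(k)}(\eta)|\le C_k\d_C e^{-c_1\eta^2}$. The only mildly delicate point is keeping every power of $\d_C$ at least one, which holds because each term contains at least one factor of $F$ or a derivative of $\Theta_C$. This, not existence, is the main bookkeeping step.

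Converting back to $(t,x)$, we have $\rd_x^k\Theta = (1+t)^{-k/2}\Theta_C^{(k)}(\eta)$ and $e^{-c_1\eta^2}=\exp(-c_1x^2/(1+t))$. This gives \eqref{c-vth-der} for $\th^C=\Theta$, and for $v^C = R\Theta/p^*$ by linearity. Next, \eqref{uc} and \eqref{q-cont} give
\[
u^C-u^* = \frac{4\b(\g-1)}{\a R\g}\Theta^2\Theta_x, \qquad q^C=-\frac{4\b p^*}{\a R}\Theta^2\Theta_x,
\]
and $\Theta^2\Theta_x = (1+t)^{-1/2}\kappa^{-1}F(\eta)$. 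Hence $\rd_x^k(u^C-u^*,q^C)$ equals $(1+t)^{-(k+1)/2}$ times a constant multiple of $F^{(k)}(\eta)$, which is bounded by $C_k\d_C e^{-c_1\eta^2}$ by the previous step. This proves \eqref{c-uq-der}.

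Finally, the zeroth-order estimates: for $v^C-v_*$ and $\th^C-\th_*$ they are the tail bounds above. For $u^C-u_*$ we use $u_*=u^*$ together with \eqref{c-uq-der} at $k=0$ and $(1+t)^{-1/2}\le1$, and $q^C$ is handled the same way; the case $x>0$ is symmetric.
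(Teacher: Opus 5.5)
Your proposal is correct and follows essentially the same approach as the paper. The paper omits the proof and cites the standard self-similar estimates for \eqref{nde} in \cite{HLM-ARMA10,HXY-ADV08}: one integrates the profile ODE for the flux $\Theta_C^2\Theta_C'$ to obtain Gaussian decay and then rescales to $(t,x)$, which is exactly what you do. Your write-up, including $|F(0)|\sim\d_C$ via the constant sign of $F$ and the extra factor $(1+t)^{-1/2}$ for $u^C-u^*$ and $q^C$, fills in the omitted details correctly.
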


\begin{proof}
The estimates follow from standard estimates for self-similar solutions of nonlinear diffusion equations (see, for instance, \cite{HLM-ARMA10,HXY-ADV08}).
We omit the details.
\end{proof}

\subsection{Radiative Shock Wave}\label{sec:shock}
We now turn to the radiative shock profiles.
The existence of radiative shock profiles was established in \cite{LCG-PD06} under the assumption of small shock strength and \(\g < (\sqrt{7}+1)/(\sqrt{7}-1)\).
Lattanzio--Mascia--Serre \cite{LMS-IU07} subsequently proved the existence of radiative shocks for general radiative hyperbolic-elliptic systems with linear coupling and showed that the profile regularity improves as the shock strength decreases.
They later extended their analysis to general nonlinear coupling \cite{LMS-08}, including the radiative Euler equations \eqref{main}, under the small-shock-strength assumption.
We also refer to \cite{Mascia-PD13} for the analysis of small, medium, and large shocks, which may exhibit a discontinuity at at most one point.

Although the existence and uniqueness are already known, we revisit the profile construction using geometric singular perturbation theory to obtain the comprehensive set of quantitative estimates needed for our subsequent application of the method of \(a\)-contraction with shifts.
The first lemma establishes the existence and uniqueness within this framework, while the second derives the required estimates from the same construction.
To the best of our knowledge, these estimates are the most detailed and comprehensive available (for small shocks) in the literature.

\vspace{2mm}
We begin with the following lemma, which establishes the existence and uniqueness of traveling wave solutions to the radiative Euler equations, namely radiative shock profiles satisfying \eqref{VS}.
Note that in the following two lemmas, monotonicity refers to the monotonicity of \((v,u,\th)\).
This does not extend to the \(q\)-variable, as is evident from \eqref{qS}.

\begin{lemma}\label{lem:EU}
Let \((v^*,u^*,\th^*) \in \RR_+\times\RR\times\RR_+\) be the left-end state and let \(m\ge 2\) be an integer.
Then, there exists a constant \(\d_m>0\) such that, for any right-end state \((v_+,u_+,\th_+)\in \RR_+\times\RR\times\RR_+\) satisfying the following Rankine--Hugoniot and Lax entropy conditions
\begin{align*}
&\exists \, \s \quad \text{s.t.}
\begin{cases}
-\s(v_+-v^*) -(u_+-u^*) =0, \\
-\s(u_+-u^*) +p_+-p^* =0, \\
-\s\Big(\frac{R(\th_+-\th^*)}{\g-1}+\frac{(u_+)^2-(u^*)^2}{2}\Big) +p_+u_+ -p^*u^* =0,
\end{cases}\\
&\text{and either } v^* > v_+, \, u^*>u_+, \, \th^* < \th_+,
\text{ or } v^* < v_+, \, u^*>u_+, \, \th^* > \th_+ \text{ holds,}
\end{align*}
and \(\d_S \coloneqq |v^*-v_+| < \d_m\), the system \eqref{main} admits a monotone \(\Ccal^m\) traveling wave solution \((v^S,u^S,\th^S,q^S) \colon \RR \to \RR_+ \times \RR \times \RR_+ \times \RR\).
Moreover, it is unique, up to a translation, among \(\Ccal^2\) profiles with monotonicity.
\end{lemma}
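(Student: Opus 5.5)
The plan is to reduce the traveling-wave system \eqref{VS} to a low-dimensional ODE and treat it as a slow–fast system amenable to geometric singular perturbation theory. First, integrate the first three equations of \eqref{VS} from \(\x=-\infty\), using the far-field state \((v^*,u^*,\th^*,0)\). This gives \(u^S=u^*-\s(v^S-v^*)\) and \(p(v^S,\th^S)=p^*-\s^2(v^S-v^*)\). It also expresses the flux \(q^S\) algebraically in terms of \(v^S\):
\[
q^S=\s\Big(\tfrac{R}{\g-1}(\th^S-\th^*)+\tfrac{\s^2}{2}(v^S-v^*)^2\Big)-\cdots,
\]
obtained by eliminating \(\th^S\) through the pressure relation. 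Hence \(\th^S=\th(v^S)\) and \(q^S=Q(v^S)\) are explicit functions, with \(Q\) quadratic in \(v-v^*\) up to linear terms. The fourth equation, \(-(q'/v)'+\a vq+\b(\th^4)'=0\), then becomes a scalar second-order ODE for \(v\), equivalently a first-order system in \((v,w)\) with \(w=q'/v\). Taking \(\x\)-derivatives shows that \(Q'(v)v'=vw\). I would rather work with \((q,z)\) or \((v,q)\) coordinates, choosing whichever makes the singular structure cleanest.

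Next, I would introduce the small parameter \(\e:=\d_S\) and rescale \(v-v^*=\e\bar v\) and \(\x=\bar\x/\e\). Expanding \(\th(v)\) and \(Q(v)\) near \(v^*\), and using that \(\s\) is \(\Ocal(\e)\)-close to the characteristic speed \(\l_3(v^*,\th^*)\), the linear part of \(Q\) nearly cancels the term \(\b(\th^4)'\); this degeneracy is exactly what produces a weak shock. In the slow variables the system becomes a fast–slow system. Its critical manifold is the graph \(q=-\frac{\b}{\a v}(\th(v)^4)'\), the analogue of the Chapman–Enskog relation \(\a vq+\b(\th^4)_x\approx0\). The fast direction is hyperbolic, with eigenvalues \(\pm\sqrt{\a}v+\Ocal(\e)\) coming from \(-(q'/v)'+\a vq\). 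On the critical manifold, the reduced slow flow is a viscous-Burgers-type (Bernoulli) ODE \(\bar v'=c(\bar v)(\bar v-0)(\bar v+1)\) with explicit \(\tanh\) heteroclinic. Here the effective viscosity is \(\frac{4\b(\g-1)\th^{3}}{\a\cdots}\), and it is positive for every \(\g>1\) because \(Q'(v^*)\) and the cancellation combine correctly. Fenichel's theorem then gives, for \(\e<\d_m\), a \(\Ccal^m\) locally invariant slow manifold \(\Ocal(\e)\)-close to the critical manifold. The size of \(\d_m\) depends on \(m\), since Fenichel's smoothness degrades with the order. The two equilibria lie on this manifold, and the reduced flow on it is a one-dimensional scalar ODE with exactly two hyperbolic equilibria (source and sink). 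This yields the monotone heteroclinic, and hence monotone \((v,u,\th)\) via the algebraic relations, with the Lax conditions fixing the direction of monotonicity.

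For uniqueness, I would argue that each end state is a saddle in the full three-dimensional phase space: one slow eigenvalue of small modulus and two fast eigenvalues \(\pm\sqrt{\a}v\). The connection therefore leaves \(v^*\) along the one-dimensional slow unstable direction, or else along the fast unstable direction. I would show that any orbit leaving along the fast direction makes \(|q'|\) grow exponentially, so it cannot reach \(v_+\) with bounded, monotone \(v\). Any bounded orbit must therefore stay in the small neighborhood where the slow manifold attracts all bounded solutions, and Fenichel's characterization of the slow manifold as the set of orbits remaining in that neighborhood for all \(\x\) then gives uniqueness up to translation. The main obstacle is this step. It requires showing that a monotone \(\Ccal^2\) profile with small amplitude stays uniformly in the \(\Ocal(\e)\) neighborhood of the critical manifold, i.e. an a priori bound on \(q^S\) and \(q^S{}'\). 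To get it, I would first apply the maximum principle to the elliptic equation for \(q\), viewed as a linear ODE with source \(\b(\th^4)'\) of size \(\Ocal(\e)\) in \(L^1\) thanks to monotonicity. Green's function bounds then give \(|q|+|q'|\lesssim\e\) uniformly, which places the orbit inside Fenichel's neighborhood.
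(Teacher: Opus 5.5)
Your existence argument has a genuine gap: it never deals with the degeneracy of the reduced equation at the midpoint of the profile. You record $Q'(v)v'=vw$, but $Q'(v)=\frac{(\gamma+1)\sigma^3}{2(\gamma-1)}(v^*+v_+-2v)$ vanishes at $v_c=\frac{v^*+v_+}{2}$. Equivalently, $v_c$ is the sonic point $\lambda_3(v^S,\theta^S)=\sigma$, which every Lax $3$-shock must cross. The field $v'=vw/Q'(v)$ therefore has a pole along $v=v_c$, and no change of phase-space coordinates removes it. In the scaled variables the equation reads $\delta_S^2D_{\delta_S}(w_0)\,w_1'=F_{\delta_S}(w_0)+B_{\delta_S}(w_0)w_1-\delta_S^2D'_{\delta_S}(w_0)w_1^2$, with $D_{\delta_S}(1/2)=0$.

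Your spectral picture is also wrong. The eigenvalues $\pm\sqrt{\alpha}\,v$ are what you get after discarding $\beta((\theta^S)^4)'=4\beta(\theta^S)^3\frac{d\theta^S}{dv}\,\frac{vw}{Q'(v)}$. That term is in fact dominant, of size $\delta_S^{-1}$. At each end state the linearization has determinant $-\alpha v^2$ and trace of order $\delta_S^{-1}$, so its eigenvalues have sizes $\delta_S$ and $\delta_S^{-1}$. Along the critical manifold the fast eigenvalue blows up and changes sign across $v_c$: the manifold is attracting for $v<v_c$ and repelling for $v>v_c$. So there is no smooth, normally hyperbolic slow--fast system on a neighborhood of the critical manifold over $[v^*,v_+]$, and Fenichel's theorem cannot be invoked as you propose.

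The missing idea is to desingularize by reparametrizing the independent variable, $dz/d\zeta=\delta_S D_{\delta_S}(w_0)$. This gives a smooth system that is uniformly normally hyperbolic, since the $\tilde w_1$-derivative of the fast right-hand side at $\delta_S=0$ is $B_0<0$. Even then, $(1/2,h^{\delta_S}(1/2))$ is an extra equilibrium of the desingularized flow on $M_{\delta_S}$. Your ``exactly two equilibria'' is therefore false for the system Fenichel is actually applied to, and no single trajectory joins the end states. The paper instead uses only the graph of the invariant manifold and solves $dw_0/dz=h^{\delta_S}(w_0)$. It then verifies the original equation for $w_0\neq 1/2$ by a local inverse change of variables, and at $w_0=1/2$ through the graph identity, which shows the singularity is removable along $M_{\delta_S}$.

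For uniqueness, the wrong spectrum sends you after an obstacle that is not there. After the algebraic reduction the phase space is two-dimensional, e.g.\ $(v,y)$ with $y=q'/v-\beta\theta^4$ and $y'=\alpha vq$; your three-dimensional picture only adds the zero eigenvalue of the conserved quantity. Both end states are saddles. At $v^*$ the unstable direction is the slow one and the fast one is stable; at $v_+$ the slow direction is stable and the fast one unstable. The relevant invariant manifolds are thus one-dimensional, and there is no fast unstable direction to exclude. The a priori bounds on $q$ and the Fenichel characterization of the slow manifold are unnecessary. That characterization would also break down at $v_c$, where the field is singular and the manifold changes stability. It suffices to normalize $v^S(0)=v_c$ and work separately on $\{v<v_c\}$ and $\{v>v_c\}$, where the field is smooth. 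There, select the branch by monotonicity and apply Cauchy--Lipschitz on each side.
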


The next lemma summarizes the quantitative properties of the radiative shock profiles.
These include exponential decay properties, higher-order derivative bounds, monotonicity and comparison between profile derivatives, a precise approximation for \((v^S)'\) and exact identity and derivative estimates for the radiative heat flux.

\begin{lemma}\label{lem:VS}
Let \((v^S,u^S,\th^S,q^S)\) be the monotone \(\Ccal^4\) \(3\)-radiative shock satisfying \eqref{VS} connecting \((v^*,u^*,\th^*,0)\) and \((v_+,u_+,\th_+,0)\), which is normalized by \(v^S(0)=\frac{v^*+v_+}{2}\).
Then there exists a constant \(\d_4>0\) such that for any \(\d_S\coloneqq \abs{v^*-v_+}<\d_4\), the following holds.\\
There exist positive constants \(C\), \(C_k\) and \(c\), independent of \(\d_S\), such that the following holds:
\begin{alignat}{2}
&\begin{aligned}\label{decay}
&\big|(v^S(\x)-v^*,u^S(\x)-u^*,\th^S(\x)-\th^*)\big|
\le C\d_S e^{-c \d_S |\x|},\\
&\big|(v^S(\x)-v_+,u^S(\x)-u_+,\th^S(\x)-\th_+)\big|
\le C\d_S e^{-c \d_S |\x|},
\end{aligned}
&&\qquad\begin{aligned}
&\forall \x\le0,\\
&\forall \x\ge0,
\end{aligned}\\
&\big|(v^S)'(\x), (u^S)'(\x), (\th^S)'(\x) \big|
\le C\d_S^2 e^{-c\d_S|\x|},
&&\qquad\forall \x\in\RR. \label{derivdecay}
\end{alignat}
Moreover, for \(k=2,3,4\) and \(\x\in\RR\),
\begin{equation}\label{2bound1}
\big|\rd_\x^k (v^S(\x),u^S(\x),\th^S(\x))\big|
\le C_k \d_S^{k-1} \big|(v^S)'(\x)\big|
\le C_k \d_S^{k+1} e^{-c \d_S |\x|}.
\end{equation}
The profile satisfies
\begin{equation}\label{derivsign}
(v^S)'>0, \qquad
(u^S)'<0, \qquad
(\th^S)'<0, \qquad \forall \x\in\RR,
\end{equation}
and
\[
|(v^S)'(\x)| \sim |(u^S)'(\x)| \sim |(\th^S)'(\x)|, \qquad \forall \x\in\RR.
\]
More precisely, for all \(\x\in\RR\), 
\[
(u^S)' = -\s (v^S)', \qquad
(\th^S)' = \frac{p^*-\s^2(2v^S-v^*)}{R}(v^S)',
\]
and hence
\begin{align}
\big|(u^S)'(\x)+\s^*(v^S)'(\x)\big|
&\le C\d_S |(v^S)'(\x)|, \label{ratio-vu}\\
\left|(\th^S)'(\x) + \frac{(\g-1)p^*}{R}(v^S)'(\x)\right|
&\le C\d_S |(v^S)'(\x)|. \label{ratio-vth}
\end{align}
The shock speed satisfies
\begin{equation}\label{sm1}
|\s_\d - \s^*| \le C\d_S,
\end{equation}
where \(p^*\coloneqq p(v^*,\th^*)\), \(p_+=p(v_+,\th_+)\),
\[
\s = \s_\d
\coloneqq \sqrt{-\frac{p_+-p^*}{v_+-v^*}}, \quad \text{and} \quad
\s^* \coloneqq \sqrt{\frac{\g p^*}{v^*}} = \frac{\sqrt{\g R \th^*}}{v^*}.
\]
Furthermore, for all \(\x\in\RR\), we have
\begin{equation}\label{tail-0}
\big|(v^S)'(\x) - A^*(v_+-v^S(\x))(v^S(\x)-v^*)\big|
\le C\d_S (v_+-v^S(\x))(v^S(\x)-v^*),
\end{equation}
where the constant \(A^*\) is given by
\begin{equation}\label{eq:A^*def}
A^* \coloneqq \frac{\a R \g (\g+1)\s^*}{8\b(\g-1)^2(\th^*)^3} >0.
\end{equation}
Finally, the radiative heat flux component satisfies the exact identity
\begin{equation}\label{qS}
q^S
= \frac{(\g+1)\s^3}{2(\g-1)}(v^S-v^*)(v_+-v^S)>0,
\end{equation}
and for all \(\x\in\RR\),
\begin{equation}\label{qder}
c \big|(v^S)'(\x)\big|
\le \big| q^S(\x) \big|
\le C\big|(v^S)'(\x)\big|,
\end{equation}
\begin{equation}\label{qhder}
\big|\rd_\x^k q^S(\x)\big|
\le C_k \d_S^k \big|(v^S)'(\x)\big|
\le C_k \d_S^{k+2} e^{-c\d_S|\x|}, \qquad k=0,1,2,3.
\end{equation}
\end{lemma}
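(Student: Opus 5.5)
The plan is to reduce \eqref{VS} to a scalar second-order ODE for \(v^S\) and treat it as a singular perturbation in \(\d_S\), which will be constructed (or revisited) in the proof of Lemma \ref{lem:EU}. Every estimate will then be read off from a one-dimensional slow manifold. The first two equations of \eqref{VS} integrate from \(\x=-\infty\) (state \((v^*,u^*,\th^*,0)\)) to give
\[
u^S-u^*=-\s(v^S-v^*), \qquad p(v^S,\th^S)=p^*-\s^2(v^S-v^*).
\]
Hence \(\th^S=\frac{v^S}{R}\big(p^*-\s^2(v^S-v^*)\big)\eqqcolon \Theta(v^S)\). Differentiating gives the stated formulas for \((u^S)'\) and \((\th^S)'\). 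Using \(\s^2=\s_\d^2=\g p^*/v^*+O(\d_S)\) (which is \eqref{sm1}, obtained by expanding the Rankine--Hugoniot relation), we get \eqref{ratio-vu} and \eqref{ratio-vth}. Integrating the energy equation in divergence form gives \(q^S=\s\big(e+\tfrac{u^2}{2}\big)-pu\) minus the same expression at the left state. Substituting the relations above expresses this as a quadratic polynomial in \(v^S\). It vanishes at \(v^*\), and by the third Rankine--Hugoniot equation it also vanishes at \(v_+\). Computing its leading coefficient yields the exact identity \eqref{qS}, \(q^S=Q(v^S)\).

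Inserting \(q=Q(v)\) and \(\th=\Theta(v)\) into the elliptic equation gives the autonomous second-order ODE
\[
-\Big(\frac{Q'(v)v'}{v}\Big)'+\a vQ(v)+\b\big(\Theta(v)^4\big)'(v)\,v'=0 .
\]
Here \(Q'(v)=O(\d_S)\) and \(Q(v)=O(\d_S^2)\), while \((\Theta^4)'(v)=-4(\th^*)^3(\g-1)p^*/R+O(\d_S)\neq0\). I then rescale
\[
v=v^*+\d_S\, y, \qquad \x=\eta/\d_S, \qquad y'=\d_S\, z .
\]
In these variables the second-order term carries an extra factor \(\d_S\), so the system has a slow manifold. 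Its leading reduced flow is the logistic equation \(\dot y=A^*y(1-y)\) (up to orientation/normalization of \(y\)), and the constant \(A^*\) of \eqref{eq:A^*def} comes from balancing \(\a vQ\) against \(\b(\Theta^4)'v'\). Fenichel's theorem then gives a locally invariant \(\Ccal^m\) slow manifold \(v'=h(v)\). Since both equilibria \(v^*,v_+\) lie on it, \(h\) factors as
\[
h(v)=(v_+-v)(v-v^*)\,g(v), \qquad g=A^*+O(\d_S).
\]
The heteroclinic orbit lies on this manifold, so we obtain \eqref{tail-0}, together with monotonicity \eqref{derivsign} and uniqueness (the statement of Lemma \ref{lem:EU}).

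The remaining estimates follow from the scalar equation \(v'=h(v)\). Comparison with the logistic ODE, normalized by \(v^S(0)=\frac{v^*+v_+}{2}\), gives \eqref{decay} and \eqref{derivdecay}. Differentiating \(v'=h(v)\) repeatedly gives \(v''=h'(v)v'\) with \(|h'|\le C\d_S\), and similarly for higher derivatives, which yields \eqref{2bound1} for \(v\). The corresponding bounds for \(u,\th\) follow from the algebraic relations. For \(q^S=Q(v^S)\), the factorization gives \(q^S\sim(v^S-v^*)(v_+-v^S)\sim|(v^S)'|\), which is \eqref{qder}. Differentiating \(Q(v^S)\) with the chain rule, using \(|Q^{(j)}|\le C\d_S^{\max(2-j,0)}\), gives \eqref{qhder}.

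I expect the main obstacle to be the rigorous, \(\d_S\)-uniform slow-manifold step. The difficulty is obtaining \(g=A^*+O(\d_S)\) uniformly in \(\Ccal^4\) on the whole interval \([v^*,v_+]\), including near the equilibria, so that the error in \eqref{tail-0} is relative to \((v_+-v)(v-v^*)\). I would first try to handle this by applying Fenichel's theorem to the blown-up variables \((y,z)\) with \(\d_S\) as a parameter, and then using the invariance of the two equilibrium points to justify the factorization of \(h\).
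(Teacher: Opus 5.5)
Your reduction to a scalar equation for $v^S$, the identity \eqref{qS}, and the way you read off \eqref{decay}--\eqref{qhder} from a graph $v'=h(v)$ are sound. Factoring $h=(v_+-v)(v-v^*)g$ is even a slightly more direct route to \eqref{tail-0} than the paper's, which bounds $(q'/v)'$ by $C\delta_S^2(v^S)'$ in the equation itself.

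The gap is in the step you treat as routine: whether Fenichel's theorem applies. In your ODE the coefficient of $v''$ is $-Q'(v)/v$ with $Q'(v)=\frac{(\gamma+1)\sigma^3}{2(\gamma-1)}(v^*+v_+-2v)$. This is not merely $O(\delta_S)$; it \emph{vanishes} at $v_c=\frac{v^*+v_+}{2}$, i.e.\ at the normalization point $\xi=0$ in the middle of the profile. This is the sonic point, where $\sigma^2=\gamma p(v_c)/v_c$. In your rescaled variables the equation reads $\delta_S^2D(y)\,\frac{dz}{d\eta}=F(y)+B(y)z-\delta_S^2D'(y)z^2$, with $D,F,B$ as in \eqref{def-TDBF} and $D(y)\propto 1-2y$. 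This is not of the form $\epsilon\,\frac{dz}{d\eta}=f(y,z,\epsilon)$ with smooth $f$. After dividing by $D$, the fast linearization $B/(\delta_S^2D)$ blows up at $y=\tfrac12$ and changes sign there: the critical manifold attracts for $y<\tfrac12$ and repels for $y>\tfrac12$. So neither smoothness nor normal hyperbolicity holds along the orbit you want. Fenichel's theorem therefore cannot be applied to $(y,z)$ with $\delta_S$ as a parameter, and uniformity near the equilibria is not the real obstacle.

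The missing idea is a desingularization. Introduce $\zeta$ with $d\eta/d\zeta=\delta_S D(y)$, which gives the smooth system $\frac{dy}{d\zeta}=\delta_S D(y)z$, $\delta_S\frac{dz}{d\zeta}=F(y)+B(y)z-\delta_S^2D'(y)z^2$ (the paper's \eqref{sys}). Its critical manifold $z=A^*y(1-y)$ is normally hyperbolic because $B_0<0$, and Fenichel yields a graph $z=h^{\delta_S}(y)$. The invariance identity \eqref{h-eqn} gives $h^{\delta_S}=F/\big(-B+\delta_S^2(Dh^{\delta_S})'\big)$, hence $h^{\delta_S}(0)=h^{\delta_S}(1)=0$ and $h^{\delta_S}>0$ on $(0,1)$.

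However, the reparametrization reverses orientation for $y>\tfrac12$ and turns $(\tfrac12,h^{\delta_S}(\tfrac12))$ into a spurious equilibrium. So no single auxiliary trajectory connects the end states. You must define the profile by $dy/d\eta=h^{\delta_S}(y)$ and use \eqref{h-eqn} again. It shows that the singular quotient $\big(F+Bh^{\delta_S}-\delta_S^2D'(h^{\delta_S})^2\big)/(\delta_S^2D)$ equals $(h^{\delta_S})'h^{\delta_S}$ for $y\neq\tfrac12$. It therefore extends continuously, and the fourth equation of \eqref{VS} holds also at $\xi=0$.

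Separately, saying that the heteroclinic orbit lies on the slow manifold gives uniqueness only for orbits already known to lie in the Fenichel neighbourhood, i.e.\ with $(v^S)'=O(\delta_S^2)$ a priori. An arbitrary monotone $\Ccal^2$ profile is not known to satisfy this. Since the lemma concerns \emph{the} normalized profile, you need uniqueness to transfer your estimates to it. The paper obtains it from a saddle argument, applied separately on $\xi<0$ and $\xi>0$, for the system in $\big(v^S,\,(q^S)'/v^S-\beta(\theta^S)^4\big)$, which is smooth near each end state.
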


\begin{remark}
We state Lemma \ref{lem:VS} only for the \(3\)-radiative shocks, since the composite wave considered here contains a \(3\)-shock.
Nevertheless, the corresponding results for the \(1\)-shocks can be obtained through the transformation \(x \mapsto -x\), \(u \mapsto -u\), \(\s_\d \mapsto -\s_\d\), \(\s^* \mapsto -\s^*\), and \(q \mapsto -q\), with the left and right end states interchanged and relabeled accordingly.
The only differences arise from the sign changes in the derivatives of the shocks and the traveling wave speed:
\[
(v^S)'<0, \qquad
(u^S)'<0, \qquad
(\th^S)'>0, \qquad \forall \x\in\RR,
\]
and 
\[
\s = \s_\d
\coloneqq -\sqrt{-\frac{p_+-p^*}{v_+-v^*}}, \quad \text{and} \quad
\s^* \coloneqq -\sqrt{\frac{\g p^*}{v^*}} = -\frac{\sqrt{\g R \th^*}}{v^*},
\]
which lead to corresponding sign changes in \(A^*\) and \(q^S\). 
\[
A^* = \frac{\a R \g (\g+1)\s^*}{8\b(\g-1)^2(\th^*)^3} <0, \qquad
q^S
= \frac{(\g+1)\s^3}{2(\g-1)}(v^S-v^*)(v_+-v^S)<0.
\]
\end{remark}

The proofs of the above two lemmas are provided in Appendix \ref{appendix:VS}.

\subsection{Weighted Poincar\'e Inequality}
One of the key ingredients in the method of \(a\)-contraction with shifts is the following weighted Poincar\'e inequality:
\begin{lemma}\label{lem:poincare} \cite[Lemma 2.9]{KV-JEMS21}
For any \(f\colon [0,1]\to\RR\) satisfying \(\int_0^1 y(1-y)|f'|^2 dy < \infty\), 
\[
\int_0^1 \abs{f-\int_0^1 f dy}^2 dy
\le \frac{1}{2} \int_0^1 y(1-y)\abs{f'}^2 dy.
\]
\end{lemma}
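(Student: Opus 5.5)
The plan is a spectral argument: the weight \(y(1-y)\) is exactly the one appearing in the Legendre operator on \([0,1]\). Let \(P_n(y)\coloneqq \mathrm{P}_n(2y-1)\) denote the shifted Legendre polynomials, where \(\mathrm{P}_n\) is the classical Legendre polynomial of degree \(n\). I will use three classical facts about them. They form a complete orthogonal system in \(L^2(0,1)\), with \(P_0\equiv 1\). They satisfy the Sturm--Liouville equation
\[
-\big(y(1-y)P_n'\big)' = n(n+1)P_n .
\]
Consequently, integrating by parts (the boundary terms vanish because \(y(1-y)\) vanishes at \(y=0,1\)), the family \(\{P_n'\}_{n\ge1}\) is orthogonal in the weighted space \(L^2(y(1-y)\,dy)\), with
\[
\int_0^1 y(1-y)|P_n'|^2\,dy = n(n+1)\norm{P_n}_{L^2}^2 .
\]
The smallest nonzero eigenvalue is \(n(n+1)=2\), which is exactly where the constant \(\frac12\) comes from.

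\textbf{Step 1 (regular case).} First assume \(f\in H^1(0,1)\), so that \(f\) is continuous up to the boundary. Write \(a_n \coloneqq \int_0^1 fP_n\,dy/\norm{P_n}_{L^2}^2\); note that \(a_0=\int_0^1 f\,dy\). By completeness and Parseval,
\[
\int_0^1\Big|f-\int_0^1 f\,dy\Big|^2dy=\sum_{n\ge1}a_n^2\norm{P_n}_{L^2}^2 .
\]
Integrating by parts against \(y(1-y)P_n'\) (again with no boundary terms) and using the eigen-equation gives
\[
\int_0^1 y(1-y)f'P_n'\,dy = n(n+1)a_n\norm{P_n}_{L^2}^2 .
\]
Apply Bessel's inequality in \(L^2(y(1-y)\,dy)\) to \(f'\) and the orthogonal system \(\{P_n'\}_{n\ge1}\). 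Completeness of this system is not needed, only Bessel. This yields
\[
\int_0^1 y(1-y)|f'|^2\,dy \ \ge\ \sum_{n\ge1} n(n+1)a_n^2\norm{P_n}_{L^2}^2 \ \ge\ 2\sum_{n\ge1}a_n^2\norm{P_n}_{L^2}^2 ,
\]
which is the claim for \(f\in H^1(0,1)\).

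\textbf{Step 2 (general \(f\)).} This is the only real obstacle. The hypothesis only gives \(\int_0^1 y(1-y)|f'|^2\,dy<\infty\). This makes \(f\in H^1_{loc}(0,1)\), but \(f\) may blow up logarithmically at the endpoints, and a priori \(f\) is not even known to lie in \(L^2\). I will handle this by rescaling rather than by a density argument. For small \(\e>0\), set
\[
z \coloneqq \e+(1-2\e)y, \qquad f_\e(y)\coloneqq f(z) .
\]
Then \(f_\e\in H^1(0,1)\) and \(f_\e'(y)=(1-2\e)f'(z)\). Moreover,
\[
y(1-y)=\frac{(z-\e)(1-\e-z)}{(1-2\e)^2}\le \frac{z(1-z)}{(1-2\e)^2} .
\]
Substituting into Step 1 applied to \(f_\e\), and writing \(m_\e\) for the average of \(f\) over \([\e,1-\e]\), the factors of \(1-2\e\) cancel and we obtain
\[
\int_\e^{1-\e}|f-m_\e|^2\,dz\le \frac12\int_0^1 z(1-z)|f'|^2\,dz .
\]

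\textbf{Step 3 (passage to the limit).} The estimate of Step 2 is uniform in \(\e\). Since \(f\) is locally bounded in the interior, it forces the averages \(m_\e\) to stay bounded. Extract a subsequence with \(m_\e\to m\) and apply Fatou's lemma to obtain
\[
\int_0^1|f-m|^2\,dz\le \frac12\int_0^1 z(1-z)|f'|^2\,dz .
\]
In particular \(f\in L^2(0,1)\). Finally, the mean \(\int_0^1 f\,dy\) minimizes \(c\mapsto \int_0^1|f-c|^2\,dy\), so replacing \(m\) by \(\int_0^1 f\,dy\) only decreases the left-hand side, which concludes the proof of \Cref{lem:poincare}.
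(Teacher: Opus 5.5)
Your proof is correct and follows essentially the same route as the source: the paper does not prove \Cref{lem:poincare} itself but cites \cite[Lemma 2.9]{KV-JEMS21}, whose argument is, to my recollection, the same shifted Legendre expansion, with the constant $\frac12$ the reciprocal of the first nonzero eigenvalue $n(n+1)=2$ of $-\partial_y\big(y(1-y)\partial_y\big)$. Your use of Bessel's inequality for $\{P_n'\}_{n\ge1}$, together with the rescaling-plus-Fatou step, removes any a priori assumption that $f\in H^1(0,1)$ or even $f\in L^2(0,1)$, making rigorous the points a bare spectral computation tends to gloss over.
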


\section{Proof of Theorem \ref{thm:main}}\label{sec:pfofmain}
\setcounter{equation}{0}
To begin with, we introduce the following change of variable: 
\[
(t,x) \mapsto (t, \x = x-\s t),
\]
and thus all the subsequent time derivatives are taken at fixed \(\x\).\\
Then, the radiative Euler equations \eqref{main} are given by
\begin{equation}\label{main-x}
\begin{cases}
v_t - \s v_\x - u_\x = 0, \\
u_t - \s u_\x + p(v,\th)_\x = 0, \\
\frac{R}{\g-1}(\th_t -\s \th_\x) + p(v,\th) u_\x + q_\x =0, \\
-\left(\frac{q_\x}{v}\right)_\x + \a v q + \b (\th^4)_\x =0.
\end{cases}
\end{equation}
We write \(U=(v,u,\th)\), with analogous notation for the wave profiles and constant states.\\
The composite wave is then given by
\begin{equation}\label{Ubar}
\Ubar = U^R(t,\x+\s t) + U^C(t,\x+\s t) + U^S(\x-X(t)) - U_* - U^*,
\end{equation}
and 
\begin{equation}\label{qbar}
\qbar = q^R(t,\x+\s t) + q^C(t,\x+\s t) + q^S(\x-X(t)),
\end{equation}
where \(X\) is the shift function which will be determined later.
We recall that \(U^R\) connects \((v_-,u_-,\th_-)\) and \((v_*,u_*,\th_*)\), \(U^C\) connects \((v_*,u_*,\th_*)\) and \((v^*,u^*,\th^*)\), and \(U^S\) connects \((v^*,u^*,\th^*)\) and \((v_+,u_+,\th_+)\), and all the strengths are assumed to be small.

In what follows, unless otherwise stated, \(U^R\) and \(U^C\) are evaluated at \((t,\x+\s t)\), while \(U^S\) is evaluated at \(\x-X(t)\); the same convention applies to \(q^R\), \(q^C\) and \(q^S\).
Thus, for example, \((v^S)_\x = (v^S)'(\x-X(t))\) and \(\rd_t (v^S) = -\dot{X}(v^S)_\x\).
We then introduce the following notation:
\begin{align}\label{eq:pertdefs}
\phi \coloneqq v-\vbar, \qquad
\psi \coloneqq u-\ubar, \qquad
\chi \coloneqq \th-\thbar, \qquad
r \coloneqq q-\qbar,
\end{align}
and 
\[
\pbar \coloneqq p(\vbar,\thbar), \qquad
p^R \coloneqq p(v^R,\th^R), \qquad
p^C \coloneqq p(v^C,\th^C), \qquad
p^S \coloneqq p(v^S, \th^S).
\]

The residuals are grouped as follows:
\begin{align}\label{eq:Qidefs}
Q_1 = Q_1^C + Q_1^I, \quad
Q_2 = Q_2^R + Q_2^I, \quad
Q_3 = Q_3^R + Q_3^C + Q_3^I,
\end{align}
where 
\begin{equation}\label{eq:Qrcdefs}
Q_1^C = (u^C)_t - \s (u^C)_\x, \quad
Q_2^R = (q^R)_\x, \quad
Q_3^R = -\Big(\frac{(q^R)_\x}{v^R}\Big)_\x, \quad
Q_3^C = -\Big(\frac{(q^C)_\x}{v^C}\Big)_\x,
\end{equation}
and 
\begin{align*}
Q_1^I
&= (\pbar - p^R - p^C - p^S)_\x, \\
Q_2^I
&= \pbar\ubar_\x - p^R (u^R)_\x - p^C (u^C)_\x - p^S (u^S)_\x, \\
Q_3^I
&= -\Big(
    \frac{\qbar_\x}{\vbar}
    - \frac{(q^R)_\x}{v^R}
    - \frac{(q^C)_\x}{v^C}
    - \frac{(q^S)_\x}{v^S}
    \Big)_\x \\
&\quad + \a\big(
    \vbar\qbar - v^R q^R - v^C q^C - v^S q^S
    \big) \\
&\quad + \b\big(
    \thbar^4 - (\th^R)^4 - (\th^C)^4 - (\th^S)^4
    \big)_\x.
\end{align*}
Then, \((\vbar,\ubar,\thbar,\qbar)\) satisfies the following:
\begin{equation}\label{bar-eq}
\begin{cases}
\vbar_t -\s \vbar_\x - \ubar_\x = -\dot{X}(v^S)_\x, \\
\ubar_t -\s \ubar_\x + \pbar_\x = Q_1 - \dot{X}(u^S)_\x, \\
\frac{R}{\g-1}(\thbar_t -\s \thbar_\x) + \pbar \ubar_\x + \qbar_\x
=Q_2 - \frac{R}{\g-1}\dot{X}(\th^S)_\x, \\
-\left(\frac{\qbar_\x}{\vbar}\right)_\x + \a \vbar \qbar + \b (\thbar^4)_\x = Q_3.
\end{cases}
\end{equation}

We show the existence of global strong solutions to the system \eqref{main-x} for any sufficiently small \(H^2\) initial perturbation of the composite wave \((\vbar,\ubar,\thbar)\) verifying \eqref{bar-eq}.
Note that the initial radiative flux \(q_0\) is determined by \(v_0\) and \(\th_0\) through the elliptic equation.
As a first step, let us recall the local well-posedness theory for strong solutions to \eqref{main-x} in the following proposition.

\begin{proposition}\label{prop:lwp}
Let $\underline{v}$, $\underline{u}$, and $\underline{\theta}$ be smooth functions so that
\begin{align*}
(\underline{v}(x), \underline{u}(x), \underline{\theta}(x))=(v_\pm, u_\pm, \theta_\pm), 
\qquad \text{for } \pm x \geq 1.
\end{align*}
For any positive constants \(\underline{\kappa}_0\), \(\overline{\kappa}_0\), \(\underline{\kappa}_1\), \(\overline{\kappa}_1\) and \(M_0\) with $\overline{\kappa}_1 > \overline{\kappa}_0>\underline{\kappa}_0>\underline{\kappa}_1 > 0$, there exists a constant $C_* >1$, so that, for any $M_1 > C_*M_0$, there exists $T_0 > 0$ so that the following holds. Consider any initial data $(v_0, u_0, \theta_0)$ verifying
\begin{align*}
\norm{(v_0-\underline{v},u_0-\underline{u}, \theta_0-\underline{\theta})}_{H^2(\RR)} \leq M_0, \\
0 < \underline{\kappa}_0 \leq v_0(x),\theta_0(x)\leq \overline{\kappa}_0, \ \forall x \in \RR.
\end{align*}
Then, there exists a unique solution $(v,u,\theta, q)$ to \eqref{main-x} on $[0,T_0]$ so that
\begin{align*}
v-\underline{v}, \, u-\underline{u}, \, \theta-\underline{\theta} &\in C([0,T_0];H^2(\RR)), \\
q &\in C([0,T_0];H^3(\RR)),
\end{align*}
and
\begin{align*}
\norm{(v-\underline{v}, u-\underline{u}, \theta-\underline{\theta})}_{C([0,T_0];H^2(\RR))}
+\norm{q}_{C([0,T_0];H^3(\RR))} \leq M_1.
\end{align*}
Moreover,
\begin{equation}\label{eq:lwp-bdfrombelow}
\underline{\kappa}_1 \leq v(t,x),\theta(t,x) \leq \overline{\kappa}_1, \ \forall (t,x) \in [0,T_0] \times \RR.
\end{equation}
\end{proposition}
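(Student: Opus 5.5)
The plan is to eliminate the radiative flux through the elliptic equation, which turns \eqref{main-x} into a quasilinear hyperbolic system for \(U=(v,u,\th)\) with a nonlocal, lower-order source. We then run a standard contraction-mapping (Picard) iteration in \(C([0,T_0];H^2)\) for that system.

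First we solve the elliptic part. For given \(v,\th\) with \(\underline{\kappa}_1\le v,\th\le\overline{\kappa}_1\) and \(v-\underline{v},\th-\underline{\th}\in H^2\), the equation
\[
-\Big(\frac{q_\x}{v}\Big)_\x+\a vq=-\b(\th^4)_\x
\]
is uniformly elliptic and coercive in \(H^1\). Its right-hand side lies in \(H^1\), because \((\th^4)_\x=(\underline{\th}^4)_\x+\big(\th^4-\underline{\th}^4\big)_\x\) and \(\underline{\th}_\x\) is compactly supported. Lax--Milgram gives a unique \(q\in H^1\). Elliptic regularity, obtained by writing \(q_{\x\x}=v_\x q_\x/v+v(\a vq+\b(\th^4)_\x)\) and bootstrapping, upgrades this to \(q\in H^3\). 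It also gives the bound
\[
\norm{q}_{H^3}\le C\big(1+\norm{(v-\underline{v},\th-\underline{\th})}_{H^2}\big).
\]
The constant depends only on \(\underline{\kappa}_1,\overline{\kappa}_1\) and on an \(H^2\) bound for the perturbation. The same argument applied to the difference of two solutions shows that the map \((v,\th)\mapsto q\) is Lipschitz from \(H^1\) to \(H^2\) and from \(H^2\) to \(H^3\). In particular, \(q_\x\) is a bounded operator of order zero in \(U\).

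Next we linearize and iterate. Given an iterate \(U^n\) with \(q^n=q[U^n]\), we solve the linear symmetrizable hyperbolic system
\[
v_t-\s v_\x-u_\x=0,\quad u_t-\s u_\x+p_v(U^n)v_\x+p_\th(U^n)\th_\x=0,\quad \tfrac{R}{\g-1}(\th_t-\s\th_\x)+p(U^n)u_\x=-q^n_\x,
\]
posed for the perturbation \(U-\underline{U}\). This system is the non-divergence Euler system and has a smooth symmetrizer depending on \(U^n\). Standard \(H^2\) energy estimates for symmetric hyperbolic systems give
\[
\frac{d}{dt}E_2\le C(M_1)\big(E_2+1\big),
\]
with \(E_2\simeq\norm{U-\underline{U}}_{H^2}^2\). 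The constant \(1\) comes from \(\underline{U}_\x\) and from \(q^n_\x\in H^2\). The symmetrizer's equivalence constant, which depends only on \(\underline{\kappa}_0,\overline{\kappa}_0\), produces the factor \(C_*\). This is where \(M_1>C_*M_0\) enters: for small \(T_0\), Gronwall keeps \(\norm{U-\underline{U}}_{H^2}\le M_1\).

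The pointwise bounds \eqref{eq:lwp-bdfrombelow} follow from
\[
\norm{U-U_0}_{L^\infty}\le C\norm{U-U_0}_{H^1}\le CT_0\sup_t\norm{U_t}_{H^1}\le C(M_1)T_0,
\]
which holds once \(T_0\) is small relative to the gaps \(\underline{\kappa}_0-\underline{\kappa}_1\) and \(\overline{\kappa}_1-\overline{\kappa}_0\). Contraction of the map \(U^n\mapsto U^{n+1}\) in \(C([0,T_0];L^2)\) uses the Lipschitz bound on \(q\) together with the \(L^2\) energy estimate for differences. Combined with the uniform \(H^2\) bound, this yields a limit in \(L^\infty H^2\cap C H^1\). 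Continuity in time with values in \(H^2\) follows by the usual argument: weak continuity plus continuity of the energy, via Friedrichs mollification, as in Kato/Majda. Uniqueness follows from the same difference estimate.

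I expect the main obstacle to be tracking how the constants depend on the data so that \(C_*\) depends only on the \(\kappa\)'s and \(M_0\), while \(T_0\) may depend on \(M_1\). A related difficulty is ensuring that the nonlocal term \(q_\x\), which is only order zero, loses no derivatives. That is why the \(H^2\to H^3\) elliptic bound is needed, and it requires \(v\in H^2\) locally so that the coefficient \(v_\x/v\) lies in \(H^1\).
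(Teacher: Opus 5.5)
Your proposal is correct and is essentially the argument the paper relies on: the paper gives no proof of its own and defers to standard local well-posedness theory for hyperbolic--elliptic coupled systems (Kato's quasilinear symmetric hyperbolic theory together with a cited local existence theorem), and that theory is exactly what you sketch — eliminate \(q\) through the elliptic equation, then iterate on the symmetrizable Euler part, which now carries an order-zero nonlocal source. One bookkeeping correction: \(\|q\|_{H^3}\) contains a contribution from the background \((\underline{\theta}^4)_\xi\) that does not scale with \(M_0\), and the symmetrizer constants at positive times involve \(\underline{\kappa}_1,\overline{\kappa}_1\); hence \(C_*\) cannot come from the \(\kappa_0\)-dependent symmetrizer constant alone and must be allowed to depend on \(M_0\), the \(\kappa\)'s and \(\underline{v},\underline{u},\underline{\theta}\), which the order of quantifiers in the statement permits.
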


The proof follows from standard local well-posedness arguments for hyperbolic-elliptic coupled systems.
An appropriate reference is \cite{MR390516}; see also \cite[Theorem 2.1]{MR1925900}.
With this result in hand, we establish the \textit{a priori} estimate \Cref{prop:main}, then apply a standard continuation argument to obtain global existence.

\subsection{Construction of Shift} 
The continuation argument is based on the \textit{a priori} estimate established in \Cref{prop:main}.
As this estimate depends on the shift $X$, we give the definition of the shift here.
A key input in the construction of the shift is the weight function \(a\colon\RR\to\RR\), which is defined in \eqref{weight}.
For the purposes of the present section, it suffices to know that \(a\) satisfies the bound \(\norm{a}_{C^1}\le 2\) (although much finer estimates will be available later).
With this weight in hand, we define the shift \(X\) as the solution to the following ODE:
\begin{equation}\label{eq:shiftdef}
\begin{cases}
\begin{aligned}
\dot{X}(t)&=-\dfrac{M}{\delta_S}\bigg[
\displaystyle\int_{\RR} a(\xi-X(t)) \Big((u^S)_\xi(\xi-X(t))\psi\\
&\qquad\qquad\qquad
+\dfrac{R}{(\gamma-1)\bar \theta}(\theta^S)_\xi(\xi-X(t))\chi
+\dfrac{\bar p}{\bar v}(v^S)_\xi(\xi-X(t))\phi \Big)d\xi \bigg], \\
\end{aligned} \\
X(0)=0,
\end{cases}
\end{equation}
where \(M\) is the constant defined in \eqref{def:M} by \(M \coloneqq \frac{(\gamma-1)^2(v^*)^3\alpha A^*}{2(\gamma R K_1)^2\beta (\theta^*)^4}\) with \(A^*\) and \(K_1\) defined in \eqref{eq:A^*def} and \eqref{eq:K1def}, respectively.
This choice of \(M\) is used in the proof of \Cref{lem:L2estpart1}.
In \eqref{eq:shiftdef}, the shifted arguments of the weight and shock profiles are displayed explicitly.

\vspace{2mm}
This shift may be uniquely constructed on any interval for which \eqref{eq:lwp-bdfrombelow} is verified (see \cite[Lemma 3.1]{KVW-ARMA25}) as a consequence of the Cauchy--Lipschitz theorem.
To be precise, we have

\begin{lemma}\label{lem:shiftexistence}
For any \(0<c_1<c_2\), there exists a constant \(C>0\) such that the following holds.
For any \(T>0\) and any \(v,u,\th \in L^\infty((0,T) \times \RR)\) satisfying 
\[
c_1 \leq v(t,x),\theta(t,x) \leq c_2, \qquad
\abs{u(t,x)} \le c_2
\]
for all \((t,x)\in(0,T)\times\RR\), the ODE \eqref{eq:shiftdef} has a unique absolutely continuous solution \(X\) on \([0,T]\).
Furthermore, 
\[
\abs{X(t)} \le C t, \qquad \forall t \in[0,T].
\]
\end{lemma}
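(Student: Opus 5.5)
The plan is to view \eqref{eq:shiftdef} as a scalar non-autonomous ODE $\dot X = F(t,X)$, with
\[
F(t,X) \coloneqq -\frac{M}{\d_S}\int_{\RR} a(\x-X)\Big((u^S)_\x(\x-X)\,\psi + \frac{R}{(\g-1)\thbar}(\th^S)_\x(\x-X)\,\chi + \frac{\pbar}{\vbar}(v^S)_\x(\x-X)\,\phi\Big)\,d\x .
\]
We then apply the Carath\'eodory version of the Cauchy--Lipschitz theorem, because $F$ is only measurable in $t$. This follows \cite[Lemma 3.1]{KVW-ARMA25}.

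A subtlety is that $\psi,\chi,\phi$ and $\thbar,\pbar,\vbar$ depend on $X$ through the shock component $U^S(\x-X)$ of $\Ubar$. To handle this, we change variables $\x\mapsto \x+X$ and expand $\phi=v-\vbar$, and similarly for the other perturbations. Then $F(t,X)$ becomes a finite sum of terms of two kinds:
\[
\int a(y)\,(v^S)'(y)\,G_1\big(v(t,y+X),u(t,y+X),\th(t,y+X),\dots\big)\,dy
\]
and terms involving only the profiles $U^R,U^C$ evaluated at $y+X$. Here $G_1$ is built from the smooth functions $p(\vbar,\thbar)/\vbar$, $1/\thbar$, and $a(y)$, $(U^S)'(y)$ no longer depend on $X$. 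We cannot differentiate $v,u,\th$ in $X$, since they are only $L^\infty$. So we move the $X$-dependence back onto the smooth factors, using the form $\int f(\x)\,g(\x-X)\,d\x$ with $f\in L^\infty$ the raw solution component and $g$ smooth. Only $g$ depends on $X$.

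The key step is showing $F$ is Lipschitz in $X$ uniformly in $t$. For terms like $\int a(\x-X)(u^S)'(\x-X)\,u(t,\x)\,d\x$,
\[
|\rd_X(\cdot)| \le \norm{u}_{L^\infty}\int |(a(u^S)')'|\,d\x \le C\,c_2\,\big(\norm{a}_{C^1}\norm{(u^S)'}_{L^1}+\norm{(u^S)''}_{L^1}\big).
\]
By \eqref{derivdecay} and \eqref{2bound1}, $\norm{(u^S)'}_{L^1}\le C\d_S$ and $\norm{(u^S)''}_{L^1}\le C\d_S^2$. For the terms with coefficients $\pbar/\vbar$ or $1/\thbar$ multiplying $v$ or $\th$, the products are smooth in the profile arguments. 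This works because $\vbar,\thbar$ stay in a compact subset of $(0,\infty)$ for small wave strengths, and $v,\th\in[c_1,c_2]$. Their $X$-derivatives are again integrals against $(U^S)'$, $(U^S)''$, $(U^S)'\cdot(U^S)'$ weighted by bounded functions.

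The term $\int a\,(u^S)'\,\ubar$ contains $U^S(\x-X)$ inside $\ubar$. Its $X$-derivative is bounded by $C\int|(u^S)'|(|(u^S)'|+|(u^S)''|)+\dots\le C\d_S$. Hence $|\rd_X F|\le \frac{M}{\d_S}C\d_S = C$, a bound independent of $t$ and $\d_S$.

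Measurability of $F$ in $t$ comes from Fubini, since $v,u,\th\in L^\infty((0,T)\times\RR)$. The same integrals give the growth bound
\[
|F(t,X)| \le \frac{M}{\d_S}\,C\big(\norm{(U^S)'}_{L^1}\big)(\norm{U}_{L^\infty}+\norm{\Ubar}_{L^\infty}) \le C.
\]
Carath\'eodory's theorem then yields a unique absolutely continuous solution on $[0,T]$, with no blow-up because $F$ is bounded. Integrating gives $|X(t)|\le \int_0^t|\dot X|\le Ct$.

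The main obstacle is the bookkeeping that removes $X$-dependence from the nonsmooth factors while keeping every bound of size $O(\d_S)$, so that the $1/\d_S$ prefactor cancels. The remaining steps are standard.
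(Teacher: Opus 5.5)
Your proposal is correct and follows the argument the paper invokes through \cite[Lemma 3.1]{KVW-ARMA25}. You write \eqref{eq:shiftdef} as $\dot X=F(t,X)$, use the $O(\d_S)$ $L^1$ bounds on $(U^S)'$ and $(U^S)''$ to cancel the $1/\d_S$ prefactor and show that $F$ is bounded and Lipschitz in $X$ uniformly in $t$, and then apply the Carath\'eodory form of the Cauchy--Lipschitz theorem to get a unique absolutely continuous solution with $|X(t)|\le Ct$. The change of variables $\x\mapsto\x+X$ is an unnecessary detour that you undo anyway, since in the original variable only the profile factors depend on $X$, namely $a$, $U^S$, and $\thbar,\pbar,\vbar$ through $U^S(\x-X)$.
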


\subsection{Global Well-Posedness and Asymptotic Stability}
To prove \Cref{thm:main}, it suffices to show the following proposition on global existence and uniform estimates.
\begin{proposition}\label{prop:gwp}
Let \((v_-,u_-,\th_-) \in \RR_+\times\RR\times\RR_+\).
Let \((\vbar,\ubar,\thbar,\qbar)\) denote the composite wave constructed in \eqref{Ubar}--\eqref{qbar}, where the shock component is shifted by \(X\).
The shift \(X\) is defined as the solution to \eqref{eq:shiftdef} using the weight function \(a\) defined in \eqref{weight}.
Then, there exist positive constants \(\d_0\) and \(\e_0\) such that if the wave strengths satisfy 
\[
|v_+ - v^*|
+|v^* - v_*|
+|v_* - v_-|
\le \d_0,
\]
and the initial perturbation satisfies \eqref{eq:initpertsmall}, then the Cauchy problem \eqref{main-x} (and \eqref{main}) admits a unique global-in-time solution \((v,u,\th,q)\).
Furthermore, the following estimate holds:
\begin{align*}
&\sup_{t\ge 0}\left[\norm{(v-\vbar,u-\ubar,\th-\thbar)(t,\cdot)}_{H^2(\RR)}^2\right]
+\sup_{t\ge 0} \norm{(q-\qbar)(t,\cdot)}_{H^3(\RR)}^2
+\d_S\int_0^\infty |\dot{X}(\t)|^2 d\t\\
&\quad
+\int_0^\infty (G^R+G^S) d\t
+\int_0^\infty \norm{(v-\vbar,u-\ubar,\th-\thbar)_x}_{H^1(\RR)}^2 d\t
+\int_0^\infty \norm{(q-\qbar)_x}_{H^2(\RR)}^2 d\t\\
&\le C\norm{(v-\vbar,u-\ubar,\th-\thbar)(0,\cdot)}_{H^2(\RR)}^2
+C\d_0^{\frac{1}{4}},
\end{align*}
where 
\[
G^R \coloneqq \int_{\RR}|(v^R)_\xi||(\phi, \chi)|^2 d\xi, \qquad
G^S \coloneqq \int_{\RR}|(v^S)_\xi||(\phi, \psi, \chi)|^2 d\xi.
\]
\end{proposition}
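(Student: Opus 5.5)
The plan is the standard continuation argument, built on the local theory of \Cref{prop:lwp}, the shift construction of \Cref{lem:shiftexistence}, and a uniform \emph{a priori} estimate (\Cref{prop:main}, to be proved in \Cref{sec:L2}--\Cref{sec:h2}).

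\emph{Setting up.} First, fix a smooth reference profile $(\underline v,\underline u,\underline\theta)$ equal to $(v_\pm,u_\pm,\theta_\pm)$ for $\pm x\ge1$. We observe that $\Ubar(0,\cdot)-(\underline v,\underline u,\underline\theta)\in H^2$, with norm controlled by the wave strengths. The contact and rarefaction parts decay exponentially or in a Gaussian way away from their centers by \Cref{lem:uR} and \Cref{lem:VC}, but the weak shock tails decay only like $e^{-c\d_S|\x|}$. A crude bound therefore gives an $L^2$ norm of order $\d_S\cdot\d_S^{-1/2}=\d_S^{1/2}$, which is still small. Combining this with \eqref{eq:initpertsmall} puts the initial data in the class of \Cref{prop:lwp}, with $M_0\lesssim \e_0+\d_0^{1/2}$ and with $v_0,\theta_0$ bounded above and below.

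\emph{A priori estimate.} The central step is the following statement. Suppose that on $[0,T]$ the solution satisfies $\sup_t\|(\phi,\psi,\chi)\|_{H^2}\le\e_1$, and that $\d_0\le\d_1$. Then the full inequality of \Cref{prop:gwp} holds on $[0,T]$, with constants independent of $T$. The proof has two parts.

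\begin{itemize}
\item The $L^2$ part uses the $a$-contraction method. We estimate the weighted relative entropy $\int a(\x-X)\,\eta(U|\Ubar)\,d\x$, where the shift is given by \eqref{eq:shiftdef}. The radiative coupling has no parabolic term, so we replace it as follows. The linearized elliptic equation for $r$ is solved with its Green's function, which introduces the variable $\z$. This lets us write $\chi$ as a combination of $\z$ and $r_x$ plus small errors. We then apply \Cref{lem:poincare}, in the shock variable $y=(v^S-v^*)/(v_+-v^*)$, to $\z$, using \eqref{tail-0} and \eqref{qS} to match the constants; this is where the choice of $M$ enters. The $r_x$ component is absorbed by the damping from the energy structure.
\item The $H^1$ and $H^2$ parts use energy estimates on the differentiated equations. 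Dissipation for $v_x$ and $u_x$ is recovered through hyperbolic cross terms of Kawashima type, and $r$ is controlled in $H^3$ by elliptic regularity. The interaction terms $Q_i$ are bounded using the separation of the waves (rarefaction and contact versus shock) and the decay rates in Lemmas \ref{lem:uR}--\ref{lem:VS}. This is the source of the $C\d_0^{1/4}$ on the right-hand side, since the time integrals of terms such as $\|Q\|_{L^2}$ contribute powers of $\d_0$.
\end{itemize}

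\emph{Continuation.} Choose $\e_0,\d_0$ so small that $C(\e_0^2+\d_0^{1/2})+C\d_0^{1/4}\le\e_1^2/4$. Let $T^*$ be the maximal time on which $\|(\phi,\psi,\chi)\|_{H^2}\le\e_1$. The a priori estimate improves this bound to $\e_1/2$, and Sobolev embedding keeps $v,\theta$ bounded above and below. We can therefore reapply \Cref{prop:lwp} and \Cref{lem:shiftexistence} from time $T^*$ with uniform $M_0$, which extends the solution past $T^*$. Hence $T^*=\infty$, and uniqueness follows from the local result.

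I expect the main obstacle to be the $L^2$ estimate: closing the Poincar\'e argument without viscosity, and controlling the bad weight and shift terms using only the nonlocal weak damping and weak diffusion supplied by $\z$ and $r_x$.
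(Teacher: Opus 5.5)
Your proposal follows the paper's route: local existence (Proposition~\ref{prop:lwp}) and the shift from Lemma~\ref{lem:shiftexistence} are combined with the $T$-independent a priori bound of Proposition~\ref{prop:main}, and closed by a standard continuation argument with $\sup_t\|q-\bar q\|_{H^3}$ recovered from the elliptic equation for $r$; the a priori bound consists of the $a$-contraction $L^2$ estimate built on the Green's-function variable $\zeta$ and Lemma~\ref{lem:poincare}, followed by $H^1$/$H^2$ energy estimates with cross terms and elliptic control of $\chi_\xi$ and of $r=q-\bar q$. One correction: the $C\delta_0^{1/4}$ does not come from the wave-interaction terms, which decay exponentially and contribute only $O(\delta_0)$, and a plain time integral of $\|Q_2\|_{L^2}$ would not even converge, because $\|Q_2^R\|_{L^2}=\|(q^R)_\xi\|_{L^2}$ decays only like $(1+t)^{-1}$; instead, the paper pairs this rarefaction residual with $(\phi,\chi)$ through its $L^1$ norm and Gagliardo--Nirenberg, and uses $\int_0^t\|Q_2^R\|_{L^1}^{4/3}d\tau\le C\delta_R^{1/3}$ to get $C\delta_0^{1/6}\sup_\tau\|(\phi,\chi)\|_{L^2}^{2/3}$, hence $C\delta_0^{1/4}$ after Young's inequality.
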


\subsection{Main Proposition: \textit{A Priori} Estimates}
The proof of \Cref{prop:gwp} boils down to getting the following \textit{a priori} estimate, formulated in terms of the perturbations defined in \eqref{eq:pertdefs}.

\begin{proposition}\label{prop:main}
Let \((v_-,u_-,\th_-) \in \RR_+\times\RR\times\RR_+\) be a given state.
Then, there exist positive constants \(C\), \(\d_1\) and \(\e_1\) such that the following holds:\\
Assume that \((v,u,\th,q)\) is a solution to the system \eqref{main-x} on \([0,T]\) for some \(T>0\).
Let \((\vbar,\ubar,\thbar,\qbar)\) denote the composite wave constructed in \eqref{Ubar}--\eqref{qbar}, with the shift \(X\) applied only to the radiative shock.
Here, \(X\) is the absolutely continuous solution to \eqref{eq:shiftdef} with respect to the weight function \(a\) defined in \eqref{weight}.
Suppose that, for some \(0<\d_0\le\d_1\), the wave strengths satisfy 
\[
\d_R + \d_C + \d_S \le \d_0,
\]
and the perturbations in each component satisfy
\begin{align*}
\phi, \psi, \chi &\in C([0,T];H^2(\RR)), \\
r &\in C([0,T];H^2(\RR)) \cap L^2(0,T;H^3(\RR)),
\end{align*}
and the \textit{a priori} assumption:
\begin{equation}\label{suppose}
\sup_{t\in[0,T]} \norm{(\phi,\psi,\chi)(t)}_{H^2} \le \e_1.
\end{equation}
Then, for any $t \leq T$, the following holds:
\begin{align*}
&\sup_{t \in [0,T]}\left[\norm{(\phi,\psi,\chi)(t,\cdot)}_{H^2(\RR)}^2\right] +\delta_S \int_0^t |\dot{X}(\tau)|^2 d\tau
+\int_0^t (G^R+G^S) d\tau\\
&\qquad
+\int_0^t \norm{(\phi,\psi,\chi)_x}_{H^1(\RR)}^2d\tau
+\int_0^t \norm{r_x}_{H^2(\RR)}^2 d\tau\\
&\leq C\norm{(\phi,\psi,\chi)(0,\cdot)}_{H^2(\RR)}^2
+C\delta_0^{\frac14}.
\end{align*}
Finally, by \eqref{eq:shiftdef}, it follows that
\[
|\dot{X}(t)|
\le C \norm{(\phi, \psi, \chi)(t,\cdot)}_{L^\infty(\RR)}, \qquad \forall t\in[0,T].
\]
\end{proposition}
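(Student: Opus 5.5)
The plan is the standard two-tier energy method: a weighted relative-entropy ($L^2$) estimate with the shift $X$, followed by higher-order estimates, all closed by smallness of $\d_0$ and $\e_1$.

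\textbf{Step 1: weighted relative entropy.} We start from the relative entropy of $U=(v,u,\th)$ with respect to $\Ubar$, weighted by $a(\x-X(t))$. The weight is built from the shock, e.g.\ $a=1+\frac{\lambda}{\d_S}(v^S-v^*)$ with $\lambda\sim\sqrt{\d_S}$, so that $a_\x\sim\frac{\lambda}{\d_S}(v^S)_\x>0$. Using \eqref{main-x} and \eqref{bar-eq}, we write $\frac{d}{dt}\int a\,\eta(U|\Ubar)$ as
\[
\dot X\,Y(t)+\Bcal(U)-\Gcal(U)+(\text{wave terms}).
\]
The ODE \eqref{eq:shiftdef} is designed so that the leading part of $\dot X Y$ equals $-\frac{\d_S}{M}|\dot X|^2$. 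The wave terms include the rarefaction sign term, which yields $G^R$ by the monotonicity in \Cref{lem:uR}(1), and the interaction terms $Q_i^I$. The interaction terms are controlled as in \cite{KVW-ARMA25}: we use the separation of waves, i.e.\ the exponential tails in \Cref{lem:uR}(3)--(4), \Cref{lem:VC} and \eqref{decay}, and these produce the $\d_0^{1/4}$ and $\d_0 e^{-c\d_0 t}$ contributions.

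\textbf{Step 2: radiative dissipation in place of parabolic dissipation (main obstacle).} The radiative part contributes $-\int a\,\chi r_\x/\thbar$-type terms, which are not sign-definite. I would linearize the elliptic equation for $r$, namely
\[
-r_{\x\x}/\vbar+\a\vbar r+4\b(\thbar^3\chi)_\x=\text{error},
\]
and invert the operator $-\rd_\x^2+\a\vbar^2$ with its Green's function. This gives $r=-(\text{kernel})\ast(4\b\thbar^3\chi)_\x$ up to small errors. Defining $\z$ as essentially this nonlocal recovery, $\chi$ becomes a combination of $\z$ and $r_\x$, and the radiative term yields a dissipation of the form $\int(|r_\x|^2+|\z_\x|^2)$ plus a damping of $r$.

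In the shock layer, the bad term $\frac{1}{\d_S}\int a_\x|\chi|^2$ is then handled as follows. Change variables to $y=\frac{v^S-v^*}{\d_S}\in[0,1]$ and use \eqref{tail-0}, so that $(v^S)'\approx A^*\d_S^2y(1-y)$. Apply \Cref{lem:poincare} to $\z$ against $\int|\z_\x|^2$; this is exactly where the constant $M$ (through $A^*$ and $K_1$) enters. The mean of $\z$ is absorbed by the $|\dot X|^2$ term. The $r_\x$ component is absorbed by the damping.

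This step is delicate: the commutator and Green's-function errors must be $O(\d_S+\e_1)$ relative to the good terms. I expect the sharp constant matching in the Poincar\'e step to be the crux.

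\textbf{Step 3: higher-order estimates and closing.} Take $\rd_\x$ and $\rd_\x^2$ of the perturbation system and do symmetrized energy estimates for $(\phi,\psi,\chi)$ with weights $p$ and $\thbar$. The elliptic equation, tested against $r_{\x\x}$ and $r_{\x\x\x\x}$, supplies $\int|\chi_\x|^2$-type and $\|r_\x\|_{H^2}^2$ dissipation. Since there is no diffusion for $\phi$ and $\psi$, dissipation of $\phi_\x$ and $\psi_\x$ is recovered hyperbolically, through cross terms such as $\int\psi\phi_\x$ from the momentum equation (giving $\|\phi_\x\|^2$ via $p_v<0$) and $\int\psi_\x\chi$ from the energy equation. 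Summing Steps 1--3 with small coefficients, using \eqref{suppose} with $\e_1$ and $\d_0$ small, yields the stated bound.

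The final bound $|\dot X|\le C\|(\phi,\psi,\chi)\|_{L^\infty}$ follows directly from \eqref{eq:shiftdef}: we have $\|a\|_{L^\infty}\le2$, $\thbar$ and $\vbar$ are bounded below, and $\int|(U^S)_\x|\,d\x\le C\d_S$, which cancels the prefactor $M/\d_S$.
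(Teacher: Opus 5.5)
Your route is the paper's route. It uses $a$-contraction with $a=1+(v^S-v^*)/\sqrt{\d_S}$ and the shift \eqref{eq:shiftdef}. It introduces $\z=-4\b(\th^*)^3K_{v^*}\chi$, with $\z_\x=r+E_2$ and $\chi=-K_1\z+\frac{r_\x}{4\b(\th^*)^3v^*}+E_3$. It applies the weighted Poincar\'e inequality to $w(y)=\z(\x)$, driven by the damping $D_1\approx c\int\z_\x^2$; the mean is absorbed by $|\dot X|^2$ and the $r_\x$ part by $D_2$. The higher-order estimates test the elliptic equation against $-r_{\x\x}/(4\b\th^4)$, so that $\int\frac{\chi_\x}{\th}r_{\x\x}$ cancels exactly, and recover $\phi_\x,\psi_\x$ from $\int[\frac{R}{\g-1}\chi\psi_\x-\frac{p^*}{2}\psi\phi_\x]$.

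A few points to tighten:
\begin{itemize}
\item Freeze the elliptic operator at $v^*$ rather than $\vbar$. The exact identities for $\z_\x$ and $\chi$ use that $K_{v^*}$ commutes with $\rd_\x$, and all $\vbar-v^*$, $\th^3-(\th^*)^3$ discrepancies go into $E_1$.
\item The dissipation $D_1,D_2$ already follows by substituting the elliptic equation into $\int a(\chi/\th)_\x r$. The Green's function is needed only to express $\chi$ and $r$ through $\z$ in $B_1$, $Y_1$--$Y_3$ and $D_1$. Before that, the translator terms from $\int a_\x\psi(p-\pbar)-\s\int a_\x\thbar\et(U|\Ubar)$ reduce $\phi,\psi$ to $\chi$.
\item The leading bad term is $\int a(v^S)_\x(\cdots)\approx\d_S\int_0^1(\cdots)\,dy$. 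It is not $\frac1{\d_S}\int a_\x|\chi|^2=\d_S^{-1/2}\int_0^1|\chi|^2dy$, which would be far too large for the Poincar\'e step.
\end{itemize}

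The genuine gap is your closing claim that whatever is not treated explicitly is absorbed by smallness of $\d_0,\e_1$. At every order, the viscous contact wave produces terms bounded only by $\d_C\int W^2|(\phi,\psi,\chi)|^2d\x$, with $W^2=(1+t)^{-1}e^{-c_1(\x+\s t)^2/(1+t)}$. Examples are the $(u^C)_\x$ term $B_4$, the $(q^C)_\x$ parts of $\Jcal^{rad}$, and the contact parts of $\qbar\phi$, $\qbar_\x\phi$, $\thbar_\x\chi$ in $E_1$ and in \eqref{first-o-1}--\eqref{sec-o-2}.

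Separation of waves does not help with these, since they are the contact wave's own terms. Bounding them by $\d_C(1+t)^{-1}\norm{(\phi,\psi,\chi)}_{L^2}^2$ gives a factor $\d_C\log(1+t)$ after time integration, so the estimate is not uniform in $T$. The paper closes this with the separate Huang--Li--Matsumura type estimate \Cref{lem:W}. That lemma bounds $\int_0^t\int W^2|(\phi,\psi,\chi)|^2$ by the $L^2$ supremum, $\d_S\int_0^t|\dot X|^2$, $\int_0^t(G^R+G^S+\norm{r}_{L^2}^2+\norm{(\phi_\x,\psi_\x,\chi_\x)}_{L^2}^2)$ and $\d_0^{1/3}$. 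You need this ingredient.

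A similar issue affects the rarefaction residual $Q_2^R=(q^R)_\x$. It is not an interaction term, and it decays only like $(1+t)^{-1}$ in $L^2$, so the $L^2$ pairing with $(\phi,\chi)$ has a divergent time integral. One must instead use $\norm{Q_2^R}_{L^1}\norm{(\phi,\chi)}_{L^\infty}$, Gagliardo--Nirenberg, and $\int_0^t\norm{Q_2^R}_{L^1}^{4/3}\le C\d_R^{1/3}$. This is where the $\d_0^{1/4}$ in the final bound comes from. The wave interactions are not the source: they give the integrable $\d_S(\d_R+\d_C)e^{-c\d_S t}+\d_R\d_C e^{-ct}$.
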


\subsection{Proof of Theorem \ref{thm:main}}
Based on the local existence result \Cref{prop:lwp} and the \textit{a priori} estimate \Cref{prop:main}, a standard bootstrap argument yields \Cref{prop:gwp}.
Here, the bound on \(\sup_{t\ge 0}\norm{q-\qbar}_{H^3(\RR)}^2\) can be easily recovered by elliptic regularity theory on the elliptic equation \eqref{pert-qeq} for \(r=q-\qbar\).
Then, using \Cref{prop:gwp}, we obtain \eqref{eq:maincontinuity}.
The long-time behaviors \eqref{eq:pertgoesto0main} and \eqref{eq:shiftgoesto0main} follow from standard arguments as well.
These proofs are the same as those in \cite{GLWX-2026} (see also \cite{KVW-ADV23}).
Thus, we omit the details and devote the remainder of the paper to proving \Cref{prop:main}.

\section{Zeroth-Order Estimates}\label{sec:L2}
\setcounter{equation}{0}
\subsection{Wave Interaction Estimates}
In this section, we begin the proof of Proposition \ref{prop:main} by obtaining the \(L^2\) estimate.
We first derive preliminary estimates for the interaction terms \(Q_i\).

\begin{lemma}\label{lem:interactionests}
Let \(X\) be the shift defined by \eqref{eq:shiftdef} and let \(Q_i^I\), \(i=1,2,3,\) be defined in \eqref{eq:Qidefs}.
Then, under the same hypotheses as in \Cref{prop:main}, the following holds:\\
There exists a constant \(C>0\), independent of \(T\), \(\d_R\), \(\d_C\) and \(\d_S\), so that for any \(0\le t\le T\),
\begin{equation}\label{Qiests}
\norm{Q_1^I}_{H^2} + \norm{Q_2^I}_{H^2} + \norm{Q_3^I}_{H^1} 
\le C\d_S (\d_R+\d_C) e^{-c \d_S t}
+C\d_R\d_C e^{-c t}.
\end{equation}
\end{lemma}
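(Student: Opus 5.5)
The plan is the standard wave-interaction argument of \cite{KVW-ARMA25}, adapted to the flux terms. Each $Q_i^I$ is a difference of the form $F(\bar U)-F(U^R)-F(U^C)-F(U^S)$ plus constants, possibly differentiated; such a difference vanishes whenever at least two of the three waves sit at their constant end states. Pointwise it is therefore bounded by sums of products of the type
\[
|U^R-U_*|\,|\partial U^S|,\quad |\partial U^R|\,|U^S-U^*|,\quad |U^C-U_*|\,|\partial U^S|,\quad |\partial U^C|\,|U^S-U^*|,
\]
together with the analogous pairings between $U^R$ and $U^C$ and derivatives of these expressions. To justify this, write for example
\[
\bar p-p^R-p^C-p^S+2p^*=\int\!\!\int \partial^2 p\,(\cdots)
\]
via the mean value theorem applied to pairs of waves. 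The same applies to $\bar v\bar q-v^Rq^R-\cdots$, to $\bar\theta^4-\cdots$ (using that $q^R,q^C,q^S$ vanish at the far states), and to $\bar q_\xi/\bar v-\cdots$. For the last one, the expression $\bar q_\xi/\bar v - \sum q^i_\xi/v^i$ equals $\sum_i q^i_\xi(1/\bar v-1/v^i)$, and $|\bar v-v^i|$ is controlled by the deviations of the other two waves from their end states. Before differentiating, I would expand all $\xi$-derivatives by the Leibniz rule; derivatives up to order $3$ of the profiles are then needed, and these are supplied by Lemmas \ref{lem:uR}, \ref{lem:qR}, \ref{lem:VC} and \ref{lem:VS}.

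Next I would localize in space. For each $t$, split $\mathbb{R}$ at the point $\xi=-\frac{\sigma}{2}t$, i.e.\ $x=\frac{\sigma t}{2}$, which lies between the contact/rarefaction region and the shock, whose center is at $\xi=X(t)$. Here $|X(t)|$ is only $O(t)$ with a small constant: the definition \eqref{eq:shiftdef} together with \eqref{suppose} gives $|\dot X|\le C\e_1$, so the shock center satisfies $x\ge \sigma t - C\e_1 t\ge \frac34\sigma t$. On the left piece the shock factor is exponentially small by \eqref{decay}, \eqref{derivdecay} and \eqref{qhder}, namely $C\d_S e^{-c\d_S|\xi-X|}\le C\d_S e^{-c\d_S t}$. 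This factor multiplies an $L^2$ (or $L^\infty$) quantity of the rarefaction or contact derivative, of size $O(\d_R+\d_C)$. On the right piece I use instead the tails: Lemma \ref{lem:uR}(3) gives $C\d_R e^{-c|x-\lambda_{1*}(1+t)|}\le C\d_Re^{-ct}$ since $\lambda_{1*}<0<x$, and Lemma \ref{lem:VC} gives $C\d_C e^{-c x^2/(1+t)}\le C\d_C e^{-ct}$. These are multiplied by shock quantities whose $L^2$ norm is $\le C\d_S^{3/2}$, or $\d_S$ in $L^\infty$. In both cases the result is bounded by $C\d_S(\d_R+\d_C)e^{-c\d_St}$. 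The rarefaction--contact interaction is handled by splitting at $x=\frac{\lambda_{1*}}{2}t$. There the Gaussian tail of the contact wave and the exponential tail of the rarefaction give $C\d_R\d_Ce^{-ct}$. For short times, $t\lesssim 1$, the bounds hold trivially since all quantities are $O(\d\cdot\d)$.

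$Q_1^I$ and $Q_2^I$ need $H^2$, which requires up to third derivatives of $\bar p$ and $\bar u_\xi$, and these are available. $Q_3^I$ only needs $H^1$, i.e.\ up to third derivatives of $q^i$, which are provided by Lemma \ref{lem:qR}(2) and \eqref{qhder}; this is presumably why only $H^1$ is claimed for $Q_3^I$. The main obstacle I expect is the term
\[
\Big(\frac{\bar q_\xi}{\bar v}-\sum_i \frac{q^i_\xi}{v^i}\Big)_\xi
\]
in $Q_3^I$. It involves $q^C$, whose decay from \eqref{c-uq-der} is only Gaussian in $x/\sqrt{1+t}$, together with derivatives of the slowly decaying shock (scale $\d_S^{-1}$). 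One must check carefully that every product contains one factor localized at a wave and one factor measuring the deviation of a different wave from its end state, so that the splitting above applies. The rate $e^{-c\d_St}$, rather than $e^{-ct}$, is forced by the width $\d_S^{-1}$ of the shock layer.
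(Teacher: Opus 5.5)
Your proposal is correct and follows essentially the paper's route: after using $|\dot X|\le C\varepsilon_1$ to get $|X(t)|\le \sigma t/4$, the paper splits $Q_3^I$ into the localized sums $\sum_i(\frac{1}{\bar v}-\frac{1}{v^i})q^i_{\xi\xi}$, $\sum_i(\frac{\bar v_\xi}{\bar v^2}-\frac{v^i_\xi}{(v^i)^2})q^i_\xi$, $\alpha\sum_i(\bar v-v^i)q^i$ and $4\beta\sum_i(\bar\theta^3-(\theta^i)^3)\theta^i_\xi$, bounds them with the same spatial splittings at $\xi=-\sigma t/2$ and $x=\lambda_{1*}(1+t)/2$, and defers $Q_1^I,Q_2^I$ to \cite{KVW-ARMA25}. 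The only slip is that the shock--contact pairing should read $|U^C-U^*|\,|\partial U^S|$ rather than $|U^C-U_*|\,|\partial U^S|$, because near the shock the contact sits at its right state $U^*$; this corrected form is what your appeal to Lemma \ref{lem:VC} for $x>0$ actually uses.
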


\begin{proof}
The proof follows the same argument as in \cite[Lemma 4.2]{KVW-ARMA25}.
Although each wave is defined differently, the properties needed for the proof remain essentially unchanged, as shown in Lemmas \ref{lem:uR}--\ref{lem:VS}.
Moreover, the expressions for \(Q_1^I\) and \(Q_2^I\) are simpler than their counterparts in \cite{KVW-ARMA25}, since the terms involving viscosity and heat conductivity are absent in the present setting.
So, the bounds obtained in \cite{KVW-ARMA25} remain sufficient for our purposes.
While an additional estimate for \((Q_2^I)_{\x\x}\), which was not considered in \cite{KVW-ARMA25}, is required here, it also follows by a similar argument.
We therefore omit the details of these estimates.

\vspace{2mm}
It remains to estimate \(Q_3^I\).
To this end, we first observe that 
\[
|\dot{X}(t)| \le C\e_1, \qquad t\in[0,T].
\]
We may therefore choose \(\e_1>0\) sufficiently small to ensure that 
\[
\abs{X(t)} \le C\e_1 t \le \frac{\s t}{4}, \qquad t\in[0,T].
\]
We also recall the following inequalities established in \cite{KVW-ARMA25}:
\begin{equation}\begin{aligned}\label{Riest}
\norm{R_1}_{L^2}
&\le C\d_R \d_S e^{-c\d_S t}
+C\d_R (\d_C+\d_S) e^{-c t},\\
\norm{R_2}_{L^2}
&\le C\d_C \d_S e^{-c\d_S t}
+C\d_C \d_R e^{-c t},\\
\norm{R_3}_{L^2}
&\le C (\d_S)^{\frac{3}{2}} (\d_R+\d_C) e^{-c\d_S t},
\end{aligned}\end{equation}
where \(R_1\), \(R_2\) and \(R_3\) are given as follows:
\begin{align*}
R_1 &\coloneqq \abs{\big((v^R)_\x,(\th^R)_\x\big)}
\abs{(v^C-v_*,\th^C-\th_*,v^S-v^*,\th^S-\th^*)},\\
R_2 &\coloneqq \abs{\big((v^C)_\x,(\th^C)_\x\big)}
\abs{(v^R-v_*,\th^R-\th_*,v^S-v^*,\th^S-\th^*)},\\
R_3 &\coloneqq \abs{\big((v^S)_\x,(\th^S)_\x\big)}
\abs{(v^R-v_*,\th^R-\th_*,v^C-v^*,\th^C-\th^*)}.
\end{align*}

To establish \eqref{Qiests}, we decompose \(Q_3^I\) as follows:
\begin{align*}
Q_3^I &=
\underbrace{-\bigg(\frac{1}{\vbar}-\frac{1}{v^R}\bigg) (q^R)_{\x\x}
-\bigg(\frac{1}{\vbar}-\frac{1}{v^C}\bigg) (q^C)_{\x\x}
-\bigg(\frac{1}{\vbar}-\frac{1}{v^S}\bigg) (q^S)_{\x\x}}_{\eqqcolon L_1}\\
&\qquad
\underbrace{+\bigg(\frac{\vbar_\x}{\vbar^2}-\frac{(v^R)_\x}{(v^R)^2}\bigg)(q^R)_\x
+\bigg(\frac{\vbar_\x}{\vbar^2}-\frac{(v^C)_\x}{(v^C)^2}\bigg)(q^C)_\x
+\bigg(\frac{\vbar_\x}{\vbar^2}-\frac{(v^S)_\x}{(v^S)^2}\bigg)(q^S)_\x}_{\eqqcolon L_2}\\
&\qquad
\underbrace{+\a(\vbar-v^R)q^R
+\a(\vbar-v^C)q^C
+\a(\vbar-v^S)q^S}_{\eqqcolon L_3}\\
&\qquad
\underbrace{+4\b(\thbar^3-(\th^R)^3)(\th^R)_\x
+4\b(\thbar^3-(\th^C)^3)(\th^C)_\x
+4\b(\thbar^3-(\th^S)^3)(\th^S)_\x}_{\eqqcolon L_4}.
\end{align*}

\bci{L_1} We first note from \eqref{Ubar} that
\begin{equation}\begin{aligned}\label{ub-rcs}
\vbar-v^R
&=(v^C-v_*)+(v^S-v^*),
&\qquad\thbar-\th^R
&=(\th^C-\th_*)+(\th^S-\th^*),\\
\vbar-v^C
&=(v^R-v_*)+(v^S-v^*),
&\qquad\thbar-\th^C
&=(\th^R-\th_*)+(\th^S-\th^*),\\
\vbar-v^S
&=(v^R-v_*)+(v^C-v^*),
&\qquad\thbar-\th^S
&=(\th^R-\th_*)+(\th^C-\th^*).
\end{aligned}\end{equation}
By Lemmas \ref{lem:uR}--\ref{lem:VS}, we also have
\begin{equation}\begin{aligned}\label{qrcs-012}
\abs{q^R}+\abs{(q^R)_\x}+\abs{(q^R)_{\x\x}}
&\le C\abs{(v^R)_\x},\\
\abs{q^C}+\abs{(q^C)_\x}+\abs{(q^C)_{\x\x}}
&\le C\frac{\d_C}{\sqrt{1+t}}e^{-\frac{c}{1+t}(\x+\s t)^2},\\
\abs{q^S}+\abs{(q^S)_\x}+\abs{(q^S)_{\x\x}}
&\le C\abs{(v^S)_\x}.
\end{aligned}\end{equation}
Since the derivatives of \(q^C\) no longer retain the same self-similar structure, the pointwise comparisons \(|(q^C)_\x| \le |(\th^C)_\x|\) and \(|(q^C)_{\x\x}| \le |(\th^C)_\x|\) are not valid globally.
Nevertheless, as illustrated by the second estimate in \eqref{qrcs-012}, we can still obtain upper bounds of the same (or even smaller) scale as \(|(\th^C)_\x|\).
Thus, following the argument in \cite{KVW-ARMA25}, we obtain the following estimate, which is of the same order as \(R_2\):
\begin{equation}\begin{aligned}\label{R2p}
\norm{R_2'}_{L^2}
&= \Big\|\abs{\big(q^C,(q^C)_\x,(q^C)_{\x\x}\big)}
\abs{(v^R-v_*,\th^R-\th_*,v^S-v^*,\th^S-\th^*)}\Big\|_{L^2}\\
&\le C\d_C \d_S e^{-c\d_S t}
+C\d_C \d_R e^{-c t}.
\end{aligned}\end{equation}
It then follows from \eqref{ub-rcs} that 
\begin{multline*}
\abs{L_1}
\le C |(q^R)_{\x\x}|\abs{(v^C-v_*,v^S-v^*)}\\
+ C |(q^C)_{\x\x}|\abs{(v^R-v_*,v^S-v^*)}
+ C |(q^S)_{\x\x}|\abs{(v^R-v_*,v^C-v^*)}.
\end{multline*}
Using \eqref{Riest} and \eqref{R2p} together with \eqref{qrcs-012}, we find that 
\[
\norm{L_1}_{L^2}
\le C(\norm{R_1}_{L^2}+\norm{R_2'}_{L^2}+\norm{R_3}_{L^2})
\le C\d_S(\d_R+\d_C) e^{-c\d_S t}
+C\d_R\d_C e^{-c t}.
\]

\bci{L_2} We begin by observing that
\begin{align*}
\abs{\frac{\vbar_\x}{\vbar^2}-\frac{(v^R)_\x}{(v^R)^2}}
&\le C\Big(\abs{\vbar-v^R}\abs{(v^R)_\x}+\abs{(v^C)_\x}+\abs{(v^S)_\x}\Big),\\
\abs{\frac{\vbar_\x}{\vbar^2}-\frac{(v^C)_\x}{(v^C)^2}}
&\le C\Big(\abs{\vbar-v^C}\abs{(v^C)_\x}+\abs{(v^R)_\x}+\abs{(v^S)_\x}\Big),\\
\abs{\frac{\vbar_\x}{\vbar^2}-\frac{(v^S)_\x}{(v^S)^2}}
&\le C\Big(\abs{\vbar-v^S}\abs{(v^S)_\x}+\abs{(v^R)_\x}+\abs{(v^C)_\x}\Big).
\end{align*}
Thus, we have
\begin{align*}
\abs{L_2}
&\le C\Big(\abs{\vbar-v^R}\abs{(v^R)_\x}+\abs{(v^C)_\x}+\abs{(v^S)_\x}\Big) |(q^R)_\x|\\
&\qquad
+C\Big(\abs{\vbar-v^C}\abs{(v^C)_\x}+\abs{(v^R)_\x}+\abs{(v^S)_\x}\Big) |(q^C)_\x|\\
&\qquad
+C\Big(\abs{\vbar-v^S}\abs{(v^S)_\x}+\abs{(v^R)_\x}+\abs{(v^C)_\x}\Big) |(q^S)_\x|.
\end{align*}
Then, we proceed by adopting the spatial decomposition used in \cite[Lemma 4.2]{KVW-ARMA25} to get \eqref{Riest}.
More precisely, for the first two terms on the right-hand side, we divide the domain into the regions \(\x+\s t \le \frac{\l_{1*}(1+t)}{2}\) and \(\x+\s t > \frac{\l_{1*}(1+t)}{2}\), whereas for the last term, we use the regions \(\x\le-\frac{\s t}{2}\) and \(\x>-\frac{\s t}{2}\).
This gives
\begin{align*}
\norm{(q^R)_\x (v^C)_\x}_{L^2}
+\norm{(q^C)_\x (v^R)_\x}_{L^2}
&\le C \d_R \d_C e^{-ct},\\
\norm{(q^R)_\x (v^S)_\x}_{L^2}
+\norm{(q^S)_\x (v^R)_\x}_{L^2}
&\le C \d_R \d_S e^{-c \d_S t},\\
\norm{(q^C)_\x (v^S)_\x}_{L^2}
+\norm{(q^S)_\x (v^C)_\x}_{L^2}
&\le C \d_C \d_S e^{-c \d_S t}.
\end{align*}
This together with \eqref{Riest} and Lemmas \ref{lem:uR}--\ref{lem:VS} yields that
\[
\norm{L_2}_{L^2}
\le C\d_S(\d_R+\d_C) e^{-c\d_S t}
+C\d_R\d_C e^{-c t}.
\]

\bci{L_3+L_4} Note from Lemmas \ref{lem:uR}--\ref{lem:VS} that
\[
\abs{q^R} \le C \abs{(v^R)_\x}, \qquad
\abs{q^C} \le C \abs{(\th^C)_\x}, \qquad
\abs{q^S} \le C \abs{(v^S)_\x}.
\]
Hence, applying \eqref{Riest}, we obtain
\[
\norm{L_3}_{L^2}+\norm{L_4}_{L^2}
\le C(\norm{R_1}_{L^2}+\norm{R_2}_{L^2}+\norm{R_3}_{L^2})
\le C\d_S(\d_R+\d_C) e^{-c\d_S t}
+C\d_R\d_C e^{-c t}.
\]
Thus, we conclude that 
\[
\norm{Q_3^I}_{L^2}
\le C\d_S(\d_R+\d_C) e^{-c\d_S t}
+C\d_R\d_C e^{-c t}.
\]

On the other hand, a similar argument yields the corresponding upper bound for \(\norm{(Q_3^I)_\x}_{L^2}\).
Indeed, since 
\[
\abs{(q^R)_{\x\x\x}}
\le C\abs{(v^R)_\x}, \qquad
\abs{(q^C)_{\x\x\x}}
\le C\frac{\d_C}{\sqrt{1+t}}e^{-\frac{c}{1+t}(\x+\s t)^2}, \qquad
\abs{(q^S)_{\x\x\x}}
\le C\abs{(v^S)_\x},
\]
we can establish the same bound as for \(\norm{Q_3^I}_{L^2}\).
We omit the details, as the argument is essentially identical to the preceding one for \(\norm{Q_3^I}_{L^2}\).
Therefore, we get the desired estimate: 
\[
\norm{Q_3^I}_{H^1}
\le C\d_S(\d_R+\d_C) e^{-c\d_S t}
+C\d_R\d_C e^{-c t}.
\]
This completes the proof of Lemma \ref{lem:interactionests}.
\end{proof}

Moreover, the remaining \(Q\) terms must also be estimated.

\begin{lemma}\label{lem:Q}
Let \(X\) be the shift defined by \eqref{eq:shiftdef} and let \(Q_i^R\) and \(Q_i^C\) be defined in \eqref{eq:Qrcdefs}.
Then, under the same hypotheses as in \Cref{prop:main}, the following holds:\\
There exists a constant \(C>0\), independent of \(T\), \(\d_R\), \(\d_C\) and \(\d_S\), so that for any \(0\le t\le T\),
\begin{align*}
\int_0^t \norm{Q_2^R}_{L^1}^{\frac{4}{3}} d\t
&\le C \d_R^{\frac{1}{3}},\\
\int_0^t \Big(\norm{Q_2^R}_{H^2}^2 + \norm{Q_3^R}_{H^1}^2\Big) d\t
&\le C\d_R,\\
\int_0^t \Big(\norm{Q_1^C}_{H^2}^2 + \norm{Q_3^C}_{H^1}^2\Big) d\t
&\le C\d_C^2.
\end{align*}
\end{lemma}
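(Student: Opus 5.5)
We prove the three bounds separately by direct use of the pointwise estimates of Lemmas \ref{lem:uR}, \ref{lem:qR} and \ref{lem:VC}. The change of variables \(\x\mapsto \x+\s t\) is an isometry in each \(L^p\), so all norms can be computed in the original variable \(x\).

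\emph{Rarefaction terms.} Here \(Q_2^R=(q^R)_\x\). By Lemma \ref{lem:qR}(2) with \(p=1\),
\[
\norm{(q^R)_\x}_{L^1}\le C\min\{\d_R,(1+t)^{-1}\}.
\]
Raising this to the power \(4/3\) and splitting the time integral at \(t_0=\d_R^{-1}\) gives
\[
\int_0^{t_0}\d_R^{4/3}\,d\t+\int_{t_0}^\infty(1+\t)^{-4/3}\,d\t\le C\d_R^{1/3}+C\d_R^{1/3},
\]
which is the first bound. For the \(L^2\)-based bound we use the pointwise estimate \(\sum_{j=1}^3|\rd_x^jq^R|\le Cq^R\le Cv_x^R\). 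This gives \(\norm{Q_2^R}_{H^2}\le C\norm{v_x^R}_{L^2}\).

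For \(Q_3^R=-((q^R)_\x/v^R)_\x\) we expand the derivative as a combination of \((q^R)_{\x\x}\), \((q^R)_{\x\x\x}\), \(v^R_\x (q^R)_\x\), and so on. Using \(v^R\ge c\), the bounds \(|\rd_x^kv^R|\le C_kv^R_x\), and the same pointwise bounds, we get \(\norm{Q_3^R}_{H^1}\le C\norm{v^R_x}_{L^2}\). One point needs care: \(H^1\) of \(Q_3^R\) involves \(\rd_x^4 q^R\), which Lemma \ref{lem:qR} does not list. It follows the same way from the definition \(q^R=-\frac{4\b(\th^R)^3}{\a v^R}\th^R_x\) together with \(k\le 4\) in Lemma \ref{lem:uR}(2), since \(\rd_x^4q^R\) needs only up to five derivatives of \(\th^R\). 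Strictly, Lemma \ref{lem:uR} covers only up to four derivatives, so I would either note that the characteristic representation gives \(k=5\) verbatim, or avoid the issue by reading \(\norm{Q_3^R}_{H^1}\) through \(Q_3^R=\a v^Rq^R+\b((\th^R)^4)_\x\)-type identities. Concretely, \(\a v^Rq^R+\b((\th^R)^4)_x=0\) by the definition \eqref{q-rare}, so \(Q_3^R\) equals the full elliptic residual. This is the step I expect to be the only mildly delicate point. Once it is handled, Lemma \ref{lem:uR}(2) with \(p=2\) gives \(\norm{v^R_x}_{L^2}^2\le C\min\{\d_R^2,\d_R(1+t)^{-1}\}\).

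This bound is not integrable in time, so instead we use the sharper bound for the higher derivatives: \(\norm{\rd_x^kv^R}_{L^2}\le C\min\{\d_R,(1+t)^{-1}\}\) for \(k\ge2\). Since \(Q_2^R\) and \(Q_3^R\) each contain at least two derivatives of the wave, pointwise they are bounded by \(|\th^R_x|\) times bounded factors, and also by the \(k\ge2\) derivatives. I would bound \(|(q^R)_x|\le C(|\th^R_x|^2+|\th^R_{xx}|)\) directly from the formula for \(q^R\); note that \(q^R\) itself has one derivative. This gives \(L^2\) norms bounded by \(\min\{\d_R,(1+t)^{-1}\}\), whose square integrates to \(C\d_R^2 t_0+C t_0^{-1}\). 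Choosing \(t_0=\d_R^{-1/2}\) would give only \(\d_R^{3/2}\) and \(\d_R^{1/2}\), so this balancing is the wrong choice. Instead I would use \(\norm{v^R_x}_{L^\infty}\le C\min\{\d_R,(1+t)^{-1}\}\) together with \(\norm{v^R_x}_{L^2}^2\le C\min\{\d_R^2,\d_R(1+t)^{-1}\}\). Then
\[
\norm{|v^R_x|^2}_{L^2}^2\le\norm{v^R_x}_{L^\infty}^2\norm{v^R_x}_{L^2}^2\le C\d_R(1+t)^{-3},
\]
which is integrable with total \(C\d_R\), and \(\norm{v^R_{xx}}_{L^2}^2\le\norm{v^R_{xx}}_{L^1}\norm{v^R_{xx}}_{L^\infty}\le C\min\{\d_R^2,(1+t)^{-2}\}\), which integrates to \(C\d_R\). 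The same Hölder interpolation handles every derivative of order at least two. This yields \(\int_0^t(\norm{Q_2^R}_{H^2}^2+\norm{Q_3^R}_{H^1}^2)\,d\t\le C\d_R\).

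\emph{Contact terms.} First note \(Q_1^C=(u^C)_t-\s(u^C)_\x\), evaluated at \((t,\x+\s t)\). This is just \((u^C)_t\) in the original variables, since \(\rd_t|_\x=\rd_t|_x+\s\rd_x\); so \(Q_1^C=(u^C)_t(t,x)\). From \(u^C-u^*=C\Theta^2\Theta_x\) and equation \eqref{nde}, \((u^C)_t\) is a combination of terms, each carrying three \(x\)-derivative weights (time counts as two). By \eqref{c-vth-der}--\eqref{c-uq-der}, each such term is bounded by \(C\d_C(1+t)^{-3/2}e^{-c_1x^2/(1+t)}\), with one more factor \((1+t)^{-1/2}\) per further \(x\)-derivative. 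Taking the \(L^2\) norm of the Gaussian produces a factor \((1+t)^{1/4}\), so \(\norm{Q_1^C}_{H^2}\le C\d_C(1+t)^{-5/4}\). Likewise \(Q_3^C\) involves \((q^C)_{xx}\) and \(v^C_x(q^C)_x\), and by \eqref{c-uq-der} \(\norm{Q_3^C}_{H^1}\le C\d_C(1+t)^{-5/4}\). Squaring gives \((1+t)^{-5/2}\), which is integrable, so the time integral is at most \(C\d_C^2\).
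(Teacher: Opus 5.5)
Your proposal is correct and follows essentially the paper's route. Pointwise bounds from Lemmas \ref{lem:uR}, \ref{lem:qR} and \ref{lem:VC} give \(\norm{Q_2^R}_{L^1}+\norm{Q_2^R}_{H^2}+\norm{Q_3^R}_{H^1}\le C\min\{\d_R,(1+t)^{-1}\}\) and \(\norm{Q_1^C}_{H^2}+\norm{Q_3^C}_{H^1}\le C\d_C(1+t)^{-5/4}\), and one then integrates in time; your identification \(Q_1^C=(u^C)_t\) in the original variables, expanded via \eqref{nde}, is equivalent to the paper's self-similar formula.

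Two of your detours are unnecessary. First, \((Q_3^R)_\x\) contains at most \((q^R)_{\x\x\x}\), which Lemma \ref{lem:qR}(2) already covers, so no \(\rd_x^4 q^R\) ever appears. Second, splitting at \(t_0=\d_R^{-1}\) rather than \(\d_R^{-1/2}\) gives \(\int_0^\infty\min\{\d_R,(1+\t)^{-1}\}^2\,d\t\le 2\d_R\), so the direct bound closes without your \(L^\infty\)--\(L^2\) interpolation.
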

\begin{proof}
First, we recall the definitions of \(Q_2^R\) and \(Q_3^R\) given in \eqref{eq:Qrcdefs}:
\[
Q_2^R = (q^R)_\x, \qquad
Q_3^R = -\left(\frac{(q^R)_\x}{v^R}\right)_\x
= -\frac{(q^R)_{\x\x}}{v^R} + \frac{(q^R)_\x (v^R)_\x}{(v^R)^2}.
\]
Then, using Lemma \ref{lem:qR}, we find that
\[
\norm{\big(Q_2^R,(Q_2^R)_\x,(Q_2^R)_{\x\x}\big)}_{L^2}
=\norm{\big((q^R)_\x,(q^R)_{\x\x},(q^R)_{\x\x\x}\big)}_{L^2}
\le C\min\{\d_R, (1+t)^{-1}\}.
\]
Thus, we obtain 
\begin{equation}\label{Q2RH2}
\norm{Q_2^R}_{H^2}
=\norm{(q^R)_\x}_{H^2}
\le C\min\{\d_R, (1+t)^{-1}\}.
\end{equation}
Moreover, Lemma \ref{lem:qR} also implies the following \(L^1\) bound: 
\begin{equation}\label{Q2RL1}
\norm{Q_2^R}_{L^1}
=\norm{(q^R)_\x}_{L^1}
\le C\min\{\d_R, (1+t)^{-1}\}.
\end{equation}
On the other hand, the other rarefaction residual \(Q_3^R\) satisfies 
\[
\abs{Q_3^R}
\le C\left(\abs{(q^R)_{\x\x}}+\abs{(q^R)_\x}\abs{(v^R)_\x}\right),
\]
and 
\[
\abs{(Q_3^R)_\x}
\le C\left(\abs{(q^R)_{\x\x\x}}
+\abs{(q^R)_{\x\x}}\abs{(v^R)_\x}
+\abs{(q^R)_\x}\abs{(v^R)_{\x\x}}
+\abs{(q^R)_\x}\abs{(v^R)_\x}^2
\right).
\]
Using Lemmas \ref{lem:uR}--\ref{lem:qR}, we have
\begin{equation}\label{Q3RH1}
\norm{Q_3^R}_{H^1}
\le \norm{Q_3^R}_{L^2}
+\norm{(Q_3^R)_\x}_{L^2}
\le C\min\{\d_R, (1+t)^{-1}\}.
\end{equation}

We now turn to the contact residuals.
We first recall the definitions of \(Q_1^C\) and \(Q_3^C\) given in \eqref{eq:Qrcdefs}.
Then, the choice of the viscous contact wave \eqref{uc} yields that
\[
Q_1^C = -\frac{u^C-u^*}{2(1+t)}-\frac{\x+\s t}{2(1+t)}(u^C)_\x, \qquad
Q_3^C = -\frac{(q^C)_{\x\x}}{v^C} + \frac{(q^C)_\x (v^C)_\x}{(v^C)^2}.
\]
It then follows from Lemma \ref{lem:VC} that 
\[
\abs{Q_1^C}+\abs{(Q_1^C)_\x}+\abs{(Q_1^C)_{\x\x}}+\abs{Q_3^C}+\abs{(Q_3^C)_\x}
\le C\d_C (1+t)^{-\frac{3}{2}} e^{-\frac{c}{1+t}(\x+\s t)^2}.
\]
This implies that
\begin{equation}\label{Q13CH21}
\norm{Q_1^C}_{H^2}
+\norm{Q_3^C}_{H^1}
\le C\d_C (1+t)^{-\frac{5}{4}}.
\end{equation}

We now turn to the time-integrated estimates.
Integrating \eqref{Q2RL1} over \([0,t]\), we obtain
\[
\int_0^t \norm{Q_2^R}_{L^1}^{\frac{4}{3}} d\t
\le C\int_0^{\frac{1}{\d_R}} (\d_R)^{\frac{4}{3}}d\t
+C\int_{\frac{1}{\d_R}}^\infty (1+\t)^{-\frac{4}{3}} d\t
\le C \d_R^{\frac{1}{3}}.
\]
Then, using \eqref{Q2RH2} and \eqref{Q3RH1}, we also find that 
\[
\int_0^t \Big(\norm{Q_2^R}_{H^2}^2 + \norm{Q_3^R}_{H^1}^2\Big) d\t
\le C\int_0^t \min\{\d_R, (1+\t)^{-1}\}^2 d\t
\le C\d_R.
\]
Lastly, for the contact residuals, \eqref{Q13CH21} yields that 
\[
\int_0^t \Big(\norm{Q_1^C}_{H^2}^2 + \norm{Q_3^C}_{H^1}^2\Big) d\t
\le C\d_C^2 \int_0^t (1+\t)^{-\frac{5}{2}}d\t
\le C\d_C^2.
\]
This completes the proof of Lemma \ref{lem:Q}.
\end{proof}

\begin{remark}\label{rmk:Q}
A straightforward calculation implies that 
\[
\int_0^t \Big(\d_S (\d_R+\d_C) e^{-c \d_S \t}
+\d_R\d_C e^{-c \t}\Big)^2 d\t
\le C\d_S(\d_R+\d_C)^2 + C\d_R^2 \d_C^2.
\]
Combining this with the estimates for the individual terms shown in Lemmas \ref{lem:interactionests} and \ref{lem:Q}, we obtain bounds for \(Q_1\), \(Q_2\) and \(Q_3\) as follows:
\[
\int_0^t \Big(\norm{\big(Q_1,Q_2\big)}_{H^2}^2
+\norm{Q_3}_{H^1}^2\Big) d\t
\le C(\d_R+\d_C^2).
\]
These bounds will be used in the subsequent analysis.
\end{remark}

\subsection{Relative Entropy Method}
As a first step in the proof of \Cref{prop:main}, we employ the \(a\)-contraction method to execute the \(L^2\) energy estimate.
This method relies on the weighted Poincar\'e inequality in \Cref{lem:poincare}, which allows us to control the hyperbolic part of the energy estimate.
In the previous works \cite{GLWX-2026,KVW-ARMA25}, the application of this inequality is based on the dissipation provided by the parabolic term.
However, the system \eqref{main-x} contains no such term, and the energy estimate structures developed in existing studies for radiative shocks are not compatible with this approach.
To this end, we identify a new variable (see \eqref{eq:zetadef}), which is not readily apparent in the system \eqref{main-x}, to which the Poincar\'e inequality is applicable.

\vspace{2mm}
The following lemma provides the \(L^2\) energy estimate.
\begin{lemma}\label{lem:l2est}
Under the hypotheses of \Cref{prop:main}, there exists a constant $C >0$, which is independent of \(\d_0\), \(\e_1\) and \(T\), such that for all \(t\in[0,T]\), the following holds:
\begin{align*}
&\sup_{\t \in [0,t]} \norm{(\phi, \psi, \chi)(\t)}_{L^2}^2
+\delta_S\int_0^t|\dot{X}(\tau)|^2d\tau
+\int_0^t(G^R+G^S)d\tau+\int_0^t(\norm{r}_{L^2}^2+\norm{r_\xi}_{L^2}^2)d\tau \\
&\qquad
\leq C\norm{(\phi, \psi, \chi)(0)}_{L^2}^2
+C(\e_1 + \sqrt{\delta_0})\int_0^t \norm{(\psi_\xi, \phi_\xi,\chi_\xi)}_2^2d\tau
+C\delta_0^{\frac{1}{4}}.
\end{align*}
\end{lemma}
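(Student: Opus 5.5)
We will run a weighted relative-entropy ($a$-contraction) estimate for the perturbation $(\phi,\psi,\chi)$ around the composite wave. The functional is
\[
\int_{\RR} a(\x-X(t))\,\eta(U|\Ubar)\,d\x,
\]
where $\eta(U|\Ubar)$ is the relative entropy of the Euler part, with entropy $-\,$(physical entropy)$\times\thbar$-scaling. It is quadratically equivalent to $\tfrac{R\thbar}{2\vbar^2}\phi^2+\tfrac12\psi^2+\tfrac{R}{2(\g-1)\thbar}\chi^2$. The weight $a$ from \eqref{weight} increases across the shock, for instance $a=1+\tfrac{\lambda}{\d_S}(v^S-v^*)$ with $\lambda\sim\sqrt{\d_S}$, so that $\|a-1\|_\infty\le C\sqrt{\d_S}$ and $a'\sim \tfrac{\lambda}{\d_S}(v^S)'>0$.

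Differentiating in time using \eqref{main-x} and \eqref{bar-eq}, we will obtain
\[
\frac{d}{dt}\int a\,\eta = \dot X\,Y(U) + \mathcal{B}(U) - \mathcal{G}(U) - \mathcal{D}_{\rm rad} + \text{(residual terms in }Q_i).
\]
Here $Y$ is linear in the perturbation; by \eqref{eq:shiftdef} it equals $-\tfrac{\d_S}{M}\dot X$ plus quadratic terms, so $\dot X Y$ produces $-\tfrac{\d_S}{M}|\dot X|^2$. The term $\mathcal{G}$ collects good terms, namely those proportional to $(v^R)_\x$ giving $G^R$ and to $a'$ times the shock. The term $\mathcal{B}$ contains the hyperbolic bad terms, principally $\int a'\,(\text{linear combination})^2$ and $\int a\,(v^S)_\x(\ldots)$. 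The radiative term comes from $\int a\,\tfrac{\chi}{\thbar}\, r_\x$ in the energy equation. We will rewrite the elliptic equation for $r$ (linearized around $\Ubar$) as $-(r_\x/\vbar)_\x+\a\vbar r=-4\b(\thbar^3\chi)_\x+\text{nonlinear}+Q_3$. Solving this with the Green's function of $-\partial_\x^2+\a\vbar^2$ (frozen coefficients plus errors) lets us write $\chi$ as a combination of the nonlocal variable $\z$ and $r_\x$. Then $\int \tfrac{\chi}{\thbar}r_\x$ becomes a coercive quantity of the form $c\int(r^2+r_\x^2)$ together with a weak-diffusion contribution $c\int \z_\x^2$ and a weak-damping contribution, modulo $O(\d_0+\e_1)$ errors.

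Next we will treat the shock region following Kang--Vasseur. We change variables to $y=(v^S(\x-X)-v^*)/\d_S\in[0,1]$ and use \eqref{tail-0}, which gives $(v^S)'\approx A^*\d_S^2\,y(1-y)$. We then linearize the bad terms around the leading-order combination $w\approx \z$, with the expression matching the shift in \eqref{eq:shiftdef}. The terms then take the form $\d_S\int_0^1 |w|^2dy$ against the good terms $\tfrac{\d_S}{M}|\dot X|^2\approx \d_S\big(\int_0^1 w\,dy\big)^2$ and the diffusion $\int \z_\x^2\,d\x=\d_S\int_0^1 y(1-y)|\partial_y w|^2\,(A^*\d_S)^{\ldots}dy$. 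The choice of $M$ is tuned so that \Cref{lem:poincare} closes $\int_0^1|w-\bar w|^2\le\tfrac12\int y(1-y)|w_y|^2$ with a strict margin. This margin gives $G^S$, after the remaining components of $(\phi,\psi,\chi)$ are controlled by $a'$-terms and by the $r_\x$-damping.

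Finally, we will bound the remaining terms. The cubic terms are controlled by $\e_1$ times dissipation or derivative norms, and the rarefaction/shock interaction terms by $(\e_1+\sqrt{\d_0})\|(\phi,\psi,\chi)_\x\|^2$. The residuals $Q_i$ are handled with \Cref{lem:interactionests}, \Cref{lem:Q} and \Cref{rmk:Q}. In particular, $\int Q_2^R\chi\le \|Q_2^R\|_{L^1}\|\chi\|_{L^\infty}\le \|Q_2^R\|_{L^1}\|\chi\|_{L^2}^{1/2}\|\chi_\x\|_{L^2}^{1/2}$, and we use the $L^{4/3}$-in-time bound together with Young's inequality to obtain $C\d_0^{1/4}$. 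The contact wave's lack of decay is absorbed using its heat-kernel form, as in \cite{KVW-ARMA25}. Integrating in time then yields the lemma.

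The main obstacle will be the Green's-function step: showing that the nonlocal radiative term actually supplies both the $\z_\x$ diffusion with the exact constant needed for the Poincaré inequality and the $L^2$ control of $r,r_\x$. The commutator errors from the variable coefficients $\vbar,\thbar$ and the weight $a$ must be small, of order $\sqrt{\d_0}$.
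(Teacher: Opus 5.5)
Your plan matches the paper's proof in all its main steps: the shifted weighted relative entropy $\int a\bar\theta\,\eta(U|\bar U)$, the Green's-function variable $\zeta=-4\beta(\theta^*)^3K_{v^*}\chi$ with $\chi=-K_1\zeta+\frac{r_\xi}{4\beta(\theta^*)^3v^*}+E_3$ and $\zeta_\xi=r+E_2$, the radiative damping $\int r^2\approx\int\zeta_\xi^2$ serving as the derivative term of the weighted Poincar\'e inequality in the variable $y$, translators from the $a_\xi$-terms, and the $L^{4/3}$-in-time treatment of $Q_2^R$ that produces $\delta_0^{1/4}$. The one cosmetic difference is that the paper obtains the good terms $D_1,D_2$ by inserting the elliptic equation into $(\chi/\theta)_\xi$, not the Green's representation into $\int\chi r_\xi$; both give the same leading constants. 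The constant you flag as the main obstacle does work out, and your scaling placeholder should read $\int\zeta_\xi^2\,d\xi\approx A^*\delta_S\int_0^1y(1-y)|w_y|^2\,dy$, with no extra power of $\delta_S$. After translation $B_1\approx\frac{\alpha v^*A^*}{4\beta(\theta^*)^4}\,\delta_S\int_0^1w^2\,dy$, while $D_1\approx\frac{\alpha v^*A^*}{4\beta(\theta^*)^4}\,\delta_S\int_0^1y(1-y)|w_y|^2\,dy$. The Poincar\'e inequality therefore leaves a factor-$2$ margin, part of which is kept for $\int r^2$, and the mean part $\bar w^2$ is absorbed by the shift term once $M$ is fixed large enough, as in the paper.
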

The remainder of this section is devoted to the proof of this lemma.

\vspace{2mm}
The method of \(a\)-contraction with shifts builds on the relative entropy method by incorporating a weight function and a time-dependent shift.
We begin by defining the relative entropy:
\begin{align}\label{eq:relentdef}
\et(U|\Ubar) \coloneqq R\Phif{v}{\vbar}+ \frac{R}{\g-1}\Phif{\th}{\thbar} + \frac{(u-\ubar)^2}{2\thbar}
\end{align}
where \(\Phi(z)=z-1-\log z\).
We also introduce the following functional: 
\[
\Hcal(U|\Ubar) \coloneqq R\Phif{v}{\vbar}+ \frac{R}{\g-1}\Phif{\th}{\thbar}.
\]
Recall that \(\d_S = v_+ - v^* >0\).
We then define the weight function as follows:
\begin{equation}\label{weight}
a(\x) \coloneqq 1 + \frac{v^S(\x) -v^*}{\sqrt{\d_S}}, \qquad
a^{-X}(\x) \coloneqq a(\x-X(t)).
\end{equation}
Note that \((v^S)_\x > 0\) for a \(3\)-shock and that
\begin{equation}\label{der-a}
a_\x = \frac{(v^S)_\x}{\sqrt{\d_S}} > 0, \qquad
1< a < 1+\sqrt{\d_S}, \qquad
(a^{-X})_t = -\dot{X} (a^{-X})_\x.
\end{equation}
In what follows, the weight function \(a\) always appears with the shift \(-X\).
Thus, for notational simplicity, we omit the superscript \(-X\) throughout.

\vspace{2mm}
The following lemma provides the time evolution of the weighted relative entropy with the time-dependent shift \(X\).

\begin{lemma}\label{lem:RHS}
Let \(a\) be the weight function defined by \eqref{weight}.
Let \(U\) and \(q\) be a solution to \eqref{main-x} and \(\Ubar\) and \(\qbar\) be the composite wave defined by \eqref{Ubar} and \eqref{qbar}.
Then, it holds that
\begin{equation}\label{RHS}
\frac{d}{dt} \int_\RR a \thbar \et(U|\Ubar) d\x
=\dot{X}Y + \Jcal^{hyp} + \Jcal^{rad} + \Jcal^{int}  - \Jcal^{good}
\end{equation}
where
\begin{align*}
Y &\coloneqq 
- \int_\RR a_\x \thbar \et(U|\Ubar) d\x
+\int_\RR a\left(-\Hcal(U|\Ubar) (\th^S)_\x
+\frac{\pbar \phi}{\vbar} (v^S)_\x
+\frac{R}{\g-1}\frac{\chi}{\thbar}(\th^S)_\x
+\psi (u^S)_\x\right)d\x,\\
\Jcal^{hyp} &\coloneqq 
\int_\RR a_\x \psi (p-\pbar) d\x
-\int_\RR a \pbar \left(\g\Phif{v}{\vbar}+\Phif{p}{\pbar}\right) \left((u^C)_\x+(u^S)_\x\right) d\x,\\
\Jcal^{rad} &\coloneqq
\int_\RR a_\x \frac{\chi}{\th}r d\x
+\int_\RR a \left(\frac{\chi^2}{\th\thbar} -\frac{\g-1}{R}\Hcal(U|\Ubar) \right) \qbar_\x d\x
+\int_\RR a \frac{\chi}{\th\thbar} r \thbar_\x d\x
- \int_\RR (a A_\th)_\x \frac{r r_\x}{v} d\x\\
&\qquad\qquad
+ \int_\RR (a A_\th r)_\x \frac{\phi}{v\vbar} \qbar_\x d\x
- \frac{\a}{4 \b} \int_\RR a \thbar \qbar r \Big(\frac{v}{\th^5}-\frac{\vbar}{\thbar^5}\Big) d\x
+ \int_\RR a (A_\th-\Abar_\th) \Big(\frac{\qbar_\x}{\vbar}\Big)_\x r d\x,\\
\Jcal^{int} &\coloneqq 
-\int_\RR a \left(\psi Q_1 + \frac{\chi}{\thbar} Q_2
-\frac{\g-1}{R}\Hcal(U|\Ubar) Q_2
+\Abar_\th r Q_3 \right) d\x,
\end{align*}
and 
\begin{align*}
\Jcal^{good} &\coloneqq
\s \int_\RR a_\x \thbar \et(U|\Ubar) d\x
+ \int_\RR a \pbar \left(\g\Phif{v}{\vbar}+\Phif{p}{\pbar}\right) (u^R)_\x d\x\\
&\qquad\qquad
+ \frac{1}{4\b}\int_\RR a \frac{\thbar}{v\th^5} (r_\x)^2 d\x
+ \frac{\a}{4\b}\int_\RR a\frac{v \thbar}{\th^5}r^2 d\x,
\end{align*}
where \(A_\th\) and \(\Abar_\th\) are given by
\[
A_\th \coloneqq \frac{\thbar}{4\b \th^5}, \qquad
\Abar_\th \coloneqq \frac{1}{4\b \thbar^4}.
\]
\end{lemma}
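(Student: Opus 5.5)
The identity \eqref{RHS} is a direct computation. I would follow the template of the weighted relative entropy computation in \cite{KVW-ARMA25} for the Euler part. All new work goes into the radiative terms, which I would treat with the elliptic equation used as a multiplier identity. I would carry it out in the following order.

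\textbf{Step 1: perturbation equations and time derivatives.} I subtract \eqref{bar-eq} from \eqref{main-x}. This gives evolution equations for \((\phi,\psi,\chi)\), with forcing \(Q_1,Q_2\), the shift terms \(-\dot X(U^S)_\x\), and the flux difference \(r_\x\) in the energy equation. It also gives the elliptic perturbation equation
\[
-\Big(\frac{q_\x}{v}-\frac{\qbar_\x}{\vbar}\Big)_\x+\a(vq-\vbar\qbar)+\b(\th^4-\thbar^4)_\x=-Q_3 .
\]
I then differentiate \(\int a\thbar\et(U|\Ubar)\,d\x\) in time, using \eqref{der-a}.
\begin{itemize}
\item The derivatives of \(a\) and of the shock part of \(\Ubar\) produce \(\dot X Y\).
\item The transport term \(-\s\rd_\x\) integrates by parts to give \(\s\int a_\x\thbar\et\), which goes into \(\Jcal^{good}\).
\item The hyperbolic terms \(\psi(p-\pbar)_\x\) and the pressure work \(\pbar\ubar_\x\) combine, exactly as in the Euler case, into \(\int a_\x\psi(p-\pbar)\). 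They also give the term \(-\int a\pbar(\g\Phi(v/\vbar)+\Phi(p/\pbar))\ubar_\x\). I split \(\ubar_\x=(u^R)_\x+(u^C)_\x+(u^S)_\x\). The rarefaction piece has a good sign (\((u^R)_\x>0\)) and is placed in \(\Jcal^{good}\); the contact and shock pieces go into \(\Jcal^{hyp}\).
\item The \(Q_1,Q_2\) contributions are grouped into \(\Jcal^{int}\).
\end{itemize}

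\textbf{Step 2: the radiative terms from the energy equation.} The energy equation produces \(-\int a(\chi/\th)r_\x\) together with terms in \(\qbar_\x\). The latter involve \(\chi^2/(\th\thbar)\) and \(-\frac{\g-1}{R}\Hcal\,\qbar_\x\); they come from expanding \(\frac{\chi}{\th}-\frac{\chi}{\thbar}\) and from the \(\Hcal\)-weighted time derivative of \(\thbar\). Integrating the \(r_\x\) term by parts gives two pieces:
\begin{itemize}
\item \(\int a_\x\frac{\chi}{\th}r\), which is a term of \(\Jcal^{rad}\);
\item \(\int a\,r(\chi/\th)_\x\).
\end{itemize}
Since \(\chi/\th=1-\thbar/\th\), the second piece equals
\[
\int a\, r\Big(\frac{\thbar\th_\x}{\th^2}-\frac{\thbar_\x}{\th}\Big)
=\int a\, r\Big(\frac{\thbar\th_\x}{\th^2}-\frac{\thbar_\x}{\thbar}\Big)+\int a\frac{\chi}{\th\thbar}r\thbar_\x .
\]
The last integral here is the third term of \(\Jcal^{rad}\).

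\textbf{Step 3: the key substitution via the elliptic equation.} Observe that
\[
\b A_\th(\th^4)_\x=\frac{\thbar\th_\x}{\th^2},\qquad \b\Abar_\th(\thbar^4)_\x=\frac{\thbar_\x}{\thbar}.
\]
So I multiply the elliptic perturbation equation by \(a r\) with the coefficient \(A_\th\) on the \(\th^4\) part and \(\Abar_\th\) on the \(\thbar^4\) part. This amounts to multiplying the \(\thbar\)-equation by \((A_\th-\Abar_\th)\) and adding. The result replaces \(\int a\,r(\frac{\thbar\th_\x}{\th^2}-\frac{\thbar_\x}{\thbar})\) by second-order and zeroth-order terms in \(r\). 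Expanding them:
\begin{itemize}
\item The principal part \(\int aA_\th r(r_\x/v)_\x\) integrates by parts to \(-\int aA_\th r_\x^2/v-\int(aA_\th)_\x rr_\x/v\). Since \(A_\th/v=\frac{\thbar}{4\b v\th^5}\), this is the \(r_\x^2\) good term plus the \((aA_\th)_\x\) term of \(\Jcal^{rad}\).
\item The cross term \(((1/v-1/\vbar)\qbar_\x)_\x\) with \(1/v-1/\vbar=-\phi/(v\vbar)\) integrates by parts to \(\int(aA_\th r)_\x\frac{\phi}{v\vbar}\qbar_\x\).
\item The damping \(\a(vq-\vbar\qbar)=\a vr+\a(v-\vbar)\qbar\), multiplied by \(A_\th\), gives the good term \(\frac{\a}{4\b}\int a\frac{v\thbar}{\th^5}r^2\). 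The \(\qbar\)-part, recombined with the corresponding \(\Abar_\th\)-weighted part, yields \(-\frac{\a}{4\b}\int a\thbar\qbar r(\frac{v}{\th^5}-\frac{\vbar}{\thbar^5})\).
\item The mismatch of coefficients in the bar-equation gives \(\int a(A_\th-\Abar_\th)(\qbar_\x/\vbar)_\x r\).
\item The forcing gives \(-\int a\Abar_\th rQ_3\), which goes into \(\Jcal^{int}\).
\end{itemize}

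\textbf{Main obstacle.} The hard part is Step 3, and specifically the bookkeeping. The multiplier must be exactly \(A_\th r\) on the \(\th^4\)-term and \(\Abar_\th r\) on the \(\thbar^4\)-term, and the mixed coefficient differences must land precisely in the displayed \(\Jcal^{rad}\) terms rather than leaving stray residuals. I would verify this by first writing \(\b(\th^4-\thbar^4)_\x\) tested against \(a(A_\th\cdot)\) and \(a(\Abar_\th\cdot)\) separately, and only then matching term by term. The remaining parts of the computation are routine by comparison with \cite{KVW-ARMA25}.
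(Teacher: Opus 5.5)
Your proposal is correct and takes essentially the same route as the paper. The Euler part is taken over from the weighted relative-entropy computation of \cite{KVW-ARMA25}, with the \((u^R)_\xi\ge 0\) piece moved into \(\mathcal{J}^{good}\). The radiative part comes from integrating \(-\int a\frac{\chi}{\theta}r_\xi\,d\xi\) by parts and then rewriting \((\chi/\theta)_\xi\) through the elliptic equation for \(q\), weighted by \(A_\theta\), and the one for \(\bar q\), weighted by \(\bar A_\theta\). Your combination ``perturbation equation times \(A_\theta\) plus the \(\bar q\)-equation times \(A_\theta-\bar A_\theta\)'' is algebraically identical to the paper's direct use of the two equations. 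I checked your term-by-term bookkeeping, including the resulting \(-\int a\bar A_\theta rQ_3\,d\xi\) forcing and the \(\frac{\alpha}{4\beta}\int a\bar\theta\bar q r\big(\frac{v}{\theta^5}-\frac{\bar v}{\bar\theta^5}\big)d\xi\) term, and it is correct.
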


\begin{proof}
The radiative Euler system has an elliptic coupling, whereas the Navier--Stokes--Fourier system studied in \cite{KVW-ARMA25} has parabolic effects arising from viscosity and heat conduction.
The two systems, however, share the same hyperbolic part and the same definition of relative entropy.
Thus, the time evolution of the relative entropy takes a similar form in both systems.

Adapting the observations in \cite[Lemma 4.3]{KVW-ARMA25} to the radiative Euler system, we obtain
\begin{equation}\label{t-sx}
\begin{aligned}
(\rd_t - \s \rd_\x)\big(\thbar\et(U|\Ubar)\big)
&=(\thbar_t - \s \thbar_\x) \Hcal(U|\Ubar)
+\left(\frac{\pbar \phi}{\vbar} (v^S)_\x
+\frac{R}{\g-1}\frac{\chi}{\thbar}(\th^S)_\x
+\psi (u^S)_\x\right)\dot{X}\\
&\quad
+\left(- \frac{\pbar \phi^2}{v\vbar}
-\frac{\chi}{\th}(p-\pbar)
+ \frac{\pbar}{\th\thbar}\chi^2\right) \ubar_\x\\
&\quad
- \big(\psi(p-\pbar)\big)_\x
- \left(\frac{\chi}{\th}r\right)_\x
+ \left(\frac{\chi}{\th}\right)_\x r
+\frac{\chi^2}{\th\thbar} \qbar_\x
-\psi Q_1
-\frac{\chi}{\thbar}Q_2.
\end{aligned}
\end{equation}
Note that the definition of \(\Phi(\cdot/\cdot)\) yields that 
\[
- \frac{\pbar \phi^2}{v\vbar}
-\frac{\chi}{\th}(p-\pbar)
+ \frac{\pbar}{\th\thbar}\chi^2
=\frac{(\g-1)\pbar}{R} \Hcal(U|\Ubar)
-\pbar \left(\g\Phif{v}{\vbar}+\Phif{p}{\pbar}\right).
\]
We also see from the third equation of \eqref{bar-eq} that 
\[
\thbar_t - \s \thbar_\x
=\frac{\g-1}{R}\big(Q_2 - \pbar \ubar_\x -\qbar_\x\big)
-\dot{X}(\th^S)_\x.
\]
Consequently, \eqref{t-sx} becomes
\begin{equation}\label{t-sx0}
\begin{aligned}
(\rd_t - \s \rd_\x)\big(\thbar\et(U|\Ubar)\big)
&= \left(-\Hcal(U|\Ubar)(\th^S)_\x
+\frac{\pbar \phi}{\vbar} (v^S)_\x
+\frac{R}{\g-1}\frac{\chi}{\thbar}(\th^S)_\x
+\psi (u^S)_\x\right)\dot{X}\\
&\quad
-\pbar \left(\g\Phif{v}{\vbar}+\Phif{p}{\pbar}\right) \ubar_\x
- \big(\psi(p-\pbar)\big)_\x
- \left(\frac{\chi}{\th}r\right)_\x
+ \left(\frac{\chi}{\th}\right)_\x r\\
&\quad
+\left(\frac{\chi^2}{\th\thbar} -\frac{\g-1}{R}\Hcal(U|\Ubar) \right) \qbar_\x
-\psi Q_1
-\left(\frac{\chi}{\thbar}-\frac{\g-1}{R}\Hcal(U|\Ubar)\right)Q_2.
\end{aligned}
\end{equation}
This establishes that
\begin{align*}
&\frac{d}{dt}\int_\RR a \thbar \et(U|\Ubar) d\x
= -\dot{X} \int_\RR a_\x \thbar \et(U|\Ubar) d\x
+ \int_\RR a( \rd_t -\s \rd_\x) \big[\thbar \et(U|\Ubar) \big] d\x
-\s \int_\RR a_\x \thbar \et(U|\Ubar) d\x \\
&\qquad
= -\s \int_\RR a_\x \thbar \et(U|\Ubar) d\x
-\dot{X} \int_\RR a_\x \thbar \et(U|\Ubar) d\x\\
&\qquad\qquad
+\dot{X} \int_\RR a \left(-\Hcal(U|\Ubar)(\th^S)_\x
+\frac{\pbar \phi}{\vbar} (v^S)_\x
+\frac{R}{\g-1}\frac{\chi}{\thbar}(\th^S)_\x
+\psi (u^S)_\x\right) d\x\\
&\qquad\qquad
-\int_\RR a \pbar \left(\g\Phif{v}{\vbar}+\Phif{p}{\pbar}\right) \ubar_\x d\x
+\int_\RR a_\x \psi(p-\pbar) d\x
+\int_\RR a_\x \frac{\chi}{\th}r d\x
+\int_\RR a \left(\frac{\chi}{\th}\right)_\x r d\x\\
&\qquad\qquad
+\int_\RR a \left(\frac{\chi^2}{\th\thbar} -\frac{\g-1}{R}\Hcal(U|\Ubar) \right) \qbar_\x d\x
-\int_\RR a \left(\psi Q_1 + \frac{\chi}{\thbar} Q_2
-\frac{\g-1}{R}\Hcal(U|\Ubar) Q_2 \right) d\x.
\end{align*}
Since \(\ubar_\x = (u^R)_\x + (u^C)_\x + (u^S)_\x\) and \((u^R)_\x \ge 0\), the following holds:
\[
-\int_\RR a \pbar \left(\g\Phif{v}{\vbar}+\Phif{p}{\pbar}\right) (u^R)_\x d\x 
\le 0,
\]
which has a non-positive contribution in the subsequent analysis.

We now extract good terms from the radiative heat flux effect.
To this end, we define
\[
A_\th \coloneqq \frac{\thbar}{4\b \th^5}, \qquad
\Abar_\th \coloneqq \frac{1}{4\b \thbar^4},
\]
and recall the radiative heat transfer equations:
\[
-\left(\frac{q_\x}{v}\right)_\x + \a v q + \b (\th^4)_\x =0, \qquad
-\left(\frac{\qbar_\x}{\vbar}\right)_\x + \a \vbar \qbar + \b (\thbar^4)_\x = Q_3.
\]
Then, we obtain 
\[
\left(\frac{\chi}{\th}\right)_\x
= \frac{\chi}{\th\thbar}\thbar_\x
+ A_\th \Big(\frac{q_\x}{v} - \frac{\qbar_\x}{\vbar}\Big)_\x
-\frac{\a\thbar}{4\b} \Big(\frac{vq}{\th^5} - \frac{\vbar \qbar}{\thbar^5}\Big)
+ (A_\th-\Abar_\th) \Big(\frac{\qbar_\x}{\vbar}\Big)_\x
- \Abar_\th Q_3.
\]
The second and third terms on the right-hand side can be written as follows: 
\[
A_\th \Big(\frac{q_\x}{v} - \frac{\qbar_\x}{\vbar}\Big)_\x
= A_\th \Big(\frac{r_\x}{v}\Big)_\x
-A_\th \Big(\frac{\phi}{v\vbar}\qbar_\x\Big)_\x
\]
and 
\[
-\frac{\a\thbar}{4\b} \Big(\frac{vq}{\th^5} - \frac{\vbar \qbar}{\thbar^5}\Big)
= -\frac{\a v \thbar}{4\b \th^5}r
-\frac{\a \thbar \qbar}{4\b} \Big(\frac{v}{\th^5} - \frac{\vbar}{\thbar^5}\Big).
\]
Then, we have
\begin{align*}
\int_\RR a \left(\frac{\chi}{\th}\right)_\x r d\x 
&= -\frac{1}{4\b}\int_\RR a \frac{\thbar}{v\th^5} (r_\x)^2 d\x
-\frac{\a}{4\b}\int_\RR a\frac{v \thbar}{\th^5}r^2 d\x\\
&\qquad
+ \int_\RR a \frac{\chi}{\th\thbar} r \thbar_\x d\x
- \int_\RR (a A_\th)_\x \frac{r r_\x}{v} d\x
+ \int_\RR (a A_\th r)_\x \frac{\phi}{v\vbar} \qbar_\x d\x\\
&\qquad
- \frac{\a}{4 \b} \int_\RR a \thbar \qbar r \Big(\frac{v}{\th^5}-\frac{\vbar}{\thbar^5}\Big) d\x
+ \int_\RR a (A_\th-\Abar_\th) \Big(\frac{\qbar_\x}{\vbar}\Big)_\x r d\x
- \int_\RR a \Abar_\th r Q_3 d\x.
\end{align*}
The two terms on the first line are the desired good terms.
Therefore, it follows that 
\[
\frac{d}{dt} \int_\RR a \thbar \et(U|\Ubar) d\x
=\dot{X}Y + \Jcal^{hyp} + \Jcal^{rad} + \Jcal^{int} - \Jcal^{good}.
\]
This completes the proof of Lemma \ref{lem:RHS}.
\end{proof}

\subsection{Elliptic Representation of Temperature}\label{sec:chir}
The radiative dissipation in \(\Jcal^{good}\) provides control of \(r\) and \(r_\x\).
In this subsection, we derive estimates for \(\chi\) and its derivatives in terms of the \(r\)-terms.
We first introduce the following to control the terms arising from the contact wave:
\begin{equation}\label{eq:Wdef}
W(t,\x) \coloneqq \frac{1}{\sqrt{1+t}} e^{-\frac{c_1}{2(1+t)}(\x+\s t)^2}.
\end{equation}

From the heat transfer equation, we find that \(r=q-\qbar\) verifies 
\[
-\Big(\frac{r_\xi}{v^*}\Big)_\xi+\alpha v^* r=-4\beta(\theta^*)^3\chi_\xi+E_1,
\]
where 
\begin{equation}\begin{aligned}\label{eq:E1def}
E_1
&=-Q_3
-\Big(\frac{r_\xi\phi}{v\bar{v}}\Big)_\xi
-\Big(\frac{\bar{q}_\xi\phi}{v\bar{v}}\Big)_\xi
-\beta(4\theta^3-4\bar{\theta}^3)\bar{\theta}_\xi
-\alpha \phi r
-\alpha \phi \bar{q} \\
&\qquad
+4\beta((\theta^*)^3-\theta^3)\chi_\xi
+\Big(\frac{r_\xi}{\bar v}-\frac{r_\xi}{v^*}\Big)_\xi
+\alpha r (v^*-\bar v).
\end{aligned}\end{equation}

Define the differential operator 
\[
L \coloneqq \alpha v^*-\partial_\xi\Big(\frac{\partial_\xi}{v^*}\Big).
\]
We now introduce the Green's function associated with the operator \(L\)
\[
G(y)=\frac{1}{2\sqrt{\alpha}}\exp(-\sqrt{\alpha}v^*|y|),
\]
and its action on a function \(f\in L^2(\RR)\): 
\[
K_{v^*}f(\xi)
\coloneqq G*f(\xi).
\]

The following lemma establishes several useful properties of \(K_{v^*}\).
\begin{lemma}\label{lem:greensfunction}
The operator $K_{v^*}$ verifies the following properties:
\begin{enumerate}
\item The operator \(L:H^2(\RR) \to L^2(\RR)\) is invertible, with $L^{-1}(f)(\x)=K_{v^*}f(\x)$.
\item For every \(f \in L^2(\RR)\), $\partial_\xi K_{v^*}f= G_\xi*f = G*f_\xi =K_{v^*}f_\xi$ (in the sense of distributions).
\item $K_{v^*}$ is a bounded operator from $L^2(\RR)$ to $H^2(\RR)$: there exists a constant \(C>0\) such that for every \(f\in L^2(\RR)\),
\[
\norm{K_{v^*} f}_{H^2(\RR)} \le C \norm{f}_{L^2(\RR)}.
\]
\end{enumerate}
\end{lemma}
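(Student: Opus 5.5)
The plan is to work on the Fourier side, where $L$ is a constant-coefficient operator with an explicit, strictly positive symbol. Since $v^*>0$ is constant, $L=\a v^*-\frac{1}{v^*}\rd_\x^2$, and with the convention $\hat f(k)=\int_\RR f(\x)e^{-ik\x}\,d\x$ its symbol is
\[
m(k)=\a v^*+\frac{k^2}{v^*}.
\]
For some $c>0$ depending only on $\a$ and $v^*$ we have $m(k)\ge c(1+k^2)$.

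\textbf{Step 1: $G$ is the fundamental solution.} I will check that $LG=\d_0$ in the sense of distributions. On each half-line, $G$ is a multiple of $e^{-\sqrt{\a}v^*|y|}$. It therefore satisfies $\a (v^*)^2G-G''=0$ there, which is the same as $\a v^*G-\frac{1}{v^*}G''=0$.

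At $y=0$, $G$ is continuous. Its derivative $G'(y)=-\sqrt{\a}v^*\,\mathrm{sgn}(y)\,G(y)$ jumps by $-2\sqrt{\a}v^*G(0)=-v^*$. Hence the distributional second derivative is the classical one plus $-v^*\d_0$. This gives $-\frac{1}{v^*}G''=-\frac{1}{v^*}G''_{\mathrm{class}}+\d_0$, and so $LG=\d_0$.

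Equivalently, the Fourier transform of $e^{-\kappa|y|}$ with $\kappa=\sqrt{\a}v^*$ is $2\kappa/(\kappa^2+k^2)$. This yields $\hat G(k)=\frac{v^*}{\a(v^*)^2+k^2}=1/m(k)$, and I will use this identity throughout.

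\textbf{Step 2: items (1) and (3).} For $f\in L^2$ we have $\widehat{K_{v^*}f}=\hat f/m$. Plancherel then gives
\[
\norm{K_{v^*}f}_{H^2}^2\simeq\int_\RR (1+k^2)^2\frac{|\hat f|^2}{m(k)^2}\,dk\le c^{-2}\norm{f}_{L^2}^2 ,
\]
up to the $2\pi$ normalisation. This is item (3).

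For item (1), $L\colon H^2\to L^2$ is bounded, since $m(k)\le C(1+k^2)$. It is injective because $m>0$: if $Lu=0$ then $m\hat u=0$, so $\hat u=0$. The map $K_{v^*}\colon L^2\to H^2$ satisfies $\widehat{LK_{v^*}f}=m\hat f/m=\hat f$ and $\widehat{K_{v^*}Lu}=\hat u$. So $K_{v^*}$ is a two-sided inverse of $L$, i.e. $L^{-1}=K_{v^*}$.

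\textbf{Step 3: item (2).} On the Fourier side all three expressions equal $ik\hat f/m$, so they coincide as tempered distributions. To make this rigorous:

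1. Since $G\in W^{1,1}(\RR)$ with the bounded weak derivative $G'$ above (no Dirac mass, because $G$ is continuous), the convolution $G'*f$ is a genuine $L^2$ function. Moreover $\rd_\x(G*f)=G'*f$, which follows by testing against $\varphi\in C_c^\infty$ and using Fubini.
2. For $f_\x\in H^{-1}$, define $G*f_\x$ by duality: $\langle G*f_\x,\varphi\rangle=-\langle f,(G*\varphi)'\rangle$, using that $G$ is even. Moving the derivative through the convolution gives $G*f_\x=G'*f$.
3. By definition $K_{v^*}f_\x=G*f_\x$, which completes item (2).

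The only step needing care is the jump computation at $y=0$ in Step 1: one must get the sign of the jump in $G'$ and the factor $1/v^*$ exactly right so that the constant $\frac{1}{2\sqrt\a}$ is confirmed. Everything else is routine Fourier-multiplier theory.
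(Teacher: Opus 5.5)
Your proposal is correct, and it proves the lemma by a different route than the paper.

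\textbf{The paper's route.} The paper stays in physical space throughout.
\begin{itemize}
\item For (1), it rescales $z=v^*\xi$ so that $L=v^*(-\partial_z^2+\alpha)$, then quotes the textbook kernel $\frac{1}{2\sqrt{\alpha}}e^{-\sqrt{\alpha}|z|}$ and transforms back.
\item It dismisses (2) as a property of convolution.
\item For (3), it bounds $K_{v^*}f$ and $\partial_\xi K_{v^*}f=G_\xi*f$ in $L^2$ by Young's inequality, using the explicit $L^1$ kernels $G$ and $G_\xi$. It then recovers the second derivative algebraically from $LK_{v^*}f=f$, namely $\partial_{\xi\xi}K_{v^*}f=\alpha(v^*)^2K_{v^*}f-v^*f$.
\end{itemize}
That last step never differentiates $G$ twice, so the Dirac mass in $G''$ never appears. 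You instead handle that mass directly through the jump computation. You do it correctly, and it independently confirms the constant $\frac{1}{2\sqrt{\alpha}}$.

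\textbf{What your route buys.} Your Fourier-multiplier argument rests on $\hat G=1/m$ with $m(k)\simeq 1+k^2$, and this gives a more self-contained logical structure.
\begin{itemize}
\item Items (1) and (3) come out simultaneously from the single bound on $1/m$.
\item Injectivity of $L$ on $H^2$ is explicit; the paper does not address it.
\item Nothing is imported. The paper's proof of (1) borrows the $L^2\to H^2$ mapping property of the textbook kernel rather than proving it, and its proof of (3) then uses (1).
\item Your duality definition of $G*f_\xi$ for $f_\xi\in H^{-1}$ gives real content to item (2). There, $K_{v^*}f_\xi$ is not a priori an $L^2$ convolution, a point the paper passes over.
\end{itemize}

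\textbf{What the paper's route buys.} In exchange, the paper's version is more elementary and needs no Plancherel theory. It also foregrounds the identity $\partial_{\xi\xi}K_{v^*}=\alpha(v^*)^2K_{v^*}-v^*$, which is reused immediately in the representation of $\chi$ through $\zeta$ and $r_\xi$. That identity is equally immediate on your Fourier side.
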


\begin{proof}
\textbf{Proof of (1):}
Consider the change of variable $z(\x)=v^*\x$.
Under this transformation, the operator \(L\) takes the form 
\[
L=v^*(-\partial_{z}^2+\alpha).
\]
Since the Green's function of $(-\partial_{z}^2+\alpha)$ is given by $\frac{1}{2\sqrt{\alpha}}e^{-\sqrt{\alpha}|z|}$, transforming back to the original variable \(\x\) yields (1).

\noindent \textbf{Proof of (2):} It follows straightforwardly from the properties of convolution.

\noindent \textbf{Proof of (3):} We first note that, by Young's convolution inequality 
\[
\norm{K_{v^*}f}_{L^2(\RR)} \le C \norm{f}_{L^2(\RR)}.
\]
Using (2) together with the fact that 
\begin{align*}
\partial_{\xi}G(\x-y)=-\frac{v^*}{2}\text{sgn}(\x-y)e^{-\sqrt{\alpha}v^*|\x-y|},
\end{align*}
we obtain the estimate for the \(L^2\) norm of the first-order derivative identically.\\
To control the \(H^2\) estimate, we may use (1) to find that 
\[
f
=\alpha v^*K_{v^*}f
-\partial_\xi \Big(\frac{\partial_{\xi}K_{v^*}f}{v^*}\Big).
\]
Re-arranging yields the following: 
\[
\partial_{\xi\xi}K_{v^*}f
=\alpha (v^*)^2 K_{v^*}f-v^*f.
\]
Taking the $L^2$ norms gives the result (3). 
\end{proof}

By \Cref{lem:greensfunction}, we obtain the following representation of \(r\):
\begin{equation}\label{eq:rrep}
r(\x)=-4\beta(\theta^*)^3K_{v^*}\chi_\xi+K_{v^*}E_1.
\end{equation}
We now introduce the new variable
\begin{equation}\label{eq:zetadef}
\zeta(\x) \coloneqq
-4\beta(\theta^*)^3K_{v^*}\chi.
\end{equation}
Applying \Cref{lem:greensfunction} again, we obtain
\begin{equation}\label{eq:E2def}
\zeta_\xi=r-K_{v^*}E_1=:r+E_2.
\end{equation}
Moreover, the same lemma allows us to express \(\chi\) in terms of \(\z\) and \(r_\x\) as 
\begin{equation}\begin{aligned}\label{eq:chirep}
\chi=\alpha v^*K_{v^*}\chi-\partial_\xi\left(\frac{K_{v^*}\chi_\xi}{v^*}\right) 
&=\frac{-\alpha v^*}{4\beta(\theta^*)^3}\zeta+\frac{r_\xi}{4\beta (\theta^*)^3v^*}+E_3,
\end{aligned}\end{equation}
where
\begin{equation}\label{eq:E3def}
E_3 \coloneqq
-\partial_\xi\left(\frac{K_{v^*}E_1}{4\beta(\theta^*)^3v^*}\right).
\end{equation}

We conclude this subsection with useful bounds for estimating the error terms.
\begin{lemma}\label{lem:errorsL2}
Let \(E_2\) and \(E_3\) be defined by \eqref{eq:E2def} and \eqref{eq:E3def}, respectively.
Under the hypotheses of \Cref{prop:main}, there exists a constant \(C>0\) such that
\begin{multline*}
\norm{E_2}_{L^2}^2
+\norm{E_3}_{L^2}^2
\leq C\bigg(\norm{Q_3}_{L^2}^2+\delta_S G^S+\delta_R G^R
+\delta_C\int_{\RR}W(t,\xi)^2 (\phi^2+\chi^2) d\xi\\
+(\e_1+\delta_0)\big(\norm{r}_{L^2}^2+\norm{r_\x}_{L^2}^2+\norm{\chi_\x}_{L^2}^2\big)\bigg).
\end{multline*}
\end{lemma}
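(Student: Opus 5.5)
The first step is to reduce both error norms to a single bound on the source term \(E_1\) in a negative Sobolev norm. By definition, \(E_2=-K_{v^*}E_1\) and \(E_3=-\partial_\x(K_{v^*}E_1)/(4\b(\th^*)^3v^*)\). \Cref{lem:greensfunction} then gives \(\norm{E_2}_{L^2}+\norm{E_3}_{L^2}\le C\norm{K_{v^*}E_1}_{H^1}\). Several terms of \(E_1\) are full \(\x\)-derivatives, namely \((r_\x\phi/(v\vbar))_\x\), \((\qbar_\x\phi/(v\vbar))_\x\), \((r_\x(1/\vbar-1/v^*))_\x\), and part of \(Q_3\). For these, property (2) of \Cref{lem:greensfunction} lets the derivative fall on the kernel: \(K_{v^*}(g_\x)=\partial_\x K_{v^*}g\), and \(\norm{\partial_\x K_{v^*} g}_{H^1}\le C\norm{g}_{L^2}\) by (3). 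So I split \(E_1=\partial_\x F+H\) and aim for
\[
\norm{E_2}_{L^2}+\norm{E_3}_{L^2}\le C\big(\norm{F}_{L^2}+\norm{H}_{L^2}\big).
\]
Here
\[
F=-\frac{r_\x\phi}{v\vbar}-\frac{\qbar_\x\phi}{v\vbar}+r_\x\Big(\frac1\vbar-\frac1{v^*}\Big),
\]
and \(H\) collects \(-Q_3\), \(-4\b(\th^3-\thbar^3)\thbar_\x\), \(-\a\phi r\), \(-\a\phi\qbar\), \(4\b((\th^*)^3-\th^3)\chi_\x\), and \(\a r(v^*-\vbar)\). I keep \(Q_3\) in \(L^2\), which matches the \(\norm{Q_3}_{L^2}^2\) on the right-hand side of the lemma.

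Next I bound each piece term by term.

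\textbf{Purely perturbative pieces.} The terms \(r_\x\phi\) and \(\phi r\) are bounded by \(\norm{\phi}_{L^\infty}\norm{(r,r_\x)}_{L^2}\le C\e_1\norm{(r,r_\x)}_{L^2}\), using Sobolev embedding and \eqref{suppose}.

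\textbf{Terms with a factor \(\th^*-\th\) or \(v^*-\vbar\).} I write \(\th-\th^*=\chi+(\thbar-\th^*)\) and \(\vbar-v^*=(v^R-v_*)+(v^C-v^*)+(v^S-v^*)+(v_*-v^*)\)-type decompositions. Each such difference is \(O(\e_1+\d_0)\) in \(L^\infty\), since \(|\thbar-\th^*|\le C(\d_R+\d_C+\d_S)\) uniformly. This gives \((\e_1+\d_0)\norm{(r_\x,r,\chi_\x)}_{L^2}\).

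\textbf{Terms with \(\thbar_\x\) or \(\qbar\) against \(\phi,\chi\).} These are \((\th^3-\thbar^3)\thbar_\x\), \(\phi\qbar\), and \(\qbar_\x\phi\). I split \(\thbar_\x\), \(\qbar\), and \(\qbar_\x\) into their rarefaction, contact, and shock parts. By \Cref{lem:qR}, \eqref{qder}, \eqref{qhder}, \eqref{2bound1}, and \Cref{lem:VC}, the parts are pointwise bounded by \(C|(v^R)_\x|\), \(C\d_C W^2\cdot\)(bounded), and \(C|(v^S)_\x|\) respectively. For the contact part, \(|(\th^C)_\x|\le C\d_C(1+t)^{-1/2}e^{-c_1x^2/(1+t)}\le C\d_C W\), with \(W\) from \eqref{eq:Wdef}, and I do not need the extra factor here. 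Squaring and integrating then gives:
\begin{itemize}
\item for the rarefaction part, \(\int|(v^R)_\x|^2|(\phi,\chi)|^2\le \norm{(v^R)_\x}_{L^\infty}G^R\le C\d_R G^R\);
\item for the shock part, \(\int |(v^S)_\x|^2|(\phi,\chi)|^2\le C\d_S^2 G^S\), which is better than needed;
\item for the contact part, \(\d_C^2\int W^2(\phi^2+\chi^2)\).
\end{itemize}
The factor \(\th^3-\thbar^3\) is controlled pointwise by \(C|\chi|\).

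\textbf{Main obstacle.} The delicate point is the shock contributions. The bound must come out as \(\d_S G^S\) with \(G^S\) weighted by \(|(v^S)_\x|\) to the first power, not with a weight of size \(O(1)\). This requires every shock-localized coefficient to carry at least one factor of \((v^S)_\x\), and \eqref{qder}–\eqref{qhder} guarantee exactly that. A second concern is whether \(Q_3\) should be kept whole or split. I keep it whole, since the lemma allows \(\norm{Q_3}_{L^2}^2\). Finally, I square the bounds and sum them, using \(\d_C^2\le\d_C\) and \(\d_S^2\le \d_S\), to obtain the claimed estimate.
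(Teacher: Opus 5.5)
Your proposal is correct and is essentially the paper's argument: you bound $K_{v^*}E_1$ term by term via \Cref{lem:greensfunction}, letting the derivative in the divergence-form terms fall on the kernel. Your split $E_1=\partial_\x F+H$ just packages the paper's remark that $K_{v^*}$ absorbs up to two derivatives, so that $E_3$ is handled the same way as $E_2$.

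There are two harmless slips. First, $\vbar-v^*=(v^R-v_*)+(v^C-v^*)+(v^S-v^*)$, with no extra $(v_*-v^*)$ term. Second, $(\th^C)_\x$ and $q^C$ are only $O(\d_C W)$ pointwise, not $O(\d_C W^2)$; the $O(\d_C W)$ bound is the one you actually end up using.
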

\begin{proof}
Firstly, we establish the bound for \(\norm{E_2}_{L^2}^2=\norm{K_{v^*}E_1}_{L^2}^2\).
We estimate each term in the definition of \(E_1\) in \eqref{eq:E1def} separately.
For the first term, \Cref{lem:greensfunction} gives 
\[
\norm{K_{v^*}Q_3} _{L^2} \le C\norm{Q_3}_{L^2}.
\]
For the second term, we obtain 
\[
\norm{K_{v^*} \Big(\frac{r_\xi \phi}{v \bar v}\Big)_\xi}_{L^2}^2
=\norm{\partial_{\xi}\Big[K_{v^*}\Big(\frac{r_\xi \phi}{v \bar v}\Big)\Big]}_{L^2}^2
\le C\norm{\frac{r_\xi \phi}{v \bar v}}_{L^2}^2
\le C\e_1 \norm{r_\x}_{L^2}^2,
\]
where we have used \eqref{suppose} to ensure that \(v\) and \(\vbar\) are uniformly bounded away from zero by a positive constant of \(\Ocal(1)\) and that \(\abs{\phi}\le C \e_1\).
Similarly, for the third term, we note that 
\[
\norm{K_{v^*}\Big(\frac{\bar{q}_\xi\phi}{v\bar{v}}\Big)_\xi}_{L^2}^2
\le C\norm{\bar{q}_\xi\phi}_{L^2}^2.
\]
Recalling that $\bar{q}=q^R+q^C+q^S$ and using Lemmas \ref{lem:qR} and \ref{lem:VS} with \eqref{c-uq-der}, we then obtain
\[
\norm{K_{v^*}\Big(\frac{\bar{q}_\xi\phi}{v\bar{v}}\Big)_\xi}_{L^2}^2
\le C\norm{\bar{q}_\xi\phi}_{L^2}^2
\le C\left(\delta_R G^R+\delta_S G^S
+\delta_C\int_{\RR}W(t,\xi)^2 \abs{\phi}^2 d\xi\right).
\]
The fourth through ninth terms are estimated using Lemmas \ref{lem:uR}--\ref{lem:VS}, Lemma \ref{lem:greensfunction}, and the \textit{a priori} assumption \eqref{suppose} as follows:
\begin{align*}
\norm{K_{v^*}\beta(4\theta^3-4\bar{\theta}^3)\bar{\theta}_\xi}_{L^2}^2
&\le C\left(\delta_R G^R+\delta_S G^S
+\delta_C\int_{\RR} W(t,\xi)^2 \abs{\chi}^2 d\xi\right),\\[0.2em]
\norm{K_{v^*}\alpha \phi r}_{L^2}^2
&\le C\e_1 \norm{r}_{L^2}^2,\\[0.2em]
\norm{K_{v^*}\alpha \phi \bar{q}}_{L^2}^2
&\le C\left(\delta_R G^R+\delta_S G^S
+\delta_C\int_{\RR} W(t,\xi)^2 \abs{\phi}^2 d\xi\right),\\[0.2em]
\norm{K_{v^*}4\beta((\theta^*)^3-\theta^3)\chi_\xi}_{L^2}^2
&\le C(\delta_0+\e_1) \norm{\chi_{\xi}}_{L^2}^2,\\[0.2em]
\norm{K_{v^*}\Big[\Big(\frac{r_\xi}{\bar v}-\frac{r_\xi}{v^*}\Big)_\xi\Big]}_{L^2}^2
&=\norm{G_\xi*\Big(\frac{r_\xi (v^*-\bar v)}{\bar v v^*}\Big)}_{L^2}^2
\le C(\delta_0+\e_1) \norm{r_\x}_{L^2}^2,\\[0.2em]
\norm{K_{v^*}\alpha r (v^*-\bar v)}_{L^2}^2
&\le C\delta_0 \norm{r}_{L^2}^2.
\end{align*}
Combining all the above considerations proves the desired bound on \(\norm{E_2}_{L^2}^2=\norm{K_{v^*}E_1}_{L^2}^2\).

It remains to estimate \(\norm{E_3}_{L^2}^2\).
To this end, we first note that 
\[
\norm{E_3}_{L^2}^2
\le C\norm{\rd_\x K_{v^*}E_1}_{L^2}^2.
\]
In the preceding estimate for \(\norm{K_{v^*}E_1}_{L^2}^2\), at most one derivative falls on the operator \(K_{v^*}\).
Since this operator can accommodate up to two derivatives, each term in \(\partial_\xi(K_{v^*}E_1)\) can be estimated in the same way as above, putting the extra derivative onto \(K_{v^*}\).
This completes the proof.
\end{proof}

\subsection{Decomposition}
In order to prove \Cref{lem:l2est}, we consider the following decomposition of the right-hand side of the evolution identity in \Cref{lem:RHS}:
\begin{align}\label{eq:RHSdecomp}
\frac{d}{dt} \int_\RR a \thbar \et(U|\Ubar) d\x
=\dot{X} \sum_{i=1}^6 Y_i
+\sum_{i=1}^5 B_i+\sum_{i=1}^3S_i-G_1-G_2-D_1-D_2, 
\end{align}
where the decompositions of the \(Y\) term and the bad terms are
\begin{align*}
&Y_1=\int_{\RR}a(u^S)_\xi\psi d\xi,
&\hspace{5mm}
Y_2 &=\int_{\RR}a\frac{(v^S)_\xi\bar p}{\bar v}\phi d\xi, 
&\hspace{5mm}
Y_3 &=\int_{\RR}a\frac{R}{\gamma-1}\frac{(\theta^S)_\xi}{\bar \theta}\chi d\xi,\\
&Y_4 =-\int_{\RR}aR(\theta^S)_\xi\Phi\Big(\frac{v}{\bar v}\Big)d\xi, 
&\hspace{5mm}
Y_5 &=-\int_{\RR}a\frac{R}{\gamma-1}(\theta^S)_\xi\Phi\Big(\frac{\theta}{\bar \theta}\Big)d\xi,
&\hspace{5mm}
Y_6 &=-\int_{\RR}a_{\xi}\bar{\theta}\eta(U|\bar U)d\xi,
\end{align*}
\begin{align*}
&B_1=\int_{\RR}a(v^S)_\xi\left(\frac{(\gamma+1)p^*\sigma^*}{2(v^*)^2}\phi^2+\frac{R\sigma^*}{2v^*\theta^*}\chi^2-\frac{p^*\sigma^*}{v^*\theta^*}\phi \chi\right)d\xi,
&\hspace{-3mm}
B_2=\int_{\RR}a_\xi\psi(p-\bar{p})d\xi,\\[0.1em]
&B_3=\Jcal^{rad},
&\hspace{-13mm}
B_4=-\int_{\RR}a\bar{p}(u^C)_\xi \bigg(\Phi\Big(\frac{p}{\bar p}\Big)+\gamma \Phi\Big(\frac{v}{\bar v}\Big)\bigg)d\xi, 
\end{align*} \vspace{-4mm}
\begin{align*}
B_5=-\int_{\RR}a\bar{p}(u^S)_\xi\bigg(\Phi\Big(\frac{p}{\bar p}\Big)+\gamma \Phi\Big(\frac{v}{\bar v}\Big)\bigg)d\xi
-\int_{\RR}a(v^S)_\xi\bigg(\frac{(\gamma+1)p^*\sigma^*}{2(v^*)^2}\phi^2+\frac{R\sigma^*}{2v^*\theta^*}\chi^2-\frac{p^*\sigma^*}{v^*\theta^*}\phi \chi\bigg)d\xi,
\end{align*}
\begin{align*}
&S_1=-\int_{\RR}a\psi Q_1d\xi,
&\hspace{8.7mm}
S_2&=-\int_{\RR}a\left(\frac{\chi}{\thbar}
-\frac{\g-1}{R}\Hcal(U|\Ubar)\right) Q_2 d\xi,
&\hspace{8.7mm}
S_3&=-\int_{\RR}a\bar{A}_\theta r Q_3 d\xi,
\end{align*}
and the decomposition of the good terms is
\begin{align*}
&G_1=\sigma \int_{\RR}a_\xi \bar{\theta}\eta(U|\bar{U})d\xi,
&\hspace{20mm}
G_2&=\int_{\RR}a\bar{p}(u^R)_\xi \bigg(\Phi\Big(\frac{p}{\bar p}\Big)
+\gamma \Phi\Big(\frac{v}{\bar v}\Big)\bigg)d\xi, \\
&D_1=\frac{\a}{4\b}\int_\RR a\frac{v \thbar}{\th^5}r^2 d\x,
&\hspace{20mm}
D_2&=\frac{1}{4\b}\int_\RR a \frac{\thbar}{v\th^5} (r_\x)^2 d\x.
\end{align*}
Then, from \eqref{eq:shiftdef}, we find that
\begin{equation}\label{eq:shiftrep}
\dot{X}(t)=-\frac{M}{\delta_S}(Y_1+Y_2+Y_3),
\end{equation}
and thus it follows that
\begin{equation}\label{eq:shifttermrep}
\dot{X}(t)Y=-\frac{\delta_S}{M}|\dot{X}(t)|^2+\dot{X}(t)\sum_{i=4}^6Y_i.
\end{equation}
Using \eqref{eq:RHSdecomp} and \eqref{eq:shifttermrep} together with Young's inequality, we find that
\begin{multline}\label{eq:RHSdecomp2}
\frac{d}{dt} \int_\RR a \thbar \et(U|\Ubar) d\x
\leq -\frac{\delta_S}{2M}|\dot{X}|^2+B_1+B_2-G_1-\frac{3}{4}D_1-\frac{3}{4}D_2 \\ 
-\frac{\delta_S}{4M}|\dot{X}|^2+\frac{C}{\delta_S}\sum_{i=4}^6|Y_i|^2+\sum_{i=3}^5B_i+\sum_{i=1}^3S_i-G_2-\frac{1}{4}D_1-\frac{1}{4}D_2.
\end{multline}
The first line on the right-hand side contains the leading-order terms, whereas the second line consists of the lower-order terms.
We estimate the leading-order and lower-order terms in the following two subsections, respectively.

\subsection{Leading-Order Estimates}
We first estimate the leading-order terms in \eqref{eq:RHSdecomp2}.

\begin{lemma}\label{lem:L2estpart1}
There exist positive constants \(c\) and \(C\) such that
\begin{equation}\begin{aligned}\label{eq:L2estpart1mainresult}
&-\frac{\delta_S}{2M}|\dot{X}|^2
+B_1+B_2-G_1
-\frac{3}{4}D_1
-\frac{3}{4}D_2 \\
&\leq
-c(G^S+\norm{r}_{L^2}^2+\norm{r_\x}_{L^2}^2)
+C\bigg((\e_1+\delta_0)\norm{(\psi_\xi,\chi_\xi)}_{L^2}^2
+\norm{Q_3}_{L^2}^2
+\delta_R G^R \\
&\qquad
+\delta_C\int_{\RR}W(t,\xi)^2\abs{(\phi,\psi,\chi)}^2 d\xi
+(\delta_S)^{\frac43}((\delta_R)^{\frac43}+(\delta_C)^{\frac43})e^{-c\d_S t}\bigg).
\end{aligned}\end{equation}
\end{lemma}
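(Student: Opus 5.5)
The plan is to follow the scheme of \cite[Lemma 4.4]{KVW-ARMA25}, but with the parabolic dissipation replaced by the radiative dissipation \(D_1,D_2\), transferred to \(\chi\) through the representation \eqref{eq:chirep}.

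\textbf{Step 1: reduce to a single bad direction.} Throughout, replace \(\s,\thbar,\vbar,\pbar\) and \(a\) by \(\s^*,\th^*,v^*,p^*\) and \(1\). The errors are controlled by \(C\d_0 G^S\), the \(W\)-terms, and \(\d_R G^R\); for this I use \eqref{sm1}, \eqref{ratio-vu}, \eqref{ratio-vth}, \eqref{der-a} and the interaction bounds of \Cref{lem:interactionests}. Expanding \(\et(U|\Ubar)\) to second order, \(G_1\) becomes
\[
\frac{\s^*}{\sqrt{\d_S}}\int (v^S)_\x\Big(\frac{p^*}{2v^*}\phi^2+\frac{R}{2(\g-1)\th^*}\chi^2+\frac{\psi^2}{2}\Big)
\]
up to cubic terms. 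Writing \(p-\pbar \approx \frac{R}{v^*}\chi-\frac{p^*}{v^*}\phi\), I then complete the square in \(\psi\) between \(G_1\) and \(B_2=\int a_\x\psi(p-\pbar)\). This leaves
\[
-\frac{\s^*}{2\sqrt{\d_S}}\int (v^S)_\x\Big(\psi-\frac{p-\pbar}{\s^*}\Big)^2,
\]
together with a quadratic form in \((\phi,\chi)\) of size \(\d_S^{-1/2}\int (v^S)_\x(\cdot)\). I will then check, using \(\s^{*2}=\g p^*/v^*\), that this form is nonnegative and degenerate in exactly one direction, namely \(\chi\) alone (equivalently, \(\phi\) slaved to \(\chi\)). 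Since \(B_1\) is \(\Ocal(1)\int (v^S)_\x|(\phi,\chi)|^2\), it is a factor \(\sqrt{\d_S}\) smaller than the \(\d_S^{-1/2}\) good terms. So everything except a term of the form \(C_0\int (v^S)_\x\chi^2\), with an explicit \(C_0\) (coming from \(B_1\) and the degenerate direction), is absorbed into a good term \(-c\,\d_S^{-1/2}\int(v^S)_\x|(\phi,\psi,\chi)-(\text{proj. on }\chi)|^2\).

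\textbf{Step 2: elliptic substitution and the Poincar\'e inequality.} In the remaining bad term I insert
\[
\chi=-\frac{\a v^*}{4\b(\th^*)^3}\z+\frac{r_\x}{4\b(\th^*)^3v^*}+E_3 .
\]
The \(r_\x\) part contributes at most \(C\norm{(v^S)_\x}_{L^\infty}\norm{r_\x}_{L^2}^2\le C\d_S^2\norm{r_\x}_{L^2}^2\), which is absorbed by \(\frac34 D_2\). The \(E_3\) part is handled by \Cref{lem:errorsL2}, giving the \(\norm{Q_3}^2\), \(\d_RG^R\), \(W\) and \((\e_1+\d_0)\norm{\chi_\x}^2\) terms on the right side. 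For the \(\z\) part I change variables to \(y=(v^S(\x-X)-v^*)/\d_S\in(0,1)\), so that \(dy=(v^S)_\x d\x/\d_S\), and write \(\int (v^S)_\x\z^2=\d_S\int_0^1\z^2dy\). I then split \(\z\) into its mean \(\bar\z\) and a remainder, and apply \Cref{lem:poincare} to the remainder. Using \eqref{tail-0}, \(y(1-y)\approx (v^S)_\x/(A^*\d_S^2)\), hence
\[
\d_S\int_0^1 y(1-y)|\z_y|^2dy\le \frac{1+C\d_S}{A^*}\int|\z_\x|^2 d\x
=\frac{1+C\d_S}{A^*}\norm{r+E_2}_{L^2}^2 .
\]
This is exactly where the explicit value of \(A^*\) in \eqref{eq:A^*def} must beat the coefficient of \(D_1\), which is \(\frac{\a}{4\b}\frac{v^*}{(\th^*)^4}\). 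I will verify that the product of constants is strictly less than \(\frac34\), leaving \(-c\norm{r}^2\).

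\textbf{Step 3: the mean, via the shift.} The mean \(\bar\z\) is controlled through \(\dot X\). By \eqref{eq:shiftrep} and \eqref{ratio-vu}--\eqref{ratio-vth}, \(Y_1+Y_2+Y_3\) equals, to leading order, a fixed combination \(K_1\) of \(\int(v^S)_\x\chi\) and the already-controlled good direction. Substituting \(\chi\) again gives
\[
\d_S\bar\z\approx -\frac{4\b(\th^*)^3}{\a v^*}\int (v^S)_\x\chi+\text{errors}.
\]
Therefore \(\d_S|\bar\z|^2\lesssim \d_S^{-1}|Y_1+Y_2+Y_3|^2=\d_S|\dot X|^2/M^2\). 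With the stated choice of \(M\), this is absorbed by \(-\frac{\d_S}{2M}|\dot X|^2\). The interaction term \((\d_S)^{4/3}(\d_R^{4/3}+\d_C^{4/3})e^{-c\d_St}\) arises from Young's inequality applied to the \(\Ucal^R,\Ucal^C\) contributions inside \(\pbar,\thbar\) in Steps 1 and 3.

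\textbf{Main obstacle.} I expect the main difficulty to be the sharp constant bookkeeping: showing that after completing squares the only degenerate direction is \(\chi\), and that the Poincar\'e constant \(\frac12\) combined with \(A^*\) and with \(M\) fits under \(\frac34 D_1\) and \(\frac{\d_S}{2M}|\dot X|^2\). If the constants do not close directly, my first fallback is to distribute the weight differently between \(D_1\) and the \(\frac14\) reserve of \(D_2\), using that \(\z_\x=r+E_2\).
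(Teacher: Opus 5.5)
Your plan is correct and is essentially the paper's proof. Your two completed squares are the paper's $G_1^{new},G_2^{new}$, with the same degenerate direction $\phi=-\frac{v^*}{(\g-1)\th^*}\chi$, $\psi=\frac{v^*\s^*}{(\g-1)\th^*}\chi$; your Poincar\'e step is the paper's lower bound on $D_1$ read in the other direction (the constant comes out to exactly half of the leading part of $D_1$, well under $\frac34$); and your Step 3 is the paper's Step 2 with the same $M$. What you leave implicit is routine but should not be done as you first describe: the cubic errors and the errors from $U^R,U^C$ in the degenerate direction need Gagliardo--Nirenberg and the $\norm{R_3}_{L^2}$ bound (a bound by $C\d_0G^S$ does not suffice there), and $-cG^S$ is recovered from the surplus in $D_1$ and in the shift term via the translation inequalities.
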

\begin{proof}
To make the $a$-contraction structure evident, we rewrite all of the leading-order terms with respect to the new variables $y$ and $w$, which are defined as follows:
\begin{align}\label{eq:cov}
y\coloneqq\frac{v^S-v^*}{\delta_S}, \qquad
w(y)\coloneqq\zeta(\xi),
\end{align}
where $\zeta$ has been defined in \eqref{eq:zetadef}.
Since $\delta_S=v_+-v^* > 0$ and $v^S_\xi > 0$, the change of variable $\xi \mapsto y \in (0,1)$ is well-defined, and 
\[
\frac{dy}{d\xi}=\frac{v^S_\xi}{\delta_S}>0, \qquad
a=1+\lambda y, \qquad
a_\xi=\lambda \frac{dy}{d\x} > 0
\]
where \(\l \coloneqq \sqrt{\d_S}\) is the total variation of the weight.

\vspace{2mm}
\noindent\underline{\textbf{Step 1: Estimates on $B_2-G_1$}}
We first extract from \(B_2-G_1\) some good terms, which we call ``translators'' because they allow us to convert between \(\psi\), \(\phi\), and \(\chi\).
Firstly, recalling  \eqref{eq:relentdef} and using $p-\bar p=\frac{R}{v}\chi-\frac{\bar p}{v}\phi$ and $|\sigma-\sigma^*| \leq C\delta_S$, we obtain
\begin{align*}
&\psi(p-\bar p)-\sigma \bar {\theta} \eta(U|\bar U)
=\psi\left(\frac{R}{v}\chi-\frac{\bar p}{v}\phi\right)
-\sigma\left(R\bar \theta \Phi\Big(\frac{v}{\bar v}\Big)
+\frac{R \bar \theta}{\gamma-1}\Phi\Big(\frac{\theta}{\bar \theta}\Big)
+\frac{\psi^2}{2}\right) \\
&\qquad
\leq 
\psi\left(\frac{R}{v^*}\chi-\frac{p^*}{v^*}\phi\right)
-\sigma^*\left(\frac{R\theta^*}{2(v^*)^2}\phi^2
+\frac{R}{2(\gamma-1)\theta^*}\chi^2+\frac{\psi^2}{2}\right) \\
&\qquad\qquad
+C\Big(|(\phi,\chi)|+\delta_S+|(v^R-v_*, \theta^R-\theta_*)|+|(v^C-v^*, \theta^C-\theta^*)|\Big)|(\phi, \psi,\chi)|^2 \\
&\qquad
=-\frac{R\sigma^*\theta^*}{2(v^*)^2}
\left(\phi+\frac{\psi}{\sigma^*}\right)^2
-\frac{R\sigma^*}{2(\gamma-1)\theta^*}
\left(\chi-\frac{(\gamma-1)\theta^*}{v^*\sigma^*}\psi \right)^2 \\
&\qquad\qquad
+C\Big(|(\phi,\chi)|+\delta_S+|(v^R-v_*, \theta^R-\theta_*)|+|(v^C-v^*, \theta^C-\theta^*)|\Big)|(\phi, \psi,\chi)|^2, \\
\end{align*}
where the inequality follows from \eqref{suppose}, while the equality follows from the definition of \(\s^*\).\\
Thus, we have
\begin{equation}\begin{aligned}\label{eq:newgoodtermsdef}
&B_2-G_1
=\int_{\RR}a_\xi\Big(\psi(p-\bar p)-\sigma \bar {\theta} \eta(U|\bar U)\Big)d\xi\\
&\leq
-\frac{R\sigma^*\theta^*}{2(v^*)^2}\int_{\RR}a_\xi\left(\phi+\frac{\psi}{\sigma^*}\right)^2d\xi
-\frac{R\sigma^*}{2(\gamma-1)\theta^*}\int_{\RR}a_\xi\left(\chi-\frac{(\gamma-1)\theta^*}{v^*\sigma^*}\psi \right)^2d\xi \\
&\qquad
+\bigg(C\delta_S\int_{\RR}a_\xi|(\phi,\psi,\chi)|^2d\xi+C\int_{\RR}a_\xi|(\phi,\psi,\chi)|^3d\xi\\
&\qquad\qquad
+C\int_{\RR}a_\xi \Big(|(v^R-v_*, \theta^R-\theta_*)|+|(v^C-v^*,\theta^C-\theta^*)|\Big)\cdot|(\phi,\psi,\chi)|^2d\xi \bigg) \\
&\eqqcolon
-G_1^{new}-G_2^{new}+B_{new}.
\end{aligned}\end{equation}
The two new good terms \(G_1^{new}\) and \(G_2^{new}\) will be used to estimate the other terms, while the new bad term \(B_{new}\) can be estimated as follows.
First, by \eqref{weight}, we have 
\begin{equation}\label{eq:Bnew1}
C\delta_S\int_{\RR}a_\xi \abs{(\phi,\psi,\chi)}^2d\xi \leq C\lambda G^S.
\end{equation}
For the second term, using \eqref{weight} and \eqref{suppose} with the Gagliardo--Nirenberg inequality, we obtain
\begin{equation}\begin{aligned}\label{eq:Bnew2}
&\int_{\RR}a_\xi|(\phi,\psi,\chi)|^3d\xi\\
&\leq C\int_{\RR}|a_\xi| \abs{\phi+\frac{\psi}{\sigma^*}}^3d\xi
+C\int_{\RR}|a_\xi| |\psi|^3 d\xi 
+C\int_{\RR}|a_\xi| \abs{\chi-\frac{(\gamma-1)\theta^*}{v^*\sigma^*}\psi}^3d\xi \\
&\leq C\e_1(G_1^{new}+G_2^{new})
+C\frac{\lambda}{\delta_S} \norm{\psi}_{L^\infty}^2 \int_{\RR}\abs{(v^S)_\xi} |\psi|d\xi \\
&\leq C\e_1(G_1^{new}+G_2^{new})
+C\frac{\lambda}{\delta_S}\norm{\psi_{\xi}}_{L^2}\norm{\psi}_{L^2}
\sqrt{\int_{\RR}\abs{(v^S)_\xi} \psi^2 d\xi}\sqrt{\int_{\RR} \abs{(v^S)_\xi} d\xi} \\
&\leq C\e_1\left(G_1^{new}+G_2^{new}+\norm{\psi_\xi}_{L^2}^2+G^S\right).
\end{aligned}\end{equation}
Finally, for the last term, using \eqref{Riest}, \eqref{suppose} and the interpolation inequality, we find that
\begin{equation}\begin{aligned}\label{eq:Bnew3}
&C\int_{\RR}a_\xi\Big(|(v^R-v_*, \theta^R-\theta_*)|+|(v^C-v^*,\theta^C-\theta^*)|\Big) |(\phi,\psi,\chi)|^2d\xi \\
&\leq C(\delta_R+\delta_C)(G_1^{new}+G_2^{new}) \\
&\qquad
+C\frac{\lambda}{\delta_S}\int_{\RR} \abs{(v^S)_\x} \abs{\psi}^2
\abs{\big(v^R-v_*,\th^R-\th_*,v^C-v^*,\th^C-\th^*\big)}  d\xi\\
&\leq C\delta_0(G_1^{new}+G_2^{new})
+C\frac{\lambda}{\delta_S} \norm{\psi}_{L^4}^2
\norm{\abs{(v^S)_\x}\abs{\big(v^R-v_*,\th^R-\th_*,v^C-v^*,\th^C-\th^*\big)}}_{L^2} \\
&\leq C\delta_0(G_1^{new}+G_2^{new})
+C\frac{1}{\sqrt{\delta_S}} \norm{\psi_{\xi}}_{L^2}^{\frac12}\norm{\psi}_{L^2}^{\frac32}(\delta_S)^{\frac32}(\delta_R+\delta_C)e^{-c\d_S t} \\ 
&\leq C\delta_0(G_1^{new}+G_2^{new})+C\e_1 \norm{\psi_\xi}_{L^2}^2+C(\delta_S)^{\frac43}
\big((\delta_R)^{\frac43}+(\delta_C)^{\frac43}\big)e^{-c\d_S t}.
\end{aligned}\end{equation}
Collecting \eqref{eq:Bnew1}--\eqref{eq:Bnew3}, together with \eqref{eq:newgoodtermsdef}, and taking \(\l\), \(\e_1\), and \(\d_0\) small enough, we have
\begin{equation}\begin{aligned}\label{eq:B2-G1simplified}
B_2-G_1 &\leq 
-\frac34 (G_1^{new}+G_2^{new})
+C(\lambda+\e_1+\delta_0)G^S+
C(\e_1+\delta_0)\norm{\psi_\xi}_{L^2}^2 \\ 
&\qquad
+C(\delta_S)^{\frac43} \big((\delta_R)^{\frac43}+(\delta_C)^{\frac43}\big) e^{-c\d_S t}.
\end{aligned}\end{equation}

\noindent\underline{\textbf{Step 2: Estimates on $-\frac{\delta_S}{2M}|\dot{X}|^2$}}
Thanks to \eqref{eq:shiftrep}, we first observe \(Y_1\), \(Y_2\), and \(Y_3\) as follows:
For \(Y_1\), using $(u^S)_\xi=-\sigma (v^S)_\xi$, we have
\begin{align*}
Y_1
&=\int_{\RR}a(u^S)_\xi\frac{v^*\sigma^*}{(\gamma-1)\theta^*}\chi d\xi
+\int_{\RR}a(u^S)_\xi\Big(\psi-\frac{v^*\sigma^*}{(\gamma-1)\theta^*}\chi\Big)d\xi\\
&=- \frac{v^*(\sigma^*)^2}{(\gamma-1)\theta^*}
\int_{\RR}a(v^S)_\xi \chi d\xi
+ \frac{v^*\sigma^*}{(\gamma-1)\theta^*}
\int_{\RR}a\big((u^S)_\xi+\sigma^*(v^S)_\xi\big) \chi d\xi\\
&\qquad
-\s\int_{\RR}a (v^S)_\xi \Big(\psi-\frac{v^*\sigma^*}{(\gamma-1)\theta^*}\chi\Big)d\xi.
\end{align*}
Substituting \eqref{eq:chirep} into the above identity, we have
\begin{align*}
Y_1&=-\frac{v^*(\sigma^*)^2}{(\gamma-1)\theta^*}
\int_{\RR}a (v^S)_\xi \left(\frac{-\alpha v^*}{4\beta(\theta^*)^3}\zeta+\frac{r_\x}{4\beta (\theta^*)^3v^*}+E_3\right) d\xi \\
&\qquad
+\frac{v^*\sigma^*}{(\gamma-1)\theta^*}\int_{\RR}a\big((u^S)_\xi+\sigma^*(v^S)_\xi\big)
\left(\frac{-\alpha v^*}{4\beta(\theta^*)^3}\zeta+\frac{r_\x}{4\beta (\theta^*)^3v^*}+E_3\right) d\xi \\
&\qquad
-\sigma\int_{\RR}a (v^S)_\xi\Big(\psi-\frac{v^*\sigma^*}{(\gamma-1)\theta^*}\chi\Big)d\xi.
\end{align*}
Then, using \eqref{eq:cov}, \eqref{sm1}, \(\abs{a-1}\le \l\) and \(\abs{\bar{v}-v^*} \le \d_0\), we obtain
\begin{multline*}
\abs{Y_1-\frac{\gamma RK_1}{(\gamma-1)v^*}\delta_S\int_0^1wdy}\\
\leq C(\lambda+\delta_0) \delta_S \int_0^1|w|dy
+C\int_{\RR}|(v^S)_\xi|(\abs{r_\xi}+\abs{E_3})d\xi
+C\frac{\delta_S}{\lambda}\int_{\RR}a_\xi \abs{\chi-\frac{(\gamma-1)\theta^*}{v^*\sigma^*}\psi} d\xi,
\end{multline*}
where the constant \(K_1\) is defined by
\begin{equation}\label{eq:K1def}
K_1 \coloneqq \frac{\alpha v^*}{4\beta (\theta^*)^3}.  
\end{equation}
Similarly, the following holds for \(Y_2\) and \(Y_3\):
\begin{multline*}
\abs{Y_2-\frac{p^*K_1}{\theta^*(\gamma-1)}\delta_S\int_0^1wdy}
+\abs{Y_3-\frac{p^*K_1}{\theta^*}\delta_S\int_0^1wdy}
\le 
C(\lambda+\delta_0)\delta_S\int_0^1|w|dy\\
+C\int_{\RR} \abs{(v^S)_\xi} (|r_\xi|+|E_3|)d\xi
+C\frac{\delta_S}{\lambda}\int_{\RR}a_\xi
\abs{\chi-\frac{(\gamma-1)\theta^*}{v^*\sigma^*}\psi} d\xi
+C\frac{\delta_S}{\lambda}\int_{\RR}a_\xi \abs{\phi+\frac{\psi}{\sigma^*}}d\xi.
\end{multline*}
Thus, using the choice of the shift \eqref{eq:shiftrep} and the definition of \(p^*\), we obtain
\begin{align*}
\abs{\dot{X}+\frac{2\gamma RK_1M}{(\gamma-1)v^*}\int_0^1wdy}
&\leq CM(\lambda+\delta_0)\int_0^1|w|dy
+C\frac{M}{\delta_S}\int_{\RR}|(v^S)_\xi|(|r_\xi|+|E_3|)d\xi \\
&\qquad
+C\frac{M}{\lambda}\int_{\RR}a_\xi \abs{\chi-\frac{(\gamma-1)\theta^*}{v^*\sigma^*}\psi} d\xi
+C\frac{M}{\lambda}\int_{\RR}a_\xi \abs{\phi+\frac{\psi}{\sigma^*}}d\xi.
\end{align*}
Here, \(M>0\) is an \(\Ocal(1)\) constant to be specified later in \eqref{def:M}.\\
It then follows from the elementary inequality \(\frac{p^2}{2}-q^2 \leq (p-q)^2\) for all \(p, q\ge 0\) that
\begin{multline*}
\frac{2(\gamma RK_1M)^2}{(\gamma-1)^2(v^*)^2}\left(\int_0^1wdy\right)^2-|\dot{X}|^2\\
\leq C(\lambda+\delta_0)^2\int_0^1|w|^2dy
+C\delta_S\int_{\RR}(|r_\xi|^2+|E_3|^2)d\xi
+\frac{C}{\lambda}(G_1^{new}+G_2^{new}).
\end{multline*}
This implies the following estimate:
\begin{equation}\begin{aligned}\label{eq:shifttermsimplified}
-\frac{\delta_S}{2M}|\dot{X}|^2
&\leq -\frac{(\gamma RK_1)^2M}{(\gamma-1)^2(v^*)^2}\delta_S
\left(\int_0^1wdy\right)^2
+C\delta_S(\lambda+\delta_0)^2\int_0^1|w|^2dy \\
&\qquad
+C\delta_S\int_{\RR}(|r_\xi|^2+|E_3|^2)d\xi
+\frac{C \delta_S}{\lambda}(G_1^{new}+G_2^{new}).
\end{aligned}\end{equation}

\noindent\underline{\textbf{Step 3: Estimates on $B_1$}}
We now extract the main contribution in \(w\) from \(B_1\).
We first write the further decomposition.
\begin{align*}
B_1=\int_{\RR}a(v^S)_\xi\left(\frac{(\gamma+1)p^*\sigma^*}{2(v^*)^2}\phi^2+\frac{R\sigma^*}{2v^*\theta^*}\chi^2-\frac{p^*\sigma^*}{v^*\theta^*}\phi \chi\right)d\xi=:B_{11}+B_{12}+B_{13}.
\end{align*}
We estimate these terms one by one.
For \(B_{11}\), using Young's inequality and \(\abs{a-1}\le \l\), we have
\begin{align*}
B_{11}
&=\frac{(\gamma+1)p^*\sigma^*}{2(v^*)^2}\int_{\RR}a(v^S)_\xi
\left(-\frac{v^*}{(\gamma-1)\theta^*}\chi
+\frac{v^*}{(\gamma-1)\theta^*}\chi
-\frac{1}{\sigma^*}\psi+\frac{1}{\sigma^*}\psi+\phi\right)^2 d\xi \\
&\leq \frac{(\gamma+1)p^*\sigma^*}{2(v^*)^2}
\left(1+\lambda+C\Big(\frac{\delta_S}{\lambda}\Big)^{\frac12}\right)
\int_{\RR}(v^S)_\xi \abs{\frac{v^*}{(\gamma-1)\theta^*}\chi}^2 d\xi \\
&\qquad
+C\Big(\frac{\delta_S}{\lambda}\Big)^{-\frac{1}{2}}
\int_{\RR}(v^S)_\xi\bigg(\abs{\chi-\frac{(\gamma-1)\theta^*}{v^*\sigma^*}\psi}^2+\abs{\phi+\frac{1}{\sigma^*}\psi}^2\bigg)d\xi \\
&\leq \frac{(\gamma+1)p^*\sigma^*}{2(v^*)^2}
\left(1+\lambda+C\Big(\frac{\delta_S}{\lambda}\Big)^{\frac12}\right)
\int_{\RR}(v^S)_\xi \abs{\frac{v^*}{(\gamma-1)\theta^*}\chi}^2d\xi
+C\Big(\frac{\delta_S}{\lambda}\Big)^{\frac{1}{2}}(G_1^{new}+G_2^{new}).
\end{align*}
Now, using \eqref{eq:chirep}, we replace \(\chi\) by \(\z\) to attain
\begin{align*}
&\frac{(\gamma+1)p^*\sigma^*}{2(v^*)^2}
\left(1+\lambda+C\Big(\frac{\delta_S}{\lambda}\Big)^{\frac12}\right)
\int_{\RR}(v^S)_\xi \abs{\frac{v^*}{(\gamma-1)\theta^*}\chi}^2d\xi \\
&=\frac{(\gamma+1)p^*\sigma^*}{2(\gamma-1)^2(\theta^*)^2}
\left(1+\lambda+C\Big(\frac{\delta_S}{\lambda}\Big)^{\frac12}\right)
\int_{\RR}(v^S)_\xi\left(-\frac{\alpha v^*}{4 \beta (\theta^*)^3}\zeta+\frac{1}{4\beta(\theta^*)^3v^*}r_\xi+E_3\right)^2d\xi \\
&\leq \frac{(\gamma+1)p^*\sigma^*}{2(\gamma-1)^2(\theta^*)^2}
\left(1+\lambda+C\Big(\frac{\delta_S}{\lambda}\Big)^{\frac12}\right)
\int_{\RR}(v^S)_{\xi}(K_1^2+C\delta_0)\zeta^2d\xi
+\frac{C}{\delta_0}\int_{\RR}(v^S)_\xi\left(|r_\xi|^2+|E_3|^2\right)d\xi \\
&\leq \frac{(\gamma+1)p^*\sigma^*K_1^2}{2(\gamma-1)^2(\theta^*)^2}
\left(1+\lambda+C\Big(\frac{\delta_S}{\lambda}\Big)^{\frac12}+C\delta_0\right)
\int_{\RR}(v^S)_\xi \zeta^2d\xi
+\frac{C}{\delta_0}\int_{\RR}(v^S)_\xi\left(|r_\xi|^2+|E_3|^2\right)d\xi,
\end{align*}
where we recall \eqref{eq:K1def} and have applied Young's inequality.
So in total, we obtain 
\begin{equation}\begin{aligned}\label{eq:B11simplified}
B_{11}
&\leq \frac{(\gamma+1)p^*\sigma^*K_1^2}{2(\gamma-1)^2(\theta^*)^2}
\left(1+\lambda+C\Big(\frac{\delta_S}{\lambda}\Big)^{\frac12}+C\delta_0\right)
\int_{\RR}(v^S)_\xi \zeta^2d\xi \\ 
&\qquad
+\frac{C}{\delta_0}\int_{\RR}(v^S)_\xi\left(|r_\xi|^2+|E_3|^2\right)d\xi
+C\Big(\frac{\delta_S}{\lambda}\Big)^{\frac{1}{2}}(G_1^{new}+G_2^{new}).
\end{aligned}\end{equation}
For $B_{12}$, similar considerations using \eqref{eq:chirep} give
\begin{equation}\begin{aligned}\label{eq:B12simplified}
B_{12}
&=\int_{\RR}a(v^S)_\xi\frac{R\sigma^*}{2v^*\theta^*}\chi^2d\xi
=\int_{\RR}a(v^S)_\xi\frac{R\sigma^*}{2v^*\theta^*}\left(\frac{-\alpha v^*}{4\beta(\theta^*)^3}\zeta+\frac{r_\x}{4\beta (\theta^*)^3v^*}+E_3\right)^2d\xi \\
&\leq \frac{R \sigma^* K_1^2}{2v^*\theta^*}(1+\lambda+C\delta_0)
\int_{\RR}(v^S)_\xi\zeta^2d\xi
+\frac{C}{\delta_0}\int_{\RR}(v^S)_\xi\left(|r_\xi|^2+|E_3|^2\right)d\xi.
\end{aligned}\end{equation}
Finally, for $B_{13}$, we may have
\begin{equation}\begin{aligned}\label{eq:B13simplified}
&B_{13}
=\frac{p^*\sigma^*}{v^*\theta^*}\int_{\RR}a(v^S)_\xi\left(-\phi-\frac{\psi}{\sigma^*}+\frac{\psi}{\sigma^*}-\frac{v^* \chi}{(\gamma-1)\theta^*}+\frac{v^* \chi}{(\gamma-1)\theta^*}\right)\chi d\xi \\
&\leq \frac{p^*\sigma^*}{v^*\theta^*}
\int_{\RR}a (v^S)_\xi\frac{v^* \chi^2}{(\gamma-1)\theta^*} d\xi
+C\int_{\RR}(v^S)_\xi \left(\abs{\phi+\frac{\psi}{\sigma^*}}+\abs{\frac{\psi}{\sigma^*}-\frac{v^* \chi}{(\gamma-1)\theta^*}}\right) \abs{\chi} d\xi \\
&\leq \frac{p^* \sigma^*}{(\theta^*)^2(\gamma-1)}\left(1+\lambda+C\Big(\frac{\delta_S}{\lambda}\Big)^{\frac12}\right)\int_{\RR}(v^S)_\xi \chi^2 d\xi+C\Big(\frac{\delta_S}{\lambda}\Big)^{\frac{1}{2}}(G_1^{new}+G_2^{new}) \\
& \leq \frac{p^* \sigma^*K_1^2}{(\theta^*)^2(\gamma-1)}\left(1+\lambda+C\Big(\frac{\delta_S}{\lambda}\Big)^{\frac12}
+C\delta_0 \right)\int_{\RR}(v^S)_\xi \zeta^2 d\xi\\
&\qquad
+\frac{C}{\delta_0}\int_{\RR}(v^S)_\xi\left(|r_\xi|^2+|E_3|^2\right)d\xi
+C\Big(\frac{\delta_S}{\lambda}\Big)^{\frac{1}{2}}(G_1^{new}+G_2^{new}).
\end{aligned}\end{equation}
Noting from \eqref{eq:A^*def} that 
\[
\frac{\alpha^2 R \gamma(\gamma+1)v^* \sigma^*}{32\beta^2 (\gamma-1)^2 (\theta^*)^7}
=\frac{\alpha v^* A^*}{4\beta (\theta^*)^4},
\]
combining \eqref{eq:B11simplified}--\eqref{eq:B13simplified} and using the change of variables \eqref{eq:cov}, we obtain
\begin{equation}\begin{aligned}\label{eq:B1simplified}
B_1 &\leq \frac{\alpha v^* A^*}{4\beta (\theta^*)^4}
\left(1+\lambda+C\Big(\frac{\delta_S}{\lambda}\Big)^{\frac12}+C\delta_0 \right)\delta_S\int_0^1w^2dy \\
&\qquad
+\frac{C}{\delta_0}\int_{\RR}(v^S)_\xi\left(|r_\xi|^2+|E_3|^2\right)d\xi
+C\Big(\frac{\delta_S}{\lambda}\Big)^{\frac{1}{2}}(G_1^{new}+G_2^{new}).
\end{aligned}\end{equation}

\noindent\underline{\textbf{Step 4: Estimates on $D_1$}}
In this step, it becomes apparent that the good term \(D_1\) plays the role of the derivative square term in \Cref{lem:poincare} to cancel the main bad term (the first term on the right-hand side of \eqref{eq:B1simplified}).
Indeed, using \eqref{eq:E2def}, we have
\begin{align*}
D_1&
=\frac{\alpha}{4\beta}\int_{\RR}a\frac{v\bar \theta}{\theta^5}(\zeta_\xi-E_2)^2d\xi
=\frac{\alpha}{4\beta}\left(\int_{\RR}a\frac{v\bar \theta}{\theta^5}|\zeta_\xi|^2d\xi
-2\int_{\RR}a\frac{v\bar \theta}{\theta^5}\zeta_\xi E_2d\xi
+\int_{\RR}a\frac{v\bar \theta}{\theta^5}|E_2|^2d\xi\right).
\end{align*}
Since the coefficient $a\frac{v\bar \theta}{\theta^5}$ is uniformly bounded, we apply Young's inequality to find that 
\[
D_1
\geq (1-\nu)\frac{\alpha}{4\beta}\int_{\RR}a\frac{v\bar \theta}{\theta^5}|\zeta_\xi|^2d\xi-C_\nu\norm{E_2}_{L^2}^2,
\]
where \(\n>0\) is a small parameter arising from Young's inequality, and \(C_\n\) is the corresponding large constant.
Then, using \(\abs{a-1}\le \l\) and \(|v-v^*|+|\th-\th^*|+|\thbar-\th^*| \le C(\d_0+\e_1)\), we have 
\[
D_1
\ge \Big(1-\nu-C(\delta_0+\e_1+\lambda)\Big) \frac{\alpha v^*}{4 \beta (\theta^*)^4}
\int_{\RR}|\zeta_\xi|^2 d\xi
-C_\nu \norm{E_2}_{L^2}^2.
\]
Now, using the change of variables \eqref{eq:cov} together with \eqref{tail-0}, we have
\begin{align*}
D_1 &\geq \Big(1-\nu-C(\delta_0+\e_1+\lambda)\Big)
\frac{\alpha v^*}{4 \beta (\theta^*)^4}
\int_0^1|w_y|^2\frac{dy}{d\xi}dy
-C_\nu \norm{E_2}_{L^2}^2 \\
&\geq \Big(1-\nu-C(\delta_0+\e_1+\lambda)\Big)
\frac{\alpha v^*(A^*-C\delta_S)}{4 \beta (\theta^*)^4}\delta_S
\int_0^1y(1-y)|w_y|^2 dy
-C_\nu \norm{E_2}_{L^2}^2.
\end{align*}
Finally, using the weighted Poincar\'e inequality \Cref{lem:poincare}, alongside 
\[
\int_0^1|w-\bar{w}|^2dy=\int_0^1w^2dy-\bar{w}^2, \qquad \text{where }
\bar{w}=\int_0^1 w dy, 
\]
we obtain
\begin{equation}\label{eq:D1simplified}
D_1 \geq \Big(1-\nu-C(\delta_0+\e_1+\lambda)\Big)
\frac{\alpha v^*A^*}{2 \beta (\theta^*)^4}\delta_S
\left(\int_0^1w^2dy-\bar{w}^2\right)
-C_\nu \norm{E_2}_{L^2}^2.
\end{equation}

\noindent\underline{\textbf{Conclusion:}}
Combining \eqref{eq:B2-G1simplified}, \eqref{eq:shifttermsimplified}, \eqref{eq:B1simplified}, and \eqref{eq:D1simplified}, we have
\begin{align*}
&-\frac{\delta_S}{2M}|\dot{X}|^2+B_1+B_2-G_1-\frac{3}{4}D_1-\frac{3}{4}D_2\\
&\leq-\frac34 (G_1^{new}+G_2^{new})
-\frac{\a}{48\b}\int_\RR a\frac{v \thbar}{\th^5}r^2 d\x
-\frac{1}{48\b}\int_\RR a \frac{\thbar}{v\th^5} (r_\x)^2 d\x\\
&\qquad-\frac{(\gamma RK_1)^2M}{(\gamma-1)^2(v^*)^2}\delta_S\bar w^2
+\frac{\alpha v^* A^*}{4\beta (\theta^*)^4}
\left(1+\lambda+C\Big(\frac{\delta_S}{\lambda}\Big)^{\frac12}+C\delta_0 \right)\delta_S \int_0^1w^2dy \\
&\qquad
-\frac{2}{3}\Big(1-\nu-C(\delta_0+\e_1+\lambda)\Big)
\frac{\alpha v^*A^*}{2 \beta (\theta^*)^4}\delta_S
\left(\int_0^1w^2dy-\bar{w}^2\right) \\ 
&\qquad
+C(\e_1+\delta_0)\norm{\psi_\xi}_{L^2}^2
+C(\delta_S)^{\frac43}\Big((\delta_R)^{\frac43}+(\delta_C)^{\frac43}\Big)e^{-c\d_S t}
+C(\lambda+\e_1+\delta_0)G^S\\
&\qquad
+C\delta_0\int_{\RR}(|r_\xi|^2+|E_3|^2)d\xi
+C\Big(\frac{\delta_S}{\lambda}\Big)^{\frac{1}{2}}(G_1^{new}+G_2^{new})
+C_\nu \norm{E_2}_{L^2}^2.
\end{align*}
Let \(M\) be the \(\Ocal(1)\) constant defined by
\begin{equation}\label{def:M}
M:=\frac{(\gamma-1)^2(v^*)^3\alpha A^*}{2(\gamma R K_1)^2\beta (\theta^*)^4}.
\end{equation} 
Then, choosing \(\n\), \(\d_0\), \(\e_1\) and \(\l\) sufficiently small, we may have
\begin{align*}
&-\frac{\delta_S}{2M}|\dot{X}|^2+B_1+B_2-G_1-\frac{3}{4}D_1-\frac{3}{4}D_2\\
&\leq -\frac12 (G_1^{new}+G_2^{new})
-\frac{\a}{48\b}\int_\RR a\frac{v \thbar}{\th^5}r^2 d\x
-\frac{1}{48\b}\int_\RR a \frac{\thbar}{v\th^5} (r_\x)^2 d\x 
-\frac{\alpha v^*A^*}{24 \beta (\theta^*)^4}\delta_S\int_0^1w^2dy\\
&\qquad
+C(\e_1+\delta_0) \norm{\psi_\xi}_{L^2}^2
+C(\delta_S)^{\frac43}\Big((\delta_R)^{\frac43}+(\delta_C)^{\frac43}\Big)e^{-c\d_S t}
+C(\lambda+\e_1+\delta_0)G^S \\ 
&\qquad
+C\delta_0\int_{\RR}(|r_\xi|^2+|E_3|^2)d\xi
+C\norm{E_2}_{L^2}^2.
\end{align*}
Next, applying the representation \eqref{eq:chirep} and $|\bar v-v^*| \leq \delta_0$, we obtain
\begin{align*}
\delta_S\int_0^1w^2dy
=\int_{\RR}|(v^S)_\xi|\zeta^2d\xi
&=\int_{\RR}|(v^S)_\xi|\abs{\frac{-4\beta (\theta^*)^3}{\alpha  v^*}\chi+\frac{1}{\alpha (v^*)^2}r_\xi+\frac{4\beta (\theta^*)^3}{\alpha v^*}E_3}^2 d\xi \\
& \geq c\int_{\RR}|(v^S)_\xi|\chi^2d\xi
-C\int_{\RR}|(v^S)_\xi|(|r_\xi|^2+|E_3|^2)d\xi.
\end{align*}
Using the translation inequalities
\begin{align*}
\int_{\RR}|(v^S)_\xi|\phi^2d\xi
&\le 2 \int_{\RR}|(v^S)_\xi|\left(\phi+\frac{\psi}{\sigma^*}\right)^2d\xi
+2 \int_{\RR}|(v^S)_\xi|\left(\frac{\psi}{\sigma^*}\right)^2 d\xi\\
&\le C\frac{\delta_S}{\lambda}G_1^{new}
+C\int_{\RR}|(v^S)_\xi|\abs{\psi}^2d\xi,\\
\int_{\RR}|(v^S)_\xi|\psi^2d\xi
&\le 2 \int_{\RR}|(v^S)_\xi|\left(\psi-\frac{v^*\sigma^*}{(\gamma-1)\theta^*}\chi\right)^2d\xi+2\int_{\RR}|(v^S)_\xi|\left(\frac{v^*\sigma^*}{(\gamma-1)\theta^*}\chi\right)^2d\xi\\
&\le C\frac{\delta_S}{\lambda}G_2^{new}
+C\int_{\RR}|(v^S)_\xi|\abs{\chi}^2 d\xi,
\end{align*}
and taking \(\d_S\) (and \(\l=\sqrt{\d_S}\)) small enough, we have
\begin{align*}
&-\frac{\delta_S}{2M}|\dot{X}|^2+B_1+B_2-G_1-\frac{3}{4}D_1-\frac{3}{4}D_2\\
&\leq -\frac14 (G_1^{new}+G_2^{new})
-\frac{\a}{48\b}\int_\RR a\frac{v \thbar}{\th^5}r^2 d\x
-\frac{1}{48\b}\int_\RR a \frac{\thbar}{v\th^5} (r_\x)^2 d\x
-c G^S
+C(\e_1+\delta_0)\norm{\psi_\xi}_{L^2}^2\\
&\qquad
+C(\delta_S)^{\frac43}\Big((\delta_R)^{\frac43}+(\delta_C)^{\frac43}\Big)e^{-c\d_S t} 
+C\delta_0\int_{\RR}(|r_\xi|^2+|E_3|^2)d\xi
+C\norm{E_2}_{L^2}^2.
\end{align*}
Finally, using the smallness of \(\d_0\) and \(\e_1\) with \Cref{lem:errorsL2}, we obtain exactly \Cref{lem:L2estpart1}.
\end{proof}

\subsection{Lower-Order Estimates}
We now provide the estimate on the lower-order terms in \eqref{eq:RHSdecomp2}.
\begin{lemma}\label{lem:L2estpart2}
There exist positive constants \(c\) and \(C\) such that
\begin{equation}\begin{aligned}\label{eq:L2estpart2mainresult}
&\frac{C}{\delta_S}\sum_{i=4}^6|Y_i|^2
+\sum_{i=3}^5B_i
+\sum_{i=1}^3S_i-G_2-\frac{1}{4}D_1-\frac{1}{4}D_2 \\
&\leq -cG^R+C\bigg((\e_1+\d_0+\sqrt{\delta_S})G^S
+\delta_C\int_{\RR}W^2|(\phi, \psi, \chi)|^2d\xi
+\delta_C(1+t)^{-\frac{5}{4}}\norm{\psi}_{L^2} \\
&\hspace{28mm}
+(\d_R)^{-\frac{1}{6}} \norm{Q_2^R}_{L^1}^{\frac43}\norm{(\phi,\chi)}_{L^2}^{\frac23}
+\sqrt{\d_R} \norm{(\phi_\xi, \chi_\xi)}_{L^2}^2
+\norm{Q_2^I}_{L^2}\norm{(\phi,\chi)}_{L^2} \\
&\hspace{28mm}
+\norm{Q_1^I}_{L^2}\norm{\psi}_{L^2}
+\norm{Q_3^R}_{L^2}^2
+\norm{Q_3^C}_{L^2}^2
+\norm{Q_3^I}_{L^2}^2\bigg).
\end{aligned}\end{equation}
\end{lemma}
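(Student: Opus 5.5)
The plan is to bound each group of lower-order terms on the left of \eqref{eq:L2estpart2mainresult} separately, in the style of \cite[Lemma 4.5]{KVW-ARMA25}, and to extract the good term \(cG^R\) from \(G_2\). Since \(\Phi(z)\sim (z-1)^2\) near \(1\) and \(|(v,\th)-(\vbar,\thbar)|\le C\e_1\), we have \(\g\Phi(v/\vbar)+\Phi(p/\pbar)\ge c(\phi^2+\chi^2)\); note that \(\Phi(p/\pbar)\) controls \(p-\pbar\), which together with \(\phi\) controls \(\chi\). Combined with \((u^R)_\x\sim(v^R)_\x>0\) from \Cref{lem:uR}, this gives \(G_2\ge cG^R\).

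\textbf{Shift terms.} For the terms \(C\d_S^{-1}|Y_i|^2\), \(i=4,5,6\), I will use \(|(\th^S)_\x|\le C(v^S)_\x\) and \(a_\x=(v^S)_\x/\sqrt{\d_S}\). By Cauchy--Schwarz, \(|Y_4|^2+|Y_5|^2\le C\int(v^S)_\x\,d\x\int(v^S)_\x|(\phi,\chi)|^4\le C\d_S\e_1^2G^S\). Similarly, \(|Y_6|^2\le C\d_S^{-1}\big(\int(v^S)_\x|(\phi,\psi,\chi)|^2\big)^2\le C\d_S^{-1}\e_1^2\d_S G^S\). After dividing by \(\d_S\), this yields \(C\e_1G^S\). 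I need to check the powers of \(\d_S\) here carefully; \(\sqrt{\d_S}G^S\) is allowed on the right, which gives some slack.

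\textbf{Hyperbolic terms.} For \(B_5\), I will Taylor expand \(\Phi(p/\pbar)+\g\Phi(v/\vbar)\) to second order around the shock state. Using \((u^S)_\x=-\s(v^S)_\x\), \(|\s-\s^*|\le C\d_S\), \(|a-1|\le\sqrt{\d_S}\), and \(|\Ubar-U^*|\le C\d_0\) on the shock support (up to interaction errors), the quadratic part cancels exactly against the subtracted \(B_1\)-type integrand; the coefficients in \(B_1\) were chosen for precisely this. The remainders are \(C(\e_1+\d_0+\sqrt{\d_S})G^S\) plus interaction pieces, and the latter are handled with \eqref{Riest} and Young's inequality in the same way as \eqref{eq:Bnew3}. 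For \(B_4\), I will use \eqref{c-uq-der}: \(|(u^C)_\x|\le C\d_C(1+t)^{-1}e^{-c_1(\x+\s t)^2/(1+t)}\le C\d_C W^2\), which gives \(C\d_C\int W^2|(\phi,\chi)|^2\).

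\textbf{Residual terms.} For \(S_1\): the \(Q_1^C\) piece is bounded by \(\norm{Q_1^C}_{L^2}\norm{\psi}_{L^2}\le C\d_C(1+t)^{-5/4}\norm{\psi}_{L^2}\) via \eqref{Q13CH21}, and the \(Q_1^I\) piece by \(\norm{Q_1^I}_{L^2}\norm{\psi}_{L^2}\). For \(S_2\), the \(Q_2^I\) part is direct. The \(Q_2^R\) part is bounded by \(\norm{Q_2^R}_{L^1}\norm{(\phi,\chi)}_{L^\infty}\). Gagliardo--Nirenberg gives \(\norm{f}_{L^\infty}\le \norm{f}_{L^2}^{1/2}\norm{f_\x}_{L^2}^{1/2}\), and Young's inequality with exponents \(4/3\) and \(4\), weighted by \(\d_R^{1/8}\), then produces \(\d_R^{-1/6}\norm{Q_2^R}_{L^1}^{4/3}\norm{(\phi,\chi)}_{L^2}^{2/3}+\sqrt{\d_R}\norm{(\phi_\x,\chi_\x)}_{L^2}^2\). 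For \(S_3\), I will apply Young's inequality, \(|S_3|\le \eta\norm{r}_{L^2}^2+C_\eta\norm{Q_3}_{L^2}^2\), and absorb the first term into \(\tfrac14D_1\).

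\textbf{Main obstacle: \(B_3=\Jcal^{rad}\).} Each term must be absorbed into \(\tfrac14(D_1+D_2)\), \(G^S\), \(G^R\), or \(W\)-terms. Terms such as \(\int a_\x\chi r/\th\) and \(\int a\chi r\thbar_\x/(\th\thbar)\) are handled by Young's inequality: \(|\thbar_\x|\le C(|(v^R)_\x|+|(v^S)_\x|+\d_C W^2)\), so each gives \(\eta\norm r^2\) plus \(\d_0(G^R+G^S+\d_C\int W^2\chi^2)\). The leading danger is \(\int a(\chi^2/(\th\thbar)-\tfrac{\g-1}{R}\Hcal)\qbar_\x\). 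Since \(|\qbar_\x|\le C\d_0(|(v^R)_\x|+|(v^S)_\x|)+C\d_C W^2\) by \eqref{qhder} and \Cref{lem:qR}, this term carries an extra factor of \(\d_0\) and becomes \(C\d_0(G^R+G^S)\) plus \(W\)-terms. The term \((aA_\th)_\x rr_\x/v\) involves \(\chi_\x\) through \(\th_\x\). I will write \((aA_\th)_\x=O(|a_\x|+|\thbar_\x|+|\chi_\x|)\); the \(\chi_\x\) part, multiplied by \(r\) and \(r_\x\), is bounded by \(\e_1\)-smallness (\(\norm{r}_{L^\infty}\) is small via elliptic regularity and \eqref{suppose}), and the rest is \(\d_0\norm{(r,r_\x)}^2\). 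The remaining three terms contain \(\qbar\), \(\qbar_\x\), or \((\qbar_\x/\vbar)_\x\) multiplied by \(|(\phi,\chi)||r|\), and Young's inequality with \(D_1\) reduces them to \(C\d_0(G^R+G^S)+C\d_C\int W^2|(\phi,\chi)|^2\). Collecting all of the above, with \(\e_1\) and \(\d_0\) small, absorbs \(\eta\)-terms into \(\tfrac14(D_1+D_2)\) and \(\d_0G^R\) into \(G_2\), giving \eqref{eq:L2estpart2mainresult}.
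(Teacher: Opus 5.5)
Your decomposition follows the paper's proof closely: the same treatment of $B_4$, $B_5$, $G_2$ and $S_1$--$S_3$, and Young's inequality against $D_1,D_2$ for the $r$-terms. However, the step you yourself flagged as the main danger does not go through. You claim $|\qbar_\x|\le C\d_0(|(v^R)_\x|+|(v^S)_\x|)+C\d_C W^2$. This holds for $q^S$ by \eqref{qhder} and for $q^C$. For the rarefaction part, however, \Cref{lem:qR} only gives $|(q^R)_\x|\le Cq^R\le C(v^R)_\x$, with no factor $\d_R$, and this cannot be improved. Indeed, $(q^R)_\x$ contains $\frac{4\b(\g-1)(\th^R)^4}{\a(v^R)^2}(v^R)_{\x\x}$, and for $t=O(1)$ the ratio $(v^R)_{\x\x}/(v^R)_\x$ is of order one, because the initial datum of $w$ is a $\tanh$ profile of unit width.

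Consequently, the $q^R$-part of $\int a\big(\frac{\chi^2}{\th\thbar}-\frac{\g-1}{R}\Hcal(U|\Ubar)\big)\qbar_\x\,d\x$ is sign-indefinite and bounded only by $CG^R$ with $C\sim\b/\a$. The good term from $G_2$ is $c^*G^R$, where $c^*$ in \eqref{eq:c^*def} does not depend on $\a,\b$, so this term cannot be absorbed.

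The missing idea is the integration by parts used in the paper. Write this piece as $-\int\big[a\big(\frac{\chi^2}{\th\thbar}-\frac{\g-1}{R}\Hcal(U|\Ubar)\big)\big]_\x q^R\,d\x$, so that only $q^R\le C(v^R)_\x$ is needed.
\begin{itemize}
\item Terms where the derivative falls on $a$, $\th$, $\thbar$ or $\vbar$ give $C(\d_0+\e_1)G^R$.
\item Terms containing $\phi\phi_\x$ or $\chi\chi_\x$ are handled by Young's inequality, e.g. $\int(v^R)_\x|\chi\chi_\x|\,d\x\le\frac{c^*}{8}G^R+C\d_R\norm{\chi_\x}_{L^2}^2$.
\end{itemize}
This is exactly where the term $\sqrt{\d_R}\norm{(\phi_\x,\chi_\x)}_{L^2}^2$ in \eqref{eq:L2estpart2mainresult} comes from, and why the explicit constant $c^*$ in $G_2$ matters. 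The remaining $\qbar$-terms ($J_5$--$J_7$ in the paper) do not have this problem. After Young's inequality the factors $\qbar_\x$ and $\qbar_{\x\x}$ appear squared, and $((v^R)_\x)^2\le C\d_R(v^R)_\x$.

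Separately, your $Y_6$ bound is off by a factor $\d_S^{-1}$, as you suspected. Using $\int(v^S)_\x\,d\x=\d_S$ together with $L^\infty$ smallness gives $|Y_6|^2\le C\e_1^2G^S$, hence $\frac{C}{\d_S}|Y_6|^2\le C\e_1^2\d_S^{-1}G^S$. This is not small, since $\d_S$ may be much smaller than $\e_1^2$. Instead, bound one factor by $\int(v^S)_\x|(\phi,\psi,\chi)|^2d\x\le\norm{(v^S)_\x}_{L^\infty}\norm{(\phi,\psi,\chi)}_{L^2}^2\le C\d_S^2\e_1^2$ using \eqref{derivdecay}; this gives $C\e_1^2G^S$.

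A minor point concerns the term $\int(aA_\th)_\x\frac{rr_\x}{v}\,d\x$. Routing the $\chi_\x$-part through $\norm{r}_{L^\infty}$ produces $\e_1\norm{\chi_\x}_{L^2}^2$, which is not on the right-hand side of the lemma. Instead use $\norm{\th_\x}_{L^\infty}\le C(\e_1+\d_0)$, which follows from \eqref{suppose} and Sobolev embedding, to get $|(aA_\th)_\x|\le C(\e_1+\d_0)$ directly.
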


\begin{proof} We estimate the first four terms on the left-hand side of \eqref{eq:L2estpart2mainresult} one by one.

\vspace{2mm}
\noindent\underline{\textbf{Step 1: Estimates on \(\frac{C}{\delta_S}\sum_{i=4}^6 \abs{Y_i}^2\)}}
By \eqref{ratio-vth}, 
\[
\abs{(Y_4,Y_5)}
\le C\int_\RR \abs{(v^S)_\x} \abs{(\phi,\chi)}^2 d\x
\]
Using \eqref{derivdecay} alongside the \textit{a priori} assumption \eqref{suppose}, we obtain 
\[
\frac{C}{\d_S}\abs{(Y_4,Y_5)}^2
\le C\d_S \e_1^2 G^S.
\]
For the term with \(Y_6\), we compute 
\begin{align*}
\frac{C}{\delta_S}|Y_6|^2 
&\leq \frac{C\lambda^2}{\delta_S^3}
\left(\int_{\RR}\abs{(v^S)_\xi} \abs{(\phi,\psi,\chi)}^2d\xi\right)^2 \\
&\leq \frac{C\lambda^2}{\delta_S}
\norm{(\phi,\psi,\chi)}_{L^2}^2\int_{\RR} \abs{v^S_\xi} \abs{(\phi,\psi,\chi)}^2d\xi
\leq C\e_1^2G^S.
\end{align*}
Thus, we have
\begin{equation}\label{eq:Y4-Y6simplified}
\frac{C}{\delta_S}\sum_{i=4}^6 \abs{Y_i}^2
\leq C\e_1 G^S.
\end{equation}

\noindent\underline{\textbf{Step 2: Estimates on \(B_3\)}}
We consider the following decomposition: 
\begin{multline*}
B_3
=\int_\RR a_\x \frac{\chi}{\th}r d\x
+\int_\RR a \left(\frac{\chi^2}{\th\thbar} -\frac{\g-1}{R}\Hcal(U|\Ubar) \right) \qbar_\x d\x
+\int_\RR a \frac{\chi}{\th\thbar} r \thbar_\x d\x
- \int_\RR (a A_\th)_\x \frac{r r_\x}{v} d\x\\
+ \int_\RR (a A_\th r)_\x \frac{\phi}{v\vbar} \qbar_\x d\x
- \frac{\a}{4 \b} \int_\RR a \thbar \qbar r \Big(\frac{v}{\th^5}-\frac{\vbar}{\thbar^5}\Big) d\x
+ \int_\RR a (A_\th-\Abar_\th) \Big(\frac{\qbar_\x}{\vbar}\Big)_\x r d\x=:\sum_{i=1}^7J_i.
\end{multline*}
To control $J_1$, using Young's inequality and \eqref{derivdecay}, we have 
\[
\abs{J_1}
\leq C\frac{\lambda}{\delta_S}\int_{\RR} \abs{(v^S)_\xi} \abs{\chi r} d\xi
\leq C\lambda
\int_{\RR}\Big(\sqrt{\abs{(v^S)_\xi}}\abs{\chi r}\Big) d\xi
\leq C\sqrt{\delta_S}\bigg(G^S+\int_{\RR}r^2 d\x\bigg).
\]
For $J_2$, using \eqref{qhder} and \eqref{c-uq-der}, we obtain
\begin{align*}
|J_2| 
&\leq C\int_{\RR}(|(q^S)_\xi|+|(q^C)_\xi|)|(\phi,\chi)|^2d\xi
+\abs{\int_{\RR}a \left(\frac{\chi^2}{\th\thbar} -\frac{\g-1}{R}\Hcal(U|\Ubar) \right) (q^R)_\x d\x} \\
&\leq C\left(\delta_S G^S+\delta_C \int_{\RR} W^2 \abs{(\chi,\phi)}^2 d\xi\right)
+\abs{\int_{\RR}a \left(\frac{\chi^2}{\th\thbar} -\frac{\g-1}{R}\Hcal(U|\Ubar) \right) (q^R)_\x d\x}.
\end{align*}
Integrating by parts and using \eqref{derivdecay} and \Cref{lem:qR}, we estimate the second term on the right-hand side as follows:
\begin{align*}
&\abs{\int_{\RR}a \left(\frac{\chi^2}{\th\thbar} -\frac{\g-1}{R}\Hcal(U|\Ubar) \right) q^R_\x d\x}
=\abs{\int_{\RR}\bigg[a \left(\frac{\chi^2}{\th\thbar} -\frac{\g-1}{R}\Hcal(U|\Ubar) \right)\bigg]_{\xi} q^R d\x}\\
&\leq C\delta_S G^R
+C\int_{\RR}v^R_\xi \bigg(\abs{\chi \chi_\xi}
+(\abs{\theta_\xi}
+\abs{\bar{\theta}_\xi})\abs{\chi}^2
+ \abs{\bar{v}_\xi} \abs{\phi}^2
+\abs{\phi \phi_\xi}
+\abs{\bar{\theta}_\xi} \abs{\chi}^2
+\abs{\chi \chi_\xi}\bigg)d\xi \\
&\leq \left(C(\delta_0+\e_1)+\frac{c^*}{8}\right)G^R+C\delta_R\norm{(\phi_\xi, \chi_\xi)}_{L^2}^2,
\end{align*}
where \(c^*\) is the constant which will be defined later in \eqref{eq:c^*def} as the coefficient of the good term \(-c^* G^R\) in \eqref{eq:G2simplified}.
So in total, using the smallness of \(\d_0\) and \(\e_1\), we have 
\[
\abs{J_2}
\le C \left(\delta_S G^S+\delta_C \int_{\RR}W^2 \abs{(\chi,\phi)}^2d\xi
+\delta_R\norm{(\phi_\xi, \chi_\xi)}_{L^2}^2\right)
+\frac{c^*}{4}G^R.
\]
For $J_3$, by Young's inequality, we find that 
\[
\abs{J_3}
\le C \int_{\RR}\big(\bar{\theta}_\xi\big)^2 \chi^2d\xi
+\frac{c_*}{8} \int_{\RR}r^2d\xi,
\]
where the constant \(c_*\) is given by
\begin{equation}\label{eq:c_*def}
c_*
\coloneqq \frac{\alpha v^*}{32 \beta (\theta^*)^4}.
\end{equation}
Then, using the decomposition $\bar{\theta}_\xi=(\theta^S)_\xi+(\theta^C)_\xi+(\theta^R)_\xi$ alongside Lemmas \ref{lem:uR}--\ref{lem:VS}, we have 
\[
\int_{\RR}\big(\bar{\theta}_\xi\big)^2 \chi^2d\xi
\leq C\delta_S G^S
+C\delta_R G^R
+C\delta_C \int_{\RR}W^2\chi^2d\xi.
\]
Thus, we obtain 
\[
\abs{J_3}
\le C\left(\delta_S G^S +\delta_R G^R +\delta_C \int_{\RR} W^2 \chi^2 d\xi\right)
+\frac{c_*}{8} \int_{\RR}r^2 d\xi.
\]
To estimate \(J_4\), since \(A_\th=\frac{\thbar}{4\b \th^5}\) and \(\abs{(a A_\th)_\x} \le C(\d_0+\e_1)\), we find that
\[
\abs{J_4}
\le C(\d_0+\e_1) \left(\int_{\RR}r^2 d\xi
+\int_{\RR}(r_\xi)^2 d\xi\right).
\]
For \(J_5\), we first note that
\[
\abs{J_5}
\le \int_\RR \Big(\abs{a_\x A_\th r}+\abs{a (A_\th)_\x r} +\abs{a A_\th r_\x}\Big)
\abs{\frac{\phi}{v \bar v}\bar{q}_\xi} d\x.
\]
Then, using Lemmas \ref{lem:uR}--\ref{lem:VS} and the fact that \(\abs{(A_\theta)_\xi} \le C(\delta_0+\e_1)\), we obtain
\begin{align*}
\int_{\RR} \abs{a_\xi A_\theta \frac{r\phi}{v \bar v}\bar{q}_\xi} d\x
&\le C \frac{\lambda}{\delta_S}\delta_0 \int_{\RR} \abs{(v^S)_\xi} \abs{r \phi} d\x
\le C \Big(\delta_S G^S +\delta_0 \int_{\RR} r^2 d\xi\Big),\\
\int_\RR \abs{a (A_\th)_\x \frac{r\phi}{v \bar v}\bar{q}_\xi} d\x
&\le C(\delta_0+\e_1) \int_{\RR}r^2 d\xi
+C(\delta_0+\e_1) \int_{\RR}\abs{\bar {q}_\xi} \phi^2d\xi\\
&\le C(\delta_0+\e_1)\int_{\RR}r^2d\xi+C\Big(\delta_SG^S+(\delta_0+\e_1)G^R+\delta_C\int_{\RR}W^2|\phi|^2d\xi\Big),\\
\int_\RR \abs{a A_\th \frac{r_\x \phi}{v \bar v}\bar{q}_\xi} d\x
&\le C\int_{\RR}\abs{\bar{q}_\xi} \abs{r_\xi}^2 d\xi
+C\Big(\delta_S G^S+\delta_C \int_{\RR}W^2 \abs{(\chi,\phi)}^2 d\xi\Big)
+\frac{c^*}{8}\int_{\RR} \abs{(v^R)_\xi} \phi^2 d\xi\\
&\le C\delta_0 \int_{\RR} \abs{r_\xi}^2 d\xi
+C\Big(\delta_S G^S +\delta_C\int_{\RR} W^2 \abs{(\chi,\phi)}^2 d\xi\Big)
+\frac{c^*}{8}G^R,
\end{align*}
where \(c^*\) is the same constant used in the estimate for \(J_2\) and will be determined later in \eqref{eq:c^*def}.
Thus, combining the above estimates, we have 
\[
\abs{J_5}
\le C\Big(\delta_S G^S +\delta_C \int_{\RR}W^2 \abs{\phi}^2 d\xi
+(\delta_0+\e_1)\int_{\RR} r^2 d\xi+ \delta_0 \int_{\RR}\abs{r_\xi}^2 d\xi\Big)
+\frac{c^*}{4}G^R.
\]
For \(J_6\), using Lemmas \ref{lem:uR}--\ref{lem:VS} again, we compute
\begin{align*}
|J_6|
&\leq C\int_{\RR}\abs{\bar{q}r\Big(\frac{v}{\theta^5}-\frac{\bar v}{\bar{\theta}^5}\Big)}d\xi
\leq C\int_{\RR}\abs{q^S+q^C+q^R} \abs{r} \abs{(\phi,\chi)} d\xi\\
&\leq \frac{c_*}{8}\int_{\RR}r^2d\xi
+C\int_{\RR}(q^S+q^C+q^R)^2|(\phi,\chi)|^2d\xi \\
&\leq \frac{c_*}{8}\int_{\RR}r^2d\xi
+C\Big(\delta_S G^S+\delta_R G^R+\delta_C \int_{\RR}W^2 \abs{(\chi,\phi)}^2 d\xi\Big).
\end{align*}
Finally, for \(J_7\), using $\abs{A_\theta-A_{\bar \theta}} \le C\abs{\chi}$ and Lemmas \ref{lem:uR}--\ref{lem:VS}, we obtain
\begin{align*}
\abs{J_7}
&\le C\int_{\RR} \abs{\Big(\frac{\bar{q}_\xi}{\bar v}\Big)_\xi \chi r} d\xi
\le C\int_{\RR}\ \big(\abs{\bar{q}_{\xi\xi}} +\abs{\bar{q}_\xi \bar{v}_\xi}\big) \abs{\chi r} d\x\\
&\le \frac{c_*}{8}\int_{\RR}r^2 d\xi
+C\int_{\RR}\ \big(\abs{\bar{q}_{\xi\xi}} +\abs{\bar{q}_\xi \bar{v}_\xi}\big)^2 \chi^2 d\x\\
&\le \frac{c_*}{8}\int_{\RR}r^2 d\xi
+C\Big(\delta_S G^S+\delta_R G^R+\delta_C\int_{\RR}W^2 \abs{(\chi,\psi)}^2 d\xi\Big).
\end{align*}
So, combining the bounds on the \(J_i\)'s, \(i=1,\ldots,7\) and using the smallness of \(\d_0\) and \(\e_1\) gives
\begin{multline}\label{eq:B3simplified}
\abs{B_3} \leq
\frac{c_*}{2}\int_{\RR}r^2 d\xi
+\frac{3c^*}{4}G^R\\
+C\bigg((\delta_0+\e_1)\int_{\RR}\abs{r_\xi}^ 2d\xi
+\sqrt{\delta_S} G^S +\delta_C \int_{\RR}W^2 \abs{(\phi, \psi, \chi)}^2 d\xi
+\delta_R \norm{(\phi_\xi, \chi_\xi)}_{L^2}^2\bigg).
\end{multline}

\noindent\underline{\textbf{Step 3: Estimates on \(B_4\)}}
We simply use \eqref{c-uq-der} to have
\begin{equation}\label{eq:B4simplified}
\abs{B_4}
\le C\int_\RR \abs{(u^C)_\xi} \abs{(\phi,\chi)}^2 d\x
\le C\delta_C \int_{\RR} W(t,\xi)^2 \abs{(\phi,\chi)}^2d\xi.
\end{equation}

\noindent\underline{\textbf{Step 4: Estimates on \(B_5\)}}
Using \eqref{ratio-vu} and \eqref{suppose}, we observe
\begin{align*}
&-\int_{\RR}a\bar{p}(u^S)_\xi\Big(\Phi\Big(\frac{p}{\bar p}\Big)
+\gamma \Phi\Big(\frac{v}{\bar v}\Big)\Big)d\xi \\
&=-\int_{\RR}a \bar p (u^S)_\xi
\bigg(\Phi\Big(\frac{p}{\bar p}\Big)
-\frac{(p- \bar p)^2}{2\pbar^2}
+\frac{(p- \bar p)^2}{2\pbar^2}
+\gamma \Phi\Big(\frac{v}{\bar v}\Big)
-\gamma \frac{\phi^2}{2\bar{v}^2}+\gamma \frac{\phi^2}{2\bar{v}^2}\bigg)d\xi \\
&\leq \sigma^* p^* \int_{\RR} a \abs{(v^S)_\xi}
\bigg(\frac{(p- \bar p)^2}{2\pbar^2}
+\gamma \frac{\phi^2}{2\bar{v}^2}\bigg) d\xi
+C(\e_1+\delta_0)G^S.
\end{align*}
Since the pressure perturbation can be expressed by
\begin{equation}\label{eq:pdiffid}
p-\bar p
=R\left(\frac{\theta}{v}-\frac{\bar \theta}{\bar v}\right)
=\bar p \left(\frac{\chi}{\bar \theta}-\frac{\phi}{\bar v}\right)
+R\left(\frac{\chi}{v}-\frac{\chi}{\bar v}-\frac{\bar \theta \phi}{v \bar v}+\frac{\bar \theta \phi}{\bar{v}^2}\right),
\end{equation}
the \textit{a priori} assumption \eqref{suppose} implies
\begin{align*}
&\sigma^* p^*\int_{\RR}a \abs{(v^S)_\xi}
\left(\frac{(p- \bar p)^2}{2\pbar^2}+ \frac{\gamma\phi^2}{2\bar{v}^2}\right)d\xi+C(\e_1+\delta_0)G^S \\
&\leq \sigma^* p^*\int_{\RR}a \abs{(v^S)_\xi} \bigg(\frac{1}{2}\left(\frac{\chi}{\bar \theta}-\frac{\phi}{\bar v}\right)^2+ \frac{\gamma\phi^2}{2(v^*)^2}\bigg)d\xi
+C(\e_1+\delta_0)G^S.
\end{align*}
Thanks to \eqref{suppose}, we expand the right-hand side to obtain
\begin{align*}
&\sigma^* p^* \int_{\RR}a \abs{(v^S)_\xi}
\bigg(\frac{1}{2}\bigg(\frac{\chi}{\bar \theta}-\frac{\phi}{\bar v}\bigg)^2
+ \frac{\gamma \phi^2}{2(v^*)^2}\bigg)d\xi
+C(\e_1+\delta_0)G^S \\
&\leq \int_{\RR}a \abs{(v^S)_\xi} \left(\frac{(\gamma+1)p^*\sigma^*}{2(v^*)^2}\phi^2+\frac{R\sigma^*}{2v^*\theta^*}\chi^2-\frac{p^*\sigma^*}{v^*\theta^*}\phi \chi\right)d\xi+C(\e_1+\delta_0)G^S.
\end{align*}
So, by the definition of \(B_5\), we find that
\begin{equation}\label{eq:B5simplified}
B_5 \leq C(\e_1+\delta_0) G^S.
\end{equation}

\noindent\underline{\textbf{Step 5: Estimates on \(G_2\)}}
Next, similarly to the case of \(B_5\), we simplify \(G_2\) as follows:
\begin{align*}
G_2
&=\int_{\RR} a\bar{p} (u^R)_\xi
\left(\Phi\Big(\frac{p}{\bar p}\Big)
-\frac{(p- \bar p)^2}{2\pbar^2}
+\frac{(p- \bar p)^2}{2\pbar^2}
+\gamma \Phi(\frac{v}{\bar v})
-\frac{\gamma \phi^2}{2\bar{v}^2}
+\frac{\gamma \phi^2}{2\bar{v}^2}\right)d\xi \\
&\geq \int_{\RR}a\bar p (u^R)_\xi
\left(\frac{(p- \bar p)^2}{2\pbar^2}
+\frac{\gamma\phi^2}{2\bar{v}^2}\right)d\xi
-C\e_1G^R.
\end{align*}
Using \eqref{eq:pdiffid} and the arithmetic mean--geometric mean inequality \(2\phi \chi \leq \frac{2\bar \theta}{\bar v}\phi^2+\frac{\bar v}{2 \bar \theta}\chi^2\), we obtain
\begin{align*}
&\int_{\RR}a\bar p(u^R)_\xi
\left(\frac{(p- \bar p)^2}{2\pbar^2}
+\frac{\gamma\phi^2}{2\bar{v}^2}\right)d\xi
+C \e_1 G^R\\
&\qquad
\geq
\int_{\RR}a \frac{R(u^R)_\xi}{2\bar{v}^2}
\bigg(\frac{\gamma \bar \theta}{\bar v}\phi^2
+\bigg(\sqrt{\frac{\bar \theta}{\bar v}}\phi-\sqrt{\frac{\bar v}{\bar\theta}}\chi\bigg)^2\bigg)d\xi
-C \e_1 G^R\\
&\qquad
\geq \int_{\RR}a\frac{R(u^R)_\xi}{2 \bar{v}^2}\left(\frac{(\gamma-1)\bar \theta}{\bar v}\phi^2+\frac{\bar v}{2 \bar \theta}\chi^2\right)d\xi-C\e_1G^R
\geq \frac{3}{2}c^*G^R-C\e_1G^R,
\end{align*}
where the constant \(c^*\) is given by
\begin{align}\label{eq:c^*def}
c^*:=\min \left\{
\frac{1}{2}\cdot
\frac{R\sqrt{R \gamma \theta^*}}{2(v^*)^3}\cdot
\frac{(\gamma-1)\theta^*}{v^*},
\frac{1}{2}\cdot\frac{R\sqrt{R \gamma \theta^*}}{2(v^*)^3}\cdot
\frac{v^*}{2 \theta^*}\right\}.
\end{align}
Here, \Cref{lem:uR} was used to determine the constant \(c^*\), alongside the smallness of \(\d_0\) and \(\e_1\).
Thus, using the smallness of \(\e_1\), we finally obtain
\begin{equation}\label{eq:G2simplified}
-G_2 \leq -c^* G^R.
\end{equation}

\noindent\underline{\textbf{Step 6: Estimates on \(S_i\), \(i=1,2,3\)}}
Let us start with recalling \eqref{eq:Qidefs}.

\vspace{2mm}
\bci{S_1} Using \eqref{Q13CH21}, we get
\begin{equation}\begin{aligned}\label{eq:S1simplified}
\abs{S_1}
\le C\int_\RR \big(\abs{Q_1^I}+\abs{Q_1^C}\big) \abs{\psi} d\x
&\le C\left(\norm{Q_1^I}_{L^2}+\norm{Q_1^C}_{L^2}\right)\norm{\psi}_{L^2}\\
&\le C\left(\norm{Q_1^I}_{L^2}+\delta_C(1+t)^{-\frac{5}{4}}\right)\norm{\psi}_{L^2}. 
\end{aligned}\end{equation}

\bci{S_2} Applying H\"older's inequality and the Gagliardo--Nirenberg inequality, we obtain
\begin{equation}\begin{aligned}\label{eq:S2simplified}
\abs{S_2}
&\le C\int_\RR \abs{Q_2} \abs{(\phi,\chi)} d\x
= C\int_\RR \abs{Q_2^I+Q_2^R} \abs{(\phi,\chi)} d\x\\
&\le C\norm{Q_2^I}_{L^2}\norm{(\phi,\chi)}_{L^2}
+C \norm{Q_2^R}_{L^1}\norm{(\phi,\chi)}_{L^2}^{\frac12} 
\norm{(\phi_\xi,\chi_\xi)}_{L^2}^{\frac12}\\
&\le C\norm{Q_2^I}_{L^2}\norm{(\phi,\chi)}_{L^2}
+C (\d_R)^{\frac{1}{2}} \norm{(\phi_\xi, \chi_\xi)}_{L^2}^2
+C (\d_R)^{-\frac{1}{6}} \norm{Q_2^R}_{L^1}^{\frac43}\norm{(\phi,\chi)}_{L^2}^{\frac23}
\end{aligned}\end{equation}
Here, Young's inequality was used as well.

\bci{S_3} Lastly, for \(S_3\), another use of Young's inequality yields that
\begin{equation}\label{eq:S3simplified}
\abs{S_3}
\le \frac{c_*}{8}\int_{\RR} r^2 d\xi
+C\left(\norm{Q_3^C}_{L^2}^2+\norm{Q_3^R}_{L^2}^2+\norm{Q_3^I}_{L^2}^2\right),
\end{equation}
where we recall the definition of \(c_*\) from \eqref{eq:c_*def}.

Combining \eqref{eq:Y4-Y6simplified}, \eqref{eq:B3simplified}, \eqref{eq:B4simplified}, \eqref{eq:B5simplified}, \eqref{eq:G2simplified}, \eqref{eq:S1simplified}, \eqref{eq:S2simplified}, and \eqref{eq:S3simplified}, together with the smallness of \(\d_0\) and \(\e_1\), and using the fact that 
\begin{align*}
-\frac{1}{4}D_1-\frac{1}{4}D_2 \leq -c_*\int_{\RR}r^2d\xi-c\int_{\RR}r_\xi^2d\xi,  
\end{align*}
for some $c>0$, we obtain \Cref{lem:L2estpart2}.
This completes the proof.
\end{proof}

\subsection{Proof of \Cref{lem:l2est}}
We now prove Lemma \ref{lem:l2est} using the estimates shown in this section, particularly those in Lemmas \ref{lem:L2estpart1} and \ref{lem:L2estpart2}.
To this end, we introduce the following lemma.

\begin{lemma}\label{lem:W}
The following weighted energy estimate holds:
\begin{multline*}
\int_0^t \int_\RR W^2(\phi^2+\psi^2+\chi^2)d\x d\t
\le C\sup_{\t\in[0,t]} \norm{\big(\phi,\psi,\chi\big)(\t)}_{L^2}^2
+C\d_S \int_0^t |\dot{X}|^2 d\t\\
+C\int_0^t \left[G^R+G^S+\norm{r}_{L^2}^2
+\norm{\big(\phi_\x,\psi_\x,\chi_\x\big)(\t)}_{L^2}^2\right] d\t
+C \d_0^{\frac{1}{3}}.
\end{multline*}
\end{lemma}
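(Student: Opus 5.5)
The plan is to adapt the heat-kernel-type weighted estimate of Huang--Li--Matsumura \cite{HLM-ARMA10} (used also in \cite{KVW-ARMA25}) to the present nonparabolic setting. We use the key lemma: with \(W\) as in \eqref{eq:Wdef} and \(g(t,\x) \coloneqq \int_{-\infty}^{\x} W^2(t,y)\,dy\), one has \(\norm{g}_{L^\infty}\le C\) and
\[
g_t - \s g_\x + \frac{c_1}{\,\cdot\,}\text{-type identity}: \quad 4\pi\, g_t \approx (g_\x)_\x\text{-controlled},
\]
and for \(h\in H^1\),
\[
\int_0^t\!\!\int_\RR W^2 h^2 \,d\x d\t \le C\sup_\t\norm{h}_{L^2}^2 + C\int_0^t \norm{h_\x}_{L^2}^2 d\t.
\]
This last bound is the standard inequality of \cite[Lemma 1]{HLM-ARMA10}. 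Its proof uses only the heat-equation structure of \(W\) itself, not of the perturbation. In the moving frame \(\x+\s t\) it is unaffected, since \(W\) is just translated.

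A first, naive route is to apply this inequality directly to each of \(h=\phi,\psi,\chi\). That gives the \(\norm{(\phi_\x,\psi_\x,\chi_\x)}_{L^2}^2\) term on the right-hand side immediately, with no \(G^R, G^S, r, \dot X\) or \(\d_0^{1/3}\) contributions. This would already prove the statement, which lists those extra terms only as permitted additional error.

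I will nevertheless check carefully that the HLM inequality holds as stated. The statement in \cite{HLM-ARMA10} is
\[
\int_0^t\!\!\int W^2 h^2 \le 4\pi\norm{h(0)}_{L^2}^2 + 4\pi\alpha^{-1}\int_0^t\norm{h_\x}^2 + 8\alpha\int_0^t\langle h_t, h g^2\rangle.
\]
So it does require the time derivative \(h_t\). That is where the equations enter. The plan is therefore to multiply the perturbation system, written in \((\phi,\psi,\chi)\) and obtained by subtracting \eqref{bar-eq} from \eqref{main-x}, by \(g\) times suitable combinations, and then integrate. Concretely, I would multiply the linearized hyperbolic part by \((\frac{\pbar}{\vbar}\phi, \psi, \frac{R}{(\g-1)\thbar}\chi)\cdot g\), where \(g=\int^\x W^2\) and the weight is chosen so that \(\rd_t g - \s g_\x\) produces \(-cW^2\)-type terms. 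Replacing \(\s\) by the frame speed is done by applying the lemma in the original variable \(x\).

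The terms that arise are handled as follows:
\begin{itemize}
\item The flux terms give \(g_\x\)-terms. The quadratic form in \((\phi,\psi,\chi)\) coming from the hyperbolic flux is indefinite, so instead I would diagonalize along the characteristic fields \(\l_1,0,\l_3\). For the nonzero speeds, \(\pm\) transport against a positive weight \(W^2\) yields a sign. For the contact field (speed \(0\)), one needs the \(\chi\)-equation's radiative term.
\item The radiative term \(\int g\chi r_\x\) is integrated by parts to give \(\int W^2\chi r + g\chi_\x r\). It is bounded by \(\norm r^2\), \(\norm{\chi_\x}^2\), and a small multiple of \(\int W^2\chi^2\), which is absorbed.
\item Shift terms \(\dot X (v^S)_\x\) give \(\d_S|\dot X|^2+G^S\).
\item Wave-derivative terms give \(G^R, G^S\) and \(W^2\)-terms multiplied by \(\d_C\), which are absorbable.
\item The \(Q_i\) terms give \(\d_0^{1/3}\) via \Cref{lem:Q} and Remark \ref{rmk:Q}, using the \(L^1\)--\(L^\infty\) pairing with the bounded \(g\) for \(Q_2^R\).
\end{itemize}

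The main obstacle is the contact (zero-speed) mode, where transport gives no sign. There, I expect to recover \(\int W^2\chi^2\) through the elliptic representation \eqref{eq:chirep}, \(\chi\approx -K_1\z + c\,r_\x\). The \(\z\) part is recovered from \(\norm r\) and \(\z_\x=r+E_2\) via the HLM inequality applied to \(h=\z\). The \(r_\x\) part is controlled directly by its norm, with \(E_2,E_3\) estimated by \Cref{lem:errorsL2}.
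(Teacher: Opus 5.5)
\textbf{There is a gap in the zero-speed (contact) mode.} Your treatment of the acoustic modes, the shift terms and the \(Q_i\) is consistent with the argument the paper defers to (\cite[Lemma 5]{HLM-ARMA10}, \cite[Lemma 3.2]{WX-JDE11}). The acoustic modes are handled by transport against the increasing weight \(\int_{-\infty}^{\xi}W^2\), with the derivative moved off \(r_\xi\) by parts. The step you flag as the main obstacle, however, fails as proposed.

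You apply the heat-kernel inequality to \(h=\zeta=-4\beta(\theta^*)^3K_{v^*}\chi\) using only \(\sup_\tau\|\zeta\|_{L^2}\) and \(\zeta_\xi=r+E_2\). As you noted two paragraphs earlier, the inequality also contains \(\int_0^t\langle(\partial_t-\sigma\partial_\xi)\zeta,\zeta g^2\rangle\,d\tau\), and this term cannot be controlled. By the third equation of \eqref{pert-eq},
\[
(\partial_t-\sigma\partial_\xi)\zeta=\frac{4\beta(\gamma-1)(\theta^*)^3}{R}\,K_{v^*}\big(p\,\psi_\xi+r_\xi+\cdots\big).
\]
The \(r_\xi\) part is harmless after integration by parts. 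The linear acoustic part, with an \(O(1)\) coefficient, produces \(-c\int K_{v^*}\psi\,(\zeta_\xi g^2+2\zeta gW)\,d\xi\). This can only be bounded by \(\|\psi\|_{L^2}\|\zeta_\xi\|_{L^2}\), a sup-in-time quantity times an \(L^2_t\) one, or by \((\int W^2\zeta^2)^{1/2}\|gK_{v^*}\psi\|_{L^2}\), which involves an unweighted norm. Neither is time-integrable in terms of the right-hand side, and there is nothing small to absorb. The underlying reason is that \(\chi\) (equivalently \(\zeta\)) is not the zero-speed characteristic variable: it still carries the acoustic coupling through \(p\psi_\xi\). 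Separately, the \(r_\xi\)-piece of \eqref{eq:chirep} would need \(\int_0^t\|r_\xi\|^2\), which is not on the right-hand side, although it could be recovered from \(\|\chi_\xi\|\) via \eqref{pert-qeq}.

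\textbf{The missing idea} is to apply the heat-kernel inequality to the entropy-type variable \(h=\frac{R}{\gamma-1}\chi+\bar p\,\phi\), i.e.\ \(\theta\) times the linearized entropy. Adding \(\bar p\) times the first equation of \eqref{pert-eq} to the third cancels \(p\psi_\xi\) up to a quadratic remainder:
\[
(\partial_t-\sigma\partial_\xi)h=-r_\xi-(p-\bar p)(\psi_\xi+\bar u_\xi)+\phi\,(\partial_t-\sigma\partial_\xi)\bar p-Q_2+\dot X\Big(\frac{R}{\gamma-1}(\theta^S)_\xi+\bar p\,(v^S)_\xi\Big).
\]
The radiative flux now enters in divergence form, so
\[
\langle -r_\xi,hg^2\rangle=\int r\,(h_\xi g^2+2hgW)\,d\xi\le \nu\int W^2h^2+C\|r\|_{L^2}^2+C\|h_\xi\|_{L^2}^2 .
\]
This is exactly why the lemma carries \(\|r\|_{L^2}^2\) but neither \(\|r_\xi\|^2\) nor \(E_2,E_3\); the elliptic representation \eqref{eq:chirep} is not needed here. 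The acoustic estimate can be obtained, for example, by multiplying the momentum equation by \((p-\bar p)\int_{-\infty}^{\xi}W^2\), which gives \(\int W^2[(p-\bar p)^2+\frac{\gamma p}{v}\psi^2]\). Since \(h\) and \(p-\bar p\) are independent combinations, together they recover \(\phi\) and \(\chi\).

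Two smaller points. First, in the heat-kernel inequality the auxiliary function is \(g=\int_{-\infty}^{\xi}W\), for which \((\partial_t-\sigma\partial_\xi)g\) is a constant multiple of \(W_\xi\). It is not \(\int_{-\infty}^{\xi}W^2\); that is the right weight for the transport argument, not for the lemma. Second, your ``naive'' inequality without the \(h_t\) term is false in general. A counterexample is \(h(t)\) of unit \(L^2\) norm spread over the scale \(K\sqrt{1+t}\) with \(K\) large, so you were right to discard it.
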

\begin{proof}
The proof follows the arguments in \cite[Lemma 5]{HLM-ARMA10} and \cite[Lemma 3.2]{WX-JDE11}, with the terms arising from the shift handled as in \cite[Appendix D]{KVW-ARMA25}.
Thus, we omit the details.
\end{proof}

We recall \eqref{eq:RHSdecomp2}.
Combining Lemmas \ref{lem:L2estpart1}--\ref{lem:L2estpart2} and taking \(\d_0\) and \(\e_1\) small enough, we obtain
\begin{align*}
&\frac{d}{dt} \int_\RR a \thbar \et(U|\Ubar) d\x
\leq -c\Big(G^S+G^R+\norm{r}_{L^2}^2+\norm{r_\x}_{L^2}^2+\delta_S|\dot{X}|^2\Big)\\
&+C\bigg((\e_1+\sqrt{\d_0})\norm{(\phi_\xi, \psi_\xi,\chi_\xi)}_{L^2}^2 
+\delta_C\int_{\RR}W(t,\xi)^2|(\phi,\psi,\chi)|^2d\xi
+\delta_C(1+t)^{-\frac{5}{4}}\norm{\psi}_{L^2}\\
&+(\delta_S)^{\frac43}\Big((\delta_R)^{\frac43}+(\delta_C)^{\frac43}\Big)e^{-c\d_S t}
+(\d_R)^{-\frac{1}{6}}\norm{Q_2^R}_{L^1}^{\frac43}\norm{(\phi,\chi)}_{L^2}^{\frac23}+\norm{Q_2^I}_{L^2}\norm{(\phi,\chi)}_{L^2} \\
&+\norm{Q_1^I}_{L^2}\norm{\psi}_{L^2}
+\norm{\big(Q_3^R,Q_3^C,Q_3^I\big)}_{L^2}^2\bigg).
\end{align*}
Then, integrating the above inequality over \([0,t]\) for any \(t\le T\), using Lemmas \ref{lem:interactionests}, \ref{lem:Q} and \ref{lem:W}, and taking \(\d_0\) and \(\e_1\) sufficiently small, we obtain
\begin{align*}
&\sup_{\tau \in [0,t]}\int_{\RR}\eta(U|\bar U)d\xi
+\delta_S \int_0^t|\dot{X}|^2d\tau
+\int_0^t\left(G^S+G^R+\norm{r}_{L^2}^2+\norm{r_\x}_{L^2}^2\right)d\tau \\
&\leq C\int_{\RR}\eta(U_0|\bar{U}(0,\xi))d\xi
+C(\e_1 + \sqrt{\delta_0})\int_0^t \norm{(\psi_\xi, \phi_\xi,\chi_\xi)}_{L^2}^2d\tau
+C\delta_0 \\
&\qquad
+\delta_C\norm{\psi}_{L^\infty(0,t;L^2(\RR))}
+C\delta_0^{\frac16}\norm{(\phi,\psi,\chi)}_{L^\infty(0,t;L^2(\RR))}^{\frac23}.
\end{align*}
Finally, using Young's inequality and the locally quadratic property of the relative entropy, i.e., 
\begin{align*}
\norm{U-\bar U}_{L^2}^2
\sim \int_{\RR}\eta(U|\bar U)d\xi, \qquad \forall t \in [0,T],
\end{align*}
we obtain the desired estimate stated in \Cref{lem:l2est}.
This completes the proof. \qed

\section{Higher Order Estimates}\label{sec:h2}
\setcounter{equation}{0}
We now turn to the higher-order energy estimates.
The zeroth-order estimate established in the previous section provides control of the perturbation at the \(L^2\) level.
To obtain the desired \(H^2\) estimate, we derive estimates for the first- and second-order spatial derivatives of the perturbation.
We first establish the derivative estimates using suitable energy multipliers and then exploit the radiative effect to obtain dissipative structures for the conserved quantities from the radiative heat flux and close the full \textit{a priori} estimate.
\par First of all, we recall the \textit{a priori} assumption \eqref{suppose}.
\begin{equation}\label{assYap}
\sup_{t\in[0,T]} \norm{(\phi,\psi,\chi)(t)}_{H^2} \le \e_1.
\end{equation}
It simply follows that both \(v\) and \(\th\) are uniformly bounded above and away from zero: 
\begin{equation}\label{vt-1}
0< c < v < C, \qquad
0< c < \th < C.
\end{equation}
Moreover, using Lemmas \ref{lem:uR}, \ref{lem:qR}, \ref{lem:VC} and \ref{lem:VS}, we obtain 
\begin{equation}\label{each-sm}
\sup_{t\in[0,T]} \Big(\norm{(\vbar_\x,\ubar_\x,\thbar_\x)(t)}_{H^1} + \norm{\qbar(t)}_{H^3}\Big)
\le C(\d_R + \d_C + \d_S) \le C\d_0.
\end{equation}
This together with the assumption \eqref{assYap} shows that 
\[
\sup_{t\in[0,T]} \norm{(v_\x, u_\x, \th_\x)(t)}_{H^1} \le C(\e_1+\d_0).
\]
Then we also have 
\begin{equation}\label{vxtx-1}
\sup_{t\in[0,T]} \Big(\norm{(v_\x,u_\x,\th_\x)(t)}_{L^\infty}
+\norm{(v_{\x\x},u_{\x\x},\th_{\x\x})(t)}_{L^2}\Big) \le C(\e_1+\d_0).
\end{equation}
We note that \eqref{vt-1} and \eqref{vxtx-1} will be used repeatedly in the analysis that follows.

\subsection{\textit{A Priori} Bounds for Radiative Heat Flux}
The \textit{a priori} assumption \eqref{suppose} is given in terms of the variables \((v,u,\th)\).
To carry out the higher-order estimates, however, additional control of the radiative heat flux \(q\) is required.
We therefore derive suitable estimates for \(q\) from the \textit{a priori} assumption together with the structure of the system.
We emphasize that, unlike the other variables \(v\), \(u\) and \(\th\), the radiative heat flux \(q\) has the same far-field state at both ends, namely zero.
Consequently, while the integrability of \(v\), \(u\) and \(\th\) must be considered relative to the corresponding composite wave, no such subtraction is needed for \(q\).
In particular, \(q\) itself can be treated directly in the relevant estimates.

\vspace{2mm}
We perform the standard energy estimate.
We recall the fourth equation in \eqref{main-x}: 
\begin{equation}\label{q-eqn}
-\left(\frac{q_\x}{v}\right)_\x + \a v q + \b (\th^4)_\x =0.
\end{equation}
Multiplying both sides by \(q\), we find that
\begin{align*}
\int_\RR \left[\frac{(q_\x)^2}{v} + \a v q^2\right] d\x
&=\int_\RR \left[- q \left(\frac{q_\x}{v}\right)_\x + \a v q^2\right] d\x
=-4\b\int_\RR \th^3 \th_\x q d\x
\le C\norm{\th_\x}_{L^2}\norm{q}_{L^2}.
\end{align*}
It follows from \eqref{vt-1} that
\[
\int_\RR \left[\frac{(q_\x)^2}{v} + \a v q^2\right] d\x
\ge c \norm{q}_{H^1}^2,
\]
which immediately yields
\begin{equation}\label{qh1}
\norm{q}_{H^1} \le C \norm{\th_\x}_{L^2}.
\end{equation}

To derive an \(H^2\) estimate, we rewrite the elliptic equation \eqref{q-eqn} in the following form:
\[
\frac{v_\x q_\x}{v^2} + \a v q + \b (\th^4)_\x = \frac{q_{\x\x}}{v}.
\]
Since \(v\) is uniformly bounded above and away from zero, this is equivalent to 
\begin{equation}\label{qxx-eqn}
q_{\x\x}=\frac{v_\x}{v} q_\x + \a v^2 q + 4\b v \th^3 \th_\x.
\end{equation}
Combining this with \eqref{vxtx-1} and \eqref{qh1}, we obtain
\[
\norm{q_{\x\x}}_{L^2}
\le C\left(\norm{q_\x}_{L^2} + \norm{q}_{L^2} + \norm{\th_\x}_{L^2}\right)
\le C \norm{\th_\x}_{L^2}.
\]
This, together with \eqref{qh1}, further implies that
\begin{equation}\label{qh2}
\norm{q}_{H^2} \le C \norm{\th_\x}_{L^2}, \qquad
\norm{q_\x}_{L^\infty} \le C \norm{\th_\x}_{L^2}.
\end{equation}

Differentiating \eqref{qxx-eqn} with respect to \(\x\), we see that 
\[
q_{\x\x\x}
=-\frac{(v_\x)^2}{v^2} q_\x
+\frac{v_{\x\x}}{v} q_\x
+\frac{v_\x}{v} q_{\x\x}
+ \a v \left(2 v_\x q + v q_\x\right)
+ 4\b \left(v_\x \th^3 \th_\x
+3v \th^2 (\th_\x)^2
+v \th^3 \th_{\x\x}
\right).
\]
Thus, using \eqref{vxtx-1}, we deduce that 
\[
\norm{q_{\x\x\x}}_{L^2}
\le C\Big(
\norm{q_\x}_{L^\infty}\norm{v_{\x\x}}_{L^2}
+\norm{q}_{L^2}
+\norm{q_\x}_{L^2}
+\norm{q_{\x\x}}_{L^2}
+\norm{\th_\x}_{L^2}
+\norm{\th_{\x\x}}_{L^2}
\Big).
\]
It then follows from \eqref{qh2} that
\[
\norm{q_{\x\x\x}}_{L^2}
\le C\Big(\big(1+\norm{v_{\x\x}}_{L^2}\big)\norm{\th_\x}_{L^2}
+\norm{\th_{\x\x}}_{L^2}\Big).
\]
Therefore, we conclude that
\begin{equation}\label{qh3}
\norm{q}_{H^3} 
\le C(\e_1 + \d_0).
\end{equation}

Finally, the bound for \(\qbar\) in \eqref{each-sm} implies that 
\begin{equation}\label{rh3}
\norm{r}_{H^3}
\le \norm{q}_{H^3} + \norm{\qbar}_{H^3}
\le C(\e_1 + \d_0).
\end{equation}

\subsection{Preliminary Estimates}
In this subsection, we state two lemmas under the \textit{a priori} assumption.
The first lemma provides bounds for terms arising in the first-order energy estimate, while the second deals with those appearing in the second-order energy estimate. These results will be used in the subsequent energy analysis.

\begin{lemma}\label{lem:first-o}
Under the \textit{a priori} assumption \eqref{suppose}, the following estimates hold:
\begin{multline}\label{first-o-1}
\int_\RR \left[\bigg(\frac{R\th}{v^2}-\frac{R\thbar}{\vbar^2}\bigg)\vbar_\x
-R\bigg(\frac{1}{v}-\frac{1}{\vbar}\bigg)\thbar_\x\right]^2 d\x
+\int_\RR \big((p-\pbar) \ubar_\x\big)^2 d\x\\
\le C\d_0(G^R+G^S)
+C\d_0\d_C\int_\RR W^2 (\phi^2+\chi^2) d\x.
\end{multline}
\begin{multline}\label{first-o-2}
\int_\RR \left(\left[\left(\frac{R\th}{v^2}-\frac{R\thbar}{\vbar^2}\right)\vbar_\x
-R\left(\frac{1}{v}-\frac{1}{\vbar}\right)\thbar_\x\right]_\x\right)^2 d\x
+\int_\RR \big(\big((p-\pbar)\ubar_\x\big)_\x\big)^2 d\x\\
\le C\d_0^2 \norm{\big(\phi_\x,\psi_\x,\chi_\x\big)}_{L^2}^2
+C\d_0 \big(G^R+G^S \big)
+C\d_0\d_C\int_\RR W^2 (\phi^2+\chi^2) d\x.
\end{multline}
\begin{multline}\label{first-o-3}
\norm{Q_3}_{L^2}^2
+\norm{\a \qbar \phi}_{L^2}^2
+\norm{4\b(\th^3-\thbar^3)\thbar_\x}_{L^2}^2
+\norm{\left(\frac{\phi\qbar_\x}{v\vbar}\right)_\x}_{L^2}^2 \\
\le C\norm{Q_3}_{L^2}^2
+C\d_0^2 \norm{\big(\phi_\x,\psi_\x,\chi_\x\big)}_{L^2}^2
+C\d_0 \big(G^R+G^S \big)
+C\d_0\d_C\int_\RR W^2 (\phi^2+\chi^2) d\x.
\end{multline}
\end{lemma}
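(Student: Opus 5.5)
All three estimates rest on one mechanism. Every integrand is a product of a perturbation $(\phi,\chi)$, or its derivative, with a derivative of the composite wave, or with $\qbar$ and its derivatives. Each such wave factor decomposes into rarefaction, contact and shock parts, and each part is controlled pointwise by its own density. By Lemmas \ref{lem:uR}, \ref{lem:qR} and \ref{lem:VS} we have
\[
|\rd_\x^k v^R|+|\rd_\x^k\th^R|+|\rd_\x^j q^R|\le C|(v^R)_\x|, \qquad
|\rd_\x^k(v^S,u^S,\th^S)|+|\rd_\x^j q^S|\le C|(v^S)_\x|
\]
for $k\ge1$ and $0\le j\le 3$. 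By Lemma \ref{lem:VC}, every $\x$-derivative of $(v^C,\th^C,u^C,q^C)$ is bounded by $C\d_C(1+t)^{-1/2}e^{-c_1(\x+\s t)^2/(1+t)}\le C\d_C W$, where the Gaussian in $W$ has exponent $c_1/2$ and so absorbs the one in Lemma \ref{lem:VC}. All wave derivatives are also bounded in $L^\infty$ by $C\d_0$.

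Consequently, for any wave quantity $g$ of this type,
\[
g^2\le C\d_0\big(|(v^R)_\x|+|(v^S)_\x|\big)+C\d_C^2W^2 ,
\]
since $|(v^R)_\x|^2\le C\d_R|(v^R)_\x|$ and $|(v^S)_\x|^2\le C\d_S^2|(v^S)_\x|$. Thus $\int g^2|(\phi,\chi)|^2\le C\d_0(G^R+G^S)+C\d_0\d_C\int W^2(\phi^2+\chi^2)$. (For $G^S$ the shock's $\psi$-component is also available, which is harmless.)

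For \eqref{first-o-1}, the a priori bound \eqref{vt-1} and the mean value theorem give
\[
\Big|\frac{R\th}{v^2}-\frac{R\thbar}{\vbar^2}\Big|+\Big|\frac1v-\frac1{\vbar}\Big|+|p-\pbar|\le C|(\phi,\chi)|,
\]
so each integrand is bounded by $C|(\vbar_\x,\thbar_\x,\ubar_\x)|^2|(\phi,\chi)|^2$, and the density bound above finishes the estimate. For \eqref{first-o-2}, differentiate. A derivative landing on the coefficient produces $C|(\phi_\x,\chi_\x)|+C|(\vbar_\x,\thbar_\x)||(\phi,\chi)|$, and a derivative landing on the wave produces $|(\vbar_{\x\x},\thbar_{\x\x},\ubar_{\x\x})||(\phi,\chi)|$. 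The terms carrying $(\phi,\chi)$ are treated exactly as before. The remaining terms are bounded by $\|\ubar_\x,\vbar_\x,\thbar_\x\|_{L^\infty}^2\|(\phi_\x,\chi_\x)\|_{L^2}^2\le C\d_0^2\|(\phi_\x,\chi_\x)\|^2_{L^2}$.

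For \eqref{first-o-3}, the term $\|Q_3\|^2_{L^2}$ appears trivially on both sides. The terms $\a\qbar\phi$ and $4\b(\th^3-\thbar^3)\thbar_\x$ are handled as in \eqref{first-o-1}, using $|\qbar|\le C(|(v^R)_\x|+|(v^S)_\x|)+C\d_C W$ from \eqref{qrcs-012} together with $|\th^3-\thbar^3|\le C|\chi|$. Expanding the last term gives
\[
\Big(\frac{\phi\qbar_\x}{v\vbar}\Big)_\x=\frac{\phi_\x\qbar_\x+\phi\qbar_{\x\x}}{v\vbar}-\phi\qbar_\x\frac{(v\vbar)_\x}{(v\vbar)^2}.
\]
The $\phi_\x\qbar_\x$ term is bounded by $\|\qbar_\x\|_{L^\infty}^2\|\phi_\x\|^2_{L^2}\le C\d_0^2\|\phi_\x\|^2$, using \eqref{each-sm}. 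The $\phi\qbar_{\x\x}$ term falls under the density bound. For the last term, $v_\x=\phi_\x+\vbar_\x$ and $|\phi_\x|\le C\e_1$ by Sobolev embedding, so $|(v\vbar)_\x|\le C$, and this term too falls under the density bound.

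I expect the main obstacle to be bookkeeping: making sure every term picks up a full factor $\d_0$ (respectively $\d_0^2$) and never just $\e_1$. In particular, I would check that the factor $|v_\x|\le C(\e_1+\d_0)$ is only ever used as an $O(1)$ bound, so that the required smallness always comes from a wave factor.
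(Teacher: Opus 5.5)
Your proposal is correct and follows essentially the same route as the paper. The paper first records the pointwise bounds \eqref{vbx2}, \eqref{vbxx2} and \eqref{qx2}, which control squares of the wave derivatives and of $\qbar,\qbar_\x,\qbar_{\x\x}$ by $C\d_0(|(v^R)_\x|+|(v^S)_\x|+\d_C W^2)$; it then bounds the coefficient differences and their derivatives by $|(\phi,\chi)|$ and $|(\phi_\x,\chi_\x)|$ exactly as you do. The only cosmetic difference is that the paper bounds the lower-order part of the differentiated coefficients by $C(\e_1+\d_0)|(\phi,\chi)|$ rather than your sharper $C|(\vbar_\x,\thbar_\x)||(\phi,\chi)|$, which changes nothing because this term is multiplied by a wave factor in either case.
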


\begin{proof}
In the proof, the derivatives of the composite wave need to be analyzed.
In particular, since they localize the bad terms through the derivatives of the smooth rarefaction wave, the viscous contact wave, and the radiative shock, these localization effects require careful analysis.
To this end, by the definition of the composite wave in \eqref{Ubar}, we have 
\[
\vbar_\x = (v^R)_\x + (v^C_\x) + (v^S)_\x, \quad
\ubar_\x = (u^R)_\x + (u^C_\x) + (u^S)_\x, \quad
\thbar_\x = (\th^R)_\x + (\th^C_\x) + (\th^S)_\x,
\]
and applying Lemmas \ref{lem:uR}--\ref{lem:VS}, we obtain 
\[
\abs{\vbar_\x}^2 + \abs{\ubar_\x}^2 + \abs{\thbar_\x}^2
\le C\d_0 \left(|(v^R)_\x|+|(v^S)_\x|
+\frac{\d_C}{1+t}e^{-\frac{c_1|\x+\s t|^2}{(1+t)}}\right)
\le C\d_0^2.
\]
Alternatively, the right-hand side can be rewritten in terms of \(W\) as follows:
\begin{equation}\label{vbx2}
\abs{\vbar_\x}^2 + \abs{\ubar_\x}^2 + \abs{\thbar_\x}^2
\le C\d_0 \left(|(v^R)_\x|+|(v^S)_\x|
+\d_C W^2\right)
\le C\d_0^2.
\end{equation}
The second-order derivatives can be estimated similarly using Lemmas \ref{lem:uR}--\ref{lem:VS}.
This yields
\begin{equation}\label{vbxx2}
\abs{\vbar_{\x\x}}^2 + \abs{\ubar_{\x\x}}^2 + \abs{\thbar_{\x\x}}^2
\le C\d_0 \left(|(v^R)_\x|+|(v^S)_\x|
+\d_C W^2\right)
\le C\d_0^2.
\end{equation}
Moreover, the definition of the radiative heat flux component in \eqref{qbar} implies that 
\[
\qbar = q^R + q^C + q^S, \quad
\qbar_\x = (q^R)_\x + (q^C)_\x + (q^S)_\x, \quad
\qbar_{\x\x} = (q^R)_{\x\x} + (q^C)_{\x\x} + (q^S)_{\x\x}.
\]
Combining this with the estimates in Lemmas \ref{lem:uR}--\ref{lem:VS}, we deduce that
\begin{equation}\label{qx2}
\abs{\qbar}^2
+\abs{\qbar_\x}^2
+\abs{\qbar_{\x\x}}^2
\le C\d_0 \left(|(v^R)_\x|+|(v^S)_\x| +\d_C W^2\right)
\le C\d_0^2.
\end{equation}

\bpf{first-o-1}
We begin by observing that
\[
\bigg(\frac{R\th}{v^2}-\frac{R\thbar}{\vbar^2}\bigg)^2
+ R^2 \bigg(\frac{1}{v}-\frac{1}{\vbar}\bigg)^2 
+ (p-\pbar)^2
\le C(\phi^2+\chi^2).
\]
The left-hand side of \eqref{first-o-1} can then be bounded by 
\[
C\int_\RR \left(\abs{\vbar_\x}^2 + \abs{\ubar_\x}^2 + \abs{\thbar_\x}^2\right) (\phi^2+\chi^2) d\x.
\]
In view of \eqref{vbx2}, this estimate further implies that 
\begin{multline*}
\int_\RR \left[\bigg(\frac{R\th}{v^2}-\frac{R\thbar}{\vbar^2}\bigg)\vbar_\x
-R\bigg(\frac{1}{v}-\frac{1}{\vbar}\bigg)\thbar_\x\right]^2 d\x
+\int_\RR \big((p-\pbar) \ubar_\x\big)^2 d\x\\
\le C\d_0(G^R+G^S)
+C\d_0\d_C\int_\RR W^2 (\phi^2+\chi^2) d\x,
\end{multline*}
as desired.

\bpf{first-o-2}
First, we note that 
\[
\rd_\x \left[\left(\frac{R\th}{v^2}-\frac{R\thbar}{\vbar^2}\right)\vbar_\x\right]
=\left(\frac{R\th}{v^2}-\frac{R\thbar}{\vbar^2}\right)_\x \vbar_\x
+\left(\frac{R\th}{v^2}-\frac{R\thbar}{\vbar^2}\right)\vbar_{\x\x}
\]
and that 
\[
\left(\frac{R\th}{v^2}-\frac{R\thbar}{\vbar^2}\right)_\x
=\left(\frac{R\chi}{v^2} - \frac{R(v+\vbar)\thbar}{v^2\vbar^2}\phi\right)_\x
=\frac{R\chi_\x}{v^2} + \left(\frac{R}{v^2}\right)_\x \chi
-\frac{R(v+\vbar)\thbar}{v^2\vbar^2}\phi_\x
-\left(\frac{R(v+\vbar)\thbar}{v^2\vbar^2}\right)_\x \phi.
\]
Then, by \eqref{vt-1}, \eqref{vxtx-1}, and Lemmas \ref{lem:uR}--\ref{lem:VS}, we deduce that
\begin{equation}\label{firsto2-1}
\abs{\left(\frac{R\th}{v^2}-\frac{R\thbar}{\vbar^2}\right)_\x}
\le C \big(\abs{\phi_\x}+\abs{\chi_\x}\big)
+C(\e_1+\d_0) \big(\abs{\phi}+\abs{\chi}\big).
\end{equation}
Similarly, we obtain 
\[
\rd_\x \left[-R\left(\frac{1}{v}-\frac{1}{\vbar}\right)\thbar_\x\right]
=-R\left(\frac{1}{v}-\frac{1}{\vbar}\right)_\x\thbar_\x
-R\left(\frac{1}{v}-\frac{1}{\vbar}\right)\thbar_{\x\x}
\]
and
\begin{equation}\label{firsto2-2}
\abs{R\left(\frac{1}{v}-\frac{1}{\vbar}\right)_\x}
\le C \abs{\phi_\x}
+C(\e_1+\d_0) \abs{\phi}.
\end{equation}
An analogous calculation shows that 
\[
\rd_\x \left[(p-\pbar)\ubar_\x\right]
=\rd_\x \left[\left(\frac{R\th}{v}-\frac{R\thbar}{\vbar}\right)\ubar_\x\right]
=\left(\frac{R\th}{v}-\frac{R\thbar}{\vbar}\right)_\x \ubar_\x
+\left(\frac{R\th}{v}-\frac{R\thbar}{\vbar}\right)\ubar_{\x\x}
\]
and that
\begin{equation}\label{firsto2-3}
\abs{\left(\frac{R\th}{v}-\frac{R\thbar}{\vbar}\right)_\x}
\le C \big(\abs{\phi_\x}+\abs{\chi_\x}\big)
+C(\e_1+\d_0) \big(\abs{\phi}+\abs{\chi}\big).
\end{equation}
On the other hand, it is straightforward to show that 
\begin{equation}\label{firsto2-n}
\left(\frac{R\th}{v^2}-\frac{R\thbar}{\vbar^2}\right)^2
+R^2\left(\frac{1}{v}-\frac{1}{\vbar}\right)^2
+\left(\frac{R\th}{v}-\frac{R\thbar}{\vbar}\right)^2
\le C(\phi^2+\chi^2).
\end{equation}
The desired estimate now follows from \eqref{vbx2}--\eqref{vbxx2} and \eqref{firsto2-1}--\eqref{firsto2-n}:
\begin{align*}
&\int_\RR \left(\left[\left(\frac{R\th}{v^2}-\frac{R\thbar}{\vbar^2}\right)\vbar_\x
-R\left(\frac{1}{v}-\frac{1}{\vbar}\right)\thbar_\x\right]_\x\right)^2 d\x
+\int_\RR \big(\big((p-\pbar)\ubar_\x\big)_\x\big)^2 d\x\\
&\le C\int_\RR  \left(\abs{\vbar_\x}^2 + \abs{\ubar_\x}^2 + \abs{\thbar_\x}^2\right) (\abs{\phi_\x}^2+\abs{\chi_\x}^2) d\x\\
&\qquad+C(\e_1+\d_0)\int_\RR  \left(\abs{\vbar_\x}^2 + \abs{\ubar_\x}^2 + \abs{\thbar_\x}^2\right) (\phi^2+\chi^2) d\x\\
&\qquad
+C\int_\RR  \left(\abs{\vbar_{\x\x}}^2 + \abs{\ubar_{\x\x}}^2 + \abs{\thbar_{\x\x}}^2\right) (\phi^2+\chi^2) d\x\\
&\le 
C\d_0^2 \norm{\big(\phi_\x,\psi_\x,\chi_\x\big)}_{L^2}^2
+C\d_0 \big(G^R+G^S \big)
+C\d_0\d_C\int_\RR W^2 (\phi^2+\chi^2) d\x.
\end{align*}

\bpf{first-o-3}
Using \eqref{qx2}, the following two terms can be readily controlled as follows: 
\begin{align*}
\norm{\a \qbar \phi}_{L^2}^2
&=\a^2 \int \qbar^2 \phi^2 d\x
\le C\d_0(G^R+G^S)
+C\d_0 \d_C \int_\RR W^2 \phi^2 d\x,\\
\norm{4\b(\th^3-\thbar^3)\thbar_\x}_{L^2}^2
&\le C\int_\RR (\thbar_\x)^2 \chi^2 d\x
\le C\d_0(G^R+G^S)
+C\d_0 \d_C \int_\RR W^2 \chi^2 d\x.
\end{align*}
It remains to estimate the last term on the left-hand side of \eqref{first-o-3}.
We use \eqref{firsto2-2} to obtain
\begin{align*}
\norm{\left(\frac{\phi\qbar_\x}{v\vbar}\right)_\x}_{L^2}^2
&=\int_\RR \left[\left(\frac{\phi\qbar_\x}{v\vbar}\right)_\x\right]^2 d\x\\
&\le C\int_\RR (\qbar_\x)^2 |\phi_\x|^2 d\x
+C\int_\RR (\qbar_{\x\x})^2 \phi^2 d\x
+C(\e_1+\d_0) \int_\RR (\qbar_\x)^2 \phi^2 d\x.
\end{align*}
Then, applying \eqref{qx2}, we deduce that 
\[
\norm{\left(\frac{\phi\qbar_\x}{v\vbar}\right)_\x}_{L^2}^2
\le C\d_0^2 \norm{\phi_\x}_{L^2}^2
+C\d_0(G^R+G^S)
+C\d_0\d_C \int_\RR W^2 \phi^2 d\x.
\]
Therefore, collecting the above estimates, we conclude that
\begin{multline*}
\norm{Q_3}_{L^2}^2
+\norm{\a \qbar \phi}_{L^2}^2
+\norm{4\b(\th^3-\thbar^3)\thbar_\x}_{L^2}^2
+\norm{\left(\frac{\phi\qbar_\x}{v\vbar}\right)_\x}_{L^2}^2 \\
\le C\norm{Q_3}_{L^2}^2
+C\d_0^2 \norm{\big(\phi_\x,\psi_\x,\chi_\x\big)}_{L^2}^2
+C\d_0 \big(G^R+G^S \big)
+C\d_0\d_C\int_\RR W^2 (\phi^2+\chi^2) d\x.
\end{multline*}
This completes the proof of Lemma \ref{lem:first-o}.
\end{proof}

We now present another lemma, which provides useful bounds for the second-order energy estimate.

\begin{lemma}\label{lem:sec-o}
Under the \textit{a priori} assumption \eqref{suppose}, the following estimates hold:
\begin{multline}\label{sec-o-1}
\int_\RR \left(\left[\left(\frac{R\th}{v^2}-\frac{R\thbar}{\vbar^2}\right)\vbar_\x
-R\left(\frac{1}{v}-\frac{1}{\vbar}\right)\thbar_\x\right]_{\x\x}\right)^2 d\x
+\int_\RR \big(\big((p-\pbar)\ubar_\x\big)_{\x\x}\big)^2 d\x\\
\le C\d_0^2 \norm{\big(\phi_\x,\psi_\x,\chi_\x\big)}_{H^1}^2
+C\d_0 \big(G^R+G^S \big)
+C\d_0\d_C\int_\RR W^2 (\phi^2+\chi^2) d\x.
\end{multline}
\begin{multline}\label{sec-o-2}
\norm{Q_3}_{H^1}^2
+\norm{\a \qbar \phi}_{H^1}^2
+\norm{4\b(\th^3-\thbar^3)\thbar_\x}_{H^1}^2
+\norm{\left(\frac{\phi\qbar_\x}{v\vbar}\right)_\x}_{H^1}^2 \\
\le C\norm{Q_3}_{H^1}^2
+C\d_0^2 \norm{\big(\phi_\x,\psi_\x,\chi_\x\big)}_{H^1}^2
+C\d_0 \big(G^R+G^S \big)
+C\d_0\d_C\int_\RR W^2 (\phi^2+\chi^2) d\x.
\end{multline}
\end{lemma}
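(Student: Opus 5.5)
The plan is to repeat the proof of Lemma \ref{lem:first-o} with one more derivative, using the product rule throughout. The rule for every term is the same. If a factor carrying no derivative of a wave profile is left multiplying $(\phi,\chi)$, it is absorbed by the localized bound $\d_0(|(v^R)_\x|+|(v^S)_\x|+\d_C W^2)$. If derivatives of the perturbation appear, we use only the uniform smallness $\le C\d_0^2$ of the profile factors.

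\textbf{Step 1: pointwise bounds for third derivatives of the profiles.} First I would upgrade \eqref{vbx2}--\eqref{vbxx2} and \eqref{qx2} to third derivatives:
\[
|\partial_\x^3(\vbar,\ubar,\thbar)|^2+|\qbar_{\x\x\x}|^2\le C\d_0\big(|(v^R)_\x|+|(v^S)_\x|+\d_C W^2\big)\le C\d_0^2 .
\]
This follows from Lemma \ref{lem:uR}(2), \eqref{2bound1}, \eqref{qhder}, Lemma \ref{lem:qR}(2) and \eqref{c-vth-der}--\eqref{c-uq-der}. For $k=3$ the contact factor $(1+t)^{-3/2}$ is dominated by $(1+t)^{-1/2}$, which gives the $W^2$ form.

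\textbf{Step 2: estimate \eqref{sec-o-1}.} Differentiate each product twice, for instance
\[
\Big[\Big(\tfrac{R\th}{v^2}-\tfrac{R\thbar}{\vbar^2}\Big)\vbar_\x\Big]_{\x\x}=(\cdot)_{\x\x}\vbar_\x+2(\cdot)_\x\vbar_{\x\x}+(\cdot)\vbar_{\x\x\x}.
\]
The first-derivative factors are handled by \eqref{firsto2-1}--\eqref{firsto2-3}. For the second-derivative factors I would prove the analogue
\[
|(\cdot)_{\x\x}|\le C|(\phi_{\x\x},\chi_{\x\x})|+C\big(|(v_\x,\th_\x,\vbar_\x,\thbar_\x)|+|(v_{\x\x},\vbar_{\x\x},\thbar_{\x\x})|\big)\big(|(\phi_\x,\chi_\x)|+|(\phi,\chi)|\big).
\]
This is the main obstacle. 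A coefficient such as $v_{\x\x}$ is only in $L^2$, by \eqref{vxtx-1}, not in $L^\infty$. So the term $v_{\x\x}\,\phi\,\vbar_\x$ cannot be treated pointwise. I would put $\phi\vbar_\x$ in $L^\infty$: by Gagliardo--Nirenberg, $\|\phi\|_{L^\infty}\le C\|\phi\|_{H^1}$, and $|\vbar_\x|\le C\d_0$. This gives
\[
\|v_{\x\x}\phi\vbar_\x\|_{L^2}^2\le C\d_0^2\|v_{\x\x}\|_{L^2}^2\|\phi\|_{L^\infty}^2 .
\]
Splitting $v_{\x\x}=\phi_{\x\x}+\vbar_{\x\x}$, the $\phi_{\x\x}$ part is bounded by $C\d_0^2\e_1^2\|\phi_{\x\x}\|_{L^2}^2$, which is admissible. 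The $\vbar_{\x\x}$ part is localized by \eqref{vbxx2} and goes into $\d_0(G^R+G^S)+\d_0\d_C\int W^2\phi^2$. Likewise $v_\x=\phi_\x+\vbar_\x$ with $\|\phi_\x\|_{L^\infty}\le C\e_1$. The remaining terms are lower order: $(\cdot)\,\partial_\x^3\vbar$ is localized by Step 1, and $(\cdot)_\x\vbar_{\x\x}$ is handled exactly as in the proof of \eqref{first-o-2}. Collecting everything gives the right-hand side of \eqref{sec-o-1}, where the $H^1$ norm of $(\phi_\x,\psi_\x,\chi_\x)$ accounts for the $\phi_{\x\x},\chi_{\x\x}$ terms.

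\textbf{Step 3: estimate \eqref{sec-o-2}.} The $L^2$ parts are already given by \eqref{first-o-3}, so only one additional $\x$-derivative is needed. The pieces are:
\begin{itemize}
\item $(\qbar\phi)_\x=\qbar_\x\phi+\qbar\phi_\x$, controlled by \eqref{qx2};
\item $[(\th^3-\thbar^3)\thbar_\x]_\x$, controlled as in Step 2;
\item $(\phi\qbar_\x/(v\vbar))_{\x\x}$, which contains $\qbar_{\x\x\x}\phi$ (localized by Step 1), $\qbar_\x\phi_{\x\x}$ (bounded by $\d_0^2\|\phi_{\x\x}\|^2$), and terms with $v_{\x\x}\phi\qbar_\x$, treated by the same $L^\infty$/$L^2$ splitting as in Step 2.
\end{itemize}
The term $\|Q_3\|_{H^1}^2$ simply stays on the right-hand side.
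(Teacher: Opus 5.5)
Your proposal is correct and follows the same route as the paper, whose proof just says to repeat the argument of Lemma \ref{lem:first-o} with one more derivative, using the higher-order localization bounds from Lemmas \ref{lem:uR}--\ref{lem:VS}, and omits the details. Your handling of the coefficients $v_{\x\x}$, which are only in $L^2$, by splitting $v_{\x\x}=\phi_{\x\x}+\vbar_{\x\x}$ is the same device the paper uses in \eqref{2th-d} for the second-order energy estimate, and it correctly supplies the omitted details.
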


\begin{proof}
The present lemma involves higher-order derivatives of the composite wave than the preceding one.
Nevertheless, the key ingredients of the proof remain the localization effects of these derivatives and suitable expansions of the relevant terms.
Since Lemmas \ref{lem:uR}--\ref{lem:VS} provide the necessary higher-order derivative estimates for the smooth rarefaction wave, the viscous contact wave, and the radiative shock, we can proceed as in the previous lemma, taking into account the additional derivative on the left-hand sides.
Although the resulting calculations are more elaborate, they require no essentially new arguments, and we therefore omit the details.
\end{proof}

\(\bullet\) \textbf{Notation:} Hereafter, \(\n>0\) denotes a small parameter arising from Young's inequality, while \(C_\n\) denotes the corresponding constant, which may be large.
The value of \(\n\) may vary from line to line and will be chosen sufficiently small later.
Once fixed, both \(\n\) and \(C_\n\) are regarded as \(\Ocal(1)\) constants, independent of \(\e_1\) and \(\d_0\), and will be absorbed into the generic constants \(C\) and \(c\).

\subsection{First-Order Estimate}
In this subsection, we establish the first-order energy estimate.
Unlike the Navier--Stokes--Fourier system, for which the energy estimate can be closed at the \(H^1\)-level (see \cite{KVW-ARMA25}), our system requires a second-order energy estimate to achieve closure.
The difference is essentially due to the absence of viscosity.
A similar difficulty arises in the heat-conducting ideal gas without viscosity, where second-order energy estimates are also required to close the argument; see the recent work \cite{GLWX-2026}.

\vspace{2mm}
We briefly outline the strategy.
We first perform standard energy estimates with suitably chosen multipliers and then exploit the elliptic equation to obtain diffusion for the radiative variable.
Next, we use the radiative equation to derive diffusion for \(\th\).
The diffusion for \(v\) and \(u\) is then obtained through the auxiliary quantity \eqref{key-qty}.
The choice of multipliers and the introduction of the auxiliary quantity are motivated by the arguments in \cite{FRX-AIHPC19,FRX-DCDS21}; see also \cite{GLWX-2026}.
The second-order energy estimate will follow the same argument.

\vspace{2mm}
We now derive the first-order energy estimate.
We recall that the perturbation variables are defined by
\[
\phi = v-\vbar, \qquad
\psi = u-\ubar, \qquad
\chi = \th-\thbar, \qquad
r = q-\qbar.
\]
Using \eqref{main-x} and \eqref{bar-eq}, we obtain the following equations for the perturbation variables:
\begin{equation}\label{pert-eq}
\begin{cases}
\phi_t -\s \phi_\x - \dot{X}(v^S)_\x -\psi_\x =0,\\
\psi_t -\s \psi_\x - \dot{X}(u^S)_\x + (p-\pbar)_\x = -Q_1,\\
\frac{R}{\g-1}(\chi_t -\s \chi_\x -\dot{X}(\th^S)_\x) + (pu_\x - \pbar\ubar_\x) + r_\x = -Q_2.
\end{cases}
\end{equation}
Turning to the elliptic equation, we first observe that 
\[
\frac{q_\x}{v}-\frac{\qbar_\x}{\vbar}
=\frac{1}{v}(q-\qbar)_\x
+\left(\frac{1}{v}-\frac{1}{\vbar}\right)\qbar_\x
=\frac{r_\x}{v}-\frac{\phi}{v\vbar}\qbar_\x,
\]
and that
\[
vq -\vbar\qbar
=v(q-\qbar)
+\qbar(v-\vbar)
=v r + \qbar\phi, \qquad
\b(\th^4-\thbar^4)_\x
=4\b(\th^3\chi_\x
+\thbar_\x(\th^3-\thbar^3)).
\]
These identities then lead to the following perturbation equation for \(r\):
\begin{equation}\label{pert-qeq}
-\left(\frac{r_\x}{v}\right)_\x + \a v r + 4\b \th^3 \chi_\x
=-Q_3
-\left(\frac{\phi\qbar_\x}{v\vbar}\right)_\x
-\a \qbar \phi
-4\b(\th^3-\thbar^3)\thbar_\x.
\end{equation}

Differentiating each equation in \eqref{pert-eq} with respect to \(\x\), multiplying the resulting equations by \(\frac{R\th}{v^2}\phi_\x\), \(\psi_\x\), and \(\frac{\chi_\x}{\th}\), respectively, and integrating over \(\x\), we obtain
\begin{align*}
&\frac{d}{dt}\int_\RR \left[
\frac{R\th}{2v^2}(\phi_\x)^2
+ \frac{1}{2}(\psi_\x)^2
+ \frac{R}{2(\g-1)\th}(\chi_\x)^2 \right] d\x
+\int_\RR \frac{\chi_\x}{\th} r_{\x\x} d\x\\
&=\dot{X}\int_\RR \left[\frac{R\th}{v^2}\phi_\x (v^S)_{\x\x}
+\psi_\x(u^S)_{\x\x}
+\frac{R \chi_\x}{(\g-1)\th}(\th^S)_{\x\x}\right] d\x
-\int_\RR \left[\psi_\x (Q_1)_\x + \frac{\chi_\x}{\th}(Q_2)_\x\right] d\x\\
&-\int_\RR \frac{p_\x}{\th}\psi_\x \chi_\x d\x
-\int_\RR \frac{\chi_\x}{\th} ((p-\pbar)\ubar_\x)_\x d\x
+\int_\RR \psi_\x \left[\left(\frac{R\th}{v^2}-\frac{R\thbar}{\vbar^2}\right)\vbar_\x
-R\left(\frac{1}{v}-\frac{1}{\vbar}\right)\thbar_\x\right]_\x d\x\\
&+\int_\RR \left[-\left(\frac{(\g+1)R\th}{2v^3}u_\x + \frac{\g-1}{2v^2}q_\x\right)(\phi_\x)^2
+ \frac{pu_\x+q_\x}{2\th^2}(\chi_\x)^2 \right] d\x.
\end{align*}
We next derive a diffusion term for the perturbation variable \(r\).
Multiplying \eqref{pert-qeq} by \(-\frac{r_{\x\x}}{4\b\th^4}\) and integrating over \(\RR\), we find that 
\begin{multline*}
\int_\RR \frac{(r_{\x\x})^2}{4\b v \th^4} d\x
+\int_\RR \frac{\a v (r_\x)^2}{4\b \th^4} d\x
-\int_\RR \frac{\chi_\x}{\th} r_{\x\x} d\x
=\int_\RR \frac{v_\x}{4\b v^2 \th^4}r_\x r_{\x\x} d\x
-\int_\RR \a\left(\frac{v}{4\b\th^4}\right)_\x r r_\x d\x\\
+\int_\RR \frac{r_{\x\x}}{4\b\th^4}\left[Q_3 + \a\qbar \phi + 4\b(\th^3-\thbar^3)\thbar_\x + \left(\frac{\phi\qbar_\x}{v\vbar}\right)_\x\right]d\x.
\end{multline*}
Upon adding the two identities above, one term on the left-hand side cancels.
This yields
\begin{equation} \label{h1begin}
\begin{aligned}
&\frac{d}{dt}\int_\RR \left[
\frac{R\th}{2v^2}(\phi_\x)^2
+ \frac{1}{2}(\psi_\x)^2
+ \frac{R}{2(\g-1)\th}(\chi_\x)^2 \right] d\x
+\int_\RR \frac{(r_{\x\x})^2}{4\b v \th^4} d\x
+\int_\RR \frac{\a v (r_\x)^2}{4\b \th^4} d\x\\
&=\dot{X}\int_\RR \left[\frac{R\th}{v^2}\phi_\x (v^S)_{\x\x}
+\psi_\x(u^S)_{\x\x}
+\frac{R \chi_\x}{(\g-1)\th}(\th^S)_{\x\x}\right] d\x\\
&\qquad\qquad
-\int_\RR \left[\psi_\x (Q_1)_\x + \frac{\chi_\x}{\th}(Q_2)_\x\right] d\x\\
&\qquad\qquad
-\int_\RR \frac{\chi_\x}{\th} ((p-\pbar)\ubar_\x)_\x d\x
+\int_\RR \psi_\x \left[\left(\frac{R\th}{v^2}-\frac{R\thbar}{\vbar^2}\right)\vbar_\x
-R\left(\frac{1}{v}-\frac{1}{\vbar}\right)\thbar_\x\right]_\x d\x\\
&\qquad\qquad
+\int_\RR \left[-\left(\frac{(\g+1)R\th}{2v^3}u_\x + \frac{\g-1}{2v^2}q_\x\right)(\phi_\x)^2
+ \frac{pu_\x+q_\x}{2\th^2}(\chi_\x)^2
-\frac{p_\x}{\th}\psi_\x \chi_\x\right] d\x\\
&\qquad\qquad
+\int_\RR \frac{v_\x}{4\b v^2 \th^4}r_\x r_{\x\x} d\x
-\int_\RR \a\left(\frac{v}{4\b\th^4}\right)_\x r r_\x d\x\\
&\qquad\qquad
+\int_\RR \frac{r_{\x\x}}{4\b\th^4}\left[Q_3 + \a\qbar \phi + 4\b(\th^3-\thbar^3)\thbar_\x + \left(\frac{\phi\qbar_\x}{v\vbar}\right)_\x\right]d\x
\eqqcolon \sum_{j=1}^8 L_j.
\end{aligned}
\end{equation}
We then estimate the right-hand side on a term-by-term basis.\\

\bci{L_1} First, Lemma \ref{lem:VS} gives
\[
\norm{\big((v^S)_{\x\x},(u^S)_{\x\x},(\th^S)_{\x\x}\big)}_{L^2}^2
\le C \d_S^5.
\]
This in turn implies that
\begin{align*}
\abs{L_1}
&\le \nu \norm{\big(\phi_\x,\psi_\x,\chi_\x\big)}_{L^2}^2
+C_\n |\dot{X}|^2 \norm{\big((v^S)_{\x\x},(u^S)_{\x\x},(\th^S)_{\x\x}\big)}_{L^2}^2\\
&\le \nu \norm{\big(\phi_\x,\psi_\x,\chi_\x\big)}_{L^2}^2
+C_\n \d_S^5 |\dot{X}|^2.
\end{align*}

\bci{L_2} By Young's inequality, we have
\[
\abs{L_2} 
\le \n \norm{\big(\phi_\x,\psi_\x,\chi_\x\big)}_{L^2}^2 
+ C_\n \norm{\big(Q_1, Q_2\big)}_{H^1}^2.
\]

\bci{L_3+L_4} A straightforward application of Young's inequality gives 
\begin{multline*}
\abs{L_3}+\abs{L_4}
\le \n \norm{\big(\phi_\x,\psi_\x,\chi_\x\big)}_{L^2}^2\\
+C_\n \int_\RR \left(\left[\left(\frac{R\th}{v^2}-\frac{R\thbar}{\vbar^2}\right)\vbar_\x
-R\left(\frac{1}{v}-\frac{1}{\vbar}\right)\thbar_\x\right]_\x\right)^2 d\x
+C_\n \int_\RR \big(\big((p-\pbar)\ubar_\x\big)_\x\big)^2 d\x.
\end{multline*}
It then follows immediately from \eqref{first-o-2} that
\[
\abs{L_3}+\abs{L_4}
\le (\n+C_\n \d_0^2) \norm{\big(\phi_\x,\psi_\x,\chi_\x\big)}_{L^2}^2
+C_\n \d_0 \big(G^R+G^S \big)
+C_\n \d_0\d_C\int_\RR W^2 (\phi^2+\chi^2) d\x.
\]

\bci{L_5} The estimates \eqref{vxtx-1} and \eqref{qh3} provide the following bound for \(L_5\):
\[
\abs{L_5} 
\le C \norm{(v_\x,u_\x,\th_\x,q_\x)}_{L^\infty}
\norm{\big(\phi_\x,\psi_\x,\chi_\x\big)}_{L^2}^2
\le C(\e_1 + \d_0) \norm{\big(\phi_\x,\psi_\x,\chi_\x\big)}_{L^2}^2.
\]

\bci{L_6} By the smallness of \(v_\x\) established in \eqref{vxtx-1}, we have
\[
\abs{L_6}
\le C(\e_1 + \d_0) \norm{r_\x}_{H^1}^2.
\]

\bci{L_7} The smallness of \(v_\x\) and \(\th_\x\) in \eqref{vxtx-1} allows us to estimate
\[
\abs{L_7}
\le C(\e_1 + \d_0) \norm{r}_{H^1}^2.
\]

\bci{L_8} The estimate \eqref{first-o-3}, together with Young's inequality, gives the following bound for \(L_8\):
\begin{multline*}
\abs{L_8}
\le \n \norm{r_{\x\x}}_{L^2}^2
+C_\n \norm{Q_3}_{L^2}^2\\
+C_\n \d_0^2 \norm{\big(\phi_\x,\psi_\x,\chi_\x\big)}_{L^2}^2
+C_\n \d_0 \big(G^R+G^S \big)
+C_\n \d_0\d_C\int_\RR W^2 (\phi^2+\chi^2) d\x.
\end{multline*}
Applying the preceding bounds to the right-hand side of \eqref{h1begin}, we obtain
\begin{align*}
&\frac{d}{dt}\int_\RR \left[
\frac{R\th}{2v^2}(\phi_\x)^2
+ \frac{1}{2}(\psi_\x)^2
+ \frac{R}{2(\g-1)\th}(\chi_\x)^2 \right] d\x
+\int_\RR \frac{(r_{\x\x})^2}{4\b v \th^4} d\x
+\int_\RR \frac{\a v (r_\x)^2}{4\b \th^4} d\x\\
&\le \big(\n + C_\n \d_0^2 + C(\e_1+\d_0)\big) \norm{\big(\phi_\x,\psi_\x,\chi_\x\big)}_{L^2}^2
+C(\e_1 + \d_0) \left(\norm{r}_{H^1}^2 + \norm{r_\x}_{H^1}^2\right)
+\n \norm{r_{\x\x}}_{L^2}^2\\
&\qquad
+C_\n \d_S^5 |\dot{X}|^2
+C_\n \left(\norm{\big(Q_1, Q_2\big)}_{H^1}^2 +\norm{Q_3}_{L^2}^2 \right)\\
&\qquad
+C_\n\d_0 \big(G^R+G^S \big)
+C_\n \d_0\d_C \int_\RR W^2 (\phi^2+\chi^2) d\x.
\end{align*}
Since the coefficients \(\frac{1}{4\b v\th^4}\) and \(\frac{\a v}{4\b \th^4}\)  are uniformly bounded away from zero, we may choose \(\n\), \(\d_0\) and \(\e_1\) small enough to obtain
\begin{equation}\begin{aligned}\label{h1-r}
&\frac{d}{dt}\int_\RR \left[
\frac{R\th}{2v^2}(\phi_\x)^2
+ \frac{1}{2}(\psi_\x)^2
+ \frac{R}{2(\g-1)\th}(\chi_\x)^2 \right] d\x
+ c\norm{r_\x}_{H^1}^2\\
&\le \big(\n + C_\n (\e_1+\d_0)\big) \norm{\big(\phi_\x,\psi_\x,\chi_\x\big)}_{L^2}^2
+C(\e_1 + \d_0) \norm{r}_{H^1}^2
+C_\n \d_S^5 |\dot{X}|^2\\
&\hspace{2mm}
+C_\n \left(\norm{\big(Q_1, Q_2\big)}_{H^1}^2 +\norm{Q_3}_{L^2}^2 \right)
+C_\n\d_0 \big(G^R+G^S \big)
+C_\n \d_0\d_C \int_\RR W^2 (\phi^2+\chi^2) d\x.
\end{aligned}\end{equation}

We now exploit the structure of the system to obtain dissipation for the fluid variables \(v\), \(u\) and \(\th\) through the radiative effects.
We first use the radiative equation to recover the diffusion associated with \(\th\) and then turn to the auxiliary quantity \eqref{key-qty} to derive the corresponding diffusion for \(v\) and \(u\).

To this end, we observe from \eqref{pert-qeq} that
\begin{equation}\label{qeq-x}
4\b\th^3 \chi_\x
=\frac{r_{\x\x}}{v} - \frac{v_\x}{v^2}r_\x
-\a v r
-Q_3
-\a \qbar \phi
-4\b(\th^3-\thbar^3)\thbar_\x
-\left(\frac{\phi\qbar_\x}{v\vbar}\right)_\x.
\end{equation}
Since the coefficient \(4\b\th^3\) is uniformly bounded away from zero, this identity implies that
\begin{multline}\label{thx-df}
\norm{\chi_\x}_{L^2}^2 
\le C \norm{r}_{H^2}^2
+ C\norm{Q_3}_{L^2}^2
+ C\d_0^2 \norm{\big(\phi_\x,\psi_\x,\chi_\x\big)}_{L^2}^2\\
+C\d_0 \big(G^R+G^S \big)
+C\d_0 \d_C\int_\RR W^2 (\phi^2+\chi^2) d\x.
\end{multline}
Here, \eqref{vt-1}, \eqref{vxtx-1} and \eqref{first-o-3} were used.

In order to obtain the diffusion terms for the remaining variables \(v\) and \(u\), we introduce the following auxiliary quantity:
\begin{equation}\label{key-qty}
\frac{d}{dt} \int_\RR \left[\frac{R}{\g-1}\chi \psi_\x - \frac{p^*}{2}\psi \phi_\x\right]d\x.
\end{equation}
Using the perturbation equations \eqref{pert-eq}, we find that
\begin{align*}
\frac{d}{dt} \int_\RR \frac{R}{\g-1}\chi \psi_\x d\x
&= \int_\RR \frac{R}{\g-1} \big(\chi_t \psi_\x - \chi_\x \psi_t\big) d\x\\
&= \frac{R}{\g-1} \dot{X} \int_\RR \left[(\th^S)_\x \psi_\x - (u^S)_\x \chi_\x\right] d\x
+\int_\RR \left[\frac{R}{\g-1}\chi_\x Q_1 -\psi_\x Q_2\right]d\x\\
&\qquad
-\int_\RR r_\x \psi_\x d\x
-\int_\RR (pu_\x -\pbar\ubar_\x)\psi_\x d\x
+\frac{R}{\g-1}\int_\RR (p-\pbar)_\x \chi_\x d\x,
\end{align*}
and that
\begin{align*}
-\frac{p^*}{2} \frac{d}{dt} \int_\RR \psi \phi_\x d\x
&=\frac{p^*}{2} \int_\RR \big(\phi_t \psi_\x - \phi_\x \psi_t\big) d\x\\
&=\frac{p^*}{2} \dot{X} \int_\RR \left[(v^S)_\x \psi_\x - (u^S)_\x \phi_\x\right] d\x
+\frac{p^*}{2} \int_\RR \phi_\x Q_1 d\x\\
&\qquad
+\frac{p^*}{2} \int_\RR (p-\pbar)_\x \phi_\x d\x
+\frac{p^*}{2} \int_\RR (\psi_\x)^2 d\x.
\end{align*}
We also have the identity 
\begin{equation}\begin{aligned}\label{p-px}
(p-\pbar)_\x 
&=-\frac{p^*}{v^*}\phi_\x + \frac{R}{v^*}\chi_\x
+R\bigg(\frac{1}{v}-\frac{1}{v^*}\bigg)\chi_\x 
-\bigg(\frac{p}{v}-\frac{p^*}{v^*}\bigg)\phi_\x\\
&\qquad
+R\bigg(\frac{1}{v}-\frac{1}{\vbar}\bigg)\thbar_\x 
-\bigg(\frac{p}{v}-\frac{\pbar}{\vbar}\bigg)\vbar_\x.
\end{aligned}\end{equation}
From these three relations, we deduce that
\begin{align*}
&\frac{d}{dt} \int_\RR \left[\frac{R}{\g-1}\chi \psi_\x - \frac{p^*}{2}\psi \phi_\x\right]d\x
+\frac{(p^*)^2}{2v^*}\int_\RR (\phi_\x)^2 d\x
+\frac{p^*}{2}\int_\RR (\psi_\x)^2 d\x\\
&=\dot{X}\int_\RR \left[ \frac{R}{\g-1}\left((\th^S)_\x \psi_\x - (u^S)_\x \chi_\x\right)
+\frac{p^*}{2} \left((v^S)_\x \psi_\x - (u^S)_\x \phi_\x\right) \right] d\x\\
&\quad
+\int_\RR \left[ \frac{p^*}{2}\phi_\x Q_1 + \frac{R}{\g-1}\chi_\x Q_1 - \psi_\x Q_2 \right] d\x
-\int_\RR r_\x \psi_\x d\x\\
&\quad
+\int_\RR \left[ \frac{R^2}{(\g-1)v^*} (\chi_\x)^2
+\left(\frac{Rp^*}{2v^*}-\frac{Rp^*}{(\g-1)v^*}\right) \phi_\x \chi_\x \right] d\x \\
&\quad
-\int_\RR \left(\frac{R}{\g-1}\chi_\x + \frac{p^*}{2}\phi_\x\right)
\left[\bigg(\frac{p}{v}-\frac{p^*}{v^*}\bigg)\phi_\x
-R\bigg(\frac{1}{v}-\frac{1}{v^*}\bigg)\chi_\x\right] d\x
-\int_\RR (p-p^*)(\psi_\x)^2 d\x\\
&\quad
-\int_\RR \left(\frac{R}{\g-1}\chi_\x + \frac{p^*}{2}\phi_\x\right)
\left[\bigg(\frac{p}{v}-\frac{\pbar}{\vbar}\bigg)\vbar_\x
-R\bigg(\frac{1}{v}-\frac{1}{\vbar}\bigg)\thbar_\x\right] d\x
-\int_\RR (p-\pbar)\psi_\x \ubar_\x d\x
\eqqcolon \sum_{j=1}^8 L_j.
\end{align*}
We proceed to estimate each term on the right-hand side separately.

\bci{L_1} Young's inequality, along with Lemma \ref{lem:VS}, gives 
\begin{align*}
\abs{L_1}
&\le
\n \norm{\big(\phi_\x,\psi_\x,\chi_\x\big)}_{L^2}^2
+C_\n |\dot{X}|^2 \norm{\big((v^S)_{\x},(u^S)_{\x},(\th^S)_{\x}\big)}_{L^2}^2 \\
&\le
\n \norm{\big(\phi_\x,\psi_\x,\chi_\x\big)}_{L^2}^2
+C_\n \d_S^3 |\dot{X}|^2.
\end{align*}

\bci{L_2} An analogous application of Young's inequality shows that 
\[
\abs{L_2}
\le \n \norm{\big(\phi_\x,\psi_\x,\chi_\x\big)}_{L^2}^2
+C_\n\norm{\big(Q_1, Q_2\big)}_{L^2}^2.
\]

\bci{L_3} Applying Young's inequality once more, we find that
\[
\abs{L_3}
\le \n \norm{\psi_\x}_{L^2}^2
+C_\n \norm{r_\x}_{L^2}^2.
\]
Here, \(\n>0\) will be chosen small enough to absorb the first term on the right-hand side into the \(u\)-diffusion term.

\bci{L_4} For this term, we have 
\[
\abs{L_4}
\le \n \norm{\phi_\x}_{L^2}^2 
+C_\n \norm{\chi_\x}_{L^2}^2.
\]
The parameter \(\n>0\) will again be chosen sufficiently small so that the first term on the right-hand side can be absorbed into the \(v\)-diffusion term.

\bci{L_5+L_6} Using \eqref{assYap} and the smallness of the wave strengths, we obtain 
\[
\abs{L_5}+\abs{L_6}
\le C(\e_1+\d_0)\norm{\big(\phi_\x,\psi_\x,\chi_\x\big)}_{L^2}^2.
\]

\bci{L_7+L_8} The estimate \eqref{first-o-1}, combined with Young's inequality, provides the following bound:
\[
\abs{L_7}+\abs{L_8}
\le \n \norm{\big(\phi_\x,\psi_\x,\chi_\x\big)}_{L^2}^2
+C\d_0(G^R+G^S)
+C\d_0\d_C\int_\RR W^2 (\phi^2+\chi^2) d\x.
\]
The preceding estimates give
\begin{align*}
&\frac{d}{dt} \int_\RR \left[\frac{R}{\g-1}\chi \psi_\x - \frac{p^*}{2}\psi \phi_\x\right]d\x
+\frac{(p^*)^2}{2v^*}\int_\RR (\phi_\x)^2 d\x
+\frac{p^*}{2}\int_\RR (\psi_\x)^2 d\x\\
&\le
\big(\n+C(\e_1+\d_0)\big)\norm{\big(\phi_\x,\psi_\x,\chi_\x\big)}_{L^2}^2
+C_\n \norm{\chi_\x}_{L^2}^2
+C_\n \norm{r_\x}_{L^2}^2\\
&\qquad
+C_\n \d_S^3 |\dot{X}|^2
+C_\n\norm{\big(Q_1, Q_2\big)}_{L^2}^2
+C\d_0(G^R+G^S)
+C\d_0\d_C \int_\RR W^2 (\phi^2+\chi^2) d\x.
\end{align*}
We now fix \(\n=\min\left\{\frac{(p^*)^2}{8v^*},\frac{p^*}{8}\right\}>0\).
By choosing \(\d_0\) and \(\e_1\) sufficiently small, we then have
\begin{align*}
&\frac{d}{dt} \int_\RR \left[\frac{R}{\g-1}\chi \psi_\x - \frac{p^*}{2}\psi \phi_\x\right]d\x
+\frac{(p^*)^2}{4v^*}\int_\RR (\phi_\x)^2 d\x
+\frac{p^*}{4}\int_\RR (\psi_\x)^2 d\x\\
&\le C \norm{\chi_\x}_{L^2}^2
+C \norm{r_\x}_{L^2}^2
+C \d_S^3 |\dot{X}|^2
+C\norm{\big(Q_1, Q_2\big)}_{L^2}^2\\
&\qquad
+C\d_0(G^R+G^S)
+C\d_0\d_C \int_\RR W^2 (\phi^2+\chi^2) d\x.
\end{align*}
The upper bound on \(\norm{\chi_\x}_{L^2}^2\) in \eqref{thx-df} then allows us to recover the diffusion terms for \(v\), \(u\) and \(\th\) as follows:
\begin{multline}\label{h1-vut}
\frac{d}{dt} \int_\RR \left[\frac{R}{\g-1}\chi \psi_\x - \frac{p^*}{2}\psi \phi_\x\right]d\x
+c \norm{\big(\phi_\x,\psi_\x,\chi_\x\big)}_{L^2}^2
\le C \norm{r}_{H^2}^2
+C \d_S^3 |\dot{X}|^2\\
+C\norm{\big(Q_1, Q_2, Q_3\big)}_{L^2}^2
+C\d_0(G^R+G^S)
+C\d_0\d_C \int_\RR W^2 (\phi^2+\chi^2) d\x.
\end{multline}

We next combine the two estimates \eqref{h1-r} and \eqref{h1-vut}.
Since
\[
\int_\RR \left[\frac{R}{\g-1}\chi \psi_\x - \frac{p^*}{2}\psi \phi_\x\right] d\x 
\le C\norm{\big(\phi,\psi,\chi\big)}_{L^2}
\norm{\big(\phi_\x,\psi_\x,\chi_\x\big)}_{L^2},
\]
we choose a fixed constant \(\n_*>0\), small enough and independent of \(\d_0\) and \(\e_1\), such that
\begin{align*}
&\int_\RR \left[
\frac{R\th}{2v^2}(\phi_\x)^2
+ \frac{1}{2}(\psi_\x)^2
+ \frac{R}{2(\g-1)\th}(\chi_\x)^2 \right]
+ \n_*\left[\frac{R}{\g-1}\chi \psi_\x - \frac{p^*}{2}\psi \phi_\x\right] d\x\\
&\hspace{30mm}
\ge c\norm{\big(\phi_\x,\psi_\x,\chi_\x\big)}_{L^2}^2
-C\norm{\big(\phi,\psi,\chi\big)}_{L^2}^2.
\end{align*}
We apply \eqref{h1-r} and \eqref{h1-vut} to the time-derivative on the left-hand side of this inequality, and then fix \(\n>0\) in \eqref{h1-r} sufficiently small and choose \(\d_0\) and \(\e_1\) accordingly.
This gives
\begin{align*}
&\frac{d}{dt}\int_\RR \left[
\frac{R\th}{2v^2}(\phi_\x)^2
+ \frac{1}{2}(\psi_\x)^2
+ \frac{R}{2(\g-1)\th}(\chi_\x)^2 \right]
+ \n_*\left[\frac{R}{\g-1}\chi \psi_\x - \frac{p^*}{2}\psi \phi_\x\right] d\x\\
&\qquad
+c\norm{\big(\phi_\x,\psi_\x,\chi_\x\big)}_{L^2}^2
+c\norm{r_\x}_{H^1}^2\\
&\le C\norm{r}_{H^1}^2
+C \d_S^3 |\dot{X}|^2
+C\norm{\big(Q_1, Q_2\big)}_{H^1}^2
+C\norm{Q_3}_{L^2}^2\\
&\qquad
+C\d_0(G^R+G^S)
+C\d_0\d_C \int_\RR W^2 (\phi^2+\chi^2) d\x.
\end{align*}
Integrating this over time, we obtain
\begin{equation}\label{eq:h1est}
\begin{aligned}
&\sup_{\t\in[0,t]}\norm{\big(\phi_\x,\psi_\x,\chi_\x\big)(\t)}_{L^2}^2
+\int_0^t \left[ \norm{\big(\phi_\x,\psi_\x,\chi_\x\big)(\t)}_{L^2}^2 + \norm{r_\x(\t)}_{H^1}^2 \right] d\t\\
&\le C \norm{\big(\phi,\psi,\chi\big)(0)}_{H^1}^2
+C \sup_{\t\in[0,t]}\norm{\big(\phi,\psi,\chi\big)(\t)}_{L^2}^2
+C\int_0^t \left[\d_S |\dot{X}|^2 + G^R + G^S + \norm{r}_{H^1}^2\right] d\t \\
&\qquad
+C\d_C\int_0^t \int_\RR \big(W(\t,\x)\big)^2 (\phi^2+\chi^2) d\x d\t
+C\int_0^t \left[ \norm{(Q_1,Q_2)}_{H^1}^2 + \norm{Q_3}_{L^2}^2\right] d\t.
\end{aligned}
\end{equation}

\subsection{Second-Order Estimate}
We turn to the second-order estimate, which will close the \textit{a priori} estimate.
As outlined above, we follow the same strategy as in the first-order case, combining standard energy estimates with the same choice of multipliers and auxiliary quantity, but with one additional spatial derivative.

\vspace{2mm}
Taking two derivatives of \eqref{pert-eq} with respect to \(\x\), testing the resulting equations against \(\frac{R\th}{v^2}\phi_{\x\x}\), \(\psi_{\x\x}\), and \(\frac{\chi_{\x\x}}{\th}\), respectively, and integrating over \(\x\), we arrive at
\begin{align*}
&\frac{d}{dt}\int_\RR \left[
\frac{R\th}{2v^2}(\phi_{\x\x})^2
+ \frac{1}{2}(\psi_{\x\x})^2
+ \frac{R}{2(\g-1)\th}(\chi_{\x\x})^2 \right] d\x
+\int_\RR \frac{\chi_{\x\x}}{\th} r_{\x\x\x} d\x\\
&=\dot{X}\int_\RR \left[\frac{R\th}{v^2}\phi_{\x\x} (v^S)_{\x\x\x}
+\psi_{\x\x}(u^S)_{\x\x\x}
+\frac{R \chi_{\x\x}}{(\g-1)\th}(\th^S)_{\x\x\x}\right] d\x\\
&\qquad
-\int_\RR \left[\psi_{\x\x} (Q_1)_{\x\x} + \frac{\chi_{\x\x}}{\th}(Q_2)_{\x\x}\right] d\x\\
&\qquad
+\int_\RR \left[-\left(\frac{(\g+1)R\th}{2v^3}u_\x + \frac{\g-1}{2v^2}q_\x\right)(\phi_{\x\x})^2
+ \frac{pu_\x+q_\x}{2\th^2}(\chi_{\x\x})^2 \right] d\x\\
&\qquad
+\int_\RR \left[ \frac{R\th}{v^2}\phi_{\x\x} \psi_{\x\x\x} -\psi_{\x\x}(p-\pbar)_{\x\x\x} -\frac{\chi_{\x\x}}{\th}(p u_\x -\pbar \ubar_\x)_{\x\x} \right] d\x.
\end{align*}
In view of the identity
\[
-\frac{R\th}{v^2}\phi_{\x\x}
-(p-\pbar)_{\x\x}
+\frac{R}{v}\chi_{\x\x}
=\left(\frac{R\th}{v^2}\right)_\x \phi_\x 
-\left(\frac{R}{v}\right)_\x \chi_\x
+\left[\left(\frac{R\th}{v^2}-\frac{R\thbar}{\vbar^2}\right)\vbar_\x
-R\left(\frac{1}{v}-\frac{1}{\vbar}\right)\thbar_\x\right]_\x,
\]
the last line of the preceding expression reduces to
\begin{align*}
&\int_\RR \left[ \frac{R\th}{v^2}\phi_{\x\x} \psi_{\x\x\x} -\psi_{\x\x}(p-\pbar)_{\x\x\x} -\frac{\chi_{\x\x}}{\th}(p u_\x -\pbar \ubar_\x)_{\x\x} \right] d\x\\
&=\int_\RR \psi_{\x\x} \left[ \left(\frac{R\th}{v^2}\right)_\x \phi_\x 
-\left(\frac{R}{v}\right)_\x \chi_\x \right]_\x d\x
+\int_\RR \psi_{\x\x} \left[\left(\frac{R\th}{v^2}-\frac{R\thbar}{\vbar^2}\right)\vbar_\x
-R\left(\frac{1}{v}-\frac{1}{\vbar}\right)\thbar_\x \right]_{\x\x} d\x\\
&\qquad
-\int_\RR \frac{\chi_{\x\x}}{\th}p_{\x\x} \psi_\x d\x
-2\int_\RR \frac{\chi_{\x\x}}{\th}p_\x \psi_{\x\x} d\x
-\int_\RR \frac{\chi_{\x\x}}{\th} \big((p -\pbar)\ubar_\x\big)_{\x\x} d\x.
\end{align*}
Moreover, to obtain the corresponding higher-order diffusion for \(r\), we differentiate \eqref{pert-qeq} once with respect to \(\x\), test the resulting equation against \(-\frac{r_{\x\x\x}}{4\b\th^4}\), and integrate over \(\RR\).
This yields
\begin{align*}
&\int_\RR \frac{(r_{\x\x\x})^2}{4\b v \th^4} d\x
+\int_\RR \frac{\a v (r_{\x\x})^2}{4\b \th^4} d\x
-\int_\RR \frac{\chi_{\x\x}}{\th} r_{\x\x\x} d\x\\
&=-2\int_\RR \frac{1}{4\b\th^4}\Big(\frac{1}{v}\Big)_\x r_{\x\x}r_{\x\x\x}d\x
-\int_\RR \frac{1}{4\b\th^4}\Big(\frac{1}{v}\Big)_{\x\x} r_\x r_{\x\x\x}d\x\\
&\qquad
-\int_\RR \Big(\frac{\a v}{4\b \th^4}\Big)_\x r_\x r_{\x\x} d\x
+\int_\RR \frac{\a v_\x}{4\b \th^4}r r_{\x\x\x} d\x
+3\int_\RR \frac{\th_\x}{\th^2}\chi_\x r_{\x\x\x} d\x\\
&\qquad
+\int_\RR \frac{r_{\x\x\x}}{4\b\th^4}\left[(Q_3)_\x
+ \a(\qbar \phi)_\x
+ 4\b\big((\th^3-\thbar^3)\thbar_\x\big)_\x
+ \left(\frac{\phi\qbar_\x}{v\vbar}\right)_{\x\x}\right] d\x.
\end{align*}
The preceding estimates together imply that
\begin{equation} \label{h2begin}
\begin{aligned}
&\frac{d}{dt}\int_\RR \left[
\frac{R\th}{2v^2}(\phi_{\x\x})^2
+ \frac{1}{2}(\psi_{\x\x})^2
+ \frac{R}{2(\g-1)\th}(\chi_{\x\x})^2 \right] d\x
+\int_\RR \frac{(r_{\x\x\x})^2}{4\b v \th^4} d\x
+\int_\RR \frac{\a v (r_{\x\x})^2}{4\b \th^4} d\x\\
&=\dot{X}\int_\RR \left[\frac{R\th}{v^2}\phi_{\x\x} (v^S)_{\x\x\x}
+\psi_{\x\x}(u^S)_{\x\x\x}
+\frac{R \chi_{\x\x}}{(\g-1)\th}(\th^S)_{\x\x\x}\right] d\x\\
&\qquad
-\int_\RR \left[\psi_{\x\x} (Q_1)_{\x\x} + \frac{\chi_{\x\x}}{\th}(Q_2)_{\x\x}\right] d\x\\
&\qquad
-\int_\RR \frac{\chi_{\x\x}}{\th} \big((p -\pbar)\ubar_\x\big)_{\x\x} d\x
+\int_\RR \psi_{\x\x} \left[\Big(\frac{R\th}{v^2}-\frac{R\thbar}{\vbar^2}\Big)\vbar_\x
-R\Big(\frac{1}{v}-\frac{1}{\vbar}\Big)\thbar_\x \right]_{\x\x} d\x\\
&\qquad
+\int_\RR \left[-\Big(\frac{(\g+1)R\th}{2v^3}u_\x + \frac{\g-1}{2v^2}q_\x\Big)(\phi_{\x\x})^2
+ \frac{pu_\x+q_\x}{2\th^2}(\chi_{\x\x})^2 \right] d\x\\
&\qquad
+\int_\RR \left[\Big(\frac{R\th}{v^2}\Big)_\x \phi_{\x\x} \psi_{\x\x}
-\left(\Big(\frac{R}{v}\Big)_\x+\frac{2p_\x}{\th}\right)\psi_{\x\x}\chi_{\x\x}\right] d\x\\
&\qquad
+\int_\RR \left[\Big(\frac{R\th}{v^2}\Big)_{\x\x} \phi_\x \psi_{\x\x}
-\frac{p_{\x\x}}{\th} \psi_\x \chi_{\x\x}
-\Big(\frac{R}{v}\Big)_{\x\x} \chi_\x \psi_{\x\x} \right] d\x\\
&\qquad
-\int_\RR \left[\left(
\frac{1}{2\b\th^4}\Big(\frac{1}{v}\Big)_\x r_{\x\x}
+\frac{1}{4\b\th^4}\Big(\frac{1}{v}\Big)_{\x\x} r_\x 
-\frac{\a v_\x}{4\b \th^4}r 
-3\frac{\th_\x}{\th^2}\chi_\x
\right) r_{\x\x\x}
+\Big(\frac{\a v}{4\b \th^4}\Big)_\x r_\x r_{\x\x}\right] d\x\\
&\qquad
+\int_\RR \frac{r_{\x\x\x}}{4\b\th^4}\left[Q_3
+ \a \qbar \phi
+ 4\b(\th^3-\thbar^3)\thbar_\x
+ \Big(\frac{\phi\qbar_\x}{v\vbar}\Big)_\x\right]_\x d\x
\eqqcolon \sum_{j=1}^9 L_j.
\end{aligned}
\end{equation}
We proceed by estimating each term on the right-hand side separately.\\

\bci{L_1} We begin with Lemma \ref{lem:VS}, which gives 
\[
\norm{\big((v^S)_{\x\x\x},(u^S)_{\x\x\x},(\th^S)_{\x\x\x}\big)}_{L^2}^2
\le C \d_S^7.
\]
Consequently, 
\begin{align*}
\abs{L_1}
&\le \nu \norm{\big(\phi_{\x\x},\psi_{\x\x},\chi_{\x\x}\big)}_{L^2}^2
+C_\n |\dot{X}|^2 \norm{\big((v^S)_{\x\x\x},(u^S)_{\x\x\x},(\th^S)_{\x\x\x}\big)}_{L^2}^2\\
&\le \nu \norm{\big(\phi_{\x\x},\psi_{\x\x},\chi_{\x\x}\big)}_{L^2}^2
+C_\n \d_S^7 |\dot{X}|^2.
\end{align*}

\bci{L_2} An application of Young's inequality yields 
\[
\abs{L_2} 
\le \n \norm{\big(\phi_{\x\x},\psi_{\x\x},\chi_{\x\x}\big)}_{L^2}^2 
+ C_\n \norm{\big(Q_1, Q_2\big)}_{H^2}^2.
\]

\bci{L_3+L_4} Young's inequality also provides 
\begin{multline*}
\abs{L_3}+\abs{L_4}
\le \n \norm{\big(\phi_{\x\x},\psi_{\x\x},\chi_{\x\x}\big)}_{L^2}^2\\
+C_\n \int_\RR \left(\left[\left(\frac{R\th}{v^2}-\frac{R\thbar}{\vbar^2}\right)\vbar_\x
-R\left(\frac{1}{v}-\frac{1}{\vbar}\right)\thbar_\x\right]_{\x\x}\right)^2 d\x
+C_\n \int_\RR \big(\big((p-\pbar)\ubar_\x\big)_{\x\x}\big)^2 d\x.
\end{multline*}
In view of \eqref{sec-o-1}, we further deduce that
\begin{multline*}
\abs{L_3}+\abs{L_4}
\le \n \norm{\big(\phi_{\x\x},\psi_{\x\x},\chi_{\x\x}\big)}_{L^2}^2\\
+C_\n \d_0^2 \norm{\big(\phi_\x,\psi_\x,\chi_\x\big)}_{H^1}^2
+C_\n \d_0 \big(G^R+G^S \big)
+C_\n \d_0\d_C\int_\RR W^2 (\phi^2+\chi^2) d\x.
\end{multline*}

\bci{L_5} The bounds in \eqref{vxtx-1} and \eqref{qh3} provide the following bound for \(L_5\): 
\[
\abs{L_5}
\le C \norm{(v_\x,u_\x,\th_\x,q_\x)}_{L^\infty} 
\norm{\big(\phi_{\x\x},\psi_{\x\x},\chi_{\x\x}\big)}_{L^2}^2
\le C(\e_1 + \d_0) \norm{\big(\phi_{\x\x},\psi_{\x\x},\chi_{\x\x}\big)}_{L^2}^2.
\]

\bci{L_6} As in the estimate of \(L_5\), we use \eqref{vxtx-1} to obtain
\begin{align*}
\abs{L_6}
&\le C \norm{(v_\x,u_\x,\th_\x)}_{L^\infty}
\Big(\norm{\phi_{\x\x}}_{L^2}\norm{\psi_{\x\x}}_{L^2}
+\norm{\psi_{\x\x}}_{L^2}\norm{\chi_{\x\x}}_{L^2}\Big)\\
&\le C(\e_1+\d_0) \norm{\big(\phi_{\x\x},\psi_{\x\x},\chi_{\x\x}\big)}_{L^2}^2.
\end{align*}

\bci{L_7} We have the following identities:
\begin{align*}
\Big(\frac{R\th}{v^2}\Big)_{\x\x}
&=\frac{R\th_{\x\x}}{v^2}
-\frac{2R\th}{v^3}v_{\x\x}
+\frac{6R\th}{v^4}(v_\x)^2
-\frac{4R}{v^3}v_\x \th_\x,\\
\frac{p_{\x\x}}{\th}
&=\frac{R}{v\th}\th_{\x\x}
-\frac{R}{v^2}v_{\x\x}
+\frac{2R}{v^3}(v_\x)^2
-\frac{2R}{v^2 \th}v_\x \th_\x,\\
\Big(\frac{R}{v}\Big)_{\x\x}
&=-\frac{Rv_{\x\x}}{v^2}
+\frac{2R(v_\x)^2}{v^3}.
\end{align*}
By \eqref{vxtx-1}, the first-order derivatives are uniformly small in \(L^\infty\), whereas the second-order derivatives are known to be small only in \(L^2\).
To account for this distinction, we decompose the preceding expressions as follows:
\begin{equation}\label{2th-d}
\begin{aligned}
\Big(\frac{R\th}{v^2}\Big)_{\x\x}
&=\frac{R\chi_{\x\x}}{v^2}
-\frac{2R\th}{v^3}\phi_{\x\x}
+\frac{6R\th}{v^4}(v_\x)^2
-\frac{4R}{v^3}v_\x \th_\x
+\frac{R\thbar_{\x\x}}{v^2}
-\frac{2R\th}{v^3}\vbar_{\x\x},\\
\frac{p_{\x\x}}{\th}
&=\frac{R}{v\th}\chi_{\x\x}
-\frac{R}{v^2}\phi_{\x\x}
+\frac{2R}{v^3}(v_\x)^2
-\frac{2R}{v^2 \th}v_\x \th_\x
+\frac{R}{v\th}\thbar_{\x\x}
-\frac{R}{v^2}\vbar_{\x\x},\\
\Big(\frac{R}{v}\Big)_{\x\x}
&=-\frac{R\phi_{\x\x}}{v^2}
+\frac{2R(v_\x)^2}{v^3}
-\frac{R\vbar_{\x\x}}{v^2}.
\end{aligned}
\end{equation}
The resulting decomposition, along with \eqref{assYap} and \eqref{vxtx-1}, gives the bound
\begin{align*}
\abs{L_7}
&\le C\e_1 \norm{\big(\phi_{\x\x},\psi_{\x\x},\chi_{\x\x}\big)}_{L^2}^2
+C(\e_1+\d_0)^2 \norm{\big(\phi_\x,\psi_\x,\chi_\x\big)}_{H^1}^2
+C\d_0 \norm{\big(\phi_\x,\psi_\x,\chi_\x\big)}_{H^1}^2\\
&\le C(\e_1+\d_0)\norm{\big(\phi_\x,\psi_\x,\chi_\x\big)}_{H^1}^2.
\end{align*}
Here, Lemmas \ref{lem:uR}--\ref{lem:VS} were also used.

\vspace{2mm}
\bci{L_8} To handle \(L_8\), we label its five constituent terms as \(L_{81},\ldots,L_{85}\), in order of appearance.
Except for \(L_{82}\), all these terms can be readily estimated using \eqref{vxtx-1} as follows:
\begin{align*}
\abs{L_{81}}+\abs{L_{83}}+\abs{L_{84}}+\abs{L_{85}}
&\le C \norm{(v_\x,u_\x,\th_\x)}_{L^\infty}
\Big(
\norm{r_{\x\x}}_{L^2} \norm{r_{\x\x\x}}_{L^2}
+\norm{r}_{L^2} \norm{r_{\x\x\x}}_{L^2}\\
&\hspace{40mm}
+\norm{\chi_\x}_{L^2} \norm{r_{\x\x\x}}_{L^2}
+\norm{r_\x}_{L^2} \norm{r_{\x\x}}_{L^2}
\Big)\\
&\le C(\e_1+\d_0) \Big(\norm{r}_{H^3}^2
+\norm{\chi_\x}_{L^2}^2\Big).
\end{align*}
The estimate of \(L_{82}\) requires the last decomposition in \eqref{2th-d}, as well as \eqref{vxtx-1} and \eqref{rh3}.
With the aid of Lemmas \ref{lem:uR}--\ref{lem:VS}, we then obtain
\begin{align*}
\abs{L_{82}}
&\le C \norm{(r_\x,v_\x,\vbar_{\x\x})}_{L^\infty}
\Big(
\norm{\phi_{\x\x}}_{L^2} \norm{r_{\x\x\x}}_{L^2}
+\norm{r_\x}_{L^2} \norm{r_{\x\x\x}}_{L^2}
\Big)\\
&\le C(\e_1+\d_0) \Big(\norm{\phi_{\x\x}}_{L^2}^2 + \norm{r}_{H^3}^2 \Big).
\end{align*}
These estimates lead to the following bound for \(L_8\): 
\[
\abs{L_8}
\le C(\e_1+\d_0) \Big(\norm{\phi_{\x\x}}_{L^2}^2
+\norm{\chi_\x}_{L^2}^2 +\norm{r}_{H^3}^2\Big).
\]

\bci{L_9} An application of Young's inequality to \eqref{sec-o-2} gives
\begin{multline*}
\abs{L_9}
\le \n \norm{r_{\x\x\x}}_{L^2}^2
+C_\n \norm{Q_3}_{H^1}^2\\
+C_\n \d_0^2 \norm{\big(\phi_\x,\psi_\x,\chi_\x\big)}_{H^1}^2
+C_\n \d_0 \big(G^R+G^S \big)
+C_\n \d_0\d_C \int_\RR W^2 (\phi^2+\chi^2) d\x.
\end{multline*}
We now use the bounds established above in \eqref{h2begin} to deduce that
\begin{align*}
&\frac{d}{dt}\int_\RR \left[
\frac{R\th}{2v^2}(\phi_{\x\x})^2
+ \frac{1}{2}(\psi_{\x\x})^2
+ \frac{R}{2(\g-1)\th}(\chi_{\x\x})^2 \right] d\x
+\int_\RR \frac{(r_{\x\x\x})^2}{4\b v \th^4} d\x
+\int_\RR \frac{\a v (r_{\x\x})^2}{4\b \th^4} d\x\\
&\le \big(\n + C_\n \d_0^2 + C(\e_1+\d_0)\big)
\norm{\big(\phi_\x,\psi_\x,\chi_\x\big)}_{H^1}^2\\
&\qquad
+C(\e_1 + \d_0) \left(\norm{r}_{H^2}^2 + \norm{r_\x}_{H^2}^2\right)
+ \big(\n+C(\e_1+\d_0)\big) \norm{r_{\x\x\x}}_{L^2}^2\\
&\qquad
+C_\n \d_S^7 |\dot{X}|^2
+C_\n \left(\norm{\big(Q_1, Q_2\big)}_{H^2}^2 +\norm{Q_3}_{H^1}^2 \right)\\
&\qquad
+C_\n\d_0 \big(G^R+G^S \big)
+C_\n \d_0\d_C \int_\RR W^2 (\phi^2+\chi^2) d\x.
\end{align*}
Since the coefficients \(\frac{1}{4\b v\th^4}\) and \(\frac{\a v}{4\b \th^4}\) are uniformly bounded below by positive constants, we choose \(\n\), \(\d_0\), and \(\e_1\) suitably small so that the following holds:
\begin{equation}\begin{aligned}\label{h2-r}
&\frac{d}{dt}\int_\RR \left[
\frac{R\th}{2v^2}(\phi_{\x\x})^2
+ \frac{1}{2}(\psi_{\x\x})^2
+ \frac{R}{2(\g-1)\th}(\chi_{\x\x})^2 \right] d\x
+ c\norm{r_{\x\x}}_{H^1}^2\\
&\le \big(\n + C_\n (\e_1+\d_0)\big) \norm{\big(\phi_\x,\psi_\x,\chi_\x\big)}_{H^1}^2
+C(\e_1 + \d_0) \norm{r}_{H^2}^2
+C_\n \d_S^7 |\dot{X}|^2\\
&\hspace{2mm}
+C_\n \left(\norm{\big(Q_1, Q_2\big)}_{H^2}^2 +\norm{Q_3}_{H^1}^2 \right)
+C_\n\d_0 \big(G^R+G^S \big)
+C_\n \d_0\d_C \int_\RR W^2 (\phi^2+\chi^2) d\x.
\end{aligned}\end{equation}

We proceed to recover the higher-order diffusion for \(v\), \(u\) and \(\th\) by exploiting the same structure of the system.
As in the first-order case, the diffusion for \(\th\) is obtained from the radiative equation, whereas that for \(v\) and \(u\) will be derived using the auxiliary quantity \eqref{key-qty} at one higher derivative level.

Differentiating the radiative equation in \eqref{qeq-x} once with respect to \(\x\), we have
\begin{multline*}
4\b\th^3 \chi_{\x\x}
=-12\b\th^2\th_\x \chi_\x
+\frac{r_{\x\x\x}}{v}
-\frac{2v_\x}{v^2}r_{\x\x}
+\frac{2(v_\x)^2}{v^3}r_\x
-\frac{v_{\x\x}}{v^2}r_\x
-\a v_\x r
-\a v r_\x\\
-\Big(Q_3
+\a \qbar \phi
+4\b(\th^3-\thbar^3)\thbar_\x\Big)_\x
-\left(\frac{\phi\qbar_\x}{v\vbar}\right)_{\x\x}.
\end{multline*}
The terms in the second line of the right-hand side can be controlled collectively using \eqref{sec-o-2}, while those in the first line are estimated using \eqref{vt-1} and \eqref{vxtx-1} as follows:
\begin{align*}
\norm{-12\b\th^2 \th_\x \chi_\x}_{L^2}^2
&\le C(\e_1+\d_0)^2 \norm{\chi_\x}_{L^2}^2,
&\norm{\frac{r_{\x\x\x}}{v}}_{L^2}^2
&\le C\norm{r_{\x\x\x}}_{L^2}^2,\\
\norm{-\frac{2v_\x}{v^2}r_{\x\x}}_{L^2}^2
&\le C(\e_1+\d_0)^2 \norm{r_{\x\x}}_{L^2}^2,
&\norm{\frac{2(v_\x)^2}{v^3}r_\x}_{L^2}^2
&\le C(\e_1+\d_0)^2 \norm{r_\x}_{L^2}^2,\\
\norm{-\a v_\x r}_{L^2}^2
&\le C(\e_1+\d_0)^2 \norm{r}_{L^2}^2,
&\norm{-\a v r_\x}_{L^2}^2
&\le C \norm{r_\x}_{L^2}^2,
\end{align*}
and 
\[
\norm{-\frac{v_{\x\x}}{v^2}r_\x}_{L^2}^2
=\norm{-\frac{\phi_{\x\x}}{v^2}r_\x -\frac{\vbar_{\x\x}}{v^2}r_\x}_{L^2}^2
\le C(\e_1+\d_0)^2 \norm{\phi_{\x\x}}_{L^2}^2
+C\d_0^2 \norm{r_\x}_{L^2}^2.
\]
Indeed, the last inequality follows from \eqref{rh3} and Lemmas \ref{lem:uR}--\ref{lem:VS}.\\
The uniform positivity of \(4\b\th^3\) then allows us to establish
\begin{multline}\label{thxx-df}
\norm{\chi_{\x\x}}_{L^2}^2 
\le C \norm{r}_{H^3}^2
+ C\norm{Q_3}_{H^1}^2
+ C(\e_1+\d_0)^2 \norm{\big(\phi_\x,\psi_\x,\chi_\x\big)}_{H^1}^2\\
+C\d_0 \big(G^R+G^S \big)
+C\d_0 \d_C\int_\RR W^2 (\phi^2+\chi^2) d\x.
\end{multline}

To recover the corresponding second-order diffusion terms for \(v\) and \(u\), we consider the higher-order counterpart of the auxiliary quantity in \eqref{key-qty}: 
\[
\frac{d}{dt} \int_\RR \left[\frac{R}{\g-1}\chi_\x \psi_{\x\x} - \frac{p^*}{2}\psi_\x \phi_{\x\x}\right]d\x.
\]
Differentiating the perturbation equations \eqref{pert-eq} once with respect to \(\x\), we observe that
\begin{align*}
&\frac{d}{dt} \int_\RR \frac{R}{\g-1}\chi_\x \psi_{\x\x} d\x
= \int_\RR \frac{R}{\g-1} \big((\chi_t)_\x \psi_{\x\x} - \chi_{\x\x} (\psi_t)_\x\big) d\x\\
&= \frac{R}{\g-1} \dot{X} \int_\RR \left[(\th^S)_{\x\x} \psi_{\x\x} - (u^S)_{\x\x} \chi_{\x\x}\right] d\x
+\int_\RR \left[\frac{R}{\g-1}\chi_{\x\x} (Q_1)_\x -\psi_{\x\x} (Q_2)_\x\right]d\x\\
&\qquad\qquad
-\int_\RR r_{\x\x} \psi_{\x\x} d\x
-\int_\RR (pu_\x -\pbar\ubar_\x)_\x \psi_{\x\x} d\x
+\frac{R}{\g-1}\int_\RR (p-\pbar)_{\x\x} \chi_{\x\x} d\x
\end{align*}
and that
\begin{align*}
&-\frac{p^*}{2} \frac{d}{dt} \int_\RR \psi_\x \phi_{\x\x} d\x
=\frac{p^*}{2} \int_\RR \big((\phi_t)_\x \psi_{\x\x} - \phi_{\x\x} (\psi_t)_\x\big) d\x\\
&=\frac{p^*}{2} \dot{X} \int_\RR \left[(v^S)_{\x\x} \psi_{\x\x} - (u^S)_{\x\x} \phi_{\x\x}\right] d\x
+\frac{p^*}{2} \int_\RR \phi_{\x\x} (Q_1)_\x d\x\\
&\qquad\qquad
+\frac{p^*}{2} \int_\RR (p-\pbar)_{\x\x} \phi_{\x\x} d\x
+\frac{p^*}{2} \int_\RR (\psi_{\x\x})^2 d\x.
\end{align*}
Taking a \(\x\)-derivative of \eqref{p-px}, we find that
\begin{multline*}
(p-\pbar)_{\x\x} 
=-\frac{p^*}{v^*}\phi_{\x\x} + \frac{R}{v^*}\chi_{\x\x}\\
+\left[R\bigg(\frac{1}{v}-\frac{1}{v^*}\bigg)\chi_\x 
-\bigg(\frac{p}{v}-\frac{p^*}{v^*}\bigg)\phi_\x
+R\bigg(\frac{1}{v}-\frac{1}{\vbar}\bigg)\thbar_\x 
-\bigg(\frac{p}{v}-\frac{\pbar}{\vbar}\bigg)\vbar_\x\right]_\x.
\end{multline*}
We further note that 
\[
-\int_\RR (pu_\x -\pbar\ubar_\x)_\x \psi_{\x\x} d\x
=-p^*\int_\RR (\psi_{\x\x})^2 d\x
-\int_\RR \left[(p-p^*)\psi_\x + (p-\pbar)\ubar_\x\right]_\x \psi_{\x\x} d\x.
\]
Taken together, these four relations imply that
\begin{align*}
&\frac{d}{dt} \int_\RR \left[\frac{R}{\g-1}\chi_\x \psi_{\x\x} - \frac{p^*}{2}\psi_\x \phi_{\x\x}\right]d\x
+\frac{(p^*)^2}{2v^*}\int_\RR (\phi_{\x\x})^2 d\x
+\frac{p^*}{2}\int_\RR (\psi_{\x\x})^2 d\x\\
&=\dot{X}\int_\RR \left[ \frac{R}{\g-1}\left((\th^S)_{\x\x} \psi_{\x\x} - (u^S)_{\x\x} \chi_{\x\x}\right)
+\frac{p^*}{2} \left((v^S)_{\x\x} \psi_{\x\x} - (u^S)_{\x\x} \phi_{\x\x}\right) \right] d\x\\
&\quad
+\int_\RR \left[ \frac{p^*}{2}\phi_{\x\x} (Q_1)_\x + \frac{R}{\g-1}\chi_{\x\x} (Q_1)_\x - \psi_{\x\x} (Q_2)_\x \right] d\x
-\int_\RR r_{\x\x} \psi_{\x\x} d\x\\
&\quad
+\int_\RR \left[ \frac{R^2}{(\g-1)v^*} (\chi_{\x\x})^2
+\left(\frac{Rp^*}{2v^*}-\frac{Rp^*}{(\g-1)v^*}\right) \phi_{\x\x} \chi_{\x\x} \right] d\x \\
&\quad
-\int_\RR \left(\frac{R}{\g-1}\chi_{\x\x} + \frac{p^*}{2}\phi_{\x\x}\right)
\left[\bigg(\frac{p}{v}-\frac{p^*}{v^*}\bigg)\phi_\x
-R\bigg(\frac{1}{v}-\frac{1}{v^*}\bigg)\chi_\x\right]_\x d\x
-\int_\RR \Big[(p-p^*)\psi_\x\Big]_\x \psi_{\x\x} d\x\\
&\quad
-\int_\RR \left(\frac{R}{\g-1}\chi_{\x\x} + \frac{p^*}{2}\phi_{\x\x}\right)
\left[\bigg(\frac{p}{v}-\frac{\pbar}{\vbar}\bigg)\vbar_\x
-R\bigg(\frac{1}{v}-\frac{1}{\vbar}\bigg)\thbar_\x\right]_\x d\x
-\int_\RR \Big[(p-\pbar)\ubar_\x\Big]_\x \psi_{\x\x} d\x\\
&\eqqcolon \sum_{j=1}^8 L_j.
\end{align*}
We now estimate the right-hand side on a term-by-term basis.

\bci{L_1} Using Lemma \ref{lem:VS} and Young's inequality, we derive the bound
\begin{align*}
\abs{L_1}
&\le
\n \norm{\big(\phi_{\x\x},\psi_{\x\x},\chi_{\x\x}\big)}_{L^2}^2
+C_\n |\dot{X}|^2 \norm{\big((v^S)_{\x\x},(u^S)_{\x\x},(\th^S)_{\x\x}\big)}_{L^2}^2 \\
&\le
\n \norm{\big(\phi_{\x\x},\psi_{\x\x},\chi_{\x\x}\big)}_{L^2}^2
+C_\n \d_S^5 |\dot{X}|^2.
\end{align*}

\bci{L_2} Young's inequality also provides
\[
\abs{L_2}
\le \n \norm{\big(\phi_{\x\x},\psi_{\x\x},\chi_{\x\x}\big)}_{L^2}^2
+C_\n\norm{\big(Q_1, Q_2\big)}_{H^1}^2.
\]

\bci{L_3} A further application of Young's inequality leads to
\[
\abs{L_3}
\le \n \norm{\psi_{\x\x}}_{L^2}^2
+C_\n \norm{r_{\x\x}}_{L^2}^2.
\]
The first term on the right-hand side can be absorbed into the \(u\)-diffusion term by choosing \(\n>0\) sufficiently small.

\bci{L_4} The corresponding bound for \(L_4\) reads
\[
\abs{L_4}
\le \n \norm{\phi_{\x\x}}_{L^2}^2 
+C_\n \norm{\chi_{\x\x}}_{L^2}^2.
\]
Similarly, an appropriate choice of \(\n>0\) allows us to absorb the first term on the right-hand side into the \(v\)-diffusion term.

\bci{L_5+L_6} To estimate \(L_5+L_6\), we use the following decomposition:
\begin{multline*}
L_5+L_6
=-\int_\RR \left(\frac{R}{\g-1}\chi_{\x\x} + \frac{p^*}{2}\phi_{\x\x}\right)
\left[\bigg(\frac{p}{v}\bigg)_\x\phi_\x
-R\bigg(\frac{1}{v}\bigg)_\x\chi_\x\right] d\x
-\int_\RR p_\x \psi_\x \psi_{\x\x} d\x\\
-\int_\RR \left(\frac{R}{\g-1}\chi_{\x\x} + \frac{p^*}{2}\phi_{\x\x}\right)
\left[\bigg(\frac{p}{v}-\frac{p^*}{v^*}\bigg)\phi_{\x\x}
-R\bigg(\frac{1}{v}-\frac{1}{v^*}\bigg)\chi_{\x\x}\right] d\x
-\int_\RR (p-p^*) (\psi_{\x\x})^2 d\x.
\end{multline*}
We denote the terms on the right-hand side by \(L_{51}\), \(L_{61}\), \(L_{52}\), and \(L_{62}\), respectively, in order of appearance.
The terms in the first line on the right-hand side, i.e., \(L_{51}\) and \(L_{61}\), can be estimated using Young's inequality and \eqref{vxtx-1}:
\[
\abs{L_{51}} + \abs{L_{61}}
\le \n \norm{\big(\phi_{\x\x},\psi_{\x\x},\chi_{\x\x}\big)}_{L^2}^2
+C_\n(\e_1+\d_0)^2 \norm{\big(\phi_\x,\psi_\x,\chi_\x\big)}_{L^2}^2.
\]
For those in the second line, the assumption \eqref{assYap} and the smallness of the wave strengths give
\[
\abs{L_{52}} + \abs{L_{62}}
\le C(\e_1+\d_0) \norm{\big(\phi_{\x\x},\psi_{\x\x},\chi_{\x\x}\big)}_{L^2}^2.
\]
These estimates then imply that 
\[
\abs{L_5} + \abs{L_6}
\le \big(\n+C(\e_1+\d_0)\big) \norm{\big(\phi_{\x\x},\psi_{\x\x},\chi_{\x\x}\big)}_{L^2}^2
+C_\n(\e_1+\d_0)^2 \norm{\big(\phi_\x,\psi_\x,\chi_\x\big)}_{L^2}^2.
\]

\bci{L_7+L_8} An application of Young's inequality in conjunction with \eqref{first-o-2} gives
\begin{multline*}
\abs{L_7}+\abs{L_8}
\le \n \norm{\big(\phi_{\x\x},\psi_{\x\x},\chi_{\x\x}\big)}_{L^2}^2\\
+C_\n \d_0^2 \norm{\big(\phi_\x,\psi_\x,\chi_\x\big)}_{L^2}^2
+C_\n \d_0(G^R+G^S)
+C_\n \d_0\d_C\int_\RR W^2 (\phi^2+\chi^2) d\x.
\end{multline*}
The bounds established above imply that
\begin{align*}
&\frac{d}{dt} \int_\RR \left[\frac{R}{\g-1}\chi_\x \psi_{\x\x} - \frac{p^*}{2}\psi_\x \phi_{\x\x}\right]d\x
+\frac{(p^*)^2}{2v^*}\int_\RR (\phi_{\x\x})^2 d\x
+\frac{p^*}{2}\int_\RR (\psi_{\x\x})^2 d\x\\
&\le
\big(\n+C(\e_1+\d_0)\big)\norm{\big(\phi_{\x\x},\psi_{\x\x},\chi_{\x\x}\big)}_{L^2}^2
+C_\n \norm{\chi_{\x\x}}_{L^2}^2
+C_\n \norm{r_{\x\x}}_{L^2}^2\\
&\qquad
+C_\n \d_S^5 |\dot{X}|^2
+C_\n\norm{\big(Q_1, Q_2\big)}_{H^1}^2
+C_\n \d_0(G^R+G^S)
+C_\n \d_0\d_C \int_\RR W^2 (\phi^2+\chi^2) d\x\\
&\qquad
+C_\n(\e_1+\d_0)^2 \norm{\big(\phi_\x,\psi_\x,\chi_\x\big)}_{L^2}^2.
\end{align*}
Fixing \(\n=\min\left\{\frac{(p^*)^2}{8v^*},\frac{p^*}{8}\right\}>0\) and taking \(\d_0\) and \(\e_1\) sufficiently small, we deduce that
\begin{align*}
&\frac{d}{dt} \int_\RR \left[\frac{R}{\g-1}\chi_\x \psi_{\x\x} - \frac{p^*}{2}\psi_\x \phi_{\x\x}\right]d\x
+\frac{(p^*)^2}{4v^*}\int_\RR (\phi_{\x\x})^2 d\x
+\frac{p^*}{4}\int_\RR (\psi_{\x\x})^2 d\x\\
&\le C \norm{\chi_{\x\x}}_{L^2}^2
+C \norm{r_{\x\x}}_{L^2}^2
+C \d_S^5 |\dot{X}|^2
+C\norm{\big(Q_1, Q_2\big)}_{H^1}^2\\
&\qquad
+C\d_0(G^R+G^S)
+C\d_0\d_C \int_\RR W^2 (\phi^2+\chi^2) d\x
+C(\e_1+\d_0)^2 \norm{\big(\phi_\x,\psi_\x,\chi_\x\big)}_{L^2}^2.
\end{align*}
Using the upper bound of \(\norm{\chi_{\x\x}}_{L^2}^2\) in \eqref{thxx-df}, we recover the corresponding second-order diffusion terms for \(v\), \(u\) and \(\th\):
\begin{equation}\begin{aligned}\label{h2-vut}
&\frac{d}{dt} \int_\RR \left[\frac{R}{\g-1}\chi_\x \psi_{\x\x} - \frac{p^*}{2}\psi_\x \phi_{\x\x}\right]d\x
+c \norm{\big(\phi_{\x\x},\psi_{\x\x},\chi_{\x\x}\big)}_{L^2}^2\\
&\le C \norm{r}_{H^3}^2
+C \d_S^5 |\dot{X}|^2
+C\norm{\big(Q_1, Q_2, Q_3\big)}_{H^1}^2\\
&\qquad
+C\d_0(G^R+G^S)
+C\d_0\d_C \int_\RR W^2 (\phi^2+\chi^2) d\x
+C(\e_1+\d_0)^2 \norm{\big(\phi_\x,\psi_\x,\chi_\x\big)}_{L^2}^2.
\end{aligned}\end{equation}

We now combine the two estimates obtained above.
Since 
\[
\int_\RR \left[\frac{R}{\g-1}\chi_\x \psi_{\x\x} - \frac{p^*}{2}\psi_\x \phi_{\x\x}\right] d\x 
\le C\norm{\big(\phi_\x,\psi_\x,\chi_\x\big)}_{L^2}
\norm{\big(\phi_{\x\x},\psi_{\x\x},\chi_{\x\x}\big)}_{L^2},
\]
we may select a sufficiently small constant \(\n_*>0\), independent of \(\d_0\) and \(\e_1\) so that
\begin{align*}
&\int_\RR \left[
\frac{R\th}{2v^2}(\phi_{\x\x})^2
+ \frac{1}{2}(\psi_{\x\x})^2
+ \frac{R}{2(\g-1)\th}(\chi_{\x\x})^2 \right]
+ \n_*\left[\frac{R}{\g-1}\chi_\x \psi_{\x\x} - \frac{p^*}{2}\psi_\x \phi_{\x\x}\right] d\x\\
&\hspace{30mm}
\ge c\norm{\big(\phi_{\x\x},\psi_{\x\x},\chi_{\x\x}\big)}_{L^2}^2
-C\norm{\big(\phi_\x,\psi_\x,\chi_\x\big)}_{L^2}^2.
\end{align*}
We use \eqref{h2-r} and \eqref{h2-vut} to bound the time-derivative of the left-hand side.
Choosing the parameter \(\n>0\) in \eqref{h2-r} sufficiently small, we then take \(\d_0\) and \(\e_1\) small enough accordingly.
Consequently, we have
\begin{align*}
&\frac{d}{dt}\int_\RR \left[
\frac{R\th}{2v^2}(\phi_{\x\x})^2
+ \frac{1}{2}(\psi_{\x\x})^2
+ \frac{R}{2(\g-1)\th}(\chi_{\x\x})^2 \right]
+ \n_*\left[\frac{R}{\g-1}\chi_\x \psi_{\x\x} - \frac{p^*}{2}\psi_\x \phi_{\x\x}\right] d\x\\
&\qquad
+c\norm{\big(\phi_{\x\x},\psi_{\x\x},\chi_{\x\x}\big)}_{L^2}^2
+c\norm{r_{\x\x}}_{H^1}^2\\
&\le C\norm{\big(\phi_\x,\psi_\x,\chi_\x\big)}_{L^2}^2
+C\norm{r}_{H^2}^2
+C \d_S^5 |\dot{X}|^2
+C\norm{\big(Q_1, Q_2\big)}_{H^2}^2
+C\norm{Q_3}_{H^1}^2\\
&\qquad
+C\d_0(G^R+G^S)
+C\d_0\d_C \int_\RR W^2 (\phi^2+\chi^2) d\x.
\end{align*}
Integration over time then gives
\begin{equation}\begin{aligned}\label{eq:H2est}
&\sup_{\t\in[0,t]}\norm{\big(\phi_{\x\x},\psi_{\x\x},\chi_{\x\x}\big)(\t)}_{L^2}^2
+\int_0^t \left[ \norm{\big(\phi_{\x\x},\psi_{\x\x},\chi_{\x\x}\big)(\t)}_{L^2}^2
+ \norm{r_{\x\x}(\t)}_{H^1}^2 \right] d\t\\
&\le C \norm{\big(\phi,\psi,\chi\big)(0)}_{H^2}^2
+C \sup_{\t\in[0,t]}\norm{\big(\phi,\psi,\chi\big)(\t)}_{H^1}^2\\
&\qquad
+C\int_0^t \left[\d_S |\dot{X}|^2 + G^R + G^S + \norm{r}_{H^2}^2
+\norm{\big(\phi_\x,\psi_\x,\chi_\x\big)(\t)}_{L^2}^2\right] d\t\\
&\qquad
+C\d_C\int_0^t \int_\RR \big(W(\t,\x)\big)^2 (\phi^2+\chi^2) d\x d\t
+C\int_0^t \left[ \norm{(Q_1,Q_2)}_{H^2}^2 + \norm{Q_3}_{H^1}^2\right] d\t.
\end{aligned}\end{equation}
Finally, combining \Cref{lem:l2est}, \eqref{eq:h1est} and \eqref{eq:H2est} with the aid of \Cref{rmk:Q} and \Cref{lem:W}, we complete the proof of \Cref{prop:main}. \qed

\begin{appendix}
\section{Proof of Lemmas \ref{lem:EU}--\ref{lem:VS}} \label{appendix:VS}
\setcounter{equation}{0}
\subsection{Preliminaries and Proof Strategy}
The proof relies on the geometric singular perturbation theory (GSPT) \cite{Jones}.
The application of the GSPT approach in the context of fluid dynamics can be found in \cite{EE-CMS26,EEKO-JDE25} for the Brenner--Navier--Stokes--Fourier system.
Our argument follows the strategy developed in \cite{EE-CMS26,EEKO-JDE25}; however, the traveling wave equations in the present setting exhibit an additional degeneracy.
To be precise, after reducing these equations to an equation for the specific volume \(v\), we find that the coefficient of the higher-order derivative vanishes at an interior state.
To address this degeneracy, we introduce an auxiliary independent variable and derive a smooth slow--fast system to which the GSPT applies.
The details are given in the remainder of the paper.

\vspace{2mm}
\(\bullet\) \textbf{Notation:} The notation used in this appendix is independent of that introduced in Sections \ref{sec:pfofmain}--\ref{sec:h2}.
In particular, symbols appearing in both places should not be confused; for instance, \(\z\) in \eqref{eq:zetadef} is unrelated to \(\z\) used below.

\subsubsection{Fenichel's First Theorem}
We present the main concepts underlying the geometric approach.
Consider the following ODE system with a small parameter \(\d\):
\begin{equation}\label{slow}
\begin{cases}
\d x' = f(x,y,\d)\\
y' = g(x,y,\d),
\end{cases}
\end{equation}
where \(x\in\RR^n\) and \(y\in\RR^l\).
We assume that \(f\) and \(g\) are smooth on \(U\times I\), where \(U\subset \RR^{n+l}\) is open and \(I\) is an open interval containing \(0\).

Following \cite{Jones}, we call an \(l\)-dimensional manifold \(M_0 \subset U\) a \textit{critical manifold} of the system \eqref{slow} if \(f(x,y,0)=0\) for every \((x,y)\in M_0\).

We introduce the definitions needed to state Fenichel's first theorem.
\begin{definition}
We say that the manifold \(M_0\) is normally hyperbolic relative to \eqref{slow} if the \(n\times n\) matrix  
\[
D_x f(x,y, \d) \big|_{\d=0}
\]
has \(n\) eigenvalues (counting multiplicity) with nonzero real part for each \((x,y)\in M_0\).
\end{definition}

Fenichel's first theorem establishes the existence of a manifold that is locally invariant under the flow of \eqref{slow}.
To make this precise, we introduce the notion of a locally invariant manifold.

\begin{definition}
Fix \(\d \in I\setminus \{0\}\).
A set \(M \subset U\) is locally invariant under the flow of \eqref{slow} if there is an open set \(V \subset U\) containing \(M\) such that, for any \((x_0,y_0)\in M\) and any \(T\in\RR\), the trajectory \((x,y)\) starting at \((x_0,y_0)\), whenever defined on \([0,T]\), satisfies 
\[
(x,y)([0,T]) \subset V \quad \Longrightarrow \quad
(x,y)([0,T]) \subset M.
\]
When \(T<0\), replace \([0,T]\) with \([T,0]\).
\end{definition}

Note that if \(M_0\) is normally hyperbolic, the implicit function theorem allows us to express it locally as a graph of \(x\) in terms of \(y\).
In the following discussion, however, we only consider the case in which \(M_0\) can be globally expressed as a graph of \(x\) in terms of \(y\) over a compact domain \(K \subset \RR^l\).
In this setting, Fenichel's first theorem can be stated as follows.

\begin{proposition}[\cite{Jones}, Fenichel's First Theorem]\label{prop_FFT}
Consider the system \eqref{slow} with normally hyperbolic critical manifold \(M_0\).
Suppose that there is a smooth function \(h \colon K \to \RR^n\) whose graph is contained in \(M_0\), i.e., 
\[
\{(x,y) \mid x=h(y), y\in K\} \subset M_0,
\]
where \(K \subset \RR^l\) is a compact, simply connected smooth domain.
Then, for each \(r\in \mathbb{N}\), there exist a constant \(\d_0=\d_0(r)>0\), with \((-\d_0,\d_0)\subset I\), and a family of functions \(h^\d \colon K \to \RR^n\), defined for \(\abs{\d}<\d_0\), such that for each \(0<|\d|<\d_0\), the graph 
\[
\{(x,y)\mid x=h^\d(y), \, y\in K\}
\]
is contained in an \(l\)-dimensional manifold \(M_\d \subset U\) that is locally invariant under the flow of \eqref{slow}.
Moreover, the map \((y,\d)\mapsto h^\d(y)\) is \(\Ccal^r\) on \(K\times(-\d_0,\d_0)\), and \(h^0=h\) on \(K\).
\end{proposition}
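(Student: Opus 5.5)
Since this is the classical result quoted from \cite{Jones}, the plan is to reproduce the standard invariant-manifold argument rather than anything specific to the radiative system. The first step is to pass to the fast time $s=\xi/\delta$, in which \eqref{slow} becomes
\[
\dot x=f(x,y,\delta),\qquad \dot y=\delta\, g(x,y,\delta),
\]
and to append the trivial equation $\dot\delta=0$. At $\delta=0$ every point of $M_0$ is an equilibrium. The linearization at a point of $M_0$ has eigenvalues $0$ with multiplicity $l+1$ (directions tangent to $M_0$ and the $\delta$-direction), together with the $n$ eigenvalues of $D_xf$, which have nonzero real part by normal hyperbolicity. Thus $\{(h(y),y,0)\colon y\in K\}$ is a compact manifold of equilibria with a uniform spectral gap. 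By compactness of $K$ and continuity, $\operatorname{Re}$ of the spectrum of $D_xf(h(y),y,0)$ stays outside a fixed strip $[-\mu,\mu]$ for all $y\in K$.

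Next, I will localize. Change variables to $z=x-h(y)$, so that the critical manifold becomes $\{z=0\}$. Because $K$ is compact and simply connected, the stable and unstable eigenbundles of $D_xf$ over $K$ are trivial, so after a smooth $y$-dependent linear change of $z$ the fast part has the block-diagonal form
\[
\dot z = A(y)z + N(z,y,\delta),\qquad N=O(|z|^2+|\delta|).
\]
I will then multiply the nonlinear terms and the $y,\delta$ dynamics by cut-off functions supported in $|z|\le\rho$, $|\delta|\le\rho$, and extend the slow variable from $K$ to a slightly larger domain, for instance by a cut-off that makes $y$ stationary near $\partial K$. The result is a globally defined system whose nonlinearity has Lipschitz constant $O(\rho)$, much smaller than the gap $\mu$.

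For this modified system I will construct the center manifold as a graph $z=h^\delta(y)$ via the Lyapunov--Perron integral equation. Concretely, I will look for solutions with at most slow exponential growth $e^{\eta|s|}$, $0<\eta<\mu$, and solve by the contraction mapping principle
\[
z(s)=\int_{-\infty}^{s}\Phi^{u\!\!\!/}\ldots
\]
more precisely, the stable component is integrated from $-\infty$ and the unstable component from $+\infty$ against the exponential dichotomy of $A(y)$. Equivalently, one can use Hadamard's graph transform on Lipschitz graphs. Invariance of the graph follows from uniqueness of the fixed point. Because the modified system agrees with the original one near $M_0\times\{0\}$, the resulting manifold $M_\delta$ is locally invariant in the sense of the definition, with $V$ the region where the cut-offs equal one. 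Since $M_0$ itself is invariant at $\delta=0$, we also get $h^0=h$.

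The main obstacle is the $\mathcal C^r$ regularity of $(y,\delta)\mapsto h^\delta(y)$, jointly in both variables. My plan is the standard fiber-contraction / Henry-type argument. I will differentiate the fixed-point equation formally $k$ times, for $k\le r$, and show that the induced maps are contractions on the weighted spaces with weight $e^{k\eta|s|}$. This requires the gap condition $r\eta<\mu$, which I can arrange by shrinking $\rho$, and hence $\delta_0$, depending on $r$. This dependence is exactly why $\delta_0=\delta_0(r)$ in the statement. Finally, a contraction argument on the derivative bundles shows that the formal derivatives are the actual derivatives and are continuous. Since $\delta$ is carried as a dynamic variable with $\dot\delta=0$, this yields regularity in $\delta$ at the same time as regularity in $y$.
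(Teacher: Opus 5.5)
The paper gives no proof of this proposition. It is quoted from \cite{Jones} and used as a black box, and only in the simplest case: $n=l=1$, $K$ a compact interval, and $D_xf|_{\d=0}=B_0<0$.

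Your outline is a legitimate proof, but it takes a different route from the cited source. You suspend $\delta$, cut off, and obtain the slow manifold as a global center manifold from the Lyapunov--Perron integral equation. Joint $\Ccal^r$ regularity in $(y,\delta)$ then comes from fiber contraction on a scale of exponentially weighted spaces under the gap condition $r\eta<\mu$. The classical treatment behind \cite{Jones} (Fenichel's) is instead geometric: the vector field is modified near $\partial K$ so that the manifold becomes overflowing (or inflowing) invariant, and the perturbed manifold is produced by a Hadamard-type graph transform.

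What each buys: your route makes the dependence $\d_0=\d_0(r)$ and the smoothness in $\delta$, including across $\delta=0$ and for $\delta<0$, completely transparent. The geometric route is coordinate-free, works directly with the invariant normal bundles, and yields the stable/unstable-manifold and fiber theorems with the same machinery.

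Three points in your sketch need correcting; none is fatal.

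\textbf{Triviality of the bundles.} ``Compact and simply connected'' does not make the stable and unstable bundles trivial once $l\ge 3$. On the spherical shell $K=\{1\le|y|\le 2\}\subset\RR^3$, take $f(x,y,\delta)=(2\hat y\hat y^{T}-I)(x-h(y))$ with $\hat y=y/|y|$. This is normally hyperbolic, but its stable bundle $\hat y^{\perp}$ is the nontrivial tangent bundle of $S^2$, so no global block-diagonalizing change of $z$ exists. You do not need one: work with the smooth Riesz projections $P^{s,u}(y)$ and $y$-dependent adapted norms.

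\textbf{The dichotomy you integrate against.} Relatedly, the dichotomy is not that of the frozen matrices $A(y)$. It is the dichotomy of the nonautonomous equation $\dot z=A(y(s))z$ along the moving center trajectory. You get it from the slowly-varying-coefficients theorem, using $|\dot y|=O(\rho)$, and its dependence on $y(\cdot)$ must be controlled inside the contraction.

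\textbf{The Lipschitz constant in the $\delta$-direction.} The suspended nonlinearity is not $O(\rho)$-Lipschitz in $\delta$, since $\partial_\delta(\delta g)=g+O(\delta)$ and $\partial_\delta N=O(1)$. This is harmless: because $\dot\delta=0$, you can run the contraction on each slice $\delta=\mathrm{const}$. Then $g$ enters only as a polynomially growing inhomogeneity in the equations for the $\delta$-derivatives, which the weights absorb.

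\textbf{Placement of the cut-off in $y$.} The cut-off must sit near the boundary of an enlarged domain $K'\supset K$, not near $\partial K$ itself. Over $K'$, extend $M_0$ as a graph by the implicit function theorem, which applies because $D_xf$ is invertible along $M_0$. Otherwise the graph over $K$ would not lie in the region $V$ where the modified and original fields agree, and local invariance would fail near $\partial K$.
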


\begin{remark}\label{rmk-FFT}
Fix an integer \(r\ge 2\).
Using Proposition \ref{prop_FFT}, we choose \(\d_0=\d_0(r)\) and the family \(h^\d\) so that \(h^\d(y)\) is of class \(C^r\) jointly in \(y\) and \(\d\).
Since \(h^0=h\), there exists a function 
\[
s \colon K\times(-\d_0,\d_0)\to \RR^n
\]
of class \(\Ccal^{r-1}\) jointly in \(y\) and \(\d\) such that 
\[
h^\d(y) = h(y) + \d s(y,\d), \qquad \forall (y,\d)\in K\times (-\d_0,\d_0).
\]
\end{remark}

\subsubsection{Reduction to Slow--Fast System}
The key step in the proof is to reduce the system of ODEs in \eqref{VS} to a slow--fast system of the form \eqref{slow}, to which Proposition \ref{prop_FFT} can be applied, following the strategy of \cite{EE-CMS26}.
To this end, we first carry out a formal reduction of the radiative shock equations.
More precisely, by eliminating \(u^S\), \(\th^S\) and \(q^S\), we derive a scalar ODE for \(v^S\), which can subsequently be reformulated as a slow--fast system.

At this stage, the computations in this subsubsection are only formal, since the existence and regularity of the radiative shock profile have not yet been established.
This apparent lack of rigor will be resolved in the next subsection.
There, Proposition \ref{prop_FFT} yields a solution of the reduced slow--fast system and then we use this solution to construct a solution of the original ODE system.
In this way, we obtain the desired radiative shock profile.

We integrate the traveling wave equation \eqref{VS} to find that 
\begin{equation}\label{VS-I}
\begin{cases}
-\s (v^S-v^*) - (u^S-u^*) = 0, \\
-\s (u^S-u^*) + p^S-p^* = 0, \\
-\s \left(\frac{R}{\g-1}(\th^S-\th^*)+\frac{(u^S)^2-(u^*)^2}{2}\right)
+ (p^S u^S - p^* u^*) + (q^S-q^*) =0.
\end{cases}
\end{equation}
Then, we eliminate the variables \(u^S\), \(\th^S\), and \(q^S\) to derive an equation for \(v^S\).\\
The first two equations in \eqref{VS-I} yield that 
\begin{equation}\label{uinv}
u^S = u^* -\s (v^S-v^*),
\end{equation}
and that
\[
-\s^2 (v^S -v^*) = p^S - p^* = \frac{R \th^S}{v^S} - p^*.
\]
This implies 
\begin{equation}\label{thinv}
\th^S = \frac{v^S}{R} \big(p^* - \s^2 (v^S-v^*)\big).
\end{equation}
Then, it follows that 
\begin{align*}
\th^S - \th^*
&= \frac{v^S}{R} \left(\frac{R\th^*}{v^*}-\frac{R\th^*}{v^S}-\s^2(v^S-v^*)\right)\\
&= \frac{(p^*-\s^2 v^S)}{R}(v^S-v^*)
= \frac{(p^*-\s^2 v^*)}{R}(v^S-v^*) - \frac{\s^2}{R}(v^S-v^*)^2,
\end{align*}
from which, we obtain 
\[
\frac{R}{\g-1}(\th^S - \th^*)
=\frac{(p^*-\s^2 v^*)}{\g-1}(v^S-v^*)
-\frac{\s^2}{\g-1}(v^S-v^*)^2.
\]
On the other hand, using the first two equations in \eqref{VS-I}, we see that 
\[
\frac{1}{2}(u^S)^2-\frac{1}{2}(u^*)^2
=\frac{1}{2}(u^S-u^*)^2 + u^*(u^S-u^*)
=\frac{\s^2}{2}(v^S-v^*)^2 - \s u^*(v^S-v^*)
\]
and that
\begin{align*}
p^S u^S - p^* u^*
&=(p^S-p^*)(u^S-u^*)+(p^S-p^*)u^*+p^*(u^S-u^*)\\
&=\s^3(v^S-v^*)^2 -\s^2 u^*(v^S-v^*) -\s p^* (v^S-v^*).
\end{align*}
Thus, summing up all, the third equation in \eqref{VS-I} yields that
\begin{align*}
q^S
&=\s \left(\frac{R}{\g-1}(\th^S-\th^*)+\frac{(u^S)^2-(u^*)^2}{2}\right)
-(p^S u^S - p^* u^*)\\
&= \left(\frac{\s(p^*-\s^2 v^*)}{\g-1}-\s^2 u^*
+\s^2 u^*
+\s p^*\right)(v^S-v^*)
-\s^3 \left(\frac{1}{\g-1}-\frac{1}{2}+1\right)(v^S-v^*)^2\\
&= \s \frac{(\g p^*-\s^2 v^*)}{\g-1}(v^S-v^*)
-\s^3 \frac{(\g+1)}{2(\g-1)}(v^S-v^*)^2.
\end{align*}
Note by \cite[(3.4)]{EEKO-JDE25} that 
\begin{equation}\label{gpsv}
\g p^* -\s^2 v^* = \frac{(\g+1)}{2}\s^2 \d_S = \frac{(\g+1)}{2}\s^2 (v_+ - v^*).
\end{equation}
Then, we have 
\[
q^S
= \frac{(\g+1)\s^3}{2(\g-1)}
\Big((v_+ - v^*)(v^S-v^*) - (v^S-v^*)^2\Big)
= \frac{(\g+1)\s^3}{2(\g-1)} (v_+ - v^S)(v^S-v^*).
\]
Let \(Q\) be the constant coefficient above, i.e., 
\[
Q\coloneqq \frac{(\g+1)\s^3}{2(\g-1)}.
\]
It follows that
\begin{equation}\label{qinv}
q^S
= Q (v_+ - v^S)(v^S-v^*).
\end{equation}
Note that once \(v^S\) is constructed, the remaining components are recovered from \eqref{uinv}, \eqref{thinv} and \eqref{qinv}, and the first three equations in \eqref{VS} are then automatically satisfied.
To proceed further, we also observe from \eqref{uinv}, \eqref{thinv} and \eqref{qinv} that 
\begin{equation}\label{tpqp}
(\th^S)' = \frac{(v^S)'}{R} \big[p^* - \s^2(2v^S-v^*)\big], \qquad
(q^S)' = Q (v^S)' \big[2(v^*-v^S)+(v_+-v^*)\big].
\end{equation}

\vspace{2mm}
We introduce the normalized profile \(w_0\) (and \(w_1\)) and the rescaled spatial variable \(z\) by
\begin{equation}\label{var}
z \coloneqq \d_S \x, \qquad
v^S(\x) = v^* + \d_S w_0(z), \qquad
w_1(z) = \frac{dw_0}{dz}.
\end{equation}
Then, it simply follows that 
\[
(v^S)'(\x) = \d_S^2 w_1(z), \qquad
(q^S)'(\x) = Q \d_S^3 w_1 (1-2w_0).
\]
Let \(\O\) be an open neighborhood of \([0,1]\). We define the following functions on \(\O\):
\begin{equation}\label{def-TDBF}
\begin{aligned}
\Theta_{\d_S}(w) &\coloneqq \frac{(v^*+\d_S w)}{R} (p^* -\s^2 \d_S w),
&D_{\d_S}(w) &\coloneqq \frac{Q(1-2w)}{v^*+\d_S w},\\
B_{\d_S}(w) &\coloneqq \frac{4\b}{R} \big(\Theta_{\d_S}(w)\big)^3 \big[p^* - \s^2(v^*+2\d_S w)\big],
&F_{\d_S}(w) &\coloneqq \a Q (v^*+\d_S w) w(1-w).
\end{aligned}
\end{equation}
We remark that, in addition to the explicit \(\d_S\)-dependence displayed in the functions, the shock speed \(\s\) and the constant \(Q\) also depend on \(\d_S\).
With the functions above, we obtain 
\[
\th^S = \Theta_{\d_S}(w_0), \qquad
q^S = Q \d_S^2 w_0 (1-w_0), \qquad
\b((\th^S)^4)' = \d_S^2 B_{\d_S}(w_0) w_1,
\]
and 
\[
\frac{(q^S)'}{v^S} = \d_S^3 D_{\d_S}(w_0) w_1.
\]
The fourth equation in \eqref{VS} 
\[
- \left(\frac{(q^S)'}{v^S}\right)'
+ \a v^S q^S 
+ \b ((\th^S)^4)' = 0
\]
reduces to 
\begin{equation}\label{sinsys}
\begin{cases}
\dfrac{dw_0}{dz} = w_1,\\
\d_S^2 D_{\d_S}(w_0) \dfrac{dw_1}{dz}
= F_{\d_S}(w_0) + B_{\d_S}(w_0)w_1
-\d_S^2 D_{\d_S}'(w_0) w_1^2.
\end{cases}
\end{equation}
It is therefore natural to take 
\[
\begin{cases}
\dfrac{dw_0}{dz} = w_1,\\
\d_S^2 \dfrac{dw_1}{dz}
= \big(D_{\d_S}(w_0)\big)^{-1} \Big(F_{\d_S}(w_0) + B_{\d_S}(w_0)w_1
-\d_S^2 D_{\d_S}'(w_0) w_1^2 \Big).
\end{cases}
\]
as the slow--fast system.
However, since \(w_0=\frac{1}{2}\) corresponds to \(v^S = \frac{v^*+v_+}{2}\) and
\[
D_{\d_S}\Big(\frac{1}{2}\Big) = 0, \qquad
D_{\d_S}'(w) = -\frac{Q(2v^*+\d_S)}{(v^*+\d_S w)^2} < 0,
\]
that is, the denominator vanishes at \(w_0=\frac{1}{2}\), the right-hand side fails to define a smooth function.
For this purpose, we consider an auxiliary variable \(\z\) defined by \(\frac{dz}{d\z}=\d_S D_{\d_S}(w_0)\).
Then, the system \eqref{sinsys} can be rewritten as 
\begin{equation}\label{sys}
\begin{cases}
\dfrac{d\wtil_0}{d\z} = \d_S D_{\d_S}(\wtil_0) \wtil_1,\\
\d_S \dfrac{d\wtil_1}{d\z}
= F_{\d_S}(\wtil_0) + B_{\d_S}(\wtil_0)\wtil_1 -\d_S^2 D_{\d_S}'(\wtil_0) \wtil_1^2,
\end{cases}
\end{equation}
where we define
\[
\wtil_0(\z) \coloneqq w_0(z(\z)), \qquad 
\wtil_1(\z) \coloneqq w_1(z(\z)).
\]
The right-hand side is now smooth, including at \(w_0=\frac{1}{2}\) and \(\d_S = 0\).

\vspace{2mm}
In what follows, we will apply Proposition \ref{prop_FFT} to the slow--fast system \eqref{sys} and thereby obtain a solution of this reduced system.
This solution will then be used to construct a solution of the original system \eqref{VS}, yielding the desired radiative shock.
In other words, although the computations carried out in the present subsection were merely formal and cannot yet be rigorously justified at this stage, their sole purpose is to identify the appropriate slow-fast system.
A rigorous construction of the radiative shock will be given in the next subsection.

\subsection{Proof of Lemma \ref{lem:EU}: Existence and Uniqueness}
First of all, we establish the existence of the radiative shock, and then we prove its uniqueness.
As mentioned earlier, the existence result relies on Fenichel's first theorem, stated as Proposition \ref{prop_FFT}, whereas uniqueness follows from standard ODE theory.

\subsubsection{Proof of Lemma \ref{lem:EU}: Existence}
We now prove the existence of a solution to the ODE system \eqref{VS}.
To this end, we first apply Proposition \ref{prop_FFT} to find an invariant manifold for the slow--fast system \eqref{sys}, from which we obtain a solution to this reduced system.
We then use this solution to construct a solution to the original system \eqref{VS}.

\vspace{2mm}
First of all, we observe that the right-hand sides of the system \eqref{sys} are smooth for \(w\) in a fixed small neighborhood \(\O\) of \([0,1]\) and for \(\d_S\) in a sufficiently small two-sided neighborhood of zero.
In fact, the shock strength \(\d_S\) is always positive.
However, the geometric theory requires the functions to be defined smoothly for \(\d\) in a small neighborhood of zero, including negative values of \(\d_S\).
For this reason, we define \(\d_S\)-dependence of the traveling speed \(\s\) by 
\[
\s = \s_{\d_S} = \sqrt{\frac{\g p^*}{v^* + \frac{\g+1}{2}\d_S}}
\]
This definition coincides with the actual shock speed for \(\d_S>0\), while providing a well-defined smooth extension to negative values of \(\d_S\).
We refer to \cite[(3.3)]{EEKO-JDE25} for a justification of this choice of \(\s\).

The limits of the coefficients and functions are as follows:
\begin{align*}
Q^* &= \frac{(\g+1)(\s^*)^3}{2(\g-1)},
&\Theta_0(w) &= \th^* >0,\\
D_0(w) &= \frac{Q^*}{v^*}(1-2w),
&F_0(w) &= \a Q^* v^* w(1-w).
\end{align*}
Using \eqref{gpsv}, we also find that 
\[
B_0(w) = \frac{4\b}{R}(\th^*)^3 \big(p^* -(\s^*)^2 v^*\big)
= - \frac{4\b(\g-1)p^*(\th^*)^3}{R} < 0.
\]
Thus, in the small neighborhood \(\O\) of \([0,1]\), the functions 
\[
-B_{\d_S}(w), \qquad
v^*+\d_S w, \qquad
\Theta_{\d_S}(w)
\]
are positive and bounded away from zero.
The critical manifold \(M_0\) of \eqref{sys} satisfies that
\[
F_0(\wtil_0) + B_0(\wtil_0)\wtil_1 =0, \qquad \forall (\wtil_0,\wtil_1)\in M_0,
\]
and so it can be characterized as follows: 
\[
M_0 = \{(\wtil_0,\wtil_1)\mid \wtil_1 = A^* \wtil_0(1-\wtil_0)\},
\]
where 
\[
A^* =\frac{\a R \g (\g+1)\s^*}{8\b(\g-1)^2(\th^*)^3}>0.
\]
This coincides with \eqref{eq:A^*def}.
Moreover, \(M_0\) is normally hyperbolic relative to \eqref{sys}, since 
\[
\rd_{\wtil_1} \big[F_{\d_S}(\wtil_0) + B_{\d_S}(\wtil_0)\wtil_1 -\d_S^2 D_{\d_S}'(\wtil_0) \wtil_1^2\big] \Big|_{\d_S=0}
= B_0 < 0.
\]

We are now ready to apply Proposition \ref{prop_FFT}.
We first fix an integer \(m\ge 2\) and choose a compact interval \(K\subset\O\) such that \([0,1]\subset \operatorname{int} K\).
Using Proposition \ref{prop_FFT} (and Remark \ref{rmk-FFT}) and taking \(r=m+1\), we find that for small enough \(\d_S\), there exists a one-dimensional locally invariant manifold \(M_{\d_S}\) for \eqref{sys} together with the following graph:
\begin{equation}\label{graph}
h^{\d_S}(w)
= A^* w(1-w)
+ \d_S \cdot s(w,\d_S), \qquad w\in K,
\end{equation}
where \(s\) is a \(\Ccal^m\) function jointly in \(w\) and \(\d_S\).
In particular, \(h^{\d_S} \to h^0\) in \(\Ccal^1(K)\) when \(\d_S\to0\).

\vspace{2mm}
For any \(a\in[0,1]\), the Cauchy--Lipschitz theorem gives a local solution \((\wtil_0,\wtil_1)\) of \eqref{sys} with initial datum \((\wtil_0,\wtil_1)(0)=(a,h^{\d_S}(a))\).
By the local invariance of \(M_{\d_S}\), this solution satisfies \(\wtil_1=h^{\d_S}(\wtil_0)\) near \(\z=0\).
Then, differentiating this identity and using \eqref{sys} at \(\z=0\), we find that 
\begin{equation}\label{h-eqn}
\d_S^2 \frac{d h^{\d_S}}{d\wtil_0}(\wtil_0) D_{\d_S}(\wtil_0) h^{\d_S}(\wtil_0)
=F_{\d_S}(\wtil_0) + B_{\d_S}(\wtil_0) h^{\d_S}(\wtil_0)
-\d_S^2 \frac{d D_{\d_S}}{d\wtil_0}(\wtil_0) \big(h^{\d_S}(\wtil_0)\big)^2.
\end{equation}
This holds for all \(\wtil_0 \in[0,1]\), since \(a\in[0,1]\) was arbitrary.
Then, we obtain that 
\[
h^{\d_S}(\wtil_0)
=\frac{F_{\d_S}(\wtil_0)}{-B_{\d_S}(\wtil_0)
+ \d_S^2 \frac{d}{d\wtil_0}\Big(D_{\d_S}(\wtil_0) h^{\d_S}(\wtil_0)\Big)}.
\]
Thanks to the \(\Ccal^1\) bounds on \(D_{\d_S}\) and \(h^{\d_S}\) together with \(B_0<0\), the denominator is positive and bounded away from zero for small enough \(\d_S\).
We further observe from the definition of \(F_{\d_S}\) in \eqref{def-TDBF} that 
\[
h^{\d_S}(0) = h^{\d_S}(1) = 0
\]
and that \(h^{\d_S}\) is positive on the interval \((0,1)\).

\vspace{2mm}
We now consider the following initial value problem:
\begin{equation}\label{hivp}
\frac{dw_0}{dz}=h^{\d_S}(w_0), \qquad
w_0(0)= \frac{1}{2},
\end{equation}
and set \(w_1(z)\coloneqq h^{\d_S}(w_0(z))\).
Note that since \(0\) and \(1\) are equilibria of \eqref{hivp}, neither of them can be reached in finite time.
By standard ODE theory (see, for instance, \cite[Lemma 3.1]{EEKO-JDE25}), the local solution by the Cauchy--Lipschitz theorem extends to all \(z\in\RR\) and stays within \((0,1)\).
Moreover, it is strictly increasing, and the absence of any other zero of \(h^{\d_S}\) in \([0,1]\) yields that 
\[
\lim_{z\to-\infty}(w_0,w_1)(z) = (0,0), \qquad
\lim_{z\to+\infty}(w_0,w_1)(z) = (1,0).
\]
The regularity of \(h^{\d_S}\) also implies that \((w_0,w_1)\) is of class \(\Ccal^m\).

\vspace{2mm}
We next verify that \((w_0,w_1)\) solves \eqref{sinsys}.
Fix \(z_0 \neq 0\), so that \(w_0(z_0)\neq \frac{1}{2}\), and take the local solution of \eqref{sys} subject to the initial datum \((\wtil_0,\wtil_1)(0)=(w_0(z_0),w_1(z_0))\).
Define \(z=z(\z)\) locally by 
\[
\frac{dz}{d\z}=\d_S D_{\d_S}(\wtil_0(\z)), \qquad
z(0) = z_0.
\]
Since \(\frac{dz}{d\z}\neq 0\) at \(\z=0\), this change of variable has a local inverse \(\z=\z(z)\).
Thanks to the locally invariant manifold \(M_{\d_S}\), we have \(\wtil_1 = h^{\d_S}(\wtil_0)\), and thus in the variable \(z\), the function \(\wtil_0\) satisfies the same ODE as \(w_0\), that is, \eqref{hivp}, with the same value at \(z_0\).
Then, uniqueness implies that \(\wtil_0(\z(z))=w_0\) and \(\wtil_1(\z(z))=w_1\) hold locally.
Hence, \((w_0,w_1)\) satisfies \eqref{sinsys} at every \(z_0 \neq 0\).
On the other hand, at \(z=0\), we use the graph relation instead of the change of variable.
Indeed, \eqref{h-eqn} yields that whenever \(w\neq\frac{1}{2}\),
\[
\frac{F_{\d_S}(w)+B_{\d_S}(w)h^{\d_S}(w)
-\d_S^2 D_{\d_S}'(w)\big(h^{\d_S}(w)\big)^2}{\d_S^2 D_{\d_S}(w)}
=(h^{\d_S})'(w) h^{\d_S}(w),
\]
whose right-hand side is continuous.
Thus, the apparent singularity is removable along the graph, together with the uniquely determined extension \(w_1'(0)=(h^{\d_S})'\big(\frac{1}{2}\big) h^{\d_S}\big(\frac{1}{2}\big)\).
Since \((w_0,w_1)\) is \(\Ccal^m\), \eqref{sinsys} also holds at \(z=0\).

Note that \((\frac{1}{2}, h^{\d_S}(\frac{1}{2}))\) is itself a critical point of \eqref{sys}.
In contrast with \cite{EE-CMS26,EEKO-JDE25}, a single trajectory of the auxiliary system on \(M_{\d_S}\) cannot connect the two endpoints across this additional critical point.
The construction above uses the graph of the locally invariant manifold, rather than a global reparametrization of one trajectory.

\vspace{2mm}
Finally, in accordance with \eqref{var}, we define 
\[
v^S(\x) \coloneqq v^* + \d_S w_0(\d_S \x).
\]
Then, based on \eqref{uinv}, \eqref{thinv} and \eqref{qinv}, we also define
\begin{align*}
u^S = u^* -\s (v^S-v^*), \qquad
\th^S = \frac{v^S}{R} \big(p^* - \s^2 (v^S-v^*)\big), \qquad
q^S = Q (v_+ - v^S)(v^S-v^*).
\end{align*}
The first three equations of \eqref{VS} follow immediately and these functions are \(\Ccal^m\) together with \(v^S,\th^S,q^S>0\).
Since \eqref{sinsys} is now justified for every \(z\in\RR\), the fourth equation also holds.
Moreover, \eqref{tpqp} yields that for small enough \(\d_S>0\),
\[
(v^S)' = \d_S^2 w_1>0, \qquad
(u^S)' = -\s (v^S)' <0, \qquad
(\th^S)' = \frac{p^*-\s^2(2v^S-v^*)}{R}(v^S)' < 0.
\]
This completes the construction of the radiative shock in the \(3\)-shock case. \qed

\subsubsection{Proof of Lemma \ref{lem:EU}: Uniqueness}
We now prove uniqueness, up to a translation, among monotone \(\Ccal^2\) profiles.
As in the existence proof, we consider the \(3\)-shock case, with \(\d_S = v_+ -v^*>0\), and work in the original variable \(\x\).

\vspace{2mm}
Let \((v^S,u^S,\th^S,q^S)\) be any such profile.
We first note that the integrated system \eqref{VS-I} gives \eqref{thinv} and \eqref{qinv} as follows: 
\[
\th^S = \frac{v^S}{R} \big(p^* - \s^2 (v^S-v^*)\big), \qquad
q^S = Q (v_+ - v^S)(v^S-v^*).
\]
Then we set
\[
y^S \coloneqq \frac{(q^S)'}{v^S} - \b (\th^S)^4, \qquad
v_c \coloneqq \frac{v^*+v_+}{2}.
\]
The fourth equation of \eqref{VS} is now written as \((y^S)' = \a v^S q^S\). 
Thus, away from \(v^S = v_c\), the profile satisfies the following system: 
\begin{equation}\label{vy-sys}
\begin{cases}
(v^S)' = \dfrac{v^S\big(y^S+\b(\th^S)^4\big)}{Q(v^*+v_+ -2v^S)},\\
(y^S)' = \a Q v^S (v_+-v^S)(v^S-v^*),
\end{cases}
\qquad \th^S = \frac{v^S}{R} \big(p^* - \s^2 (v^S-v^*)\big),
\end{equation}
whose right-hand side is smooth near each of the two critical points 
\[
P^* \coloneqq \big(v^*, -\b(\th^*)^4\big), \qquad
P_+ \coloneqq \big(v_+, -\b(\th_+)^4\big).
\]
Since \((y^S)'= \a v^S q^S \ge 0\), the identity \((q^S)'=v^S(y^S+\b(\th^S)^4)\) and the far-field conditions in \eqref{VS} imply 
\[
\lim_{\x\to-\infty}(v^S,y^S)(\x) = P^*, \qquad
\lim_{\x\to+\infty}(v^S,y^S)(\x) = P_+.
\]

We then consider the linearizations of \eqref{vy-sys}.
Using 
\[
\frac{d\th^S}{dv^S} = \frac{p^* - \s^2(2v^S-v^*)}{R}, \qquad
p_+ = p^* -\s^2 \d_S,
\]
the Jacobian matrices at \(P^*\) and \(P_+\) are respectively
\begin{equation}\label{jacobian}
J^* = \begin{pmatrix}
\frac{4\b v^* (\th^*)^3}{R Q \d_S} (p^*-\s^2 v^*) & \frac{v^*}{Q\d_S}\\
\a Q v^* \d_S & 0
\end{pmatrix}, \qquad
J_+ = \begin{pmatrix}
-\frac{4\b v_+ (\th_+)^3}{R Q \d_S} (p_+ - \s^2 v_+) & -\frac{v_+}{Q \d_S}\\
-\a Q v_+ \d_S & 0
\end{pmatrix}.
\end{equation}
The determinants of these matrices are given by 
\[
\det J^* = -\a (v^*)^2 <0, \qquad
\det J_+ = -\a (v_+)^2 <0,
\]
and thus both critical points are saddles.
In particular, the unstable manifold of \(P^*\) and the stable manifold of \(P_+\) are one-dimensional.
Note that monotonicity selects the unstable branch at \(P^*\) with \(v^S > v^*\) and the stable branch at \(P_+\) with \(v^S<v_+\).

\vspace{2mm}
We now compare two monotone \(\Ccal^2\) profiles \((v^S,u^S,\th^S,q^S)\) and \(\vhat,\uhat,\thhat,\qhat\), and we define \(\yhat\) in the same way as \(y^S\).
Since \((v^S)'=(q^S)'=\vhat'=\qhat'=0\) at each end state, neither profile can attain the end state at a finite point.
In addition, each profile attains \(v_c\) exactly once due to the monotonicity.
Thus, we may suppose that \(v^S(0)=\vhat(0)=v_c\).

Near \(P^*\), both trajectories lie on the same one-dimensional unstable branch with \(v>v^*\).
We choose \(\x_1,\x_2<0\) such that \((v^S,y^S)(\x_1)=(\vhat,\yhat)(\x_2)\).
Since the system is autonomous, the translated trajectories \((v^S,y^S)(\x+\x_1)\) and \((\vhat,\yhat)(\x+\x_2)\) solve the same initial value problem at \(\x=0\).
By the Cauchy--Lipschitz theorem, they coincide on their common interval, where both \(v\)-components remain below \(v_c\).
By continuity, their first arrival times at \(v_c\) must coincide.
Since each profile attains \(v_c\) only at \(\x=0\), these times must be \(-\x_1\) and \(-\x_2\), respectively.
Thus, it follows that \(\x_1=\x_2\) and 
\[
(v^S,y^S)(\x) = (\vhat, \yhat)(\x), \qquad \x<0.
\]
Similarly, near \(P_+\), both trajectories lie on the same one-dimensional stable branch with \(v<v_+\).
The same argument backwards in \(\x\) yields 
\[
(v^S,y^S)(\x) = (\vhat, \yhat)(\x), \qquad \x>0.
\]
This establishes the desired uniqueness of the radiative shocks. \qed

\subsection{Proof of Lemma \ref{lem:VS}: Quantitative Estimates}
Throughout this subsection, \(C\), \(c\) and \(C_k\) denote positive constants independent of \(\d_S\), whose values may change from line to line.
We again consider a \(3\)-shock and follow the order of estimates in \cite{EEKO-JDE25}: derivative bounds, comparison of components, and then exponential decay.
The estimates for the \(q\)-variable are proved last.

By uniqueness, the normalized profile coincides with the one constructed in the existence proof.
We recall the construction of the radiative shock solution, in particular, \eqref{var}: 
\[
z =\d_S \x, \qquad
v^S(\x) = v^* + \d_S w_0(z), \qquad
\rd_z w_0 = w_1 = h^{\d_S}(w_0),
\]
where \(0<w_0<1\), \(w_0(0)=\frac{1}{2}\), and \(w_1>0\).
Moreover, the function \(h^{\d_S}\) can be written as 
\[
h^{\d_S}(w_0)
=A^* w_0(1-w_0) + \d_S s(w_0,\d_S),
\]
for some \(\Ccal^m\) function \(s\), and it satisfies \(h^{\d_S}(0)=h^{\d_S}(1)=0\).
For the constant \(\d_4\) in Lemma \ref{lem:EU}, taken smaller if necessary, the following holds: 
\[
\sup_{0<\d_S<\d_4} \|h^{\d_S}\|_{\Ccal^3([0,1])} \le C.
\]
In particular, \(0<w_1\le C\).
Differentiating the graph relation \(w_1 = h^{\d_S}(w_0)\) yields 
\begin{align*}
\rd_z w_1
&= (\rd_{w_0} h^{\d_S}) w_1,\\
\rd_z^2 w_1
&= (\rd_{w_0}^2 h^{\d_S}) w_1^2
+ (\rd_{w_0} h^{\d_S}) \rd_z w_1,\\
\rd_z^3 w_1
&= (\rd_{w_0}^3 h^{\d_S}) w_1^3
+ 3(\rd_{w_0}^2 h^{\d_S}) w_1 \rd_z w_1
+ (\rd_{w_0} h^{\d_S}) \rd_z^2 w_1,
\end{align*}
where all derivatives of \(h^{\d_S}\) are evaluated at \(w_0(z)\).
It follows successively that 
\[
|\rd_z^k w_1| \le C_k w_1, \qquad \text{for } k=1,2,3.
\]
Then, since \(\rd_\x^k v^S = \d_S^{k+1} \rd_z^{k-1}w_1\), we obtain 
\begin{equation}\label{v-der-bd}
0 < (v^S)' \le C\d_S^2, \qquad
\big|\rd_\x^k v^S \big|
\le C_k \d_S^{k-1} (v^S)', \qquad \text{for } k=2,3,4.
\end{equation}
Moreover, from \eqref{uinv} and \eqref{thinv}, we observe
\[
\rd_\x^k u^S = -\s \rd_\x^k v^S, \qquad
\rd_\x^k \th^S
= \frac{p^*-\s^2(2v^S-v^*)}{R} \rd_\x^k v^S
-\frac{\s^2}{R}\sum_{l=1}^{k-1} \binom{k}{l} (\rd_\x^l v^S) (\rd_\x^{k-l} v^S).
\]
All of the coefficients are uniformly bounded.
Thus, taking \(\d_4 <1\), we obtain 
\begin{equation}\label{high-der}
\big|\rd_\x^k (v^S,u^S,\th^S)\big|
\le C_k \d_S^{k-1} \big|(v^S)'\big|, \qquad k=2,3,4.
\end{equation}

Next, we show \eqref{ratio-vu} and \eqref{ratio-vth}.
It is well established that \eqref{sm1} holds (see also \eqref{gpsv}).
Then, \eqref{uinv} and \eqref{tpqp} simply yield \eqref{ratio-vu} and \eqref{ratio-vth}.
Note that this part is considerably simpler than the corresponding analysis in the previous work \cite{EE-CMS26,EEKO-JDE25} for the Brenner--Navier--Stokes--Fourier system, since the first two equations contain no dissipative terms.
Moreover, for small enough \(\d_S\), they yield \eqref{derivsign} and 
\[
|(v^S)'| \sim |(u^S)'| \sim |(\th^S)'|.
\]

We now prove \eqref{tail-0}.
Using \eqref{qinv}, we find that 
\[
\left(\frac{(q^S)'}{v^S}\right)'
= Q \left[
\frac{v^* + v_+ - 2v^S}{v^S}(v^S)''
-\frac{v^*+v_+}{(v^S)^2}\big((v^S)'\big)^2
\right].
\]
Note that \(|v^*+v_+ -2v^S|\le \d_S\) always holds.
Thus, \eqref{v-der-bd} implies that
\begin{equation}\label{rad-minor}
\left|\left(\frac{(q^S)'}{v^S}\right)'\right|
\le C\big(\d_S |(v^S)''|
+ |(v^S)'|^2 \big)
\le C \d_S^2 (v^S)'.
\end{equation}
On the other hand, together with \eqref{qinv}--\eqref{tpqp}, the last equation of \eqref{VS} can be written as
\begin{equation}\label{trans-4eq}
(v_+-v^S)(v^S-v^*)
= \frac{4\b(\th^S)^3 [\s^2(2v^S-v^*)-p^*]}{\a R Q v^S} (v^S)'
+ \frac{1}{\a Q v^S}\left(\frac{(q^S)'}{v^S}\right)'.
\end{equation}
Since 
\[
|v^S - v^*|
+|\th^S - \th^*|
+|\s - \s^*|
+|Q - Q^*|
\le C\d_S,
\]
the coefficient of \((v^S)'\) in \eqref{trans-4eq} differs from \(\frac{1}{A^*}\) by at most \(C\d_S\).
Combining this with \eqref{rad-minor}, we obtain 
\[
\abs{(v_+-v^S)(v^S-v^*)-\frac{1}{A^*}(v^S)'}
\le C\d_S (v^S)'.
\]
Since \(A^*>0\), this implies that 
\begin{equation}\label{vcomp}
c(v_+-v^S)(v^S-v^*)
\le (v^S)'
\le C(v_+-v^S)(v^S-v^*).
\end{equation}
These two estimates yield \eqref{tail-0}.

To deduce the exponential decay, we first note that \(v_+-v^S \ge \frac{\d_S}{2}\) on \(\x\le0\), whereas we have \(v^S-v^*\ge\frac{\d_S}{2}\) on \(\x\ge0\).
Thus, \eqref{vcomp} implies that 
\[
(v^S-v^*)' \ge c\d_S(v^S-v^*),\quad \forall \x\le0, \qquad
(v_+-v^S)' \le -c\d_S(v_+-v^S),\quad \forall \x\ge0.
\]
The standard Gr\"onwall argument (see for instance, \cite[Lemma 2.1]{KV-JEMS21}) together with \eqref{uinv} and \eqref{thinv} yields the desired exponential decay \eqref{decay}.
Then, substitution of \eqref{decay} into \eqref{vcomp} gives 
\[
0< (v^S)'(\x)
\le C\d_S^2 e^{-c\d_S|\x|}.
\]
The ratio between the first derivatives yields \eqref{derivdecay}, and \eqref{high-der} completes \eqref{2bound1}.

\vspace{2mm}
It remains to show the estimates for \(q^S\).
The exact identity \eqref{qS} is already given by \eqref{qinv}.
Since \(Q\) is positive and bounded away from zero, \eqref{vcomp} directly yields \eqref{qder}.
Then, for \(j=1\), \(2\), \(3\), differentiating \eqref{qinv} implies 
\[
\rd_\x^j q^S
=Q\left[
\big(v^*+v_+-2v^S\big)\rd_\x^j v^S
-\sum_{l=1}^{j-1}\binom{j}{l} (\rd_\x^l v^S)
(\rd_\x^{j-l} v^S)
\right],
\]
where the sum is empty when \(j=1\).
Using \eqref{v-der-bd} and \(|v^*+v_+ -2v^S|\le \d_S\), we find that 
\[
|\rd_\x^j q^S(\x)|
\le C_j \d_S^j (v^S)'(\x)
\le C_j \d_S^{j+2} e^{-c\d_S|\x|}, \qquad j=1,2,3.
\]
This completes the proof of Lemma \ref{lem:VS}. \qed
\end{appendix}

\vspace{2mm}
\noindent \textbf{Conflict of interest.} The authors declare no conflict of interest.

\vspace{2mm}
\noindent \textbf{Data availability statement.} The authors do not analyze or generate any datasets, as this work is purely theoretical and mathematical.

\vspace{2mm}
\noindent \textbf{Declaration of the use of AI.}
During the preparation of this manuscript, the authors used OpenAI's ChatGPT to assist with language editing as well as the identification of typographical errors and potential inconsistencies in notation and formatting. The final draft was given to GPT-6 to review for mathematical accuracy. This resulted in the detection of one minor mathematical issue, which was simple to repair without the use of AI. No essential mathematical ideas and concepts were provided by AI. All text (both mathematical and non-mathematical) was first written by the authors. All AI-assisted suggestions were carefully reviewed by the authors, who take full responsibility for the content and accuracy of the manuscript.

\bibliography{reference}

\end{document}

%% file: newmacros.tex
\usepackage{amssymb,mathrsfs,mathtools,graphicx,enumerate,color,colortbl}
\usepackage[hidelinks]{hyperref}
\usepackage{cleveref}
\usepackage{comment}

\newcommand{\Ccal}{\mathcal{C}}

\newcommand{\Hcal}{\mathcal{H}}

\newcommand{\Jcal}{\mathcal{J}}

\newcommand{\Ocal}{\mathcal{O}}

\newcommand{\Abar}{\bar{A}}

\newcommand{\Ubar}{\bar{U}}

\renewcommand{\hbar}{\bar{h}}

\newcommand{\pbar}{\bar{p}}
\newcommand{\qbar}{\bar{q}}

\newcommand{\ubar}{\bar{u}}
\newcommand{\vbar}{\bar{v}}

\newcommand{\thbar}{\bar{\th}}

\newcommand{\qtil}{\tilde{q}}

\newcommand{\util}{\tilde{u}}
\newcommand{\vtil}{\tilde{v}}
\newcommand{\wtil}{\tilde{w}}

\newcommand{\thtil}{\tilde{\th}}

\renewcommand{\a}{\alpha}
\renewcommand{\b}{\beta}
\newcommand{\g}{\gamma}
\renewcommand{\d}{\delta}

\newcommand{\z}{\zeta}
\newcommand{\et}{\eta}
\renewcommand{\th}{\theta}

\renewcommand{\l}{\lambda}

\newcommand{\n}{\nu}
\newcommand{\x}{\xi}

\newcommand{\s}{\sigma}
\renewcommand{\t}{\tau}

\renewcommand{\O}{\Omega}

\newcommand{\e}{\varepsilon}

\newcommand{\RR}{{\mathbb R}}

\newcommand{\abs}[1]{\left|#1\right|}

\newcommand{\norm}[1]{\left\|#1\right\|}

\newcommand{\Phif}[2]{\Phi\Big(\frac{#1}{#2}\Big)}

\newcommand{\bpf}[1]{\noindent\textbf{Proof of \eqref{#1}:}} 
\newcommand{\bci}[1]{\noindent\textbf{Control of \(#1\):}}

\newcommand{\vhat}{\widehat{v}}
\newcommand{\uhat}{\widehat{u}}
\newcommand{\thhat}{\widehat{\theta}}
\newcommand{\qhat}{\widehat{q}}
\newcommand{\yhat}{\widehat{y}}

\definecolor{black}{rgb}{0.0, 0.0, 0.0}
\definecolor{red}{rgb}{1.0, 0.5, 0.5}

\newcommand{\rd}{\partial}